\DeclareSymbolFont{cyrletters}{OT2}{wncyr}{m}{n}
\DeclareMathSymbol{\Sha}{\mathalpha}{cyrletters}{"58}
\definecolor{refkey}{rgb}{1,1,1}
\definecolor{labelkey}{rgb}{1,1,1}
\definecolor{cite}{rgb}{0.9451,0.2706,0.4941}
\definecolor{ruri}{rgb}{0.0078,0.4022,0.8010}
\makeindex \setcounter{tocdepth}{1}
\def\F{{\rm \mathbb{F}}}
\def\Z{{\rm \mathbb{Z}}}
\def\Q{{\rm \mathbb{Q}}}
\def\C{{\rm \mathbb{C}}}
\def\P{{\rm \mathbb{P}}}
\def\p{{\rm \mathfrak{p}}}
\def\A{{\rm \mathbb{A}}}
\def\NS{{\rm NS}}
\def\PGL{{\rm PGL}}
\def\Aut{{\rm Aut}}
\def\NS{{\rm NS}}
\def\Pic{{\rm Pic}}
\def\Disc{{\rm Disc}}
\def\Sym{{\rm Sym}}
\def\GL{{\rm GL}}
\def\Gal{{\rm Gal}}
\def\Hom{{\rm Hom}}
\def\End{{\rm End}}
\def\Spec{{\rm Spec}}
\newcommand{\sheaf}[1]{\mathscr{#1}}
\numberwithin{equation}{section}
\newtheorem{theorem}{Theorem}[section]
\newtheorem{lemma}[theorem]{Lemma}
\newtheorem{question}{Question}
\newtheorem{remark}[theorem]{Remark}
\newtheorem{definition}[theorem]{Definition}
\newtheorem{example}[theorem]{Example}
\newtheorem{corollary}[theorem]{Corollary}
\newtheorem{proposition}[theorem]{Proposition}
\DeclareMathOperator{\Mat}{Mat}
\DeclareMathOperator{\Jac}{Jac}
\newcommand{\define}[1]{{\fontfamily{cmss}\selectfont{#1}}}
\DeclareMathOperator{\Proj}{Proj}
\DeclareMathOperator{\sep}{sep}
\newcommand{\calO}{\mathcal{O}}
\DeclareMathOperator{\HH}{H}
\DeclareMathOperator{\Prym}{Prym}
\begin{document}
\setlength{\parskip}{2pt} 
\setlength{\parindent}{8pt}
\title{The geometry and arithmetic of bielliptic Picard curves} 
\author{Jef Laga}
\address{Department of Pure Mathematics and Mathematical Statistics, Wilberforce Road, Cambridge, CB3 0WB, UK}
\email{jeflaga@hotmail.com}
\author{Ari Shnidman}
\address{Einstein Institute of Mathematics, Hebrew University of Jerusalem, Israel} 
\email{ariel.shnidman@mail.huji.ac.il}

\maketitle

\begin{abstract}
We study the geometry and arithmetic of  the curves $C \colon y^3 = x^4 + ax^2 + b$ and their associated Prym abelian surfaces $P$.  We prove a Torelli theorem in this context and give a geometric proof of the fact that $P$ has quaternionic multiplication (QM) by the quaternion order of discriminant $6$. This allows us to  describe the Galois action on the geometric endomorphism algebra of $P$. As an application, we classify the torsion subgroups of the Mordell-Weil groups $P(\Q)$, as both abelian groups and $\End(P)$-modules. 
\end{abstract}

\tableofcontents
\makeatletter
\makeatother

\section{Introduction}

Let $k$ be a field of characteristic neither $2$ nor $3$.  A \define{bielliptic Picard curve} over $k$ is a smooth projective curve $C$ with an affine model of the form 
\begin{align}\label{equation: biell pic curve intro}
y^3 = x^4+ax^2+b
\end{align}
for some $a,b\in k$.  Such a curve is equipped with both a $\mu_3$-action $(x,y)\mapsto (x,\omega y)$ and a commuting involution $\tau \colon (x,y)\mapsto (-x,y)$.  The induced involution $\tau^*$ on the Jacobian variety $J = \Jac_C$ allows us to define the Prym variety $P = \ker(1+\tau^*)$.
The abelian surface $P/k$ inherits a $\mu_3$-action and carries a $(1,2)$-polarization; it need not be principally polarizable over $k$.

The goal of this paper is to explore the remarkably rich geometry and arithmetic of bielliptic Picard curves and their associated Pryms.

\subsection{Basic results}
We first prove some foundational results for bielliptic Picard curves. We
\begin{enumerate}
    \item\label{item: torelli} prove a Torelli theorem for the association $C\mapsto P$, once suitable data on both sides is fixed (Theorem \ref{theorem: Torelli theorem bielliptic Picard curves});
    \item\label{item: QM property}  show $\End(P_{\bar{k}})$ contains a maximal order $\calO$ in a discriminant $6$ quaternion algebra (Proposition \ref{proposition: Prym has O-PQM}), in other words  $P_{\bar{k}}$ has quaternionic multiplication (QM) by $\calO$;
    \item\label{item: endo field} give an explicit description of the Artin representation $\rho_{\End}\colon \Gal_k \to \Aut(\calO) \hookrightarrow \GL_4(\Q)$ describing the Galois action on the endomorphism ring $\calO$ (Corollary \ref{corollary: determination endo field});
    \item\label{item: bielliptic Picard curves} relate the moduli space of bielliptic Picard curves to a unitary Shimura curve $Y/\Q$ and a quaternionic Shimura curve  (Section \ref{subsec: comparison with quaternionic curve});
    \item\label{item: CM points} determine the finitely many $\bar \Q$-isomorphism classes of Pryms $P/\Q$ which are geometrically non-simple (Proposition \ref{proposition: all CM j-invariants}), corresponding to the CM points on $Y$. 
\end{enumerate}

 Our Torelli result (\ref{item: torelli}) is perhaps surprising because Barth \cite{Barth-abeliansurfacespolarization} has shown that the Prym construction on all bielliptic genus $3$ curves does not satisfy a Torelli theorem, but has one-dimensional fibres.  There are other instances of algebraic families of abelian surfaces with QM in the literature \cite{BabaGranath-genus2curvesQM, HashimotoMarabayashi}, but (\ref{item: QM property}) is interesting because of the very simple nature of this family. 
Moreover, contrary to previous approaches, our proof constructs the quaternionic action explicitly and geometrically, using the automorphisms and the polarization of $P$.
Part (\ref{item: endo field}) gives an interesting source of abelian surfaces with large endomorphism field and, in the notation of \cite{FKRS-STgenus2},  produces the first published examples of geometrically simple abelian surfaces with Sato--Tate group \href{https://www.lmfdb.org/SatoTateGroup/1.4.E.6.1a}{$J(E_3)$} and \href{https://www.lmfdb.org/SatoTateGroup/1.4.E.12.4a}{$J(E_6)$} (Example \ref{example: sato-tate group large}).
Part (\ref{item: bielliptic Picard curves}) and Shimura reciprocity allow us to calculate all geometrically non-simple Prym varieties in (\ref{item: CM points}).

\subsection{Rational torsion subgroups}
To further illustrate the rich and accessible nature of these curves, we classify the finite torsion subgroups that arise in the Mordell-Weil groups $P(\Q)$ of Pryms of bielliptic Picard curves over $\Q$.

\begin{theorem}\label{thm:main}
   Let $P/\Q$ be the Prym surface of a bielliptic Picard curve $C/\Q$. Then 
   \[P(\Q)_{\mathrm{tors}} \simeq 
      \begin{cases}
\Z/n\Z & \mbox{ for some } n \in \{1,2,3,6\}, \mbox{ or }\\
   \Z/n\Z \times \Z/n\Z &\mbox{ for some } n \in \{2,3\}, \mbox{ or } \\
   \Z/6\Z \times \Z/3\Z. & \\

   \end{cases}
   \] 
   Conversely, for each finite abelian group $G$ above, there exist infinitely many $\bar{\Q}$-isomorphism classes of bielliptic Picard  Prym surfaces $P/\Q$ with $P(\Q)_{\mathrm{tors}} \simeq G$.  
\end{theorem}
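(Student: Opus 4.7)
The proof splits into an upper bound showing $P(\Q)_{\mathrm{tors}}$ lies in the listed set, and a realization direction producing infinitely many examples of each group.

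For the upper bound, the key input is the quaternionic multiplication of item (\ref{item: QM property}): $T_\ell P$ is a free rank-one module over $\calO \otimes \Z_\ell$, and its Galois representation is governed by item (\ref{item: endo field}) via $\rho_{\End}$. For the positive-density set of primes $p$ of good reduction that split completely in the field of definition of $\calO$, Frobenius commutes with $\calO$ and hence has characteristic polynomial $(T^2 - a_p T + p)^2$ on $T_\ell P$, giving $|P(\F_p)| = (p+1-a_p)^2$. Via the standard injection $P(\Q)_{\mathrm{tors}}[m] \hookrightarrow P(\F_p)$ for $\gcd(m,p) = 1$, a handful of such small primes suffice to rule out all primes $\ell \geq 5$ from dividing $|P(\Q)_{\mathrm{tors}}|$; alternatively one can invoke quaternionic analogues of Mazur's theorem due to Jordan--Livn\'e, Rotger, and others.

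Having localized at $\ell \in \{2,3\}$, I analyze each prime separately. At $\ell = 3$: since $3$ is ramified in the quaternion algebra $\calO \otimes \Q$, the ring $\calO/3\calO$ is a local $\F_3$-algebra of length $2$ with residue field $\F_9$, and $P[3]$ is free of rank one over it. Combined with the $\mu_3$-action (a $\Q$-rational action of the group scheme $\mu_3$) and the explicit $\rho_{\End}$, this forces $P(\Q)[3] \subseteq (\Z/3)^2$, and a $3$-adic lattice argument using the rank-one $\calO \otimes \Z_3$-structure rules out $P(\Q)[9] \supsetneq P(\Q)[3]$. At $\ell = 2$: the $2$-torsion of $J = \Jac(C)$ is controlled by the factorization of $x^4 + ax^2 + b$, and descent through $\tau^*$ gives $P(\Q)[2] \subseteq (\Z/2)^2$; a parallel rank-one argument at $2$ (also ramified) yields $P(\Q)[4] = P(\Q)[2]$. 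Putting the $\ell$-parts together requires additional compatibility: $P(\Q)[2] = (\Z/2)^2$ combined with any nontrivial $3$-torsion, or $P(\Q)[3] = (\Z/3)^2$ combined with $(\Z/2)^2$, would force a larger $\Gal_\Q$-invariant subgroup of $P[6]$ or $P[12]$ than is consistent with the determined $\rho_{\End}$. This eliminates mixed groups such as $(\Z/2)^2 \times \Z/3$ and $(\Z/6)^2$.

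For realization, I would produce explicit $1$-parameter rational subfamilies $(a(t), b(t))$ for which $P_t$ acquires each target torsion group by exhibiting explicit torsion divisors on $C_t$: rational roots of $x^4 + ax^2 + b$ give Weierstrass-type $2$-torsion descending to $P$; small-degree divisors exploiting the $\mu_3$-symmetry give rational $3$-torsion on $P$; combining both yields $\Z/6$ and $\Z/6 \times \Z/3$. Hilbert irreducibility then gives infinitely many specializations with at least the prescribed torsion, and the upper bound gives equality. Distinctness of $\bar{\Q}$-isomorphism classes follows from varying the Shimura-curve invariant of item (\ref{item: bielliptic Picard curves}). The main obstacle will be the fine analysis at $\ell = 3$, especially ruling out $9$-torsion, and the mixed-group arguments eliminating $(\Z/6)^2$ and $(\Z/2)^2 \times \Z/3$, both of which rely on subtle consequences of $\rho_{\End}$; constructing the rational family realizing $\Z/6 \times \Z/3$ may also require a careful computer-algebra search to find the right geometric configuration on $C$.
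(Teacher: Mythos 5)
Your overall architecture (upper bound plus explicit realizations, localizing at $2$ and $3$) matches the paper, and your realization strategy --- $2$-torsion from rational roots of the quartic, $3$-torsion from the $\mu_3$-symmetry, families intersected to get $\Z/6\Z\times\Z/3\Z$ --- is essentially what is done in \S\ref{sec:2,3,6-torsion}. But several steps in your upper bound are genuine gaps. First, for $\ell\ge 5$ the ``handful of split primes'' argument cannot give a \emph{uniform} bound over the whole family, since $a_p$ varies with $P$ and you would only ever bound torsion curve by curve; the paper instead exploits that $P$ has potentially good \emph{supersingular} reduction at the ramified primes $2$ and $3$, reduces to the case $\calO\subset\End(P_{\Q(\omega)})$ (Proposition \ref{proposition: reduction to gl2 type}), consults Honda--Tate over $\F_3$ to isolate the single surviving case $\ell=7$, and then needs Silverberg's ramification criterion together with Lorenzini's bound on component groups of totally additive reduction to kill $7$. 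Second, your claim that a ``parallel rank-one argument at $2$'' rules out $4$-torsion is exactly what the authors warn does \emph{not} work: general QM arguments cannot exclude $\Z/4\Z$, and Proposition \ref{proposition: no 4-torsion} instead uses the bitangent description of $A[2]$ to transport a hypothetical point of order $4$ to a Galois-stable cyclic subgroup of order $4$ on the CM elliptic curve $E$ (which has $j=0$), impossible by CM theory.

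Third, eliminating $(\Z/2\Z)^2\times\Z/3\Z$ is not a consequence of $\rho_{\End}$: both $(\Z/2\Z)^2$ and $\Z/3\Z$ occur individually on one-parameter families, their simultaneous occurrence is cut out by a genus-one curve, and Proposition \ref{proposition: no order 12} must determine all rational points on $z^2=w(w^3-1)$ (an elliptic curve with Mordell--Weil group $\Z/6\Z$) to conclude; no representation-theoretic compatibility rules this group out. Similarly, your $3$-adic lattice argument against $9$-torsion is vague precisely where the paper reduces modulo $2$ and classifies supersingular abelian surfaces over $\F_2$ to get $P(\Q)[3]\subseteq(\Z/3\Z)^2$. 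These Diophantine and reduction-theoretic steps are where the real work lies, and your proposal does not supply them.
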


Theorem \ref{thm:main} is the analogue of Mazur's classification of rational torsion points of elliptic curves \cite{Mazur-eisensteinideal} in our setting.
As with elliptic curves, each finite group allowed by Theorem \ref{thm:main} can be realized by an explicit family (in fact, sometimes multiple families) of Pryms whose coefficients can be rationally parameterized. 
This is in accordance with the philosophy of Mazur and Ogg, namely that torsion should only occur as a consequence of the ambient geometry of the relevant moduli space; see \cite{Mazur-Oggstorsionconjgcture} for a recent survey.  Theorem \ref{thm:main} is significantly easier to prove than Mazur's theorem, essentially because $P$ has everywhere potentially good reduction. Nonetheless, it appears to be the first classification of rational torsion on a universal abelian variety over a Shimura variety which is not a modular curve. 

\begin{remark}
{\em
In a separate work with Schembri and Voight \cite{LSSV-QMMazur}, we significantly extend some of our arguments here to show that for {\em any} abelian surface $A/\Q$ such that $\End(A_{\bar{\Q}})$ is a maximal order in a non-split quaternion algebra over $\Q$, there is the uniform bound $|A(\Q)_{\mathrm{tors}}| \leq 18$. Theorem $\ref{thm:main}$ shows that this bound is sharp. Contrary to \cite[Theorem 1.3]{LSSV-QMMazur}, Theorem \ref{thm:main} is a complete classification, which we achieve using arguments specific to the geometry of the abelian surfaces considered here.
}
\end{remark}

\begin{example}
{\em 
By Proposition \ref{prop: order 18 torsion} below, for each $t \in \Q\setminus\{0,1,-1\}$ the Prym $P_t$ associated to the curve 
\[2(t^2 -1)^2y^3 = (x^2 -1)(x^2 -t^2)\]
satisfies $P_t(\Q)_{\mathrm{tors}} \simeq \Z/3\Z \times \Z/6\Z$, achieving the maximum order allowed by Theorem \ref{thm:main}. 
}
\end{example}
 \begin{remark}
     {\em
     Theorem \ref{thm:main} implies that the rational torsion subgroup $J(\Q)_{\mathrm{tors}}$ of the Jacobian of a bielliptic Picard curve is $12$-torsion. 
     A complete classification of the groups $J(\Q)_{\mathrm{tors}}$ would require parameterizing points of order $4$ in $J(\Q)$. 
     }
 \end{remark}

It is natural to ask how the torsion subgroup of $P(\Q)$ is affected by the presence of endomorphisms defined over $\Q$. Generically, we have $\End(P) = \Z$, but it can also happen that $\End(P)$ is an order in a quadratic field.
Such $P$ are said to be \define{of $\GL_2$-type}, and are modular by work of Khare-Wintenberger \cite{KhareWintenberger-Serresmodularity}.  After proving explicit conditions on $a$ and $b$ for $P$ to be of $\GL_2$-type (see \S \ref{subsec:gl2type}), we classify all the possible $\End(P)$-modules that arise as $P(\Q)_{\mathrm{tors}}$.

\begin{theorem}\label{thm:gl2main}
Let $P/\Q$ be the Prym surface of a non-CM bielliptic Picard curve  over $\Q$, with $\End(P) \neq \Z$. Then $\End(P) \simeq \Z[\sqrt{D}]$ for some $D \in \{2,6\}$. Furthermore,  there is an isomorphism of $\Z[\sqrt{D}]$-modules 
\[P(\Q)_{\mathrm{tors}} \simeq 
      \begin{cases}
\{0\} & \mbox{ or }\\
   \Z[\sqrt{D}]/\mathfrak{a_p} &\mbox{ for some $p \in \{2,3\}$ },
   \end{cases}
   \] 
where $\mathfrak{a}_p$ is the unique prime ideal of $\Z[\sqrt{D}]$ above $p$. Conversely, for each $D \in \{2,6\}$, and for each of the three cyclic $\Z[\sqrt{D}]$-modules $G$ that appear above,  there are infinitely many $\overline{\Q}$-isomorphism classes of Pryms $P/\Q$ such that $\End(P) \simeq \Z[\sqrt{D}]$ and $P(\Q)_{\mathrm{tors}} \simeq G$ as $\Z[\sqrt{D}]$-modules.   
\end{theorem}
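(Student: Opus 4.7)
My plan is to combine Theorem \ref{thm:main} (the abelian-group classification) with the Galois action on $\calO$ from Corollary \ref{corollary: determination endo field}. The central identification is $\End(P) = \calO^{\Gal_\Q}$, so both $\End(P)$ and the induced module structure on $P(\Q)_{\mathrm{tors}}$ follow from knowing the image of $\rho_{\End}$.

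For the determination of $\End(P)$: since $B = \calO \otimes \Q$ is a quaternion algebra over $\Q$, any Galois-fixed subring strictly containing $\Z$ is either a quadratic order in some quadratic subfield of $B$, or else all of $\calO$. Using the explicit image of $\rho_{\End}$, I would enumerate its centralizers in $\calO$. The quadratic subfields of $B$ that arise as fixed subfields of the relevant subgroups will work out to be exactly $\Q(\sqrt{2})$ and $\Q(\sqrt{6})$, while $\End(P) = \calO$ does not occur for non-CM $P$ in this family because the image of $\rho_{\End}$ is non-trivial outside the CM locus. Since $\End(P)$ is closed under the main involution of $B$, is contained in the maximal order $\calO$, and is integral over $\Z$, it must equal the maximal order $\Z[\sqrt{D}]$ for $D \in \{2,6\}$.

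For the torsion module structure: by Theorem \ref{thm:main}, $P(\Q)_{\mathrm{tors}}$ is one of a short list of abelian groups, and it carries a $\Z[\sqrt{D}]$-module structure via $\End(P)$. The relevant arithmetic is $\Z[\sqrt{D}]/\mathfrak{a}_2 \simeq \F_2$ for $D \in \{2,6\}$, $\Z[\sqrt{6}]/\mathfrak{a}_3 \simeq \F_3$, and $\Z[\sqrt{2}]/\mathfrak{a}_3 = \Z[\sqrt{2}]/(3) \simeq \F_9 \simeq (\Z/3\Z)^2$ (as $3$ is inert in $\Z[\sqrt{2}]$). Certain groups fail module-theoretically: for example $\Z/3\Z$ admits no $\Z[\sqrt{2}]$-module structure, since that would require $\sqrt{2}$ to act as an element of $\F_3$ with square $-1$, which does not exist. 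The product-type groups $(\Z/2\Z)^2$, $\Z/6\Z$, and $\Z/6\Z \times \Z/3\Z$ would be eliminated using that the $\ell$-adic Tate module $T_\ell P$ is locally free of rank $2$ over $\Z[\sqrt{D}] \otimes \Z_\ell$, so any rational $\ell$-torsion subgroup sits inside a rank-$2$ free module over $\Z[\sqrt{D}]/(\ell)$; the $(1,2)$-polarization and the $\mu_3$-action then further pin down the allowed rational submodules.

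For the converse direction, each allowed pair $(\End(P), P(\Q)_{\mathrm{tors}})$ would be realized by an explicit one-parameter family: the loci in the $(a,b)$-plane where $\End(P) \supseteq \Z[\sqrt{D}]$ are cut out by the conditions in \S\ref{subsec:gl2type}, and intersecting these with the torsion loci arising in the proof of Theorem \ref{thm:main} yields, after exhibiting a single rational point in each intersection, infinite families via Hilbert irreducibility. The hardest step I anticipate is eliminating the product-type torsion subgroups, since this requires combining the $\Z[\sqrt{D}]$-module structure with the $(1,2)$-polarization and the $\mu_3$-action on $P[\ell]$ in a non-obvious way.
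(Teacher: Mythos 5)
Your determination of $\End(P)$ is essentially the paper's: for non-CM $P$ one has $\End(P)=\calO^{\Gal_\Q}$, and Lemma \ref{lem:subfields} forces the answer to be $\Z$, $\Z[\sqrt{2}]$ or $\Z[\sqrt{6}]$ (this is Corollary \ref{cor:principal polarization}(1)). Your module-theoretic elimination of $\Z/3\Z$ and $\Z/6\Z$ when $D=2$ (because $3$ is inert in $\Z[\sqrt{2}]$) also matches a remark made in \S\ref{sec:gl2type}. For the converse, Hilbert irreducibility is not really the right tool: the relevant intersections of loci are rational curves, and the paper simply writes down explicit parameterizations $(a_2(t),b_2(t))$ and $(a_6(t),b_6(t))$ (Propositions \ref{prop:sqrt(2) and 3-torsion} and \ref{prop: sqrt(6) and 3-torsion}); but that part of your plan is workable.

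The genuine gap is your proposed elimination of the remaining groups. The modules $(\Z/2\Z)^2\simeq\Z[\sqrt{D}]/\mathfrak{a}_2^2$, $\Z/6\Z\simeq\Z[\sqrt{6}]/\mathfrak{a}_2\mathfrak{a}_3$ and $\Z/2\Z\times\F_9\simeq\Z[\sqrt{2}]/\mathfrak{a}_2\mathfrak{a}_3$ are all perfectly consistent (indeed cyclic) $\Z[\sqrt{D}]$-module structures, and nothing about the Tate module being free of rank one over $\calO\otimes\Z_\ell$, the $(1,2)$-polarization, or the $\mu_3$-action obstructs them: for instance $P[\mathfrak{a}_2]$ is a Galois-stable group of order $4$ and could a priori be entirely rational. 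What actually rules these out in the paper is Diophantine input with no structural substitute: one intersects the explicit $\GL_2$-type loci of Corollaries \ref{cor:sqrt(2)-multiplication} and \ref{cor:sqrt(6)-multiplication} with the explicit torsion loci of \S\ref{sec:2,3,6-torsion}, and must then determine all rational points on specific auxiliary genus-one curves --- e.g.\ $y^2=w(w^3-8)(w^3+1)$ to kill $(\Z/2\Z)^2$, and the conditions that $b_2(t)$ (resp.\ $b_6(t)$) be a cube to kill $\Z/3\Z\times\Z/6\Z$ (resp.\ $\Z/6\Z$); each is a rank-zero elliptic curve computation carried out in Magma. A further ingredient you would need for $D=6$ is the reduction of ``$P[3](\Q)\neq 0$'' to ``$P[\p](\Q)\neq 0$'', which uses that $\mathfrak{a}_3$ and $\p=(1-\omega)$ generate the same two-sided ideal of $\calO$ above $3$, so that $P[\mathfrak{a}_3]=P[\p]$ and the explicit $\p$-torsion families become applicable.
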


When $D = 2$ and $p = 3$, Theorem \ref{thm:gl2main} gives examples of $\GL_2$-type Pryms such that $P(\Q)_{\mathrm{tors}} \simeq \F_9$, showing that the upper bound proven in \cite[Theorem 1.4]{LSSV-QMMazur} for the order of the rational torsion subgroup of a QM abelian surface of $\GL_2$-type over $\Q$ is in fact sharp.

\subsection{Methods}
The geometry of $(1,2)$-polarized surfaces was analyzed in detail by Barth \cite{Barth-abeliansurfacespolarization} and  his results are crucial to our study of $C$ and $P$. The key player in the story is the bigonal dual curve $\widehat{C} \colon y^3 = x^4 + 8ax^2 + 16(a^2 - 4b)$, which is the intersection of $P$ with a well chosen theta divisor on $J$. The corresponding line bundle $\sheaf{M} = \calO_P(\widehat{C})$ represents the $(1,2)$-polarization on $P$. 
The linear system $|\sheaf{M}|$ is a pencil of genus $3$ curves,
and the $\mu_3$-action on $C$ induces a $\mu_3$-action on  $P$ and on the pencil.
The technical heart of this paper is to analyze this $\mu_3$-action on the pencil $|\sheaf{M}|$, both theoretically and explicitly. 
Our Torelli theorem (\ref{item: torelli}) boils down to showing that $\widehat{C}$ is the unique bielliptic Picard curve in $|\sheaf{M}|$ of the correct signature.
For the QM property (\ref{item: QM property}), we give two different proofs. The first is indirect,  relating the moduli of bielliptic Picard curves to a unitary Shimura curve (in other words, we first prove (\ref{item: bielliptic Picard curves}) and then deduce (\ref{item: QM property})). The second proof constructs the QM explicitly, by showing that a specific sextic twist of the original curve $C$ also lives in $|\sheaf{M}|$, but carrying the opposite signature. This second proof allows us to prove (\ref{item: endo field}) as well.

To prove Theorems \ref{thm:main} and \ref{thm:gl2main} we must first exhibit explicit families with specified torsion subgroups, and then we must rule out all other finite abelian groups. To exhibit groups, we describe elements of $P[2]$ and $P[3]$ geometrically,  as coming from the fixed  points for the $\mu_6$-action. To rule out groups which are not $6$-torsion, we first use general arguments  for QM abelian surfaces which are developed in greater generality in \cite{LSSV-QMMazur}.  Instead of quoting the main results there, we give a simpler proof (tailored to our special family of surfaces), which also allows us to handle the geometrically non-simple case not treated in \cite{LSSV-QMMazur}.\footnote{For those interested in reading that paper as well, it is perhaps instructive to first read \S\ref{sec: classifying rational torsion subgroups} of this paper (consulting \cite{LSSV-QMMazur} for the details of a few key results), before reading the more elaborate proofs in \cite{LSSV-QMMazur}, where nothing is assumed about the geometry of the abelian surfaces involved.} However, these general arguments only go so far; for example, they are not enough to rule out $4$-torsion in general QM abelian surfaces $A/\Q$. To eliminate groups such as $\Z/4\Z$ and $(\Z/2\Z)^2 \times (\Z/3\Z)$, we use arguments that are very much specific to the geometry of bielliptic Picard curves (see Propositions \ref{proposition: no order 12} and \ref{proposition: no 4-torsion}).

\subsection{Previous results}
    Petkova--Shiga \cite{PetkovaShiga} considered bielliptic Picard curves from a complex analytic viewpoint.
    Using period matrix computations, they show that their Pryms over $\C$ have quaternionic multiplication. Hashimoto-Murabayashi \cite{HashimotoMarabayashi} and Baba-Granath \cite{BabaGranath-genus2curvesQM} have studied genus two curves whose Jacobians $J'$ have QM by $\calO$. Over $\overline{k}$, each surface $J'$ becomes isomorphic to a bielliptic Picard Prym $P$, but generally not over $k$.  There is also work of Bonfanti-van Geemen \cite{BonfantivanGeemen} which gives an algebraic description of the embedding of the Shimura curve inside the moduli space of $(1,2)$-polarized abelian surfaces.


\subsection{Future work}
There are many other avenues of research related to bielliptic Picard curves and their Pryms that are ripe for investigation. In \cite{LagaShnidmanCeresa}, we use our description of the $\Gal_k$-module $\End(P_{k^{\sep}})$ to characterize the bielliptic Picard curves with torsion Ceresa cycle. One can also try to study the average rank of the Mordell-Weil group $P(\Q)$; the average rank of $P(\Q)$ in cubic and sextic twist families was already considered in \cite{AlpogeBhargavaShnidman} and \cite{ShnidmanWeissRankGrowth}.
To help explore other arithmetic questions, it would be worthwhile to develop more robust algorithms that are tailored towards these curves, such as explicit descent algorithms, explicit addition laws, an analogue of Tate's algorithm, $L$-functions, etc.  We hope this work encourages others to study some of these topics.

\subsection{Structure of paper} 
We start with some generalities on bielliptic Picard curves and their (dual) Prym varieties in \S\ref{section: Bielliptic picard curves}. In particular, we discuss bigonal duality in \S\ref{subsection: bigonal duality}.
We prove our Torelli theorem in \S\ref{section: Torelli}.
In \S\ref{connections to Shimura curves} we make the connection with unitary and quaternionic Shimura curves.
In  \S\ref{sec: identifying bielliptic pic curves in the pencil} we perform some explicit calculations with a pencil of curves on the Prym variety. 
These calculations are used in \S\ref{endomorphisms of the Prym variety} to explicitly construct the quaternionic multiplication.
In \S\ref{sec:2,3,6-torsion} we give explicit descriptions of the $2$ and $3$-torsion subgroups of Pryms of bielliptic Picard curves.
In \S\ref{sec: classifying rational torsion subgroups} and \S\ref{sec:gl2type} we classify rational torsion subgroups and prove Theorems \ref{thm:main}
and \ref{thm:gl2main}.

\subsection{Acknowledgements}
We thank Adam Morgan, Ciaran Schembri, Michael Stoll, John Voight for helpful conversations and remarks.  The second author was funded by the European Research Council (ERC, CurveArithmetic, 101078157).
Part of this research was carried out while the first author was visiting the second author in Jerusalem. We thank the Hebrew University of Jerusalem and the Einstein Institute of Mathematics for their hospitality.

\subsection{Notation and conventions}\label{subsec: notation and conventions}

\begin{itemize}
    \item Our base field will typically be denoted by $k$, with choice of separable and algebraic closure $k^{\sep} \subset \bar{k}$ and absolute Galois group $\Gal_k = \Gal(k^{\sep}/k)$.
    All fields in this paper will be assumed to have characteristic $\neq 2,3$ unless explicitly stated otherwise.
    Galois actions will typically be right actions.
    \item A \define{variety} over a field $k$ is a separated finite type scheme over $k$.
    A variety is called \define{nice} if it is smooth, projective and geometrically integral.
    \item If $X,Y/k$ are varieties, $f\colon X_{k^{\sep}} \rightarrow Y_{k^{\sep}}$ is a morphism of $k^{\sep}$-varieties and $\sigma \in \Gal_k$, then $f^{\sigma}$ denotes the $k^{\sep}$-morphism $x\mapsto f(x^{\sigma^{-1}})^{\sigma}$.
    \item For a nice variety $X/k$, denote by $\Pic(X)$ its Picard group and by $\Pic_X$ its Picard scheme. 
    \item If $X/k$ is a nice curve, let $\Pic^n (X) \subset\Pic (X)$ be the subset of line bundles of degree $n$, let $\Jac_X = \Pic^0_X$ denote its Jacobian, let $\Aut(X)$ denote the $k$-automorphism group of $X$ and let $\mathbf{Aut}(X)$ be the \emph{scheme} of automorphisms of $X$, with $\mathbf{Aut}(X)(K) = \Aut(X_K)$ for every field extension $K/k$.
    \item For every integer $n\geq 1$ we define the group scheme $\mu_n = \Spec\left(k[t]/(t^n-1)\right)$. 
    If $X/k$ a variety, we define a \define{$\mu_n$-action} on $X$ to be a morphism of $k$-schemes $\mu_n \times_k X\rightarrow X$ satisfying the axioms of a (left) group action. 
    If $m,n$ are coprime, giving a $\mu_{mn}$-action is the same as giving commuting $\mu_m$ and $\mu_n$-actions, using the isomorphism $\mu_m\times \mu_n\rightarrow \mu_{mn}$ induced by the inclusion maps.
    \item We will typically write $\omega \in \bar{k}$ for a choice of primitive third root of unity and $k(\omega)$ for the smallest field extension of $k$ containing such $\omega$.
    \item If $V$ is a vector space over a field $k$, we let $\P(V) = \Proj (\Sym^{\bullet}(V^{\vee}))$ be the projective space parametrizing lines in $V$.
    If $X/k$ is variety, a morphism $X\rightarrow \P(V)$ is the same as a pair $(\sheaf{L}, \phi)$, where $\sheaf{L}$ is a line bundle on $X$ and $\phi \colon V^{\vee} \otimes_{k} \mathcal{O}_X \rightarrow \sheaf{L}$ is a surjection (up to a suitable notion of isomorphism).
    \item If $\sheaf{L}$ is a line bundle on a variety $X/k$ we denote by $|\sheaf{L}|:=\P(\HH^0(X, \sheaf{L})^{\vee})$ the linear system of effective divisors in $\sheaf{L}$.
    If $D$ is a Cartier divisor on $X$ we also write $|D|$ for $|\mathcal{O}_X(D)|$.
    If $\sheaf{L}$ has no base points, we thus get a natural morphism $\phi_{\sheaf{L}} \colon X\rightarrow |\sheaf{L}|$.
    \item If $A/k$ is an abelian variety, let $A^\vee =\Pic^0_A$ be its dual abelian variety. If $\sheaf{L} \in \Pic(A)$, denote by $\lambda_{\sheaf{L}} \colon A \to A^\vee$ the homomorphism $x \mapsto t_x^*\sheaf{L} \otimes \sheaf{L}^{-1}$.
\end{itemize}

\section{Bielliptic Picard curves and their Pryms}\label{section: Bielliptic picard curves}

\subsection{Basic definitions}\label{subsec: basic definitions}
Let $k$ be a field (always assumed of characteristic $\neq 2,3$).
\begin{definition}
    Let $C/k$ be a nice genus $3$ curve.
    We call a faithful $\mu_6$-action $\gamma \colon \mu_6 \hookrightarrow \mathbf{Aut}(C)$ on $C$ \define{bielliptic} if the quotient $C/\mu_2$ is a genus one curve.
    We say $C$ is a \define{bielliptic Picard curve} over $k$ if it admits a bielliptic $\mu_6$-action.
\end{definition}

For $a,b\in k$, consider the projective plane curve $C_{a,b}$ over $k$ with affine equation 
\begin{align}\label{equation: bielliptic picard curve equation}
  C_{a,b}\colon y^3 = x^4+ax^2+b.  
\end{align}
The curve $C_{a,b}$ has a unique ($k$-rational) point $\infty$ at infinity,  and $C_{a,b}$ is smooth if and only if $$\Delta_{a,b}:= 16b(a^2-4b)$$ is nonzero.
The $\mu_2$-action $\pm 1 \cdot (x,y)= (\pm x,y)$ and $\mu_3$-action $\omega\cdot (x,y) = (x,\omega y)$ (where $\omega^3=1$) combine to a faithful bielliptic $\mu_6$-action $\gamma_{a,b}$, given by $\zeta \cdot (x,y)= (\zeta^3 x ,\zeta^4 y)$, for every $\zeta \in \mu_6(k^{\sep})$. The unique fixed point for this action is $\infty$.

\begin{theorem}\label{theorem: curves with mu6 action are of the form Ca,b}
Let $C/k$ be nice genus $3$ curve with faithful $\mu_6$-action $\gamma$.
Then the following conditions are equivalent:
\begin{enumerate}[\hspace{3mm}$(1)$]
    \item $\gamma$ is a bielliptic $\mu_6$-action (and consequently $C$ is a bielliptic Picard curve).
    \item $\gamma$ has a unique fixed point.
    \item $C/\mu_3$ has genus zero.
    \item $C$ is nonhyperelliptic.
    \item There is an isomorphism $C \simeq C_{a,b}$ for some $a,b\in k$ such that $\gamma$ corresponds to $\gamma_{a,b}$ or $\gamma_{a,b}^{-1}$.
\end{enumerate}

\end{theorem}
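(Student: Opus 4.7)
My plan is to treat (5) as the master condition: it trivially implies each of (1)--(4), and any of (1)--(4) leads back to (5) via a Riemann--Hurwitz case analysis followed by an explicit coordinate construction. The implications (5)$\Rightarrow$(1)--(4) are by direct inspection: on $C_{a,b}$ the action $\gamma_{a,b}$ fixes only $\infty$; the substitution $u=x^2$ realizes $C_{a,b}/\mu_2$ as the elliptic curve $y^3=u^2+au+b$; the $\mu_3$-invariant $x$ identifies $C_{a,b}/\mu_3\simeq\P^1$; and $C_{a,b}$ is a smooth plane quartic, hence nonhyperelliptic.

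For the converse, I apply Riemann--Hurwitz to the covers $C\to C/\mu_n$ for $n\in\{2,3,6\}$. With $F_n$ the number of geometric fixed points and $g_n=g(C/\mu_n)$, one gets
\[F_2=8-4g_2,\qquad F_3=5-3g_3,\qquad F_6=2g_2+3g_3-1-6g_6.\]
The constraints $g_n\ge 0$ and $0\le F_6\le\min(F_2,F_3)$ restrict $(g_2,g_3,g_6,F_6)$ to exactly three tuples: (A) $(1,0,0,1)$, (B) $(0,1,0,2)$, (C) $(2,1,1,0)$. Conditions (1), (2), (3) each isolate case (A) from the tuple. For (4): in case (B), $C\to C/\mu_2=\P^1$ is itself a hyperelliptic double cover; in case (C), were $C$ nonhyperelliptic, the canonical embedding $C\hookrightarrow\P^2$ would be a smooth plane quartic equipped with a linear $\mu_2$-action whose fixed locus contains a line, and Bezout would force $F_2\ge 1$, contradicting $F_2=0$. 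Thus (4) too singles out case (A), completing the equivalence of (1)--(4).

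To derive (5) from case (A): the cyclic degree-$3$ cover $C\to\P^1=C/\mu_3$ presents $C$ birationally as $y^3=f(x)$ with $y$ a $\mu_3$-eigenvector. The branch-point count $\deg f+[\deg f\not\equiv 0\pmod{3}]=F_3=5$ forces $\deg f=4$. The $\mu_2$-involution on $\P^1$ has two fixed points; normalizing it to $x\mapsto-x$, commutativity with $\mu_3$ gives $\tau(y)=cy$ with $c^2=1$ and $f(-x)=c^3 f(x)$, and matching leading coefficients forces $c=1$, so $f$ is even: $f(x)=\lambda x^4+a'x^2+b'$. A scaling $(x,y)\mapsto(\lambda^2 x,\lambda^3 y)$ makes $\lambda=1$, yielding $C\simeq C_{a,b}$ with $\Delta_{a,b}\neq 0$ from smoothness. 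The alternative $\gamma_{a,b}$ versus $\gamma_{a,b}^{-1}$ reflects the two choices of primitive cube root used to split $\mu_6\simeq\mu_2\times\mu_3$.

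The main obstacles I anticipate are the Bezout argument ruling out nonhyperelliptic case (C), and the descent of the coordinate normalization to $k$. The latter is saved by the $k$-rational fixed point $\infty\in C(k)$, whose image on $\P^1$ is one of the two $\mu_2$-fixed points, which forces the full normalization, and in particular the coefficients $a,b$, to be defined over $k$.
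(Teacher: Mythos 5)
Your proof is correct and follows the same overall architecture as the paper's: a Riemann--Hurwitz trichotomy on the fixed-point/quotient-genus data (your tuples (A), (B), (C) are exactly the paper's cases (b), (c), (a)), identification of the bielliptic case, and a Kummer-theoretic model $y^3=f(x)$ anchored at the unique $\mu_6$-fixed point. Two steps are handled differently, and both of your substitutes are sound and arguably more self-contained. First, for the unramified case $F_2=0$ (your case (C)), the paper quotes Farkas's theorem that an unramified double cover of a genus $2$ curve is hyperelliptic, whereas you rule out nonhyperellipticity directly: a linear involution of $\P^2$ has fixed locus a line union a point, and by Bezout the line must meet the canonical quartic, forcing $F_2\geq 1$. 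Second, to pass from $y^3=f(x)$ to the even quartic $x^4+ax^2+b$, the paper writes $f=x^4+ax^2+cx+b$ and cites a lemma of Bouw--Koutsianas--Sijsling--Wewers to conclude $c=0$; you instead normalize the induced involution on $\P^1=C/\mu_3$ to $x\mapsto -x$ (legitimate over $k$ since one fixed point is the $k$-rational image of $\infty$, hence both are $k$-rational) and deduce that $f$ is even by comparing leading coefficients, which is cleaner. The only place where you are slightly glib is the branch-point count $\deg f+[\deg f\not\equiv 0\bmod 3]=5$, which implicitly takes $f$ squarefree; this is harmless since smoothness of the genus-$3$ model forces it (and the paper is equally brief here), but it deserves a word. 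Your closing remark that $\gamma$ must be $\gamma_{a,b}$ or $\gamma_{a,b}^{-1}$ because only the $\mu_3$-component is ambiguous is also correct, since inverting only the $\mu_3$-factor of $\zeta\mapsto(\zeta^3x,\zeta^4y)$ yields $\zeta\mapsto(\zeta^3x,\zeta^2y)=\gamma_{a,b}^{-1}(\zeta)$.
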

\begin{proof}
For each $i$ dividing $6$, let $n_i$ be the number of $\mu_i$-fixed points. By Riemann-Hurwitz, we have $n_2 \in \{0,4,8\}$, $n_3 \in \{2,5\}$, and $n_6 \leq 3$. Moreover, $\mu_2$ permutes the $\mu_3$-fixed points and vice versa. So $n_2 - n_6$ is divisible by $3$ and $n_3 - n_6$ is divisible by $2$. There are then three possibilities:
\begin{enumerate}[\hspace{3mm}$(a)$]
    \item  $(n_2,n_3,n_6) = (0,2,0)$, or
    \item  $(n_2,n_3,n_6) = (4,5,1)$, or
    \item   $(n_2,n_3,n_6) = (8,2,2)$.
\end{enumerate}
The equivalence of $(1)$, $(2)$, and $(3)$ follows immediately from the above cases. To show the equivalence of $(1)$ and $(4)$, first observe that case $(c)$ is hyperelliptic because $C/\mu_2$ is genus $0$. Case $(a)$ is also hyperelliptic since an unramified double cover of a genus two curve is again hyperelliptic \cite{Farkas}. It remains to show that $C$ is non-hyperelliptic in case $(b)$. In that case, there is only one $\mu_6$-fixed point $\infty$, which is therefore $k$-rational. Moreover, $\infty$ must have different local monodromy type for the $\mu_3$-cover $C \to C/\mu_3 \simeq \P^1$ compared to the other four ramification points (which are permuted by the $\mu_2$-action). By Kummer theory, there is a model $C \colon y^3 = f(x)$, for some quartic polynomial $f(x) \in k[x]$, and in particular $C$ is a plane quartic curve. This shows that $(1)-(4)$ are equivalent. 

It is clear that $(5)$ implies all the rest, so it remains to show the converse. However, we have seen that such a $C$ has a model $y^3 = f(x)$ which we may assume takes the form 
\begin{equation}\label{eq:picard}
y^3 = x^4 + ax^2 + cx + b
\end{equation}
with the $\mu_3$-action $\omega\cdot (x,y) =(x,\omega y)$ or $\omega\cdot (x,y)=(x,\omega^{-1}y)$ for every $\omega\in \mu_3$. 
We see from this model (as in the proof of \cite[Lemma 1.21(a)]{BKSW}) that $C$ admits an  involution commuting with the $\mu_3$-action only if $c = 0$ and this involution is given by $(x,y)\mapsto (-x,y)$, proving $(5)$.
\end{proof}



There is one $k^{\sep}$-isomorphism class of bielliptic Picard curves that behaves so differently that it deserves its own name. 
\begin{definition}
    A bielliptic Picard curve $C$ is \define{special} if it is $k^{\sep}$-isomorphic to $C_{0,1}: y^3 =x^4 +1$.
\end{definition}
A bielliptic Picard curve $C$ is special precisely when $\Aut(C_{k^{\sep}})$ is larger than $\Z/6\Z$; in that case, it is a group of order $48$ \cite[Theorem 3.1]{LercierRitzRovettaSijslin-planequartic}, with \texttt{GAP} label \href{https://people.maths.bris.ac.uk/~matyd/GroupNames/1/C4.A4.html}{(48,33)}. 
We will see in Lemma \ref{lemma: bielliptic picard special iff a=0} that $C_{a,b}$ is special if and only if $a = 0$.

We say $C$ a nice genus $3$ curve is a Picard curve if it admits a faithful $\mu_3$-action such that the quotient $C/\mu_3$ is isomorphic to $\P^1$. 
We say $C$ is a bielliptic curve if it admits an involution $\tau$ such that the quotient $C/\tau$ is a genus $1$ curve.
To avoid notational conflicts, we show:
\begin{lemma}
    If a nice genus $3$ curve $C/k$ is both a Picard curve and a bielliptic curve, then it is a bielliptic Picard curve in the sense of Definition \ref{subsec: basic definitions}. 
\end{lemma}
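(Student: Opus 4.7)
My plan is to show that $\sigma$ (the given generator of the Picard $\mu_3$-action) and $\tau$ (the bielliptic involution) commute in $\Aut(C_{\bar{k}})$; granting this, they commute over $k$ by Galois descent and together generate a faithful $\mu_2\times\mu_3\simeq\mu_6$-action on $C$ over $k$. Its $\mu_2$-quotient equals $C/\tau$, which has genus $1$ by hypothesis, so the $\mu_6$-action is bielliptic and $C$ is a bielliptic Picard curve in the sense defined above.

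To prove commutativity, I set $\sigma' := \tau\sigma\tau^{-1} \in \Aut(C_{\bar{k}})$, an order-$3$ element, and rule out every possibility except $\sigma' = \sigma$ via Riemann--Hurwitz. Suppose first $\sigma' = \sigma^{-1}$: then $H := \langle\sigma,\tau\rangle \simeq S_3$ acts on $C_{\bar{k}}$. The three involutions $\tau,\sigma\tau,\sigma^2\tau$ are $H$-conjugate and each has $4$ fixed points (matching the bielliptic $\tau$), while $\sigma$ and $\sigma^{-1}$ each have $5$ fixed points (from the Picard cover $C\to\mathbb{P}^1$). Letting $m$ denote the number of $H$-fixed points on $C$, the points with stabilizer exactly $\langle\sigma\rangle$ number $5-m$ and those with stabilizer exactly any one of the three $C_2$-subgroups number $4-m$, so the total ramification is
\[ R := \sum_{P\in C}(|H_P|-1) = 2(5-m) + 3(4-m) + 5m = 22. \]
Riemann--Hurwitz then gives $4 = 6(2g(C/H) - 2) + 22$, forcing $g(C/H) = -1/2$, a contradiction.

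Suppose instead $\sigma'\notin\langle\sigma\rangle$: then $\Aut(C_{\bar{k}})$ contains two distinct Picard-type $\mu_3$-subgroups, both with $5$ fixed points (since $\tau$-conjugacy preserves fixed-point counts). The subgroup $\langle\sigma,\sigma'\rangle$ must be nonabelian (both generators have order $3$ but do not commute in general) and contain at least two $\mu_3$'s. An analogous ramification calculation disposes of the small cases: for $\langle\sigma,\sigma'\rangle\simeq C_3\times C_3$, a Riemann--Hurwitz count on $C\to C/(C_3\times C_3)$ shows that only one of the four $C_3$-subgroups can be Picard-type, contradicting that both $\langle\sigma\rangle$ and $\langle\sigma'\rangle$ are; for $A_4$, all four $C_3$-subgroups would have to be Picard-type, giving $g(C/A_4) = -1$. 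The main obstacle is extending this bookkeeping to the remaining subgroups allowed by the Hurwitz bound $|\Aut(C_{\bar{k}})|\leq 168$; the convenient shortcut is that every unresolved case forces $C_{\bar{k}}\simeq\{y^3 = x^4+1\}$, for which the commuting pair $(x,y)\mapsto(x,\omega y)$ and $(x,y)\mapsto(-x,y)$ gives the required bielliptic $\mu_6$-action, descending to $k$ by the $k$-rationality of $\sigma$ and $\tau$.
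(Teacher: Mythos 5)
Your overall strategy --- prove that the Picard generator $\sigma$ and the bielliptic involution $\tau$ commute, by setting $\sigma' = \tau\sigma\tau^{-1}$ and using Riemann--Hurwitz to exclude the alternatives --- is different from the paper's, which instead normalizes a Picard model $y^3 = x^4+ax^2+bx+c$ and invokes the explicit determination of automorphisms of Picard curves in \cite[Lemma 1.21]{BKSW} to force $b=0$ and $\tau\colon(x,y)\mapsto(-x,y)$ unless $C$ is the special curve. Your $S_3$ computation is correct ($R=22$ independent of $m$; in fact $m=0$ since point stabilizers are cyclic), as are the $C_3\times C_3$ and $A_4$ exclusions (in the former, $\sum n_i = 11$ with each $n_i\in\{2,5\}$ forces exactly one quotient-genus-$0$ subgroup).

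The genuine gap is the last step. The assertion that ``every unresolved case forces $C_{\bar k}\simeq\{y^3=x^4+1\}$'' is precisely the hard part of the argument and is not proved; moreover the unresolved cases are not vacuous. For the special curve $y^3=x^4+1$ itself, $\Aut(C_{\bar k})$ is the group of order $48$ with four conjugate Sylow $3$-subgroups, each with $5$ fixed points, and two of them generate a copy of $\mathrm{SL}_2(3)$ (order $24$) --- not $C_3\times C_3$ or $A_4$ --- so your case analysis does not terminate there. To close the argument by your method you would need either to carry out the Riemann--Hurwitz bookkeeping for all groups generated by two order-$3$ elements inside automorphism groups of smooth plane quartics (including $\mathrm{SL}_2(3)$, $C_7\rtimes C_3$, and the full groups of the Fermat and Klein quartics, for which one must separately verify that order-$3$ automorphisms have only $2$ fixed points and hence are not Picard-type), or to import the classification of automorphism groups of plane quartics \cite{LercierRitzRovettaSijslin-planequartic} --- at which point you are essentially reproving the input the paper takes from \cite{BKSW}. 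A cleaner rescue, once you know $C_{\bar k}$ is special, is to observe that the order-$48$ group has a \emph{unique} involution, which is therefore central; then $\tau$ is that involution and commutes with $\sigma$ for free, with no need for the ad hoc descent remark at the end. As written, however, the proof is incomplete.
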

\begin{proof}
    Since $C$ is a Picard curve, it has an affine equation of the form $y^3 = f(x)$ for some monic quartic $f(x) =x^4+ ax^2+bx+c\in k[x]$ with nonzero discriminant, and the $\mu_3$-action on $C$ is given by $\omega\cdot (x,y) = (x, \omega y)$.
    By assumption, there exists an involution $\tau\colon C\rightarrow C$ such that $C/\tau$ has genus $1$.
    Let $C_0$ be the Picard curve with equation $y^3 = x^4+1$.
    If $C_{k^{\sep}}$ is not isomorphic to $C_{0, k^{\sep}}$, then a calculation using \cite[Lemma 1.21(a)]{BKSW} implies that $b=0$ and that $\tau$ maps $(x,y)$ to $(-x,y)$, so $C$ is indeed a bielliptic Picard curve.
    We may therefore assume that $C_{k^{\sep}}\simeq C_{0, k^{\sep}}$.
    Since every two order $3$ subgroups of $\Aut(C_{0,k^{\sep}})$ are conjugate (by an explicit computation in the group \href{https://people.maths.bris.ac.uk/~matyd/GroupNames/1/C4.A4.html}{(48,33)}), there is an isomorphism $C_{k^{\sep}}\xrightarrow{\sim} C_{0, k^{\sep}}$ that preserves the $\mu_3$-actions on both sides.
    Such an isomorphism must be of the form $(x,y)\mapsto (\alpha x, \beta y)$ for some $\alpha, \beta \in k^{\sep}\setminus\{0\}$, using the same reasoning as the proof of \cite[Lemma 1.21]{BKSW}.
    It follows that $a=b=0$ and $C: y^3 = x^4 + c$ is a (special) bielliptic Picard curve.
\end{proof}

\subsection{Marked bielliptic Picard curves}

It is often useful to pin down a $\mu_6$-action on a bielliptic Picard curve with specified signature around the (unique, by Theorem \ref{theorem: curves with mu6 action are of the form Ca,b}) $\mu_6$-fixed point $\infty$.

\begin{definition}
A \define{marked bielliptic Picard curve} is a bielliptic Picard curve $C$ with bielliptic $\mu_6$-action $\gamma$ such that $\mu_6$ acts on the tangent space $T_{\infty}C$ via the identity character $\mu_6\rightarrow \mu_6$. 
\end{definition}
An isomorphism of marked bielliptic Picard curves $(C, \gamma)\rightarrow (C',\gamma')$ is by definition an isomorphism of curves $C\rightarrow C'$ that is equivariant with respect to the $\mu_6$-actions.
A calculation using the generator $\frac{x}{y}\in (T_{\infty}C_{a,b})^{\vee}$ shows that the pair $(C_{a,b}, \gamma_{a,b})$ defined in \S\ref{subsec: basic definitions} is a marked bielliptic Picard curve.

\begin{lemma}\label{lemma: bielliptic Picard can be marked in unique way}
    Let $C/k$ be a bielliptic Picard curve with bielliptic $\mu_6$-action $\gamma$. 
    Then one of $(C, \gamma^{\pm 1})$ is a marked bielliptic Picard curve, and if $C$ is not special this marked $\mu_6$-action is the unique one.
\end{lemma}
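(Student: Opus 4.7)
The plan is to use Theorem \ref{theorem: curves with mu6 action are of the form Ca,b} to reduce to the explicit models $(C_{a,b}, \gamma_{a,b})$, and then extract uniqueness from the structure of $\Aut(C_{\bar k})$ in the non-special case.

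For existence, Theorem \ref{theorem: curves with mu6 action are of the form Ca,b}(5) produces a $k$-isomorphism $\phi \colon C \xrightarrow{\sim} C_{a,b}$ for some $a,b\in k$ conjugating $\gamma$ to $\gamma_{a,b}^{\epsilon}$ with $\epsilon \in \{\pm 1\}$. Since $(C_{a,b}, \gamma_{a,b})$ is marked (as noted after the definition of \emph{marked}, via the direct computation with the cotangent generator $x/y$ at $\infty$), pulling back along $\phi$ shows that $(C, \gamma^{\epsilon})$ is marked. Moreover, because $\gamma$ is faithful, $\operatorname{char}(k)\neq 2,3$, and $\infty$ is a $\mu_6$-fixed point, the induced representation of $\mu_6$ on $T_\infty C$ is given by a faithful character $\chi\colon \mu_6 \hookrightarrow \G_m$ (finite group actions in characteristic coprime to the order linearize at fixed points and cannot be trivial on the tangent space without being trivial overall). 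The only such characters are the identity $\mu_6\to\mu_6$ and its inverse, and since $\gamma^{-1}$ has character $\chi^{-1}$ on $T_\infty C$, at most one of $(C, \gamma^{\pm 1})$ is marked. Combined with the previous sentence, exactly one of $(C, \gamma^{\pm 1})$ is a marked bielliptic Picard curve.

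For uniqueness in the non-special case, recall from the discussion after Definition \ref{subsec: basic definitions} that $\Aut(C_{\bar k}) \simeq \Z/6\Z$. Any bielliptic $\mu_6$-action on $C_{\bar k}$ gives an injective homomorphism of finite group schemes $\mu_6 \hookrightarrow \mathbf{Aut}(C_{\bar k})$, which must be an isomorphism by comparing orders. After fixing an identification $\mu_6(\bar k)\simeq \Z/6\Z$, any two such isomorphisms differ by an automorphism of $\Z/6\Z$, of which there are exactly two (the identity and inversion). Thus $\gamma$ and $\gamma^{-1}$ are the only bielliptic $\mu_6$-actions on $C$, and by the existence paragraph exactly one of them is marked.

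The main subtlety to watch out for is the convention for the $\mu_6$-character on $T_\infty C$ versus on its dual $T^*_\infty C$: the verification that $(C_{a,b}, \gamma_{a,b})$ is marked is phrased in terms of the cotangent generator $x/y$, so one must check that this convention is consistent with ``identity character on $T_\infty C$'' in the definition. Beyond this bookkeeping, the argument is essentially an application of Theorem \ref{theorem: curves with mu6 action are of the form Ca,b} together with the classification of characters of $\mu_6$ and of automorphism groups of non-special bielliptic Picard curves.
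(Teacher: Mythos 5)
Your proof is correct and follows essentially the same route as the paper: the paper's argument is exactly your observation that the faithful $\mu_6$-action linearizes at $\infty$ to a faithful character of $\mu_6$ on $T_\infty C$ (hence the identity character or its inverse), plus the fact that $\mathbf{Aut}(C_{\bar k})\simeq\mu_6$ for non-special $C$ to get uniqueness. Your additional detour through the explicit models $(C_{a,b},\gamma_{a,b})$ for existence is harmless but not needed once the tangent-space character argument is in place.
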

\begin{proof}
    Since $\gamma$ is faithful, it acts on $T_{\infty}C$ via the identity character or its inverse, so either $\gamma$ or $\gamma^{-1}$ acts via the identity. 
    If $C$ is not special, $\mathbf{Aut}(C) \simeq \mu_6$ by \cite[Theorem 3.1]{LercierRitzRovettaSijslin-planequartic}, proving the uniqueness in that case.
\end{proof}

\subsection{Automorphisms}\label{subsection: automorphisms}

\begin{lemma}\label{lemma: all isos between marked bielliptic Picard curves}
    Every isomorphism $(C_{a,b}, \gamma_{a,b}) \rightarrow (C_{a',b'}, \gamma_{a',b'})$ of marked bielliptic Picard curves over $k$ is of the form $(x,y)\mapsto (\lambda^3x,\lambda^4 y)$ for some $\lambda \in k^{\times}$ such that $(a',b') = (\lambda^6 a,\lambda^{12} b)$.
\end{lemma}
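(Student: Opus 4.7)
The plan is to exploit the $\mu_6$-eigenspace decomposition of the affine coordinate rings together with pole-order bookkeeping at the unique fixed point. Since by Theorem \ref{theorem: curves with mu6 action are of the form Ca,b} both $\gamma_{a,b}$ and $\gamma_{a',b'}$ have unique fixed points, namely $\infty$, any $\mu_6$-equivariant isomorphism $\phi\colon C_{a,b}\to C_{a',b'}$ satisfies $\phi(\infty)=\infty$ and hence induces an isomorphism of local rings preserving pole orders. On each curve, $\ord_\infty(x)=-3$ and $\ord_\infty(y)=-4$, and $t:=x/y$ is a uniformizer on which $\mu_6$ acts by $\zeta\cdot t=\zeta^{-1}t$ (so the tangent character is the identity, consistent with the marking).

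Next, I would decompose the affine ring $k[C_{a,b}]=k[x,y]/(y^3-x^4-ax^2-b)$, using the free $k[x]$-basis $\{1,y,y^2\}$: the action $\zeta\cdot x^jy^i=\zeta^{3j+4i}x^jy^i$ shows that the $\chi^3$-eigenspace (i.e.\ $3j+4i\equiv 3\pmod{6}$) is exactly $x\cdot k[x^2]$, while the $\chi^4$-eigenspace is exactly $y\cdot k[x^2]$. Because $x'$ and $y'$ lie in the $\chi^3$- and $\chi^4$-eigenspaces of $k[C_{a',b'}]$, and $\phi^{*}$ is $\mu_6$-equivariant, we deduce $\phi^{*}x'\in x\cdot k[x^2]$ and $\phi^{*}y'\in y\cdot k[x^2]$.

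The monomials $x^{2\ell+1}$ and $y\,x^{2\ell}$ have pole orders $6\ell+3$ and $6\ell+4$ at $\infty$ respectively, while $\phi$ preserves the pole orders $3$ of $x'$ and $4$ of $y'$. Thus only the $\ell=0$ terms survive, giving $\phi^{*}x'=c_1 x$ and $\phi^{*}y'=d_0 y$ for some $c_1,d_0\in k^\times$. Pulling back $(y')^3=(x')^4+a'(x')^2+b'$ and using $y^3=x^4+ax^2+b$ on $C_{a,b}$, comparison of coefficients yields $d_0^3=c_1^4$, $d_0^3 a=c_1^2 a'$, and $d_0^3 b=b'$. Setting $\lambda:=d_0/c_1\in k^\times$, the first relation gives $c_1=\lambda^3$ and $d_0=\lambda^4$, and the remaining relations become $a'=\lambda^6 a$ and $b'=\lambda^{12}b$; in affine coordinates $\phi$ is therefore $(x,y)\mapsto(\lambda^3x,\lambda^4y)$, as required.

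I expect the only genuinely substantive step to be the eigenspace computation combined with the pole-order truncation that collapses the pullbacks to linear expressions in $x$ and $y$; once that is established, extracting $\lambda$ from $d_0^3=c_1^4$ and reading off the coefficient transformation is routine.
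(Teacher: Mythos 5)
Your proof is correct. It reaches the same two pivotal facts as the paper --- that $\phi$ fixes $\infty$ and that equivariance forces $\phi^{*}x'=c_1x$, $\phi^{*}y'=d_0y$ --- but by a genuinely different mechanism. The paper's proof invokes the canonical embedding: since the $C_{a,b}$ are non-hyperelliptic plane quartics, any isomorphism between them is induced by a linear automorphism of $\P^2$, which must preserve the unique $\mu_6$-fixed point and the three distinct $\mu_6$-eigenlines in $\HH^0(\P^2,\calO(1))=\mathrm{span}\{1,x,y\}$, hence is diagonal in these coordinates. You instead decompose the full affine coordinate ring into $\mu_6$-eigenspaces (correctly identifying them as $x\cdot k[x^2]$ and $y\cdot k[x^2]$) and then use the valuation at $\infty$ --- the pole orders $6\ell+3$ and $6\ell+4$ being distinct, hence no cancellation --- to truncate the pullbacks to single monomials. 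Your route is more elementary and self-contained (it does not need the canonical embedding, only that $\phi(\infty)=\infty$, which you correctly extract from uniqueness of the $\mu_6$-fixed point), at the cost of being longer; the paper's is shorter but leans on the geometric input that the curves are canonically embedded. The final step --- pulling back the defining equation, comparing coefficients of $1,x^2,x^4$, and setting $\lambda=d_0/c_1$ --- is identical in both, and your algebra there checks out.
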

\begin{proof}
    Let $\phi\colon C_{a,b} \rightarrow C_{a',b'}$ be an isomorphism preserving the $\mu_6$-actions.
    Since $C_{a,b}$ and $C_{a',b'}$ are canonically embedded, $\phi$ is induced by a linear isomorphism of the ambient projective space $\P^2_k$.
    This isomorphism preserves the point at infinity (being the unique $\mu_6$-fixed point) and the $\mu_6$-eigenspaces of $\HH^0(\P^2, \calO(1)) = \text{span}\{1,x,y\}$. 
    A short calculation shows that $\phi$ must be of the form above.
\end{proof}

\begin{lemma}\label{lemma: when are two bielliptic Picard curves isomorphic}
Every marked bielliptic Picard curve over $k$ is of the form $(C_{a,b}, \gamma_{a,b})$ for some $a,b\in k$.
Two marked bielliptic Picard curves $(C_{a,b}, \gamma_{a,b})$ and $(C_{a',b'}, \gamma_{a',b'})$ are isomorphic if and only if there exists $\lambda\in k^{\times}$ such that $a' = \lambda^6 a$ and $b' = \lambda^{12} b$.
\end{lemma}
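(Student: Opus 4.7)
The plan is to deduce both statements from the preceding Theorem \ref{theorem: curves with mu6 action are of the form Ca,b} and Lemma \ref{lemma: all isos between marked bielliptic Picard curves}, with the key additional ingredient being a tangent-character calculation at $\infty$ that distinguishes $\gamma_{a,b}$ from $\gamma_{a,b}^{-1}$.

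For the first assertion, let $(C,\gamma)$ be a marked bielliptic Picard curve. By Theorem \ref{theorem: curves with mu6 action are of the form Ca,b} there exist $a,b\in k$ and an isomorphism $C\simeq C_{a,b}$ carrying $\gamma$ to either $\gamma_{a,b}$ or $\gamma_{a,b}^{-1}$. I would then verify directly that $\gamma_{a,b}$ acts on the cotangent vector $x/y\in (T_\infty C_{a,b})^{\vee}$ by the character $\zeta\mapsto \zeta^{-1}$, and hence on $T_\infty C_{a,b}$ by the identity character, so $(C_{a,b},\gamma_{a,b})$ is itself marked. Conversely $\gamma_{a,b}^{-1}$ acts on $T_\infty$ by the character $\zeta\mapsto \zeta^{-1}$, which differs from the identity character of $\mu_6$. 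Since the marking is an invariant of the isomorphism class of $(C,\gamma)$, the second possibility is excluded, and we obtain $(C,\gamma)\simeq (C_{a,b},\gamma_{a,b})$.

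For the equivalence, the ``if'' direction is a direct computation: I would check that $(x,y)\mapsto (\lambda^3 x,\lambda^4 y)$ sends the equation $y^3=x^4+ax^2+b$ to $y^3=x^4+\lambda^6 a\,x^2+\lambda^{12}b$, and is $\mu_6$-equivariant from $\gamma_{a,b}$ to $\gamma_{\lambda^6 a,\lambda^{12}b}$. The ``only if'' direction is then immediate from Lemma \ref{lemma: all isos between marked bielliptic Picard curves}, which asserts that every isomorphism of marked curves has precisely this form.

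I do not expect any serious obstacle here: the only subtle point is making sure the sign ambiguity $\gamma_{a,b}^{\pm 1}$ coming from Theorem \ref{theorem: curves with mu6 action are of the form Ca,b} is resolved by the marking condition, which is handled by the tangent-character computation above. Everything else is a direct application of the two cited results.
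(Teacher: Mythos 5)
Your proposal is correct and follows essentially the same route as the paper: the first assertion is Theorem \ref{theorem: curves with mu6 action are of the form Ca,b}(5) combined with the observation (already recorded in the paper via the generator $x/y$ of $(T_\infty C_{a,b})^\vee$) that $(C_{a,b},\gamma_{a,b})$ is marked while $(C_{a,b},\gamma_{a,b}^{-1})$ is not, which rules out the sign ambiguity; the second assertion is exactly Lemma \ref{lemma: all isos between marked bielliptic Picard curves} plus the routine check that $(x,y)\mapsto(\lambda^3x,\lambda^4y)$ gives an equivariant isomorphism onto $C_{\lambda^6a,\lambda^{12}b}$. Your write-up just makes explicit the tangent-character computation that the paper leaves implicit.
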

\begin{proof}
    The first part follows from Theorem \ref{theorem: curves with mu6 action are of the form Ca,b}(5).
    The second part follows from Lemma \ref{lemma: all isos between marked bielliptic Picard curves}.
\end{proof}

\begin{lemma}\label{lemma: bielliptic picard special iff a=0}
The bielliptic Picard curve $C_{a,b}$ is special if and only if $a = 0$.    
\end{lemma}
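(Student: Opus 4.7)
My plan is to reduce to Lemma~\ref{lemma: when are two bielliptic Picard curves isomorphic}, which classifies marked bielliptic Picard curves up to isomorphism. The direction $a=0 \Rightarrow C_{a,b}$ special is straightforward: pick $\lambda \in k^{\sep}$ with $\lambda^{12} = b^{-1}$, and apply Lemma~\ref{lemma: when are two bielliptic Picard curves isomorphic} to obtain a marked isomorphism $(C_{0,b}, \gamma_{0,b}) \simeq (C_{0,1}, \gamma_{0,1})$ over $k^{\sep}$, so $C_{0,b}$ is special.

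For the converse, start with any $k^{\sep}$-isomorphism of curves $\phi\colon C_{a,b, k^{\sep}} \xrightarrow{\sim} C_{0,1, k^{\sep}}$. The idea is to modify $\phi$ by an element of $\Aut(C_{0,1, k^{\sep}})$ so that the modified map intertwines the marked actions $\gamma_{a,b}$ and $\gamma_{0,1}$; then Lemma~\ref{lemma: when are two bielliptic Picard curves isomorphic} yields $a=0$. First, since $|\Aut(C_{0,1, k^{\sep}})| = 48$, Sylow's theorem implies that all its $\mu_3$-subgroups are conjugate, so after composing $\phi$ with a suitable automorphism we may assume that the push-forward $\phi_* \gamma_{a,b}$ shares its $\mu_3$-subgroup with $\gamma_{0,1}$. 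Second, using the canonical plane quartic embedding $C_{0,1} \hookrightarrow \P^2$ and the $\mu_3$-weight decomposition $H^0(\P^2, \O(1)) = \langle X, Z\rangle \oplus \langle Y\rangle$, I would directly compute the centralizer of the standard $\mu_3$-action in $\Aut(C_{0,1, k^{\sep}})$, identifying it with the group of maps $(X:Y:Z) \mapsto (\zeta X:\gamma Y: Z)$ satisfying $\zeta^4 = \gamma^3 = 1$. The unique non-trivial involution in this centralizer is $\tau_{0,1}$, which pins down $\phi_*\gamma_{a,b}|_{\mu_2} = \gamma_{0,1}|_{\mu_2}$ and forces $\phi_*\gamma_{a,b} \in \{\gamma_{0,1}, \gamma_{0,1}^{-1}\}$.

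The case $\phi_*\gamma_{a,b} = \gamma_{0,1}^{-1}$ is ruled out by comparing the $\mu_6$-actions on the one-dimensional tangent spaces at the (unique) $\mu_6$-fixed points: the marking forces $\mu_6$ to act on $T_{\infty} C_{a,b}$ via the identity character, while $\gamma_{0,1}^{-1}$ acts on $T_{\infty} C_{0,1}$ via the inverse character, and $\mu_6$-equivariance of the isomorphism $d\phi$ would then require $\zeta = \zeta^{-1}$ for all $\zeta \in \mu_6$, which is absurd. Hence $\phi_*\gamma_{a,b} = \gamma_{0,1}$, so $\phi$ is an isomorphism of marked bielliptic Picard curves, and Lemma~\ref{lemma: when are two bielliptic Picard curves isomorphic} yields $a = 0$. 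The main technical ingredient is the centralizer computation; everything else is formal manipulation of characters and group actions.
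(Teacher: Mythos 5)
Your proof is correct, but it takes a genuinely different route from the paper's. The paper argues by counting: it quotes that in the order-$48$ group $\Aut(C_{a,b,\bar k})$ every cyclic subgroup of order $6$ has centralizer of order $12$, so $\Aut(C_{a,b},\gamma_{a,b})$ would have order $12$ if $C_{a,b}$ were special, whereas Lemma \ref{lemma: all isos between marked bielliptic Picard curves} pins this group down as $\{\lambda \in k^\times \mid (a,b)=(\lambda^6 a,\lambda^{12}b)\}$, which has order $6$ when $a\neq 0$ --- contradiction. You instead upgrade an arbitrary $k^{\sep}$-isomorphism $C_{a,b}\to C_{0,1}$ to a \emph{marked} isomorphism and then read off $a=0$ from Lemma \ref{lemma: when are two bielliptic Picard curves isomorphic}. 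Your key steps all check out: order-$3$ subgroups of a group of order $48=2^4\cdot 3$ are Sylow, hence conjugate; the centralizer of the standard $\mu_3$-action on $C_{0,1}\colon Y^3Z=X^4+Z^4$ is indeed $\{(X:Y:Z)\mapsto(\zeta X:\gamma Y:Z)\mid \zeta^4=\gamma^3=1\}\simeq \Z/12\Z$ (eigenspace preservation gives a block form, the $Y^3$-linear term forces $Z\mapsto dZ$, and the absence of an $X^3Z$ term forces diagonality), with unique involution $\tau_{0,1}$; and the tangent-space signature argument at the unique $\mu_6$-fixed point correctly excludes the inverse marking, using that $(C_{0,1},\gamma_{0,1})$ is itself marked. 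What each approach buys: yours replaces the black-box group-theoretic fact about centralizers of order-$6$ subgroups in the GAP group $(48,33)$ with Sylow plus an explicit linear-algebra computation on the canonical model (essentially the technique the paper itself uses to prove that a curve that is both Picard and bielliptic is a bielliptic Picard curve), at the cost of needing the extra signature argument to rule out $\gamma_{0,1}^{-1}$, which the paper's pure counting argument sidesteps entirely.
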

\begin{proof}
    We may assume that $k$ is algebraically closed.
    Lemma \ref{lemma: when are two bielliptic Picard curves isomorphic} shows that $C_{a,b}$ is special if $a=0$.
    Conversely, suppose that $C_{a,b}$ is special.
    By \cite[Theorem 3.1]{LercierRitzRovettaSijslin-planequartic}, $G = \Aut(C_{a,b})$ is a group of order $48$ with \texttt{GAP} label \href{https://people.maths.bris.ac.uk/~matyd/GroupNames/1/C4.A4.html}{(48,33)}.
    Moreover, $\Aut(C_{a,b}, \gamma_{a,b})$ is the centralizer in $G$ of a cyclic order $6$ subgroup. 
    A group theory calculation similar to \cite[Lemma 4.1.5(c)]{BKSW} shows that all cyclic order $6$ subgroup of $G$ are conjugate and have centralizer of order $12$, so $\Aut(C_{a,b}, \gamma_{a,b})$ has order $12$.
    On the other hand, Lemma \ref{lemma: all isos between marked bielliptic Picard curves} shows that $\Aut((C_{a,b}, \gamma_{a,b})) = \{\lambda \in k^{\times} \mid (a,b) = (\lambda^6 a,\lambda^{12} b)\}$.
    If $a\neq 0$, the latter set has size $6$, so if $C$ is special then $a=0$.
\end{proof}

\begin{lemma}\label{lemma: automorphism group bielliptic Picard curves}
    Let $(C, \gamma)$ be a marked bielliptic Picard curve over $k$.
    Then $\mathbf{Aut}(C, \gamma) = \mu_6$ if $C$ is not special and $\mathbf{Aut}(C, \gamma) = \mu_{12}$ if $C$ is special.
\end{lemma}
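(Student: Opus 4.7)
The strategy is to reduce to the explicit model, then recognize the automorphism scheme as a subgroup scheme of $\mathbb{G}_m$ cut out by the conditions appearing in Lemma \ref{lemma: all isos between marked bielliptic Picard curves}.

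First, by Theorem \ref{theorem: curves with mu6 action are of the form Ca,b}(5) and the marking condition, I may identify $(C,\gamma)$ with $(C_{a,b},\gamma_{a,b})$ for some $a,b\in k$ with $16b(a^2-4b)\neq 0$. The proof of Lemma \ref{lemma: all isos between marked bielliptic Picard curves} uses only the canonical embedding $C_{a,b}\hookrightarrow\P^2_k$ and equivariance with respect to $\mu_6$, both of which are functorial in the base. Running the same argument over $\Spec R$ for an arbitrary $k$-algebra $R$ therefore yields a functorial identification
\[
\mathbf{Aut}(C_{a,b},\gamma_{a,b})(R)=\{\lambda\in R^{\times}\mid (\lambda^6-1)a=0\text{ and }(\lambda^{12}-1)b=0\},
\]
with $\lambda$ acting by $(x,y)\mapsto(\lambda^3x,\lambda^4y)$. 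Thus $\mathbf{Aut}(C_{a,b},\gamma_{a,b})$ sits inside $\mathbb{G}_{m,k}$ as the closed subgroup scheme $H_{a,b}$ defined by these two equations.

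Second, I would split into cases using Lemma \ref{lemma: bielliptic picard special iff a=0}. If $C$ is not special, then $a\neq 0$, the condition $(\lambda^6-1)a=0$ forces $\lambda^6=1$ (which then implies $\lambda^{12}=1$), and so $H_{a,b}=\mu_6$. If $C$ is special, then $a=0$, while smoothness forces $b\neq 0$; the remaining condition becomes $\lambda^{12}=1$, giving $H_{0,b}=\mu_{12}$.

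The only delicate point is justifying that the extension of Lemma \ref{lemma: all isos between marked bielliptic Picard curves} to $R$-points gives the full automorphism scheme, rather than just a subscheme. Since $\mathrm{char}\,k\notin\{2,3\}$, both $\mu_6$ and $\mu_{12}$ are finite étale, and $\mathbf{Aut}(C_{a,b})$ is finite étale (as $C_{a,b}$ is a smooth projective curve of genus $3$ with order of automorphism group coprime to $\mathrm{char}\,k$). So if the functorial argument above feels too delicate, an alternative is to note that inclusions of étale $k$-group schemes are detected on $\bar{k}$-points, and the equality on $\bar{k}$-points then follows from Lemma \ref{lemma: all isos between marked bielliptic Picard curves} applied over $\bar{k}$ (together with the order count of the centralizer of a cyclic order $6$ subgroup in $\mathtt{SmallGroup}(48,33)$ carried out in the proof of Lemma \ref{lemma: bielliptic picard special iff a=0} to confirm the special case). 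Either way, equality of scheme with the explicit $\mu_n$ follows, completing the lemma.
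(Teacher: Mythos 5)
Your proof is correct and follows the same route as the paper, whose entire proof reads ``Combine Lemmas \ref{lemma: all isos between marked bielliptic Picard curves} and \ref{lemma: bielliptic picard special iff a=0}.'' The extra care you take over the scheme-theoretic content (extending the isomorphism computation functorially to $R$-points, or equivalently checking the identification on $\bar{k}$-points of finite \'etale group schemes) is exactly the detail the paper leaves implicit, and either of your two justifications closes it.
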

\begin{proof}
    Combine Lemmas \ref{lemma: all isos between marked bielliptic Picard curves} and \ref{lemma: bielliptic picard special iff a=0}.
\end{proof}

Lemma \ref{lemma: when are two bielliptic Picard curves isomorphic} shows that the moduli stack of marked bielliptic Picard curves can be identified with the weighted projective stack $\P(6,12)$ minus the discriminant locus $\Delta_{a,b}=0$.
As with the moduli stack of elliptic curves, the coarse space has a simpler description.
Given a bielliptic Picard curve $C/k$, its \define{$j$-invariant} is defined as 
\begin{align}\label{equation: j-invariant}
j(C) := j_{a,b} := \frac{4b-a^2}{4b} \in k \setminus \{0\},
\end{align}
where $a,b\in k$ are such that $C \simeq C_{a,b}$.
Lemmas \ref{lemma: bielliptic Picard can be marked in unique way}, \ref{lemma: when are two bielliptic Picard curves isomorphic} and \ref{lemma: bielliptic picard special iff a=0} show that this is well defined and that $C_{k^{\sep}} \simeq C'_{k^{\sep}}$ if and only if $j(C) = j(C')$, even if $C$ or $C'$ is special.

\subsection{Twists}\label{subsection: twists}
Let $(C, \gamma)$ be a marked bielliptic Picard curve over $k$.
By the twisting principle, every cocycle $\xi \in \HH^1(k, \mathbf{Aut}(C, \gamma))$ determines a bielliptic Picard curve $(C_{\xi}, \gamma_{\xi})$ that is $k^{\sep}$-isomorphic to $(C, \gamma)$.
Its $k$-isomorphism class is characterized by the existence of a $k^{\sep}$-isomorphism $\phi\colon(C_{\xi}, \gamma_{\xi})_{k^{\sep}} \rightarrow  (C, \gamma)_{k^{\sep}}$ such that $\sigma \mapsto \phi^{\sigma} \circ \phi^{-1}$ represents the class of $\xi$. 

In particular, since $\gamma$ determines a subgroup $\mu_6 \subset \mathbf{Aut}(C, \gamma)$ and since $\HH^1(k, \mu_6) \simeq k^{\times}/k^{\times 6}$, every $\delta\in k^{\times}$ gives rise to a \define{sextic twist} $(C_{\delta}, \gamma_{\delta})$ of $(C, \gamma)$.
Concretely, if $C = C_{a,b}$ then the sextic twist of $C_{a,b}$ by $\delta$ is isomorphic to $C_{\delta a, \delta^2 b}$.
If $C$ is not special, Lemmas \ref{lemma: bielliptic Picard can be marked in unique way} and \ref{lemma: all isos between marked bielliptic Picard curves} show that $\mathbf{Aut}(C,\gamma) = \mathbf{Aut}(C) = \mu_6$ and so every twist of $C$ (as a marked and unmarked curve) is isomorphic to a sextic twist.

\subsection{The Prym variety \texorpdfstring{$P$}{P}}\label{subsection: the prym variety P}

Let $(C,\gamma)$ be a marked bielliptic Picard curve over $k$. We associate to $C$ an abelian surface $P$ whose study will occupy much of the rest of this paper.

Restricting the $\mu_6$-action to $\mu_2$ defines an involution $\tau\colon C\rightarrow C$.
The quotient $\pi \colon C\rightarrow E := C/\tau$ is a genus $1$ curve.
When endowed with the $k$-point $\pi(\infty)$ it has the structure of an elliptic curve, so we can identify $\Jac_E$ with $E$.
Let $J =\Jac_C$ be the Jacobian of $C$.
\begin{definition}
     The \define{Prym variety} of $C$ is defined as $P := \ker(1+\tau^*\colon J \rightarrow J)$.
\end{definition}
This is an abelian surface. 
We refer to \cite[\S3.3]{Laga-Prymsurfaces} for more details concerning Prym varieties of bielliptic genus $3$ curves.
The properties we will use can be deduced from \cite{Mumford-prymvars} and are summarized in the following commutative diagram:
\[\begin{tikzcd}
	&& 0 \\
	& {E[2]} & E \\
	0 & P & J & E & 0 \\
	&& A \\
	&& 0
	\arrow[hook, from=2-2, to=3-2]
	\arrow[hook, from=2-2, to=2-3]
	\arrow[from=1-3, to=2-3]
	\arrow["{\pi^*}"', from=2-3, to=3-3]
	\arrow[from=3-3, to=4-3]
	\arrow[from=4-3, to=5-3]
	\arrow[from=3-1, to=3-2]
	\arrow[from=3-2, to=3-3]
	\arrow["{\pi_*}", from=3-3, to=3-4]
	\arrow[from=3-4, to=3-5]
	\arrow["{\times 2}", from=2-3, to=3-4]
	\arrow["\lambda"', from=3-2, to=4-3]
\end{tikzcd}\]
It depicts two dual exact sequences, where $\pi_*$ (resp.\ $\pi^*$) denotes pushforward (resp.\ pullback) of divisors.
The principal polarization on $J$ restricts to a $(1,2)$-polarization $\lambda \colon P\rightarrow A := P^{\vee}$ with kernel $P[\lambda](k^{\sep}) \simeq (\Z/2\Z)^2$.
The intersection of $P$ and $\pi^*(E)$ is $\pi^*(E[2]) = P[\lambda]$.
The sum of the two maps $P\rightarrow J$ and $E\rightarrow J$ is an isogeny $P\times E\rightarrow J$ with kernel $\{(\pi^*(x),x)\mid x\in E[2]\}\simeq E[2]$.

If $C=C_{a,b}$ for some $a,b\in k$, we may write $E_{a,b}$, $J_{a,b}$, $\lambda_{a,b}\colon P_{a,b}\rightarrow A_{a,b}$ et cetera, but we drop the subscripts when convenient.

\begin{remark}\label{rem: genus two curve}
{\em
    $P$ need not be principally polarizable over $k$; indeed we  prove in Corollary \ref{cor:principal polarization} that if $P$ is geometrically simple, then it is principally polarizable if and only if $16b(a^2 - 4b)$ is a sixth power in $k^\times$. 
    However, if $b = s^3$ is a cube in $k^\times$, then the Jacobian of the following genus two curve is in the isogeny class of $P_{a,s^3}$: 
\[-asy^2 = (x^2 + 2x - 2)(s^3x^4 + 4s^3x^3 + 2dx -d).\]
where $d = a^2 - 4s^3$; this follows from \cite[Theorem 1.1]{RitzenthalerRomagny}.  
}
\end{remark}

We now incorporate the $\mu_3$-action on $C$ into the picture.
By Albanese functoriality, there is a unique $\mu_6$-action on $J$ such that the Abel--Jacobi map $\text{AJ}_{\infty}\colon C\hookrightarrow J$ is $\mu_6$-equivariant.
This induces a $\mu_3$-action on the subvariety $P$ and the quotient $E$.
We record the following important signature calculation. 
\begin{lemma}\label{lemma: mu3-action on P has char poly T^2+T+1}
    For every primitive third root of unity $\omega\in k^{\sep}$, the action of $\omega$ on $\HH^0(P_{k^{\sep}}, \Omega^1_{P_{k^{\sep}}})$ has characteristic polynomial $T^2+ T+ 1$.  
\end{lemma}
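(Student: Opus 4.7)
The plan is to reduce to the explicit model and then compute directly. By Theorem \ref{theorem: curves with mu6 action are of the form Ca,b}(5), after passing to $k^{\sep}$ we may assume $(C,\gamma) = (C_{a,b},\gamma_{a,b}^{\pm 1})$ for some $a,b\in k^{\sep}$. The $\mu_3$-action on $H^0(P_{k^{\sep}},\Omega^1)$ has eigenvalues that are third roots of unity, and its characteristic polynomial is independent of the choice of primitive $\omega$, so it is enough to treat the case $\gamma = \gamma_{a,b}$.

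First I would write down an explicit basis of $H^0(C,\Omega^1)$. Since $C_{a,b}$ is a smooth plane quartic, or alternatively a cyclic triple cover of $\P^1$ branched along $y^3 = f(x)$ with $\deg f = 4$, a standard computation gives the basis
\[
\omega_1 = \frac{dx}{y^2}, \qquad \omega_2 = \frac{x\,dx}{y^2},\qquad \omega_3 = \frac{dx}{y}.
\]
Under $\gamma_{a,b}(\zeta)\colon (x,y)\mapsto (\zeta^3 x,\zeta^4 y)$ these are eigenvectors, and in particular the generator $\omega \in \mu_3$ (i.e.\ $\zeta$ with $\zeta^3 = \omega$, $\zeta^2 = 1$) sends $(x,y)\mapsto (x,\omega y)$, so $\omega_1,\omega_2 \mapsto \omega^{-2}\omega_i = \omega\,\omega_i$ and $\omega_3\mapsto \omega^{-1}\omega_3 = \omega^2\omega_3$. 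Similarly $\tau\colon (x,y)\mapsto (-x,y)$ sends $\omega_1\mapsto -\omega_1$, $\omega_2\mapsto \omega_2$, $\omega_3\mapsto -\omega_3$.

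Next I would identify $H^0(P_{k^{\sep}},\Omega^1)$ inside $H^0(J_{k^{\sep}},\Omega^1) = H^0(C_{k^{\sep}},\Omega^1)$. Since $P = \ker(1+\tau^*)^0$, the cotangent space at the origin of $P$ is canonically the $(-1)$-eigenspace of $\tau^*$ acting on the cotangent space of $J$ at the origin, hence on $H^0(C_{k^{\sep}},\Omega^1)$. From the computation above this eigenspace is spanned by $\omega_1$ and $\omega_3$, with $\mu_3$-eigenvalues $\omega$ and $\omega^2$ respectively. The characteristic polynomial of $\omega$ is therefore
\[
(T-\omega)(T-\omega^2) = T^2+T+1,
\]
as required.

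There is essentially no obstacle here beyond bookkeeping; the only subtlety is making sure that the identification $H^0(P,\Omega^1) \simeq H^0(J,\Omega^1)^{\tau^*=-1}$ is used correctly (rather than the $+1$-eigenspace), which follows from $P$ being the connected kernel of $1+\tau^*$ and not of $1-\tau^*$.
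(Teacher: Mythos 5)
Your proof is correct and follows essentially the same route as the paper: reduce to the explicit model $C_{a,b}$, compute the $\mu_3$-eigenvalues $\omega,\omega,\omega^2$ on the basis $\frac{dx}{y^2}, \frac{x\,dx}{y^2}, \frac{dx}{y}$ of $\HH^0(C,\Omega^1_C)$, and isolate the two eigenvalues belonging to $P$. The only (immaterial) difference is that you carve out $\HH^0(P,\Omega^1_P)$ directly as the $(-1)$-eigenspace of $\tau^*$, whereas the paper removes the $E$-contribution via the isogeny $P\times E\to J$; these amount to the same decomposition.
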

\begin{proof}
    We may assume that $(C, \gamma) = (C_{a,b}, \gamma_{a,b})$ by Lemma \ref{lemma: when are two bielliptic Picard curves isomorphic} and that $k = k^{\sep}$, so fix such a $\omega \in k$. 
    Then $\omega$ acts on $C_{a,b}$ via $(x,y)\mapsto (x,\omega y)$.
    The vector space $\HH^0(C, \Omega^1_C)$ has basis $\frac{dx}{y^2}, x\frac{dx}{y^2}, \frac{dx}{y}$, hence the $\omega$-action on this vector space has eigenvalues $\omega, \omega, \omega^2$.
    The vector space $\HH^0(E, \Omega^1_E)$ has basis $\frac{dx}{y^2}$, hence $\omega$ has eigenvalue $\omega$.
    Since the map $P\times E\rightarrow J, (p,e)\mapsto p+\pi^*(e)$ is an isogeny with kernel $E[2]$, it induces a $\mu_3$-equivariant isomorphism $\HH^0(J, \Omega^1_J) \simeq \HH^0(P, \Omega^1_P) \oplus \HH^0(E, \Omega^1_E)$.
    Combining the last three sentences with the isomorphism $\HH^0(C, \Omega^1_C) \simeq \HH^0(J, \Omega_J^1)$ proves the lemma.
\end{proof}

\begin{lemma}\label{lemma: mu3-fixed points Prym}
    The subgroup of $\mu_3$-fixed points of $P$ is of size $9$ and contained in $P[3]$.
\end{lemma}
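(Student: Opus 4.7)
The plan is to identify the $\mu_3$-fixed subgroup scheme of $P$ with the kernel of the isogeny $1-\omega \colon P \to P$, where $\omega$ denotes a primitive cube root of unity in $\bar k$ acting on $P_{\bar k}$ via the $\mu_3$-action, and then to compute everything in terms of this isogeny.

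First I would use Lemma \ref{lemma: mu3-action on P has char poly T^2+T+1} to show that $1-\omega$ is an étale isogeny. The polynomial $T^2+T+1$ does not vanish at $T=1$, so by the lemma $1-\omega$ acts invertibly on the cotangent space $\HH^0(P_{\bar k}, \Omega^1)$ and hence on the tangent space at $0$. Since $1-\omega$ is a homomorphism of abelian varieties that is étale at the origin, it is étale everywhere, and in particular surjective with reduced, finite kernel. Thus $P^{\mu_3} = \ker(1-\omega)$ is an étale finite group scheme whose $\bar k$-points form the geometric $\mu_3$-fixed subgroup.

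To compute the size, I would use that the degree of an isogeny $f \colon P \to P$ equals $\det(f \mid V_\ell P)$ for any prime $\ell\neq \mathrm{char}\,k$. The characteristic polynomial of $\omega$ on $V_\ell P$ lies in $\Z[T]$ and is independent of $\ell$; comparing with the Hodge decomposition $H^1(P_{\C},\C) = H^{1,0}\oplus H^{0,1}$ (after reducing to the complex case, or via the comparison with the de Rham realization), Lemma \ref{lemma: mu3-action on P has char poly T^2+T+1} together with complex conjugation $H^{0,1}=\overline{H^{1,0}}$ gives the four eigenvalues $\omega,\omega^2,\omega,\omega^2$, so the characteristic polynomial of $\omega$ on $V_\ell P$ is $(T^2+T+1)^2$. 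Evaluating at $T=1-\omega$ yields $\deg(1-\omega) = \bigl((1-\omega)(1-\omega^2)\bigr)^2 = 3^2 = 9$. For the containment in $P[3]$, note that $1-\omega$ is a non-zero-divisor in the semisimple algebra $\End(P_{\bar k})\otimes\Q$ (being an isogeny), so the identity $\omega^3-1 = (1-\omega)\cdot(-(1+\omega+\omega^2)) = 0$ in $\End(P_{\bar k})$ forces $1+\omega+\omega^2=0$ as an endomorphism; applying this to any fixed point $x$ gives $0 = (1+\omega+\omega^2)(x) = 3x$, i.e.\ $x \in P[3]$.

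I expect the main (very mild) obstacle to be cleanly justifying that the characteristic polynomial of $\omega$ on the Tate module is the square of its characteristic polynomial on the tangent space. Everything else is either an immediate consequence of Lemma \ref{lemma: mu3-action on P has char poly T^2+T+1} or a standard fact about isogenies of abelian varieties (which applies here because $\mathrm{char}\,k \neq 2, 3$, so in particular $1-\omega$ and the integer $3$ are invertible where needed).
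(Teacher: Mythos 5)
Your proof is correct, and it overlaps substantially with the paper's, but the degree count is done by a genuinely different route. Both arguments begin identically: Lemma \ref{lemma: mu3-action on P has char poly T^2+T+1} shows $1-\omega$ acts invertibly on the (co)tangent space at $0$, hence is an isogeny, and then the relation $(1-\omega)\circ(1+\omega+\omega^2)=1-\omega^3=0$ together with invertibility of $1-\omega$ in $\End(P_{\bar k})\otimes\Q$ forces $1+\omega+\omega^2=0$. You use this identity for the $3$-torsion containment exactly as the paper does in substance (the paper writes $\beta=1-\omega$ and notes $P[\beta]\subset P[\beta^2]=P[-3\omega]=P[3]$; you apply $1+\omega+\omega^2$ directly to a fixed point to get $3x=0$ --- same content). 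Where you diverge is in computing $|\ker(1-\omega)|=9$: you pass to the $\ell$-adic Tate module and argue that the characteristic polynomial of $\omega$ on $V_\ell P$ is $(T^2+T+1)^2$ via the Hodge decomposition, whereas the paper simply squares the isogeny: $(1-\omega)^2=1-2\omega+\omega^2=-3\omega$, so $\deg(1-\omega)^2=\deg([3])=81$ and $\deg(1-\omega)=9$. The paper's identity is shorter and works uniformly in every characteristic $\neq 2,3$ with no comparison theorems. The step you flagged as the ``mild obstacle'' is indeed the one delicate point of your version in positive characteristic, but it can be repaired internally: once $\omega^2+\omega+1=0$ holds in $\End(P_{\bar k})$ (which you prove anyway), the minimal polynomial of $\omega$ on $V_\ell P$ divides $T^2+T+1$, and the only monic degree-$4$ polynomial over $\Q$ all of whose roots are primitive cube roots of unity is $(T^2+T+1)^2$; no Hodge-theoretic input is needed. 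So if you reorder your argument to establish $\omega^2+\omega+1=0$ first, the degree follows either from the Tate module or, more economically, from $(1-\omega)^2=-3\omega$.
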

\begin{proof}
    We may assume that $k = k^{\sep}$. 
    So let $\omega \in k$ be a third root of unity and $\alpha\colon P\rightarrow P$ the corresponding automorphism. 
    Let $\beta = 1-\alpha$.
    By Lemma \ref{lemma: mu3-action on P has char poly T^2+T+1}, $\beta$ induces an isomorphism on differentials so $\beta$ is an isogeny. 
    Since $\beta \circ (\alpha^2 + \alpha + 1) =0$ and $\beta$ is surjective, 
    \begin{align}\label{equation: endomorphism satisfies r^2+r+1=0}
       \alpha^2+ \alpha +1=0. 
    \end{align}
    Therefore $\beta^2 = 1-2\alpha+\alpha^2 = -3\alpha$, so $\beta$ has degree $9$ and $P[\beta]\subset P[-3\alpha] = P[3]$.
\end{proof}

\subsection{The dual Prym variety \texorpdfstring{$A$}{A}} \label{subsection: the dual prym A}

Keep the notations from \S\ref{subsection: the prym variety P}.
Since $P[\lambda] \subset P[2]$, there exists a unique isogeny\footnote{We warn the reader that $\widehat{\lambda}$ is not the same as the dual $\lambda^\vee$ of $\lambda$. Being a polarization, $\lambda$ is in fact self-dual!} $\widehat{\lambda}\colon A\rightarrow P$ such that $\widehat{\lambda} \circ \lambda = [2]$.
The next proposition describes this isogeny geometrically and characterizes $A$ in terms of the curve $C$ with its involution $\tau\colon C\rightarrow C$.

\begin{proposition}\label{proposition: properties of the embedding of C into A}
Let $i\colon C\rightarrow A$ be the composite of the Abel--Jacobi map $C\rightarrow J$ with respect to $\infty \in C(k)$ and the projection map $J\rightarrow A$. 
\begin{enumerate}
    \item The morphism $i\colon C\hookrightarrow A$ is a closed embedding.
    \item The divisor $i(C)$ is ample and induces the polarization $\widehat{\lambda}$.
    \item If $B/k$ is an abelian surface and $j\colon C\hookrightarrow B$ a closed embedding mapping $\infty$ to $0$ and such that $[-1]\circ j = j \circ\tau$, then there exists a unique isomorphism of abelian surfaces $\phi\colon A\rightarrow B$ such that $\phi \circ i  = j$.
\end{enumerate}
\end{proposition}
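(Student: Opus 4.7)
For Part $(1)$, I would factor $i = q \circ \AJ_\infty$ where $q \colon J \to A = J/\pi^*(E)$ is the quotient, and verify injectivity on both points and tangent spaces. If $i(P) = i(Q)$, then $[P]-[Q] = \pi^*D$ for some $D \in \Pic^0 E$; since $\pi^*D$ is $\tau^*$-invariant, applying $\tau^*$ and rearranging yields the linear equivalence $[P]+[\tau Q] \sim [\tau P]+[Q]$ of effective degree-$2$ divisors. Because $C$ is non-hyperelliptic by Theorem \ref{theorem: curves with mu6 action are of the form Ca,b}, every degree-$2$ class satisfies $h^0 \leq 1$, so these divisors coincide as effective divisors, forcing either $P = Q$ or both $P, Q$ to be $\tau$-fixed. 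A separate argument handles the fixed-point case: $[P_1]+[P_2] \sim \pi^* E_1$ for distinct $\tau$-fixed $P_1, P_2$ would (after case analysis on whether $E_1$ is a branch point) exhibit a $g^1_2$ on $C$, contradicting non-hyperellipticity. For tangent spaces, injectivity at $P$ is equivalent, dually, to the non-vanishing of some $\tau$-antiinvariant holomorphic $1$-form at $P$; using the basis $\{dx/y^2,\, dx/y\}$ from the proof of Lemma \ref{lemma: mu3-action on P has char poly T^2+T+1}, a direct local computation on $C_{a,b}$ shows their divisors are $4[\infty]$ and $\sum_{y=0}[Q_i]$ respectively, which are disjoint.

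For Part $(2)$, adjunction on the abelian surface $A$ gives $i(C)^2 = 2g(C) - 2 = 4 > 0$, so the effective divisor $i(C)$ is ample. Writing $L = \calO_A(i(C))$, the polarization $\lambda_L \colon A \to A^\vee = P$ has degree $\chi(L)^2 = 4$, matching $\deg \widehat\lambda = 16/\deg \lambda = 4$. To prove equality of polarizations, I would establish $\lambda_L \circ \lambda = [2]_P = \widehat\lambda \circ \lambda$; since $\lambda$ is surjective, right-cancellation then gives $\lambda_L = \widehat\lambda$. By the functoriality $\lambda \circ \lambda_L \circ \lambda = \lambda_{\lambda^*L}$ (using the symmetry $\lambda^\vee = \lambda$) together with $\lambda \circ [2]_P = \lambda_{M^{\otimes 2}}$ for $M = \Theta|_P$, this reduces to the numerical equivalence $\lambda^*L \equiv M^{\otimes 2}$ in $\NS(P)$; the resulting difference $\lambda_L \circ \lambda - [2]_P$ has image in the finite group $\ker \lambda$ and must vanish, being a homomorphism from connected $P$. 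I expect this $\NS$-identity to be the main obstacle: my plan is to prove it by pulling back to $J$, using $q^{-1}(i(C)) = \AJ_\infty(C) + \pi^*E$ to compute $\lambda^*L = \iota^* \calO_J(\AJ_\infty(C) + \pi^*E)$ where $\iota \colon P \hookrightarrow J$, and comparing with $\iota^*\Theta^{\otimes 2}$ via the Poincar\'e formula $[\AJ_\infty(C)] = \Theta^2/2$ and a Pontryagin-product computation in $H^*(J)$.

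For Part $(3)$, the universal property of $(J, \AJ_\infty)$ yields a unique homomorphism $\widetilde j \colon J \to B$ with $\widetilde j \circ \AJ_\infty = j$. The equivariance $[-1] \circ j = j \circ \tau$ lifts to $\widetilde j \circ (1 + \tau^*) = 0$, and since the image of $1 + \tau^* \colon J \to J$ equals $\pi^*(E)$, the map $\widetilde j$ factors uniquely through a homomorphism $\phi \colon A \to B$ satisfying $\phi \circ i = j$. Surjectivity of $\phi$ follows since $\phi(A) \supset j(C)$ is a closed subgroup containing a smooth genus-$3$ curve, and no proper abelian subvariety of $B$ (which could only be an elliptic curve) admits a closed embedding of a curve of genus $\geq 2$. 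For injectivity, I would argue by contradiction: if $0 \neq p \in \ker \phi$, then $\phi$ identifies the ample divisor $i(C)$ with its distinct translate $i(C) + p$, but the positive intersection $i(C) \cdot (i(C) + p) = i(C)^2 = 4$ from Part $(2)$ produces a common point $q = i(R_1) = i(R_2) + p$; applying $\phi$ yields $j(R_1) = j(R_2)$, whence $R_1 = R_2$ and $p = 0$, a contradiction. Hence $\phi$ is an isomorphism.
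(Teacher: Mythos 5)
The paper does not prove this proposition at all: it quotes Barth and defers the detailed verification to \cite[Proposition 3.5]{Laga-Prymsurfaces}. Your proposal is therefore a genuinely different contribution, namely a from-scratch proof, and its architecture is sound. For part (1), the reduction of $i(P)=i(Q)$ to $[P]+[\tau Q]\sim [\tau P]+[Q]$ and the appeal to non-hyperellipticity is exactly the right mechanism (it is the same one underlying Ikeda's description of the fibres of $i^{(2)}$ in Proposition \ref{proposition: A is a quotient of Sym^2C}); your residual case of two distinct $\tau$-fixed points does close, e.g.\ $[P]-[Q]=\pi^*([e]-[O_E])$ gives $[P]+2\infty\sim [Q]+\pi^*[e]$, and $h^0([P]+2\infty)=1$ by Riemann--Roch with $K_C\sim 4\infty$, which is incompatible with $P\neq Q$. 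The divisor computations $\mathrm{div}(dx/y^2)=4\infty$ and $\mathrm{div}(dx/y)=\sum_i[(\alpha_i,0)]$ are correct and these are indeed disjoint. For part (3), the Albanese argument is correct; two small points: if $t_p(i(C))=i(C)$ the divisors are not distinct translates, but the same evaluation argument still forces $p=0$, and in positive characteristic triviality of $\ker\phi(\bar k)$ does not by itself rule out an infinitesimal kernel, so one should either verify $\deg\phi=1$ by an intersection-number computation (as in the paper's proof of Proposition \ref{proposition: nodal curve recovers (1,2)-polarized surface}) or check injectivity of $d\phi$ at the origin. The one substantive step you have not actually carried out is in part (2): the identity $\lambda^*\calO_A(i(C))\equiv (\calO_J(\Theta)|_P)^{\otimes 2}$ in $\NS(P)$, which you correctly isolate as the crux. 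The formal bookkeeping around it ($\lambda_{\lambda^*L}=\lambda\circ\lambda_L\circ\lambda$ via $\lambda^\vee=\lambda$, cancellation against the isogeny $\lambda$, and uniqueness of $\widehat{\lambda}$) is all valid, and your proposed method works --- note in particular that the addition map $C\times E\rightarrow J$ is generically injective by your part (1), so the Pontryagin product $[\AJ_\infty(C)]\star[\pi^*E]$ computes the class of $q^*i(C)$ with no multiplicity correction --- but until that cohomological computation is executed, part (2) remains an outline; this is essentially the computation Barth performs.
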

\begin{proof}
    This is due to Barth \cite{Barth-abeliansurfacespolarization}; see \cite[Proposition 3.5]{Laga-Prymsurfaces} for a detailed proof. 
\end{proof}

Let $\Sym^2 C$ be the symmetric square of $C$, a nice surface over $k$ parameterizing effective divisors of degree $2$ on $C$.
In analogy with Jacobians of genus $2$ curves, we will describe the fibres of the surjective map $i^{(2)}\colon \Sym^2 C\rightarrow A$ given by $p+p' \mapsto i(p) + i(p')$; this is due to Ikeda \cite{Ikeda-Biellipticcurvesofgenusthree}.

To this end, we define two involutions on $\Sym^2 C$. 
The first is  $\tau^{(2)}(p + p') = \tau(p) + \tau(p')$.  
The second involution $\kappa$ sends $p+p'$ to the unique effective degree $2$ divisor linearly equivalent to $4\infty -p-p'$; this is well defined by Riemann--Roch and the fact that $C$ is not hyperelliptic.
Since $\tau(\infty) = \infty$, the involutions $\tau^{(2)}$ and $\kappa$ on $\Sym^2 C$ commute.
Finally, the double cover $\pi\colon C\rightarrow E$ induces a map $\pi^* \colon E \rightarrow \Sym^2 C$, giving an embedding $E \simeq \pi^*(E)  \hookrightarrow \Sym^2 C$.

\begin{proposition}[Ikeda]\label{proposition: A is a quotient of Sym^2C}
The map $i^{(2)} \colon \Sym^2 C\rightarrow A$ contracts $\pi^*(E)$ to the origin. 
Two points $D, D' \in \Sym^2 C$ not in $\pi^*(E)$ map to the same point of $A$ if and only if $D' = \kappa(\tau^{(2)}(D))$, i.e. $D' + \tau(D) \sim 4\infty$.
\end{proposition}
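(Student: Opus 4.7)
The approach is to combine the universal property of the embedding $i\colon C \hookrightarrow A$ provided by Proposition~\ref{proposition: properties of the embedding of C into A} --- specifically, that $i(\infty) = 0$ and $[-1]\circ i = i\circ\tau$ --- with a degree count for $i^{(2)}$ obtained by factoring through the Jacobian. The contraction claim is immediate from the universal property: for $e \in E$, writing $\pi^{-1}(e) = p + \tau(p)$ gives
\[
  i^{(2)}(\pi^{-1}(e)) = i(p) + i(\tau p) = i(p) - i(p) = 0.
\]
The sufficient direction also follows formally: $[-1]\circ i = i\circ\tau$ gives $i^{(2)}(\tau^{(2)} D) = -i^{(2)}(D)$, while $D + \kappa(D) \sim 4\infty$ combined with $i(\infty) = 0$ gives $i^{(2)}(D) + i^{(2)}(\kappa D) = i^{(4)}(D+\kappa D) = i^{(4)}(4\infty) = 0$, whence $i^{(2)}(\kappa D) = -i^{(2)}(D)$; composing the two involutions yields $i^{(2)}(\kappa\tau^{(2)}(D)) = i^{(2)}(D)$.

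The converse direction is where the real work lies. The plan is to factor $i^{(2)} = \psi\circ\phi$, where $\phi\colon \Sym^2 C \to J$ is the Abel--Jacobi map $D \mapsto [D - 2\infty]$ and $\psi\colon J \to A$ is the quotient by $\pi^*(E) = \ker(J\to A)$ (from the diagram in \S\ref{subsection: the prym variety P}). Since $C$ is non-hyperelliptic by Theorem~\ref{theorem: curves with mu6 action are of the form Ca,b}, $\phi$ is a closed embedding onto a translate $\Theta \subset J$ of the theta divisor, so the fiber of $i^{(2)}$ over $a \in A$ is identified with the scheme-theoretic intersection $\Theta \cap \psi^{-1}(a)$, where $\psi^{-1}(a)$ is a translate of the elliptic curve $\pi^*(E) \subset J$. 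The intersection number $\Theta \cdot \pi^*(E) = 2$ is then computed by pulling back the principal polarization of $J$ along $\pi^*\colon E\hookrightarrow J$: this yields $\pi_*\circ \lambda_\Theta\circ \pi^* = \pi_*\pi^* = [\deg\pi] = [2]$ on $E$, a polarization of degree $2$.

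The hard part will be upgrading this numerical statement into a set-theoretic description of the fibers, i.e., ruling out an improper intersection when $D \notin \pi^*(E)$. If $\pi^*(E) + \phi(D) \subset \Theta$, then the projection formula $\HH^0(C, D + \pi^*(e - \pi(\infty))) = \HH^0(E, \pi_* D \otimes \O_E(e - \pi(\infty)))$ shows that the rank-$2$ degree-$0$ bundle $\pi_* D$ on $E$ has a nonzero section after \emph{every} degree-$0$ twist. By Atiyah's classification of bundles on an elliptic curve, this is possible only if $\pi_* D$ admits a line subbundle of degree $\geq 1$. By adjunction $\Hom(\O_E(\delta), \pi_* D) = \HH^0(C, D \otimes \pi^*\O_E(-\delta))$, such a subbundle produces an effective divisor in $|D - \pi^*(\delta)|$ of degree $2 - 2\deg(\delta)$; effectivity forces $\deg(\delta) = 1$ and then $D = \pi^{-1}(\delta) \in \pi^*(E)$, contradicting the assumption. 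With properness established, the intersection has scheme-theoretic length $2$; since $D$ and $\kappa\tau^{(2)}(D)$ both lie in it, the fiber over $i^{(2)}(D) \neq 0$ equals $\{D, \kappa\tau^{(2)}(D)\}$ as a set.
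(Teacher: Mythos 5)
Your strategy is sound, and since the paper itself only defers to the proof of Ikeda's Lemma 3.1 rather than giving an argument, a self-contained proof of this kind is genuinely useful. The contraction of $\pi^*(E)$, the ``if'' direction, the identification of the fibre of $i^{(2)}$ over $a$ with $\Theta_{2\infty}\cap\psi^{-1}(a)$, the computation $\Theta_{2\infty}\cdot \pi^*(E)=2$ via $\pi_*\circ\pi^*=[2]$, and the properness argument via Atiyah's classification are all correct. (Your properness argument also shows, as a byproduct, that $(i^{(2)})^{-1}(0)=\pi^*(E)$ exactly, which you tacitly use; and the step from $D\sim\pi^*(\delta)$ to $D=\pi^*(\delta)$ uses $h^0(\calO_C(D))=1$ for non-hyperelliptic $C$, which you have.)

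There is, however, a genuine gap in the last sentence. From ``the intersection has length $2$ and contains $D$ and $\kappa(\tau^{(2)}(D))$'' you may conclude that the fibre is $\{D,\kappa(\tau^{(2)}(D))\}$ only when these two points are \emph{distinct}. The fixed locus of $\sigma:=\kappa\circ\tau^{(2)}$ is the curve $\{Q+Q'\,:\,\pi(Q)+\pi(Q')\sim 2O_E\}$ (it maps isomorphically onto $\widehat{C}\subset P$ under $\phi$), and its image in $A$ is a curve; so for a one-dimensional family of $a\neq 0$ the fibre contains a $\sigma$-fixed point $D$, and a length-$2$ scheme containing $D$ could a priori be $D$ together with a second reduced point $D''$. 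Since the fibre is $\sigma$-stable of length $2$, such a $D''$ would itself be $\sigma$-fixed, and then $D''\neq\kappa(\tau^{(2)}(D))=D$ would contradict the statement you are proving --- so this case must be excluded, not ignored. A clean fix: $U:=(i^{(2)})^{-1}(A\setminus\{0\})=\Sym^2 C\setminus\pi^*(E)\rightarrow A\setminus\{0\}$ is proper and quasi-finite (by your properness argument), hence finite of degree $2$, and it factors through the degree-$2$ quotient $U\rightarrow U/\sigma$; the induced map $U/\sigma\rightarrow A\setminus\{0\}$ is therefore finite and birational onto a smooth variety, hence an isomorphism by Zariski's main theorem, so every fibre of $i^{(2)}$ over $a\neq 0$ is a single $\sigma$-orbit. (Alternatively one can argue directly that $\lambda|_{\widehat{C}}$ is injective away from $P[\lambda]$ because $\widehat{C}\cap(\widehat{C}+t)$ consists of the base points of $|\sheaf{M}|$ for $t\in P[\lambda]\setminus\{0\}$, $\widehat{C}$ admitting no free involution since it is non-hyperelliptic.) With this addition the proof is complete.
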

\begin{proof}
This follows from the proof of \cite[Lemma 3.1]{Ikeda-Biellipticcurvesofgenusthree}. 
\end{proof}

We see that $A$ is obtained from $\Sym^2 C/\langle \tau^{(2)} \circ \kappa\rangle$ by contracting the rational curve $\pi^*(E)/\langle -1\rangle$. This description will be useful in Section \ref{subsec: 2-torsion in A using bitangents}, where we relate the bitangents of $C$ to the group $A[2]$. 

\subsection{Bigonal duality}\label{subsection: bigonal duality}

Given a marked bielliptic Picard curve $(C, \gamma)$, it turns out that we can define another marked bielliptic Picard curve $(\widehat{C}, \hat{\gamma})$ such that the role of the Prym variety and its dual are reversed: the Prym variety of $\widehat{C}$ is isomorphic to $A = P^{\vee}$.
This is called the bigonal dual of $C$, originally defined by Pantazis \cite{Pantazis-Prymvarsgeodesicflow} (inspired by ideas of Donagi \cite{Donagi-tetragonalconstruction}) and analyzed by Barth \cite{Barth-abeliansurfacespolarization} in this specific situation; see \cite[\S3.4]{Laga-Prymsurfaces} for more details.
Let $\Theta_{2\infty}\subset J$ be the image of the Abel--Jacobi map $\Sym^2 C\rightarrow J$ sending $P + P'$ to $P+ P' -2\infty$.
\begin{definition}
    The \define{bigonal dual} of $C$ is defined by $\widehat{C} := P\cap \Theta_{2\infty}$.
\end{definition}

The $\mu_3$-action on $P$ restricts to a $\mu_3$-action on $\widehat{C}$.
Inversion $[-1]$ on $P$ restricts to an involution $\hat{\tau}$ on $\widehat{C}$.
Using the isomorphism $\mu_2\times \mu_3\rightarrow \mu_6$ induced by the two inclusions, we obtain a $\mu_6$-action $\hat{\gamma}$ on $\widehat{C}$.

\begin{lemma}\label{lemma: bigonal dual is a bielliptic Picard curve}
    The pair $(\widehat{C}, \hat{\gamma})$ is a marked bielliptic Picard curve.
\end{lemma}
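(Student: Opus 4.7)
The plan is to verify that $\widehat{C}$ satisfies the characterization of bielliptic Picard curves in Theorem \ref{theorem: curves with mu6 action are of the form Ca,b} and then to confirm the marking condition at the unique $\mu_6$-fixed point. The first step is to establish that $\widehat{C}$ is smooth, geometrically integral, and of genus $3$: the line bundle $\calO_P(\widehat{C})$ represents the $(1,2)$-polarization of $P$ (see \S\ref{subsection: bigonal duality}), and Barth's theory of $(1,2)$-polarized abelian surfaces guarantees that smooth members of the corresponding pencil are nice genus $3$ curves. Smoothness of our specific $\widehat{C}$ can be verified by checking that $P$ and $\Theta_{2\infty}$ meet transversally, which by a dimension count at the origin $0$ reduces to the eigenspace computation performed in the final paragraph.

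Next I would show that $\hat\gamma$ is a faithful $\mu_6$-action with a unique fixed point on $\widehat{C}$. The origin $0 \in P$ lies on $\widehat{C}$ since $\AJ(2\infty) = 0 \in \Theta_{2\infty}$, and it is manifestly $\mu_6$-fixed. Conversely, any $\mu_6$-fixed point of $P$ must be fixed by both $\mu_3$ and $[-1]$, so it lies in the intersection of the $\mu_3$-fixed locus (contained in $P[3]$ by Lemma \ref{lemma: mu3-fixed points Prym}) with $P[2]$, which is trivial. Faithfulness follows from faithfulness of the $\mu_3$-action on $P$---evident from the relation \eqref{equation: endomorphism satisfies r^2+r+1=0} satisfied by a generator---together with the nontriviality of $[-1]$ on the one-dimensional variety $\widehat{C}$. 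Theorem \ref{theorem: curves with mu6 action are of the form Ca,b} then implies that $\widehat{C}$ is a bielliptic Picard curve with $\hat\gamma$ a bielliptic $\mu_6$-action.

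The main obstacle is verifying the marking condition: $\mu_6$ must act on $T_0\widehat{C}$ via the identity character. I would write $T_0\widehat{C} = T_0 P \cap T_0\Theta_{2\infty}$ inside $T_0 J$ and compute the $\mu_6$-characters on each factor. By Lemma \ref{lemma: mu3-action on P has char poly T^2+T+1}, $T_0 P$ splits into two $\mu_3$-eigenlines with eigenvalues $\omega$ and $\omega^2$, both scaled by $-1$ under $\mu_2 \subset \mu_6$; so the two $\mu_6$-characters on $T_0 P$ are the identity character and its inverse. For $T_0 \Theta_{2\infty}$, I would expand the Abel--Jacobi map $g\colon C \to J$ as a formal power series $g(t) = \sum_k A_k t^k$ in the $\mu_6$-equivariant local parameter $t = x/y$ at $\infty$; using the elementary symmetric functions $s_1 = t_1 + t_2$, $s_2 = t_1 t_2$ on $\Sym^2 C$, the expansion $g(t_1) + g(t_2) = A_1 s_1 - 2 A_2 s_2 + \cdots$ shows that $T_0 \Theta_{2\infty}$ is spanned by $A_1$ and $A_2$; linear independence of these coefficients can be verified by direct computation in the basis $dx/y^2, x\,dx/y^2, dx/y$ of $H^0(C, \Omega^1_C)$. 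By $\mu_6$-equivariance of $g$, each $A_k$ carries the $k$-th power of the character of $t$, so comparing with the $\tau^*$-eigenspace decomposition of $T_0 J$ identifies $A_1 \in T_0 P$ and $A_2 \in T_0 \pi^*(E)$. Hence $T_0\widehat{C}$ equals the line spanned by $A_1$, and tracking the $\mu_6$-character shows this is the identity-character eigenspace of $T_0 P$, completing the proof.
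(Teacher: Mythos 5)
Your overall architecture tracks the paper's proof fairly closely: the paper establishes that $\widehat{C}$ is a nice genus $3$ curve with $\widehat{C}/\mu_2$ of genus $1$ by citing \cite[Lemma 3.5]{Laga-Prymsurfaces}, deduces faithfulness of $\hat{\gamma}$ from the isolatedness of the $\mu_3$-fixed points of $P$, and then performs the same local computation at $0$ that you do, only in the cotangent coordinates $u=t_1+t_2$, $v=t_1t_2$ on $\Sym^2 C$ rather than via the Abel--Jacobi coefficients $A_1,A_2$. Your variants of the last two steps are legitimate: getting uniqueness of the fixed point directly from $P[3]\cap P[2]=0$ and then invoking criterion $(2)$ of Theorem \ref{theorem: curves with mu6 action are of the form Ca,b} is a fine alternative to the paper's route through the genus-$1$-quotient criterion, and your character bookkeeping ($A_1$ in the $\tau$-anti-invariant part $T_0P$, $A_2$ in $T_0\pi^*(E)$, hence $T_0\widehat{C}=\mathrm{span}(A_1)$ carrying the identity character) is equivalent to the paper's $u$/$v$ computation, granted the linear independence of $A_1,A_2$, which indeed follows from $C$ being non-hyperelliptic.

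The genuine gap is in your first step. Transversality of $P$ and $\Theta_{2\infty}$ \emph{at the origin} only shows that $\widehat{C}$ is smooth and one-dimensional at $0$; it says nothing about the other points of $P\cap\Theta_{2\infty}$, where the intersection could a priori be singular or non-reduced, and your eigenspace computation lives entirely in $T_0 J$, so it cannot be propagated to other points (the $\mu_6$-action is nowhere near transitive on $\widehat{C}$). Likewise, Barth's statement that the \emph{smooth} members of $|\sheaf{M}|$ are nice genus $3$ curves is of no use until you know $\widehat{C}$ is a smooth member, which is precisely what is at stake; Lemma \ref{lemma: properties pencil of (1,2)-polarized surface}(3) explicitly allows finitely many singular members. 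The paper sidesteps all of this by quoting \cite[Lemma 3.5]{Laga-Prymsurfaces}. If you want a self-contained repair, the right tool is the argument the paper gives later for Proposition \ref{proposition: mu3-fixed point in pencil is smooth}: $\widehat{C}$ is a $\mu_3$-stable member of the pencil, and a singular member (a nodal genus $2$ curve or a union of two elliptic curves) would produce an order-$2$ element in the kernel of $1-\tilde{\alpha}$ on the Jacobian of its normalization, contradicting $\tilde{\alpha}^2+\tilde{\alpha}+1=0$, which forces $\deg(1-\tilde{\alpha})$ to be a power of $3$. That proposition does not depend on the present lemma, so invoking its proof here introduces no circularity.
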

\begin{proof}
    By \cite[Lemma 3.5]{Laga-Prymsurfaces}, $\widehat{C}$ is a nice genus $3$ curve and $\widehat{C}/\mu_2$ is a genus $1$ curve.
    Since the $\mu_3$-action on $P$ has only isolated fixed points (Lemma \ref{lemma: mu3-fixed points Prym}), the $\mu_3$-action on $\widehat{C}$ is nontrivial. 
    Therefore $\hat{\gamma}$ is a faithful bielliptic action and $\widehat{C}$ is a bielliptic Picard curve.
    The origin $0\in P(k)$ is a $\mu_6$-fixed point of $\widehat{C}$, which is unique by Theorem \ref{theorem: curves with mu6 action are of the form Ca,b}.
    
    It remains to show that $(\widehat{C}, \widehat{\gamma})$ is a marked bielliptic Picard curve, in other words that $\mu_6$ acts on $T_0\widehat{C}$ via the identity character. 
    Equivalently, we will show that $\mu_6$ acts on $(T_0\widehat{C})^{\vee}$ via the \emph{inverse} of the identity character.
    Choose a uniformizer $t \in \calO_{C, \infty}$.
    This induces a $k$-basis $\{t_1,t_2\}$ of $(T_{(\infty, \infty)}(C\times C))^{\vee} = (T_{\infty}C)^{\vee} \oplus (T_{\infty}C)^{\vee}$ and an isomorphism of completed local rings $\widehat{\calO}_{C\times C, (\infty, \infty)} \simeq k[[t_1,t_2]]$.
    In turn, this induces an isomorphism $\widehat{\calO}_{\Sym^2 C, 2\infty} \simeq k[[u,v]]$, where $u = t_1 + t_2$ and $v = t_1 t_2$.
    It follows that $(T_{2\infty} \Sym^2 C)^{\vee} \simeq (T_0 \Theta_{2\infty})^{\vee}$ has basis $\{u,v\}$ and $[-1]$ sends $u,v$ to $-u,v$ respectively.
    Therefore $T_0\widehat{C} = T_0 \Theta_{2\infty} \cap T_0 P$ has basis $\{u\}$ which indeed has the correct character.
\end{proof}
It follows that the objects defined in \S\ref{subsection: the prym variety P} also apply to $(\widehat{C}, \hat{\gamma})$, giving $\widehat{J} = \Jac_{\widehat{C}}$, $\widehat{P}$, $\widehat{A}$, $\hat{\pi}\colon \widehat{C} \rightarrow \widehat{E}$ in this context.
The inclusion $\widehat{C} \hookrightarrow P$ induces a homomorphism $\widehat{J}\rightarrow P$.

\begin{proposition}\label{proposition: bigonal duality}
    The homomorphism $\widehat{J} \rightarrow P$ factors through the projection $\widehat{J} \rightarrow \widehat{A}$ and induces an isomorphism $\widehat{A} \xrightarrow{
    \sim
    }P$ of $(1,2)$-polarized surfaces.
\end{proposition}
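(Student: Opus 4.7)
The plan is to derive the proposition from Proposition \ref{proposition: properties of the embedding of C into A}(3) applied to the curve $\widehat{C}$ in place of $C$, together with the tautological closed embedding $j\colon \widehat{C}\hookrightarrow P$ coming from the definition $\widehat{C} = P\cap \Theta_{2\infty}$. This universal property will simultaneously produce the factorization of $\widehat{J}\to P$ through $\widehat{A}$ and the compatibility with $(1,2)$-polarizations, once I also invoke part (2) of the same proposition to track the polarizing divisor.

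To apply Proposition \ref{proposition: properties of the embedding of C into A}(3), I would check three hypotheses. First, $j$ is a closed embedding, because $\widehat{C}$ is a nice curve (Lemma \ref{lemma: bigonal dual is a bielliptic Picard curve}) that is by construction a closed subscheme of $P$. Second, $j$ sends the $\mu_6$-fixed point of $\widehat{C}$ to $0\in P$: indeed $0$ lies in $\Theta_{2\infty}$ (since $2\infty - 2\infty = 0$ in $\Pic^0(C)$), so $0\in \widehat{C}$, and it is the unique $\mu_6$-fixed point of $\widehat{C}$ by Theorem \ref{theorem: curves with mu6 action are of the form Ca,b}. Third, the relation $[-1]_P\circ j = j\circ \hat\tau$ is tautological, since $\hat\tau$ was defined as the restriction of $[-1]_P$ to $\widehat{C}$.

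Applying Proposition \ref{proposition: properties of the embedding of C into A}(3) then yields a unique isomorphism $\phi\colon \widehat{A}\xrightarrow{\sim} P$ satisfying $\phi\circ \hat i = j$, where $\hat i\colon \widehat{C}\hookrightarrow \widehat{A}$ is the analog for $\widehat{C}$ of the Abel--Jacobi-then-project embedding from \S\ref{subsection: the dual prym A}. By the universal property of the Albanese, the homomorphism $\widehat{J}\to P$ induced by $j$ is the unique group-theoretic extension of $j$, hence equals the composite $\widehat{J}\twoheadrightarrow \widehat{A} \xrightarrow{\phi} P$; this is precisely the asserted factorization. For the polarization compatibility, Proposition \ref{proposition: properties of the embedding of C into A}(2) applied to $\widehat{C}$ tells me that the $(1,2)$-polarization on $\widehat{A}$ is induced by the divisor $\hat i(\widehat{C})$. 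Under $\phi$ this maps to the divisor $\widehat{C}\subset P$, whose associated line bundle is the restriction $\mathcal{O}_J(\Theta_{2\infty})|_P$. Since $\Theta_{2\infty}$ induces the principal polarization on $J$, its restriction to $P$ induces $\lambda$ by construction, so $\phi$ pulls back $\lambda$ to the polarization of $\widehat{A}$, as required.

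The main obstacle I anticipate is the scheme-theoretic identification $\mathcal{O}_P(\widehat{C})\cong \mathcal{O}_J(\Theta_{2\infty})|_P$, i.e.\ verifying that the intersection $P\cap \Theta_{2\infty}$ inside $J$ realizes $\widehat{C}$ as a Cartier divisor on $P$ in the expected class. This should follow from a transversality and dimension count, once one confirms that $\widehat{C}$ is the reduced intersection of the expected degree -- information essentially packaged in Lemma \ref{lemma: bigonal dual is a bielliptic Picard curve} and the general theory of Prym varieties.
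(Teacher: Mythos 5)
Your argument is correct, and it is genuinely more self-contained than the paper's proof, which simply cites \cite[Proposition 3.6]{Laga-Prymsurfaces}. You instead derive the statement from the universal property already quoted as Proposition \ref{proposition: properties of the embedding of C into A}(3), applied to $(\widehat{C},\hat\gamma)$ (legitimate, since Lemma \ref{lemma: bigonal dual is a bielliptic Picard curve} is established beforehand and without reference to Proposition \ref{proposition: bigonal duality}, so there is no circularity). Your three hypothesis checks are all right, and the Albanese uniqueness argument correctly identifies the induced map $\widehat{J}\to P$ with $\phi\circ(\widehat{J}\twoheadrightarrow\widehat{A})$. The one point you flag as an obstacle --- that $\mathcal{O}_P(\widehat{C})\cong\mathcal{O}_J(\Theta_{2\infty})|_P$, i.e.\ that the scheme-theoretic intersection $P\cap\Theta_{2\infty}$ is the reduced curve $\widehat{C}$ with multiplicity one --- is already closed by the lemma you cite: since $\widehat{C}$ is \emph{defined} as that intersection and Lemma \ref{lemma: bigonal dual is a bielliptic Picard curve} (via \cite[Lemma 3.5]{Laga-Prymsurfaces}) asserts it is a nice genus $3$ curve, the divisor is reduced; alternatively, a non-reduced divisor $2D$ on an abelian surface would have self-intersection $4(D\cdot D)$ with $(D\cdot D)$ even, which cannot equal $4$. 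Granting that, $\lambda_{\mathcal{O}_P(\widehat{C})}$ is the restriction of the principal polarization of $J$, which is $\lambda$ by definition, and the identity $\phi^{\vee}\circ\lambda_{\mathcal{L}}\circ\phi=\lambda_{\phi^*\mathcal{L}}$ together with Proposition \ref{proposition: properties of the embedding of C into A}(2) for $\widehat{C}$ gives the polarization compatibility exactly as you say. What your route buys is a proof internal to the results already stated in the paper; what the citation buys is brevity and the fact that \cite{Laga-Prymsurfaces} verifies the same scheme-theoretic and polarization bookkeeping once and for all in the general bielliptic setting.
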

\begin{proof}
    See \cite[Proposition 3.6]{Laga-Prymsurfaces}.
    The fact that the isomorphism preserves the polarizations means that the maps $A\rightarrow P$ and $\widehat{P}\rightarrow \widehat{A}$ that are both denoted by $\widehat{\lambda}$ are the same under these identifications.
\end{proof}

Choosing an isomorphism $(C, \gamma) \simeq (C_{a,b}, \gamma_{a,b})$ for some $a,b\in k$ using Lemma \ref{lemma: when are two bielliptic Picard curves isomorphic}, we can write down equations for $\widehat{C}$ \cite[\S3.4 Eq (3.7), (3.8)]{Laga-Prymsurfaces}: 
\begin{align}
    C_{a,b}&: y^3 = x^4+ax^2+b, \\
    E_{a,b}&: y^2 = x^3+16(a^2-4b), \\
    \widehat{C}_{a,b} &: y^3 = x^4 + 8ax^2+16(a^2-4b), \label{equation: bigonal dual curve}\\ 
    \widehat{E}_{a,b} &: y^2 = x^3+b. 
\end{align}
The coordinates on $E_{a,b}$ and $\widehat{E}_{a,b}$ are chosen so that the double covers $C_{a,b}\rightarrow E_{a,b}$ and $\widehat{C}_{a,b}\rightarrow \widehat{E}_{a,b}$ are given by $(x,y) \mapsto (4y,8x^2+4a)$ and $(x,y) \mapsto (y/4,x^2/8+a/2)$ respectively. 

The third equation \eqref{equation: bigonal dual curve} shows that bigonal duality takes the following elegant form on the $j$-invariant \eqref{equation: j-invariant}:
\begin{lemma}\label{lemma: j-invariant bigonal dual}
    Let $C$ be a bielliptic Picard curve over $k$.
    Then $j(\widehat{C}) = 1/j(C)$. 
\end{lemma}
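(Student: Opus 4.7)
The plan is to reduce immediately to a direct computation, since everything we need is already recorded in the excerpt. Using Lemma \ref{lemma: when are two bielliptic Picard curves isomorphic}, choose $a,b\in k$ with $(C,\gamma)\simeq (C_{a,b},\gamma_{a,b})$, so that $j(C)=j_{a,b}=(4b-a^2)/(4b)$ by definition \eqref{equation: j-invariant}. By equation \eqref{equation: bigonal dual curve}, the bigonal dual is $\widehat{C}_{a,b}=C_{a',b'}$ with $a'=8a$ and $b'=16(a^2-4b)$.

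Next, I would simply substitute into the definition of the $j$-invariant to obtain
\[
j(\widehat{C})=\frac{4b'-a'^2}{4b'}=\frac{64(a^2-4b)-64a^2}{64(a^2-4b)}=\frac{-256b}{64(a^2-4b)}=\frac{4b}{4b-a^2}=\frac{1}{j(C)}.
\]
Note that since $\Delta_{a,b}=16b(a^2-4b)\neq 0$ (smoothness of $C$), both $b$ and $a^2-4b$ are nonzero, so $j(C)\in k^{\times}$ and the inversion is legitimate.

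There is essentially no obstacle: the whole content is packaged in the explicit normal form \eqref{equation: bigonal dual curve} for the bigonal dual together with the rationality of the $j$-invariant assignment. The only thing worth flagging is well-definedness, namely that $j$ depends only on the $k^{\sep}$-isomorphism class of $C$; this is exactly what the sentence following \eqref{equation: j-invariant} records (using Lemmas \ref{lemma: bielliptic Picard can be marked in unique way}, \ref{lemma: when are two bielliptic Picard curves isomorphic}, and \ref{lemma: bielliptic picard special iff a=0}), so the computation above is independent of the chosen model.
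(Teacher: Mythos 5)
Your proof is correct and is exactly the argument the paper intends: the lemma is stated immediately after equation \eqref{equation: bigonal dual curve} with the remark that it "shows" the identity, i.e.\ the paper leaves precisely this substitution $j_{8a,\,16(a^2-4b)} = 4b/(4b-a^2) = 1/j_{a,b}$ to the reader. Your additional remarks on nonvanishing of $b$ and $a^2-4b$ and on well-definedness of $j$ are accurate but routine.
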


The quartic polynomial $\hat{f} := x^4+8ax^2+16(a^2-4b)$ defining the bigonal dual curve can also be constructed from the Galois theory of the original quartic $f := x^4+ax^2+b$. 
A calculation shows:
\begin{lemma}\label{lemma: bigonal dual Galois theory}
    Suppose that $f$ has roots $\{\pm \alpha, \pm \beta\}$ in $k^{\sep}$. 
    Then $\hat{f}$ has roots $\{\pm 2\alpha \pm 2\beta\}$.
\end{lemma}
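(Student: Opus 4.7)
The plan is to verify the statement by a direct symmetric function computation: extract the elementary symmetric functions of $\alpha^2$ and $\beta^2$ from the factorization of $f$, then expand the putative product of linear factors for $\hat{f}$ and match coefficients.

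First I would use the hypothesis that $\{\pm\alpha, \pm\beta\}$ are the roots of $f$ to write $f(x) = (x^2-\alpha^2)(x^2-\beta^2) = x^4 - (\alpha^2+\beta^2)x^2 + \alpha^2\beta^2$, which gives the identities $\alpha^2 + \beta^2 = -a$ and $\alpha^2\beta^2 = b$. From these I immediately obtain $(\alpha^2 - \beta^2)^2 = (\alpha^2+\beta^2)^2 - 4\alpha^2\beta^2 = a^2 - 4b$.

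Next I would compute $\prod_{\epsilon_1,\epsilon_2\in\{\pm 1\}}\bigl(x - 2\epsilon_1\alpha - 2\epsilon_2\beta\bigr)$ by grouping the four factors into two pairs with opposite overall sign. The pair indexed by $(\epsilon_1,\epsilon_2)$ and $(-\epsilon_1,-\epsilon_2)$ gives $x^2 - 4(\epsilon_1\alpha + \epsilon_2\beta)^2$, so the whole product equals
\[
\bigl(x^2 - 4(\alpha+\beta)^2\bigr)\bigl(x^2 - 4(\alpha-\beta)^2\bigr) = x^4 - 8(\alpha^2 + \beta^2)x^2 + 16(\alpha^2-\beta^2)^2.
\]
Substituting the two identities from the previous step, this becomes $x^4 + 8ax^2 + 16(a^2 - 4b) = \hat{f}(x)$, as desired.

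There is no real obstacle here; the argument is a short algebraic manipulation. The only point worth flagging is that the eight expressions $\pm 2\alpha \pm 2\beta$ only give four distinct values (as they must, for a quartic), which is automatic from pairing $(+,+)$ with $(-,-)$ and $(+,-)$ with $(-,+)$ and is built into the computation above.
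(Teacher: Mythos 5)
Your computation is correct and is exactly the calculation the paper alludes to when it states the lemma with ``A calculation shows'': one reads off $\alpha^2+\beta^2=-a$ and $\alpha^2\beta^2=b$ and expands $\prod(x-2\epsilon_1\alpha-2\epsilon_2\beta)$ to recover $x^4+8ax^2+16(a^2-4b)$. The only slip is cosmetic: there are four, not eight, expressions $\pm 2\alpha\pm 2\beta$ (two independent sign choices), and your pairing into the two quadratic factors already accounts for all of them, so the closing remark is unnecessary but harmless.
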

This description of $\hat{f}$ will be useful when analyzing $3$-torsion in $P$ in \S\ref{subsec: sqrt3-torsion}.

\section{A Torelli theorem}\label{section: Torelli}

A bielliptic Picard curve $C$ gives rise to two $(1,2)$-polarized abelian surfaces with $\mu_3$-actions: the Prym surface $P$ and its dual $A$. We show that one can recover $C$ from $A$, together with its polarization and $\mu_3$-action; in other words, we prove a Torelli-type theorem (Theorem \ref{theorem: Torelli theorem bielliptic Picard curves}).  We do this by studying the pencil of genus three curves in the linear system $|C|$ on $A$.
In fact, we will prove that \emph{any} indecomposable $(1,2)$-polarized abelian surface with compatible $\mu_3$-action is the dual Prym variety of a bielliptic Picard curve, so we work in this generality below.

\subsection{Generalities on \texorpdfstring{$(1,2)$}{(1,2)}-polarized surfaces}\label{subsection: generalities (1,2)-polarized surfaces}

We recall and complement some results of Barth \cite{Barth-abeliansurfacespolarization}.
Barth works over $\C$, but the proofs of the basic results quoted here are valid in arbitrary characteristic $\neq 2$.

Let $(A, \lambda)$ be a $(1,2)$-polarized abelian surface over $k$.
We assume that $(A,\lambda)$ is indecomposable, that is, not isomorphic to a product of two elliptic curves $(E,O) \times (E',2O')$ as polarized abelian varieties.
Recall our notations concerning linear systems and abelian varieties in \S\ref{subsec: notation and conventions}.

\begin{lemma}\label{lemma: properties pencil of (1,2)-polarized surface}
Let $\sheaf{L} \in \Pic(A_{k^{\sep}})$ be a line bundle representing the polarization: $\lambda_{\sheaf{L}} = \lambda$.
\begin{enumerate}
    \item $\dim_k \HH^0(A, \sheaf{L}) = 2$.
    \item The linear system $|\sheaf{L}|$ has four base-points on which $A[\lambda](k^{\sep})$ acts simply transitively.
    \item At most finitely many members of the pencil are singular (arithmetic) genus $3$ curves, in which case they are curves of geometric genus $2$ with one node or the union of two genus $1$ curves $X_1, X_2$ with intersection number $(X_1\cdot X_2) = 2$.
\end{enumerate}
\end{lemma}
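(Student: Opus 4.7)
The plan is to combine Riemann--Roch on abelian surfaces with the theory of theta groups, largely in the spirit of Barth \cite{Barth-abeliansurfacespolarization}.

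For part (1), I would apply Riemann--Roch on the abelian surface together with Mumford vanishing. The polarization type $(1,2)$ gives $(\sheaf{L}^2) = 2\cdot 1\cdot 2 = 4$, so $\chi(A,\sheaf{L}) = (\sheaf{L}^2)/2 = 2$; ampleness then forces $\HH^i(A,\sheaf{L}) = 0$ for $i>0$, whence $\dim_k \HH^0(A,\sheaf{L}) = 2$.

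For part (2), the base locus $B$ of the pencil is cut out scheme-theoretically by any two independent sections and therefore has length $(\sheaf{L}^2) = 4$. The subgroup $A[\lambda](k^{\sep})$, of order $4$, acts on $A$ by translations preserving $\sheaf{L}$, hence preserves $B$; since nontrivial translations have no fixed points, this action on the reduced support of $B$ is free. A length count then forces $B$ to be reduced of cardinality $4$ and to form a torsor under $A[\lambda](k^{\sep})$. The indecomposability hypothesis enters only to exclude the possibility $B = \emptyset$: otherwise $\phi_{\sheaf{L}}$ would realize $A$ as an elliptic fibration over $|\sheaf{L}| \simeq \P^1$ which $A[\lambda]$ would split, forcing $(A,\lambda)$ to decompose as a product.

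For part (3), a Bertini argument away from $B$ together with a local computation at each base point (where the two generating sections cut out the maximal ideal) shows that the generic member is smooth, so only finitely many fibres are singular. Adjunction gives $p_a(D) = 1 + (\sheaf{L}^2)/2 = 3$ for every member. For an irreducible singular $D$, the normalization maps nonconstantly to $A$, so $p_g(D) \geq 1$; the case $p_g(D) = 1$ would force the reduced support of $D$ to be a translate of an elliptic subvariety $E \subset A$ with $E^2 = 0$, yielding $(\sheaf{L}^2) = 0$ in contradiction to $(\sheaf{L}^2) = 4$, and $p_g(D) = 0$ is impossible because there are no nonconstant maps $\P^1 \to A$. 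This leaves $p_g(D) = 2$ with a single $\delta$-invariant-one singularity. If $D = X_1 + X_2$ is reducible, the relations
\[
g(X_1) + g(X_2) + X_1\cdot X_2 - 1 = 3, \qquad X_1^2 + 2X_1\cdot X_2 + X_2^2 = 4,
\]
together with $g(X_i) \geq 1$ and $X_i^2 \geq 0$ on an abelian surface, force $g(X_i) = 1$, $X_i^2 = 0$, and $X_1 \cdot X_2 = 2$.

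The main obstacle is the irreducible case of (3): after the genus analysis above, one must still rule out a cuspidal singularity (which also has $\delta = 1$) in favour of a node. I expect to handle this via a versal deformation analysis of the total space $\mathcal{C} \subset A \times |\sheaf{L}|$ of the pencil, using that nodes deform equisingularly within a one-parameter family whose generic fibre is smooth, whereas cusps do not.
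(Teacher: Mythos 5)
The paper offers no argument here: the lemma is quoted verbatim from Barth \cite[\S1.2]{Barth-abeliansurfacespolarization}, so by writing a self-contained proof you are attempting strictly more than the paper does. Part (1) and the length-four/torsor count in part (2) are fine. But the step you yourself flag as the main obstacle --- excluding a cusp in the irreducible case of (3) --- fails as proposed. The principle that ``cusps do not deform within a one-parameter family whose generic fibre is smooth'' is false: $y^2 = x^3 + t$ is exactly such a family, with smooth generic fibre and a cuspidal central fibre. Equisingular deformation theory distinguishes nodes from cusps in the \emph{equigeneric} locus, but says nothing about whether a cusp can sit in a pencil with smooth general member. Ruling out the cusp requires an input intrinsic to the abelian surface (Barth derives the structure of the singular members from the symmetry $[-1]^*D = D$ of every member and the configuration of two-torsion points, not from deformation theory); absent that, you should simply cite Barth for this point, as the paper does.

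There are two further gaps where indecomposability is actually used but your proposal either omits it or misplaces it. In (2), the computation ``the base locus has length $(\sheaf{L}^2)=4$'' presupposes that two generic members of the pencil share no common component; excluding a fixed component of $|\sheaf{L}|$ is precisely what indecomposability buys (via the decomposition theorem for polarized abelian surfaces: the pencil has a fixed part if and only if $(A,\lambda)$ splits off a principally polarized elliptic factor). Your stated use of indecomposability --- to rule out $B=\emptyset$ --- is misplaced, since once there is no fixed component, $B\neq\emptyset$ follows from $(\sheaf{L}^2)=4>0$ alone. Likewise, in the reducible case of (3) your two numerical relations do \emph{not} force $g(X_i)=1$ and $(X_1\cdot X_2)=2$: the configuration $g(X_1)=2$, $X_1^2=2$, $X_2$ elliptic with $X_2^2=0$ and $(X_1\cdot X_2)=1$ satisfies both equations, as do configurations with three or more components or with multiplicities. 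These are excluded only because an elliptic curve $E\subset A$ with $(E\cdot\sheaf{L})=1$ would again decompose the polarization --- one more appearance of the hypothesis that your argument needs to make explicit.
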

\begin{proof}
    See \cite[\S1.2]{Barth-abeliansurfacespolarization}.
\end{proof}

\begin{lemma}\label{lemma: (1,2) surfaces unique symmetric line bundle and each divisor is symmetric}
There exists a unique $\sheaf{L} \in \Pic(A_{k^{\sep}})$ representing $\lambda$ whose base locus is $A[\lambda]$.
This line bundle is symmetric: $[-1]^*\sheaf{L} \simeq \sheaf{L}$. Moreover, for every divisor $D\in |\sheaf{L}|$ we have $[-1]^*D = D$.
\end{lemma}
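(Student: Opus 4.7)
The plan is to prove existence, uniqueness, and symmetry of $\sheaf{L}$ by elementary translation arguments, and then to deduce the stronger statement that every divisor in $|\sheaf{L}|$ is $[-1]$-invariant via a blow-up analysis of the pencil.

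For existence and uniqueness I would invoke Lemma \ref{lemma: properties pencil of (1,2)-polarized surface}: any line bundle $\sheaf{L}_0 \in \Pic(A_{k^{\sep}})$ representing $\lambda$ has a four-element base locus $B$ on which $A[\lambda](k^{\sep})$ acts simply transitively. Choosing any $b \in B$ and setting $\sheaf{L} := t_b^*\sheaf{L}_0$ produces a line bundle with base locus $B - b = A[\lambda]$. For uniqueness, any $\sheaf{L}'$ representing $\lambda$ is of the form $t_x^*\sheaf{L}$ with $x$ unique modulo $\ker \lambda = A[\lambda]$, and its base locus is $A[\lambda] - x$; this equals $A[\lambda]$ precisely when $x \in A[\lambda]$. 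For symmetry, $[-1]^*\sheaf{L}$ represents $\lambda$ (the N\'eron--Severi class is $[-1]$-invariant) and has base locus $[-1] \cdot A[\lambda] = A[\lambda]$ because $A[\lambda] \subset A[2]$, so uniqueness yields $[-1]^*\sheaf{L} \simeq \sheaf{L}$.

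The heart of the proof is that every $D \in |\sheaf{L}|$ satisfies $[-1]^*D = D$. Let $\sigma \colon \widetilde{A} \to A_{k^{\sep}}$ be the blow-up at the four points of $A[\lambda]$, with exceptional divisors $E_p \simeq \P(T_p A)$ for $p \in A[\lambda]$. The base scheme has length $\sheaf{L}^2 = 2\chi(\sheaf{L}) = 4$, matching the number of geometric base points, so each base point is reduced (simple). Consequently the pencil lifts to a morphism $\pi \colon \widetilde{A} \to |\sheaf{L}| \simeq \P^1$, and for each $p$ the restriction $V := \HH^0(A, \sheaf{L}) \to \HH^0(E_p, \calO_{E_p}(1))$ is an isomorphism, so $\pi|_{E_p} \colon E_p \to \P^1$ is an isomorphism. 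The involution $[-1]$ lifts to an involution $\widetilde{[-1]}$ on $\widetilde{A}$ which is $\pi$-equivariant with respect to some involution $\iota \colon \P^1 \to \P^1$.

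At each $p \in A[\lambda] \subset A[2]$, $d[-1]_p = -\text{id}$ on $T_pA$ descends to the identity on the projectivization $E_p = \P(T_pA)$; thus $\widetilde{[-1]}$ fixes $E_p$ pointwise, and $\pi(E_p)$ must lie in the fixed locus of $\iota$. Since $\pi|_{E_p}$ is surjective onto $\P^1$, the fixed locus of $\iota$ is all of $\P^1$, forcing $\iota = \mathrm{id}$. Hence $[-1]$ preserves every divisor of the pencil. The main obstacle is this last step: one must recognize that the pencil extends to a morphism on the blow-up, verify simplicity of the base points (via the intersection count $\sheaf{L}^2 = 4$), and then exploit the contradiction between the pointwise fixing of the $E_p$ and any potentially non-trivial involution $\iota$ on the base of the pencil.
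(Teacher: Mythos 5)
Your proposal is correct. The existence--uniqueness part coincides with the paper's own argument: both rest on Lemma \ref{lemma: properties pencil of (1,2)-polarized surface}(2) together with the identity $\mathrm{BL}(t_a^*\sheaf{L}) = t_a^*\mathrm{BL}(\sheaf{L})$ and the fact that the line bundles representing $\lambda$ form a single orbit under translation, with stabilizer $A[\lambda]$; deducing $[-1]^*\sheaf{L}\simeq\sheaf{L}$ from uniqueness is likewise the standard step. Where you genuinely diverge is the final assertion that \emph{every} $D\in|\sheaf{L}|$ is $[-1]$-invariant: the paper outsources this to Barth's Proposition 1.6, which works with the normalized $[-1]$-linearization on $\HH^0(A,\sheaf{L})$ and shows the two-dimensional space of sections lies in a single $\pm1$-eigenspace, whereas you argue geometrically on the blow-up of the four base points. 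Your route is sound: since $\sheaf{L}^2=2\chi(\sheaf{L})=4$ equals the number of geometric base points, each base point is reduced and two general members meet transversally there, so the pencil lifts to a morphism $\pi\colon\widetilde{A}\to\P^1$ with $\pi|_{E_p}$ an isomorphism; $[-1]$ fixes each $p\in A[\lambda]\subset A[2]$ with $\P(d[-1]_p)=\P(-\mathrm{id})=\mathrm{id}$, so its lift fixes $E_p$ pointwise, and surjectivity of $\pi|_{E_p}$ forces the induced involution of $|\sheaf{L}|$ to be trivial. This buys a self-contained proof, independent of the reference and valid verbatim in any characteristic $\neq 2$, at the cost of a slightly longer geometric setup; Barth's eigenspace computation is shorter but less transparent about \emph{why} no member of the pencil can be moved by $[-1]$.
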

\begin{proof}
Denote the base locus of a line bundle $\sheaf{L}$ by $\text{BL}(\sheaf{L})$.
Given $a \in A(k^{\sep})$, let $t_a\colon A \rightarrow A$ be the translation map and $\lambda(a) \in A^{\vee}(k^{\sep})=\Pic^0(A_{k^{\sep}})$ be its image under $\lambda$. 
Then for any $\sheaf{L}$ representing $\lambda$, the following identity holds:
\begin{align}\label{equation: base locus translation line bundles}
    \text{BL}(\lambda(a) \otimes \sheaf{L}) = t_a^*\text{BL}(\sheaf{L}).
\end{align}
Indeed, by definition $\lambda(a) \otimes \sheaf{L} = t_a^*\sheaf{L}$. 
Let $D,D'$ be two distinct members of the pencil $|\sheaf{L}|$. Then $t_a^*D, t_a^*D'$ are two distinct members of $|t_a^*\sheaf{L}|$.
So $\text{BL}(t_a^*\sheaf{L})  = t_a^*D \cap t_a^*D' = t_a^*(D\cap D') = t_a^*\text{BL}(\sheaf{L})$, proving \eqref{equation: base locus translation line bundles}.
This identity, together with Lemma \ref{lemma: properties pencil of (1,2)-polarized surface}(2), proves the existence and uniqueness of $\sheaf{L}$.
The remainder of the lemma follows from \cite[Proposition 1.6]{Barth-abeliansurfacespolarization}.
\end{proof}
For the remainder of this section fix $\sheaf{L}\in \Pic(A_{k^{\sep}})$ satisfying the properties of Lemma \ref{lemma: (1,2) surfaces unique symmetric line bundle and each divisor is symmetric}. 
Since there is a unique such line bundle up to isomorphism and $A$ has a $k$-point, $\sheaf{L}$ is automatically Galois invariant and descends to a unique line bundle on $A$ that we will continue to denote by $\sheaf{L}$.
\begin{remark}
{\em
    This marks an interesting contrast with principal polarizations on abelian surfaces, which may not be represented by a $k$-rational line bundle.
    Indeed, this is the case for a positive proportion of genus 2 Jacobians over $\Q$ ordered by height \cite[Theorem 23]{PoonenStoll-CasselsTate}. 
}
\end{remark}

The next proposition shows that every smooth member of the pencil $|\sheaf{L}|$ realizes $A$ as the dual Prym variety of a bielliptic genus $3$ curve, as in Proposition \ref{proposition: properties of the embedding of C into A}.
A \define{bielliptic genus $3$ curve} $(X, \tau)$ is a nice genus $3$ curve together with an involution $\tau$ on $X$ with $4$ fixed points.
For such a curve, let $\Prym(X,\tau) = \ker(1 + \tau^* \colon \Jac_X\rightarrow \Jac_X)$ be its Prym variety.
\begin{proposition}\label{proposition: smooth member pencil determines the surface}
    Suppose that $X\in |\sheaf{L}|(k)$ is smooth, and let $\tau$ be the restriction of $[-1]$ to $X$.
    Then $(X, \tau)$ is a bielliptic genus $3$ curve and the inclusion $X\rightarrow A$ induces an isomorphism $\Prym(X,\tau)^{\vee} \simeq A$ of $(1,2)$-polarized surfaces.
\end{proposition}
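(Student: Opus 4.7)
The plan is to show $(X, \tau)$ is a bielliptic genus $3$ curve, construct a homomorphism $\phi \colon \Prym(X, \tau)^{\vee} \to A$ via Albanese functoriality, and then identify $\phi$ as an isomorphism of $(1,2)$-polarized surfaces using Barth's analysis.

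First, adjunction on the abelian surface $A$ gives $\omega_X \simeq \sheaf{L}|_X$, so $2g(X) - 2 = (X \cdot X) = \sheaf{L}^2 = 4$ and $X$ has genus $3$. By Lemma \ref{lemma: (1,2) surfaces unique symmetric line bundle and each divisor is symmetric}, $[-1]^*X = X$ as divisors, so $\tau := [-1]|_X$ is a well-defined involution of $X$ whose fixed locus is $X \cap A[2]$. The four base points $A[\lambda] \subset A[2]$ of $|\sheaf{L}|$ all lie on $X$, so $\tau$ has at least four fixed points. Riemann--Hurwitz then yields $\#\text{Fix}(\tau) = 8 - 4 g(X/\tau) \in \{4, 8\}$, forcing $g(X/\tau) \in \{0, 1\}$. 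The hyperelliptic case $g(X/\tau) = 0$ is excluded by the indecomposability of $(A, \lambda)$, following Barth \cite{Barth-abeliansurfacespolarization}, so $(X, \tau)$ is bielliptic.

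Next, take the identity $0 \in A(k)$ as basepoint: it lies in $A[\lambda] \subset X(k)$ and is $\tau$-fixed. Composing the Abel--Jacobi embedding $X \hookrightarrow \Jac_X$ with $\iota \colon X \hookrightarrow A$ yields, by Albanese functoriality, a homomorphism $f \colon \Jac_X \to A$. Since $\iota \circ \tau = [-1] \circ \iota$, we obtain $f \circ \tau^* = -f$, so $f$ vanishes on the image of $1 + \tau^*$, which coincides with $\pi^*\Jac_{X/\tau}$ where $\pi \colon X \to X/\tau$. Standard Prym theory then identifies $\Jac_X / \pi^*\Jac_{X/\tau}$ with $\Prym(X, \tau)^{\vee}$, producing the desired $\phi$.

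Finally, indecomposability implies $\iota(X)$ generates $A$ as an abelian variety, so $f$ (and hence $\phi$) is surjective; dimension considerations then show $\phi$ is an isogeny of abelian surfaces. Both $\Prym(X, \tau)^{\vee}$ and $A$ carry canonical $(1, 2)$-polarizations of degree $2$, and by pulling back $\sheaf{L}$ along $\iota$ and comparing to the standard Prym polarization (see \cite[\S3.3]{Laga-Prymsurfaces}), a degree count forces $\deg \phi = 1$ and the two polarizations to agree. The main obstacles are therefore the exclusion of the hyperelliptic subcase $g(X/\tau) = 0$ and the polarization comparison in the final step; both are standard consequences of Barth's detailed analysis of $(1,2)$-polarized abelian surfaces.
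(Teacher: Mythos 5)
Your outline is correct, and it is essentially the paper's proof: the paper disposes of this proposition with a one-line citation to Barth's Proposition (1.8) and Theorem (1.12), which are precisely the two inputs you defer to (non-hyperellipticity of smooth members of the pencil, and the identification of $A$ with the dual Prym respecting polarizations). The scaffolding you make explicit — adjunction giving genus $3$, symmetry of the divisors forcing $\tau$ to be an involution with the four base points among its fixed points, Riemann--Hurwitz narrowing $g(X/\tau)$ to $\{0,1\}$, and Albanese functoriality producing the map $\Prym(X,\tau)^{\vee}\to A$ — is all sound and consistent with Barth's argument.
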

\begin{proof}
    See \cite[Proposition (1.8) and Theorem (1.12)]{Barth-abeliansurfacespolarization}.
\end{proof}

The abelian surface $A$ can also be reconstructed from the singular members of the pencil.
By Lemma \ref{lemma: properties pencil of (1,2)-polarized surface}(3), there are two cases to consider. 

For the first case, let $X \in |\sheaf{L}|(k)$ be a curve with one node, whose normalization $\tilde{X}\rightarrow X$ is a genus $2$ curve. Let $p\in X(k)$ be the node and $q_1,q_2$ be its preimages in $\tilde{X}(k^{\sep})$. 
The morphism $\tilde{X} \rightarrow X\hookrightarrow A$ induces a homomorphism of abelian varieties $\phi \colon \Jac_{\tilde{X}}\rightarrow A$, sending a degree zero divisor class on $\tilde{X}$ to its sum in $A$.
\begin{proposition}\label{proposition: nodal curve recovers (1,2)-polarized surface}
The homomorphism $\phi\colon \Jac_{\tilde{X}} \rightarrow A$ is an isogeny with kernel $\{0,q_1-q_2\}$.
\end{proposition}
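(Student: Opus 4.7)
The plan is to establish three intermediate facts: (i) $\phi$ is surjective, hence an isogeny; (ii) $q_1 - q_2$ defines a nonzero element of $\ker \phi$; and (iii) $\deg \phi \leq 2$. Together these force $\deg \phi = 2$ and $\ker \phi = \{0, q_1 - q_2\}$ as group schemes---so, as a consequence, $q_1 - q_2$ is automatically $2$-torsion in $\Jac_{\tilde X}$.

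For (i), $X$ is an irreducible reduced curve on $A$ whose normalization $\tilde X$ has geometric genus $2$ (Lemma \ref{lemma: properties pencil of (1,2)-polarized surface}). If the image of $\phi$ were a proper abelian subvariety $B \subsetneq A$, then $B$ would be an elliptic curve and the reduced irreducible curve $X = \iota(\tilde X)$ would have to equal a translate of $B$, forcing $X$ to be smooth of genus one---a contradiction. For (ii), $\phi(q_1 - q_2) = \iota(q_1) - \iota(q_2) = p - p = 0$, while $q_1 - q_2 \neq 0$ in $\Jac_{\tilde X}$ since two distinct points on a curve of positive genus are never linearly equivalent.

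The core of the proof, and the main obstacle, is (iii), which I plan to attack via the Hodge index theorem on $\Jac_{\tilde X}$. Pick an Abel--Jacobi embedding $j \colon \tilde X \hookrightarrow \Jac_{\tilde X}$ (after base change to $k^{\sep}$ if needed). Since $\phi \circ j$ is a translate of $\iota$, we obtain
\[
\deg\bigl(j^*\phi^*\sheaf{L}\bigr) \,=\, \deg\bigl(\iota^*\sheaf{L}\bigr) \,=\, X \cdot \sheaf{L} \,=\, \sheaf{L}^2 \,=\, 4,
\]
where the second equality uses that the degree of a line bundle on a nodal curve is preserved under pullback to its normalization. Poincaré's formula identifies the class of $j(\tilde X)$ in the Néron--Severi group of the abelian surface $\Jac_{\tilde X}$ with the theta class $[\Theta]$, so $\Theta \cdot \phi^*\sheaf{L} = 4$. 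With $\Theta$ ample and $\Theta^2 = 2$, the Hodge index inequality yields
\[
2 \cdot (\phi^*\sheaf{L})^2 \,=\, \Theta^2 \cdot (\phi^*\sheaf{L})^2 \,\leq\, (\Theta \cdot \phi^*\sheaf{L})^2 \,=\, 16,
\]
so $(\phi^*\sheaf{L})^2 \leq 8$. On the other hand, for an isogeny $\phi$ of degree $n := \deg \phi$ between abelian surfaces, $(\phi^*\sheaf{L})^2 = n \cdot \sheaf{L}^2 = 4n$, whence $n \leq 2$. Combining with (ii) gives $n = 2$ and $\ker \phi = \{0, q_1 - q_2\}$.
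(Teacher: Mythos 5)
Your proof is correct, but your degree computation takes a genuinely different route from the paper's. Both arguments are intersection-theoretic on $\Jac_{\tilde{X}}$ and feed on the same numerical inputs ($(X\cdot X)=4$ by adjunction and $(\tilde{X}\cdot\tilde{X})=\Theta^2=2$ by Poincar\'e's formula), but the paper establishes the exact identity $\phi^*[X]=(\deg\phi)[\tilde{X}]$ in the N\'eron--Severi group --- the preimage $\phi^{-1}(X)$ being the union of the $\ker\phi$-translates of the Abel--Jacobi image --- and then the projection formula gives $(\deg\phi)\cdot 4=(\deg\phi)^2\cdot 2$, hence $\deg\phi=2$ on the nose, with no need for the kernel element $q_1-q_2$ in the degree count. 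You instead use only the single component $j(\tilde{X})\subset\phi^{-1}(X)$, which yields $\Theta\cdot\phi^*\sheaf{L}=4$, and convert this into the upper bound $\deg\phi\le 2$ via the Hodge index inequality; you then need the nonzero kernel element $q_1-q_2$ to force equality. The trade-off: your argument avoids having to describe $\phi^{-1}(X)$ completely (a step that does require a moment's justification), at the cost of invoking Hodge index and of making the kernel element do double duty, pinning down the degree from below as well as identifying the kernel. Both are sound; the paper's is slightly more economical, while yours is marginally more robust since it only needs one component of the preimage.
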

\begin{proof}
The image of $\phi$ is an abelian subvariety containing $X$, so $\phi$ is surjective. 
By construction, $\phi(q_1 - q_2) = 0$.
It suffices to prove $\phi$ has degree $2$.
This follows from an intersection number calculation.
Given divisors $D, E$ on a nice surface, denote by $[D], [E]$ their class in the Picard group and their intersection number by $(D\cdot E)$.
The origin $0\in A(k)$ lifts to a unique point $\infty \in \tilde{X}(k)$ and we identify $\tilde{X}$ with its image in $\Jac_{\tilde{X}}$ using the point $\infty$.
Then $\phi^{*}([X]) = (\deg \phi)[\tilde{X}]$ and by the projection formula
\begin{align*}
    (\deg \phi)(X,X) =  (\phi^* X , \phi^* X)  = (\deg \phi)^2 (\tilde{X},\tilde{X}).
\end{align*}
The adjunction formula tells us that $(X,X) = 2\cdot 3-2 = 4$ and $(\tilde{X},\tilde{X}) = 2$.
Therefore $\deg \phi = 2$, as claimed. 
\end{proof}

For the second case, let $X = X_1\cup X_2 \in |\sheaf{L}|(k)$ be a pair of genus $1$ curves with intersection number $2$ and let $\tilde{X} = X_1 \sqcup X_2 \rightarrow X$ be its normalization.
The map $\tilde{X}\rightarrow X \hookrightarrow A$ induces a homomorphism $\phi\colon \Jac_{\tilde{X}} := \Jac_{X_1} \times \Jac_{X_2}\rightarrow A$.

\begin{proposition}\label{proposition: pair of genus 1 curves recovers (1,2)-polarized surface}
The map $\phi$ is an isogeny of degree $2$.
\end{proposition}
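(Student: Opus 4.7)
The plan is to parallel the proof of Proposition \ref{proposition: nodal curve recovers (1,2)-polarized surface}: show $\phi$ is surjective (hence an isogeny), then compute $\deg\phi = 2$ by identifying $\ker\phi$ with the scheme-theoretic intersection of $X_1$ and $X_2$ in $A$. Since $\deg\phi$ is preserved under base change, I work over $\bar{k}$.

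Pick base points $p_i \in X_i(\bar{k})$ and let $\psi_i\colon \Jac_{X_i} \to A$ be the homomorphism induced by the closed immersion $X_i \hookrightarrow A$. Each $\psi_i$ is an isomorphism onto its image $\bar{X}_i := X_i - p_i$, a one-dimensional abelian subvariety of $A$, and $\phi$ factors as $(e_1, e_2) \mapsto \psi_1(e_1) + \psi_2(e_2)$.

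For surjectivity, the image of $\phi$ equals $\bar{X}_1 + \bar{X}_2$. If this sum were proper in $A$, then $\bar{X}_1 = \bar{X}_2$, so $X_1$ and $X_2$ would be translates of a common abelian subvariety; two such translates either coincide or are disjoint, and disjointness is ruled out by $(X_1 \cdot X_2) = 2$. Hence $X_1 = X_2$ as subvarieties, forcing $X = 2X_1$ as a divisor and $(X \cdot X) = 4(X_1 \cdot X_1) = 0$, which contradicts $(X \cdot X) = \sheaf{L}^2 = 4$. Therefore $\phi$ is surjective.

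For the degree, I identify $\ker\phi$ with the scheme-theoretic intersection $\bar{X}_1 \cap \bar{X}_2$ via $(e_1, e_2) \mapsto \psi_1(e_1)$, whose inverse is $e \mapsto (\psi_1^{-1}(e), -\psi_2^{-1}(e))$; this uses $\bar{X}_2 = -\bar{X}_2$, valid because $\bar{X}_2$ is an abelian subvariety. Translation preserves intersection numbers on $A$, so $(\bar{X}_1 \cdot \bar{X}_2) = (X_1 \cdot X_2) = 2$, and for a proper $0$-dimensional intersection of smooth curves on the smooth surface $A$ this equals the length of $\bar{X}_1 \cap \bar{X}_2$ as a subscheme. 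Hence $\deg\phi = |\ker\phi| = 2$. The one subtlety to verify carefully is the scheme-theoretic identification $\ker\phi \simeq \bar{X}_1 \cap \bar{X}_2$, which follows from the fiber-product description $\ker\phi = (\bar{X}_1 \times \bar{X}_2) \times_A \{0\}$, where the map $\bar{X}_1 \times \bar{X}_2 \to A$ is addition.
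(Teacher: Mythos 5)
Your proof is correct, but the degree computation takes a genuinely different route from the one the paper intends. The paper's proof is "very similar to" the nodal case, i.e.\ the projection formula: $(\deg \phi)(X\cdot X) = (\phi^*X\cdot \phi^*X) = (\deg\phi)^2(\tilde{X}\cdot\tilde{X})$, combined with $(X\cdot X) = 4$ and $(\tilde{X}\cdot\tilde{X}) = 2(X_1\times\{0\}\cdot\{0\}\times X_2) = 2$ inside $\Jac_{X_1}\times\Jac_{X_2}$. You instead exploit the feature special to this degeneration --- that both components of $X$ are translates of abelian subvarieties $\bar{X}_1,\bar{X}_2$ --- to identify $\ker\phi$ scheme-theoretically with $\bar{X}_1\cap\bar{X}_2$ and read off $\deg\phi$ as the length of that intersection, which is $(X_1\cdot X_2)=2$ by translation-invariance of intersection numbers. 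Your surjectivity argument is also a necessary elaboration of the paper's one-liner: for the nodal case "the image is an abelian subvariety containing $X$" suffices because a genus $2$ curve cannot sit inside an elliptic curve, whereas for the reducible case one must rule out $\bar{X}_1 = \bar{X}_2$, which you do correctly via $(X_1\cdot X_2) = 2 > 0$ and $\sheaf{L}^2 = 4$. The paper's approach is uniform across both singular fibres; yours is more self-contained here and makes the kernel explicit (it is the $2$-torsion point of $\bar{X}_1\cap\bar{X}_2$ away from the origin), at the cost of not generalizing to the nodal case. Both are valid.
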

\begin{proof}
The proof is very similar to that of Proposition \ref{proposition: nodal curve recovers (1,2)-polarized surface}; we omit the details. 
\end{proof}

\subsection{Compatible \texorpdfstring{$\mu_3$}{mu3}-actions on \texorpdfstring{$(1,2)$}{(1,2)}-polarized abelian surfaces}\label{subsection: comptible mu3-actions (1,2)-polarized surfaces}

\begin{definition}\label{definition: mu3-abelian surface}
    A \define{$\mu_3$-abelian surface} over $k$ is a triple $(A,\lambda,\alpha)$, where $(A, \lambda)$ is a $(1,2)$-polarized abelian surface and $\alpha \colon \mu_3\rightarrow \Aut(A_{k^{\sep}})$ is a $\Gal_k$-equivariant group homomorphism such that for every primitive third root of unity $\omega \in k^{\sep}$:
    \begin{enumerate}
    \item $\alpha(\omega)^{\vee} \circ  \lambda \circ \alpha(\omega)=  \lambda$;
    \item $\alpha(\omega)$ has characteristic polynomial $X^2 + X +1$ on $T_0 A = \HH^0(A, \Omega^1_A)^{\vee}$.
\end{enumerate}
\end{definition}
The signature condition (2) is equivalent to $\alpha(\omega)^2+\alpha(\omega) +1=0$, and is motivated by Lemma \ref{lemma: mu3-action on P has char poly T^2+T+1}.
Condition (1) can be reformulated as follows: after extending $\mu_3\hookrightarrow \Aut(A_{k^{\sep}})$ to a $\Q$-algebra homomorphism $\Q(\omega) \hookrightarrow \End(A_{k^{\sep}})\otimes \Q$, the Rosati involution associated to $\lambda$ restricts to the conjugation involution $a + b\omega\mapsto a+ b \bar{\omega}$ on $\Q(\omega)$.
\begin{remark}
    {\em
    It would be more accurate to call such triples `$(1,2)$-polarized abelian surfaces with compatible $\mu_3$-action of signature $(1,1)$', but we stick with $\mu_3$-abelian surfaces for brevity.
    }
\end{remark}

Let $(A, \lambda,\alpha)$ be a $\mu_3$-abelian surface over $k$.
Assume that $(A, \lambda)$ is indecomposable, so the results of \S\ref{subsection: generalities (1,2)-polarized surfaces} apply. 
Let $\sheaf{L}$ be the unique line bundle on $A$ representing $\lambda$ with base locus $A[\lambda]$.
The compatibility of $\alpha$ and $\lambda$ and the uniqueness of $\sheaf{L}$ show that $\alpha(\omega)^* \sheaf{L} \simeq \sheaf{L}$ for all $\omega \in \mu_3(k^{\sep})$.
Each such isomorphism is unique up to $k^{\times}$-scaling so determines a \emph{canonical} isomorphism $|\alpha(\omega)^* \sheaf{L}| \simeq |\sheaf{L}|$
Via pullback this induces a $\mu_3$-action on the pencil $|\sheaf{L}|$. 
\begin{proposition}\label{proposition: mu3-fixed point in pencil is smooth}
Every $\mu_3$-fixed point of $|\sheaf{L}|$ is a smooth curve. 
\end{proposition}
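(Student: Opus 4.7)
The plan is to suppose that a $\mu_3$-fixed $X \in |\sheaf{L}|$ is singular and derive a contradiction with the standing indecomposability assumption on $(A,\lambda)$. By Lemma \ref{lemma: (1,2) surfaces unique symmetric line bundle and each divisor is symmetric}, $[-1]^*X = X$, so $X$ is acted upon by a commuting $\mu_6$. Lemma \ref{lemma: properties pencil of (1,2)-polarized surface}(3) leaves two cases:
\begin{enumerate}[(a)]
    \item $X$ is irreducible with one node, of geometric genus $2$;
    \item $X = X_1 \cup X_2$ with $X_i$ of genus $1$ and $(X_1 \cdot X_2) = 2$.
\end{enumerate}
In each case the goal is to exhibit $(A, \lambda) \simeq (E, O) \times (E', 2O')$ as polarized abelian varieties.

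For case (b), since $\mu_3$ has odd order it fixes each component $X_i$ rather than swapping them. The signature condition on $T_0 A$ then forces each $X_i$ to have CM by $\Z[\omega]$, with $T_0 X_1$ and $T_0 X_2$ realizing the two characters of $\mu_3$. Proposition \ref{proposition: pair of genus 1 curves recovers (1,2)-polarized surface} provides a $\mu_3$-equivariant degree $2$ isogeny $\phi\colon \Jac_{X_1} \times \Jac_{X_2} \to A$, and a direct calculation of $\phi^{-1}(X_1 + X_2)$ shows that $\phi^*\sheaf{L}$ is of product type with a degree $2$ divisor on each factor. Since the two CM factors are isogenous, one modifies the splitting on the source (using a suitable hom between the factors sending the relevant $2$-torsion appropriately) to turn $\ker \phi$ into a direct summand, yielding the desired isomorphism of polarized surfaces.

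For case (a), the node $p$ is preserved both by $[-1]$ (as the unique singularity of the symmetric curve $X$) and by $\mu_3$, so $p \in A[2]\cap A[3] = \{0\}$. The two branches of $X$ at $0$ are distinct $\mu_3$-invariant lines in $T_0 A$ and hence must coincide with the $\omega$- and $\omega^2$-eigenlines provided by the signature condition. The normalization $\tilde{X}$ is a genus $2$ curve with a faithful $\mu_3$-action whose quotient is $\P^1$, so by Kummer theory has a model $y^2 = x^6 + \alpha x^3 + \beta$ with $\mu_3\colon (x,y)\mapsto(\omega x, y)$; the substitution $u = x^3$ exhibits $\Jac_{\tilde{X}}$ as $\mu_3$-equivariantly isogenous to a product of two CM elliptic curves. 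Composing with the $\mu_3$-equivariant $2$-isogeny $\Jac_{\tilde{X}} \to A$ of Proposition \ref{proposition: nodal curve recovers (1,2)-polarized surface} and arguing as in case (b) produces the same polarized decomposition.

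The main obstacle is the final step in each case: upgrading the \emph{isogeny}-level identification of $A$ with a product of CM elliptic curves to an actual \emph{isomorphism} of polarized abelian surfaces. A priori the $(1,2)$-polarization could correspond to a Hermitian form on $E_\omega^2$ with nonzero off-diagonal entry (which the Hermitian classification does allow, e.g.\ $\operatorname{diag}(1,3)$ perturbed by a unit), so closing the argument requires using the rigidity imposed by the $\mu_3$-action together with the concrete degeneration data of $X$ (the component structure in case (b), and the identification of the branches of $X$ at $0$ with the $\mu_3$-eigenlines in case (a)) to force the polarization to be of product type.
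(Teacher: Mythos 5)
Your overall strategy --- deriving a contradiction with the indecomposability of $(A,\lambda)$ --- is not the paper's, and as written it has a genuine gap that you yourself flag: the passage from an isogeny-level identification of $A$ with a product of CM elliptic curves to an \emph{isomorphism of polarized surfaces} of product type. This is not a technical loose end but the entire difficulty. The weaker statement you do establish (that $A$ is isogenous to a product of elliptic curves with CM by $\Z[\omega]$) carries no contradiction whatsoever: the paper exhibits plenty of \emph{indecomposable} $\mu_3$-abelian surfaces that are geometrically isogenous to squares of CM elliptic curves (these are precisely the CM points of \S\ref{subsec: split Prym varieties}, e.g.\ the special curve $y^3=x^4+1$ with $j=1$, and the whole of Table \ref{table 1}). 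So any successful version of your argument must extract the product-type polarization from the specific degeneration data of $X$, and you have not shown how to do this; the Hermitian-form obstruction you mention at the end is real. There are also smaller unverified steps along the way, e.g.\ in case (b) you need to rule out that $\alpha$ restricts to a fixed-point-free translation of order $3$ on a component $X_i$ before concluding that $X_i$ has CM by $\Z[\omega]$.

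The paper's proof avoids all of this and uses only the degree of the isogeny, not its source. By Propositions \ref{proposition: nodal curve recovers (1,2)-polarized surface} and \ref{proposition: pair of genus 1 curves recovers (1,2)-polarized surface} (which you invoke), in either case the normalization induces an isogeny $\phi\colon \Jac_{\tilde X}\to A$ with kernel $\{0,D\}$ of order $2$. Since $\alpha$ preserves $X$, it lifts to $\tilde\alpha$ on $\Jac_{\tilde X}$ with $\phi$ equivariant; the kernel is then $\tilde\alpha$-stable, hence $D$ is fixed by $\tilde\alpha$. Because $\alpha^2+\alpha+1=0$ and $\phi$ is an isogeny, also $\tilde\alpha^2+\tilde\alpha+1=0$, so $1-\tilde\alpha$ is an isogeny of degree a power of $3$; but its kernel contains the point $D$ of order $2$, a contradiction. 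You should be able to see that this two-line argument replaces both of your cases and never needs to decompose $A$ or touch the polarization.
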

\begin{proof}
We may assume that $k = k^{\sep}$.
Let $\omega \in k$ be a third root of unity and for ease of notation denote $\alpha(\omega)\colon A\rightarrow A$ by $\alpha$.
Let $X\in |\sheaf{L}|$ be fixed by $\alpha$.
Assume for the sake of contradiction that $X$ is singular, with normalization $\tilde{X}\rightarrow X$. 
By Lemma \ref{lemma: properties pencil of (1,2)-polarized surface}(3) and Propositions \ref{proposition: nodal curve recovers (1,2)-polarized surface} and \ref{proposition: pair of genus 1 curves recovers (1,2)-polarized surface}, the map $\tilde{X}\rightarrow A$ induces an isogeny $\phi\colon \Jac_{\tilde{X}} \rightarrow A$ whose kernel $\{0,D\}$ has order $2$.
Since $\alpha$ preserves $X$, it induces an automorphism $\tilde{\alpha}$ of $\Jac_{\tilde{X}}$ and $\phi$ is equivariant with respect to $\tilde{\alpha}$ and $\alpha$.
It follows that $\tilde{\alpha}$ fixes $D$.
Since $\alpha^2+ \alpha + 1=0$ (Condition (2) in Definition \ref{definition: mu3-abelian surface}) and $\phi$ is an isogeny, $\tilde{\alpha}^2+\tilde{\alpha}+1=0$.
Therefore the degree of $1 -\tilde{\alpha}$ is a power of $3$, contradicting the fact that $\Z/2\Z \simeq \{0,D\}\subset \Jac_{\tilde{X}}[1-\tilde{\alpha}]$.
\end{proof}

\begin{corollary}
    Assume that $k = k^{\sep}$.
    Then there are exactly $2$ curves in $|\sheaf{L}|$ that are preserved by the $\mu_3$-action. 
\end{corollary}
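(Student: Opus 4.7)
The plan is to reduce the corollary to showing that the induced $\mu_3$-action on the pencil $|\sheaf{L}| \simeq \P^1$ is nontrivial. An order $3$ automorphism of $\P^1$ is induced by an element of order dividing $3$ in $\PGL_2(k^{\sep})$, which lifts to a diagonalizable matrix in $\GL_2(k^{\sep})$ whose eigenvalues are cube roots of unity (up to a common scalar). If the two eigenvalues coincide, the matrix is scalar and acts trivially on $\P^1$; otherwise there are exactly two fixed points on $\P^1$. So it suffices to rule out the trivial action.

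Suppose for contradiction that $\mu_3$ acts trivially on $|\sheaf{L}|$. Then \emph{every} member of the pencil is $\mu_3$-fixed and hence, by Proposition \ref{proposition: mu3-fixed point in pencil is smooth}, a smooth curve of genus $3$. Let $\pi \colon \widetilde{A} \to A$ be the blow-up of $A$ at the four distinct base points of $|\sheaf{L}|$ (Lemma \ref{lemma: properties pencil of (1,2)-polarized surface}(2)). Projection from the base locus provides a morphism $p\colon \widetilde{A} \to \P^1 = |\sheaf{L}|$ whose scheme-theoretic fibre over $X \in |\sheaf{L}|$ is the proper transform of $X$. Because each base point is a smooth point of every (smooth) member, the proper transform is isomorphic to $X$; hence $p$ is a smooth proper morphism all of whose geometric fibres are smooth curves of genus $3$.

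We then derive a contradiction from two computations of the Euler characteristic of $\widetilde{A}$ (using $\ell$-adic cohomology for any $\ell \neq \operatorname{char}(k)$, or topological Euler characteristic when $k = \C$). Blowing up four points on the abelian surface $A$ gives
\[
\chi(\widetilde A) \;=\; \chi(A) + 4 \;=\; 0 + 4 \;=\; 4,
\]
while multiplicativity for the smooth proper fibration $p$ gives
\[
\chi(\widetilde A) \;=\; \chi(\P^1)\cdot\chi(X) \;=\; 2\cdot(2 - 2\cdot 3) \;=\; -8.
\]
Since $4 \neq -8$, the $\mu_3$-action on $|\sheaf{L}|$ cannot be trivial, and by the first paragraph it therefore has exactly two fixed points in $|\sheaf{L}|$.

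The argument is short and the only minor point requiring care is the blow-up geometry: one must check that after blowing up the four base points, the total transform of a member of the pencil is the proper transform itself (no extra components along the exceptional divisors contributing to the fibre of $p$), which holds precisely because the four base points are distinct and are smooth points of every smooth member. I do not anticipate any substantive obstacle.
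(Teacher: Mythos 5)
Your proof is correct, and its overall skeleton matches the paper's: reduce to showing the $\mu_3$-action on $|\sheaf{L}|\simeq\P^1$ is nontrivial, then invoke the classification of order-$3$ elements of $\PGL_2(k^{\sep})$ to get exactly two fixed points. The difference is in how nontriviality is established. The paper simply quotes Barth's fact that the pencil $|\sheaf{L}|$ \emph{always} contains singular members, which combined with Proposition \ref{proposition: mu3-fixed point in pencil is smooth} immediately rules out the trivial action. You instead re-derive the needed consequence of that fact from scratch: assuming the action trivial forces all members to be smooth by the same Proposition, and you then blow up the four base points (distinct by Lemma \ref{lemma: properties pencil of (1,2)-polarized surface}(2), and transversal intersection points of any two members since $\sheaf{L}^2=4$) to obtain a smooth proper genus-$3$ fibration over $\P^1$, whose Euler characteristic $2\cdot(-4)=-8$ contradicts $\chi(A)+4=4$. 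Your route buys self-containedness — it avoids the external citation and in fact reproves the relevant piece of Barth's statement, valid in any characteristic $\neq 2,3$ via $\ell$-adic Euler characteristics — at the cost of a slightly longer argument; the paper's route is shorter but leans on \cite[\S1.2]{Barth-abeliansurfacespolarization}. Your attention to the blow-up geometry (that the fibres of the resulting fibration are the proper transforms, with the exceptional curves becoming sections) is exactly the point that needs checking, and it goes through.
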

\begin{proof}
    Let $\omega\in k$ be a third root of unity.
    By Proposition \ref{proposition: mu3-fixed point in pencil is smooth}, every fixed point of $\alpha(\omega)$ on $|\sheaf{L}|$ is smooth. 
    Since $|\sheaf{L}|$ always contains singular curves \cite[\S1.2]{Barth-abeliansurfacespolarization}, this implies that $\alpha(\omega)$ acts nontrivially on $|\sheaf{L}|$.
    By the classification of order $3$ elements of $\PGL_2(k)$, we may choose coordinates on $\P^1$ so that $\alpha(\omega)$ is given by $(x:y) \mapsto (\omega x:y)$.
    This map clearly has $2$ fixed points.
\end{proof}

Let $X_1,X_2\in |\sheaf{L}_{k^{\sep}}|$ be the members of the pencil that are preserved by the $\mu_3$-action $\alpha$.
By Lemma \ref{lemma: (1,2) surfaces unique symmetric line bundle and each divisor is symmetric}, the involution $[-1]$ also preserves $X_1$ and $X_2$.
These combine to a $\mu_6$-action $\gamma_i$ on $X_i$ for each $i=1,2$.
The next proposition shows that each $X_i$ is a bielliptic Picard curve and the $\mu_6$-actions have `opposite' signature.

\begin{proposition}\label{proposition: two mu3-fixed points are bielliptic picard with opposite signature}
    Reordering $X_1$ and $X_2$ if necessary, the pairs $(X_1,\gamma_1)$ and $(X_2,\gamma_2^{-1})$ are marked bielliptic Picard curves.
    Moreover, these pairs are defined over $k$.
\end{proposition}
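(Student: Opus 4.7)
My plan is to verify in turn that (i) each $X_i$ is a bielliptic Picard curve, (ii) the $\mu_6$-characters on $T_0 X_1$ and $T_0 X_2$ are opposite, and (iii) the resulting marked pairs descend to $k$. I would work over $k^{\sep}$ for (i)--(ii) and defer the descent to (iii).

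For (i), Proposition \ref{proposition: mu3-fixed point in pencil is smooth} shows that $X_i$ is smooth of genus $3$, and Proposition \ref{proposition: smooth member pencil determines the surface} (applied over $k^{\sep}$) identifies $\tau_i := [-1]|_{X_i}$ as a bielliptic involution. The restriction of $\alpha(\omega)$ to $X_i$ is nontrivial because the identity $\alpha(\omega)^2+\alpha(\omega)+1=0$ from Definition \ref{definition: mu3-abelian surface}(2) allows the argument of Lemma \ref{lemma: mu3-fixed points Prym} (with $A$ in place of $P$) to show that $1-\alpha(\omega)$ is an isogeny of degree $9$; in particular its kernel is finite and cannot contain the curve $X_i$. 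The commuting $\mu_2$- and $\mu_3$-actions then assemble into a faithful bielliptic $\mu_6$-action $\gamma_i$ on $X_i$, so $X_i$ is a bielliptic Picard curve.

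For (ii), Theorem \ref{theorem: curves with mu6 action are of the form Ca,b} gives each $(X_i, \gamma_i)$ a unique $\mu_6$-fixed point $\infty_i$, which must also be a $\mu_6$-fixed point of $A$. Since the $\mu_3$-fixed locus of $A$ lies in $A[3]$ and $A[2]\cap A[3] = 0$, the only such point is the origin, so $\infty_1 = \infty_2 = 0$. Consequently $0 \in X_1\cap X_2 = \text{BL}(\sheaf{L})$, and since $X_1 \cdot X_2 = (\sheaf{L}^2) = 4 = |\text{BL}(\sheaf{L})|$ (Lemma \ref{lemma: properties pencil of (1,2)-polarized surface}(2)), the two curves meet transversely at every base point, and in particular at $0$. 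Thus $T_0 X_1$ and $T_0 X_2$ are two distinct $\mu_3$-stable lines in $T_0 A$, and must therefore be the two $\alpha(\omega)$-eigenlines $L_\omega, L_{\omega^2}$. Fixing $\zeta \in k^{\sep}$ primitive of order $6$ with $\zeta^2 = \omega$ (so $\zeta^3 = -1$), the character $\chi_i \colon \mu_6 \to \G_m$ by which $\mu_6$ acts on $T_0 X_i$ is determined by $\chi_i(\zeta)$; the constraints $\chi_i(\zeta^3) = -1$ and $\chi_i(\zeta^2) \in \{\omega, \omega^{-1}\}$ force $\chi_i(\zeta) \in \{\zeta, \zeta^{-1}\}$, one value for each $i$. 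After relabelling so that $T_0 X_1 = L_\omega$, both $(X_1, \gamma_1)$ and $(X_2, \gamma_2^{-1})$ are marked.

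For (iii), the set $\{X_1, X_2\}$ is $\Gal_k$-stable because it is the $\mu_3$-fixed locus of the $k$-rational pencil $|\sheaf{L}|$ (the line bundle $\sheaf{L}$ descends to $k$ by the uniqueness noted after Lemma \ref{lemma: (1,2) surfaces unique symmetric line bundle and each divisor is symmetric}) equipped with its canonical $\Gal_k$-equivariant $\mu_3$-action. Since $\Hom(\mu_6, \G_m) \simeq \Z/6\Z$ carries trivial Galois action, the character computed in (ii) is a $\Gal_k$-invariant invariant of each $X_i$, and since it distinguishes $X_1$ from $X_2$, $\Gal_k$ cannot swap them. Hence both $(X_1, \gamma_1)$ and $(X_2, \gamma_2^{-1})$ are defined over $k$. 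I expect the main technical step to be the transversality argument in (ii) that pins down the two tangent directions; everything after is character bookkeeping.
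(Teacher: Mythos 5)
Your proposal is correct and follows essentially the same route as the paper: smoothness and the bielliptic involution from Propositions \ref{proposition: mu3-fixed point in pencil is smooth} and \ref{proposition: smooth member pencil determines the surface}, faithfulness of the $\mu_3$-action from $\alpha(\omega)^2+\alpha(\omega)+1=0$ having isolated fixed points, transversality of $X_1$ and $X_2$ at the four base points to get $T_0A = T_0X_1\oplus T_0X_2$, and the signature condition of Definition \ref{definition: mu3-abelian surface} to split the two nontrivial characters between the two curves, with descent to $k$ exactly as in the paper. The only cosmetic difference is that you rederive $\infty_i = 0$ via $A[2]\cap A[3]=0$ rather than simply noting that $0$ is a base point of the pencil, but the argument is sound.
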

\begin{proof}
    The second sentence follows from the first, since $\Gal_k$ preserves the $\mu_3$-fixed points of $|\sheaf{L}_{k^{\sep}}|$ and the signatures of the $\mu_6$-action.
    To show that $(X_1,\gamma_1)$ and $(X_2,\gamma_2)$ are bielliptic Picard curves of opposite signature, we may assume that $k = k^{\sep}$. 

    Let $(X,\gamma) = (X_i, \gamma_i)$ for some $i=1,2$. 
    Proposition \ref{proposition: mu3-fixed point in pencil is smooth} shows that $X$ is smooth, and Proposition \ref{proposition: smooth member pencil determines the surface} shows that $(X,[-1]|_X)$ is a bielliptic genus 3 curve. 
    Since $\alpha^2+\alpha+1=0$, the $\mu_3$-action on $A$ has isolated fixed points, so the induced $\mu_3$-action on $X$ is faithful.
    It follows that $\gamma$ is a faithful bielliptic action and hence $X$ is a bielliptic Picard curve. Note that $0\in X(k) \subset A(k)$ is the unique $\mu_6$-fixed point.
    
    To prove the claim about signatures, it suffices to show that the $\mu_3$-action on $T_0X_1$ and $T_0X_2$ acts via inverse characters.
    Since $\sheaf{L}$ has self-intersection $4$ and $|\sheaf{L}|$ has four base points (Lemma \ref{lemma: properties pencil of (1,2)-polarized surface}), $X_1$ and $X_2$ intersect transversally at all these base points.
    Since $0 \in A(k)$ is such a base point,
    \begin{align}\label{equation: tangent space surface direct sum}
        T_0 A = T_0 X_1 \oplus T_0X_2.
    \end{align}
    Condition (2) of Definition \ref{definition: mu3-abelian surface} says that the $\mu_3$-action on $T_0A$ is a direct sum of the two nontrivial characters of $\mu_3$.
    Since the decomposition \eqref{equation: tangent space surface direct sum} is $\mu_3$-stable, the proposition follows.
\end{proof}

\subsection{A Torelli theorem for marked bielliptic Picard curves}\label{subsec: torelli}

Let $(C, \gamma)$ be a marked bielliptic Picard curve over $k$. The $\mu_6$-action on $C$ induces a $\mu_6$-action on $J = \Jac_C$ such that the Abel--Jacobi map $C\hookrightarrow J$ is $\mu_6$-equivariant. 
This induces $\mu_3$-actions $\alpha$ on $P$ and $\hat{\alpha}$ on\footnote{A calculation shows that the $\mu_3$-actions $\hat{\alpha}$ (making $J\rightarrow A$ $\mu_3$-equivariant) and $\alpha^{\vee}$ (the dual of the $\mu_3$-action on $P$) on $A$ are inverse to each other.} $A$. 
\begin{lemma}\label{lemma: Prym and dual are mu3-abelian surfaces}
    $\Prym(C, \gamma) := (P, \lambda, \alpha)$ and $\Prym(C, \gamma)^{\vee} := (A, \widehat{\lambda}, \hat{\alpha})$ are indecomposable $\mu_3$-abelian surfaces over $k$.
\end{lemma}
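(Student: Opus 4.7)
My plan is to verify in turn each of the required conditions---the $(1,2)$-polarization structure, the Galois-equivariance of the $\mu_3$-action, axioms (1) and (2) of Definition \ref{definition: mu3-abelian surface}, and finally indecomposability---invoking earlier results wherever possible. The polarization structure and Galois-equivariance are direct from the constructions of \S\ref{subsection: the prym variety P}--\S\ref{subsection: the dual prym A} together with Albanese functoriality applied to the Galois-stable $\mu_6$-action $\gamma$, so I focus below on the three remaining points.

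For the compatibility axiom (1), I would start from the fact that every automorphism of $C$ lifts via Albanese functoriality to an automorphism of $J$ preserving the canonical principal polarization $\lambda_J$, so that $\alpha_J(\omega)^\vee \circ \lambda_J \circ \alpha_J(\omega) = \lambda_J$ for every $\omega \in \mu_3(k^{\sep})$. To transfer this to $P$, I write $\lambda = \pi \circ \lambda_J \circ \iota$, with $\iota \colon P \hookrightarrow J$ and $\pi = \iota^\vee \colon J^\vee \twoheadrightarrow A$, and combine $\iota \circ \alpha(\omega) = \alpha_J(\omega) \circ \iota$ with its dual $\alpha(\omega)^\vee \circ \pi = \pi \circ \alpha_J(\omega)^\vee$ to obtain $\alpha(\omega)^\vee \circ \lambda \circ \alpha(\omega) = \lambda$ by a short diagram chase. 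Compatibility for $(A, \widehat\lambda, \hat\alpha)$ I would then deduce by substituting into $\widehat\lambda \circ \lambda = [2]$, invoking the footnoted identification $\hat\alpha(\omega) = (\alpha(\omega)^\vee)^{-1}$ and the double-duality isomorphism $\alpha^{\vee\vee} = \alpha$. The signature axiom (2) for $(P, \alpha)$ is exactly Lemma \ref{lemma: mu3-action on P has char poly T^2+T+1}; for $(A, \hat\alpha)$ I would note that the induced action on $T_0 A \simeq H^0(P, \Omega^1_P)^\vee$ is the inverse contragredient of the action of $\alpha(\omega)$ on $H^0(P, \Omega^1_P)$, and so has the same characteristic polynomial $T^2+T+1$.

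For indecomposability I argue by contradiction. If $(A, \widehat\lambda) \simeq (E_1, [O_1]) \times (E_2, [2 O_2])$ as polarized abelian varieties, then the seesaw principle shows that, up to translation, the representing line bundle on $E_1 \times E_2$ is $\widehat{\sheaf{L}} = \mathrm{pr}_1^* \calO_{E_1}(O_1) \otimes \mathrm{pr}_2^* \calO_{E_2}(2 O_2)$, and the K\"unneth formula gives $h^0(\widehat{\sheaf{L}}) = 1 \cdot 2 = 2$. Every global section factors as $s_1 \otimes s_2$ with $s_1$ the unique-up-to-scalar generator of $H^0(\calO_{E_1}(O_1))$, so the associated divisor always contains the fixed vertical component $\{O_1\} \times E_2$ and is therefore reducible. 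But Proposition \ref{proposition: properties of the embedding of C into A} embeds $C$ as a smooth, geometrically connected divisor of $A$ representing $\widehat\lambda$, contradicting reducibility. The indecomposability of $(P, \lambda)$ follows by the same K\"unneth argument applied to the smooth irreducible embedding $\widehat{C} \hookrightarrow P$ provided by Lemma \ref{lemma: bigonal dual is a bielliptic Picard curve}.

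The main technical obstacle I anticipate is the compatibility axiom, where transferring preservation of $\lambda_J$ on $J$ first to $\lambda$ on $P$ and then to $\widehat\lambda$ on $A$ requires careful bookkeeping of restrictions, duals, and the footnoted inverse relation between $\hat\alpha$ and $\alpha^\vee$. The signature axiom reduces cleanly to Lemma \ref{lemma: mu3-action on P has char poly T^2+T+1}, and indecomposability follows once the explicit K\"unneth computation on a product of elliptic curves is in hand.
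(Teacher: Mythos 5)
Your proof is correct, but it takes a genuinely different route from the paper's. The paper verifies only half of the conditions directly --- axiom (1) of Definition \ref{definition: mu3-abelian surface} and indecomposability for $(A,\widehat\lambda,\hat\alpha)$, using that $\widehat\lambda$ is induced by the irreducible ample divisor $i(C)$ preserved by $\hat\alpha$ (Proposition \ref{proposition: properties of the embedding of C into A}), and axiom (2) for $(P,\lambda,\alpha)$ via Lemma \ref{lemma: mu3-action on P has char poly T^2+T+1} --- and then obtains the remaining halves in one stroke from bigonal duality (Proposition \ref{proposition: bigonal duality}), since $P\simeq \widehat{A}$ and $A\simeq\widehat{P}$ are themselves the dual Prym and Prym of the marked bielliptic Picard curve $(\widehat{C},\hat\gamma)$. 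You instead verify all four statements directly: axiom (1) for $P$ by restricting the equivariance of $\lambda_J$ along $\lambda=\iota^\vee\circ\lambda_J\circ\iota$, axiom (1) for $A$ by dualizing through $\widehat\lambda\circ\lambda=[2]$, and indecomposability for both surfaces by the K\"unneth computation. Your route is more self-contained and makes the compatibility axiom completely explicit (the paper leaves implicit the step from ``$\hat\alpha$ preserves $\calO_A(i(C))$'' to ``$\hat\alpha(\omega)^\vee\circ\widehat\lambda\circ\hat\alpha(\omega)=\widehat\lambda$''); the paper's route is shorter and reuses machinery it needs anyway. Note that your indecomposability argument for $P$ still leans on the bigonal-dual apparatus, since knowing that $\calO_P(\widehat{C})$ represents $\lambda$ ultimately comes from Proposition \ref{proposition: bigonal duality}.

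One small imprecision: you identify $T_0A$ with $\HH^0(P,\Omega^1_P)^\vee$, but canonically $T_0A=T_0P^\vee\simeq \HH^1(P,\calO_P)$, while $\HH^0(P,\Omega^1_P)^\vee\simeq T_0P$; these are not the same $\mu_3$-representation a priori. The conclusion is nonetheless correct and is most cleanly obtained from the remark following Definition \ref{definition: mu3-abelian surface}: axiom (2) is equivalent to $\alpha(\omega)^2+\alpha(\omega)+1=0$ in $\End(P_{k^{\sep}})$, an identity preserved by dualizing and by inverting (since $u^{-1}=u^2$), so $\hat\alpha(\omega)=(\alpha(\omega)^\vee)^{-1}$ satisfies it as well.
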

\begin{proof}
    The first condition of Definition \ref{definition: mu3-abelian surface} for $\Prym(C,\gamma)^{\vee}$ follows from Proposition \ref{proposition: properties of the embedding of C into A} and the fact that $\hat{\alpha}$ preserves $C\hookrightarrow A$.
    Moreover, since $C$ is irreducible, $(A, \widehat{\lambda})$ is indecomposable.
    The second condition for $\Prym(C, \gamma)$ follows from Lemma \ref{lemma: mu3-action on P has char poly T^2+T+1}.
    The lemma now follows from bigonal duality (Proposition \ref{proposition: bigonal duality}).
\end{proof}

The next theorem is a strong Torelli statement for the association $(C, \gamma)\mapsto \Prym(C, \gamma)^{\vee}$.
To state it, we upgrade the set of marked bielliptic Picard curves (respectively $\mu_3$-abelian surfaces) to a category, where the morphisms are isomorphisms of curves (respectively polarized abelian surfaces) respecting the $\mu_6$-structure (respectively $\mu_3$-structure).

\begin{theorem}\label{theorem: Torelli theorem bielliptic Picard curves}
    The association $(C,\gamma) \mapsto \Prym(C, \gamma)^{\vee}$
    induces an equivalence of categories: 
    \begin{align}\label{equation: equivalence of categories torelli}
    \begin{Bmatrix}
        \text{Marked bielliptic} \\
        \text{Picard curves over }k
    \end{Bmatrix}
    \rightarrow 
    \begin{Bmatrix}
        \text{Indecomposable} \\
        \mu_3\text{-abelian surfaces over }k
    \end{Bmatrix}.
    \end{align}
\end{theorem}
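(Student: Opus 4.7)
The plan is to establish essential surjectivity and full faithfulness of the functor separately, using the detailed analysis of the pencil $|\sheaf{L}|$ developed in \S\ref{subsection: comptible mu3-actions (1,2)-polarized surfaces}. First, Lemma \ref{lemma: Prym and dual are mu3-abelian surfaces} already tells us the functor is well-defined, so the content lies in constructing an inverse functor at the level of objects and morphisms.

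For essential surjectivity, let $(A, \lambda, \alpha)$ be an indecomposable $\mu_3$-abelian surface. Let $\sheaf{L}$ be the canonical symmetric line bundle from Lemma \ref{lemma: (1,2) surfaces unique symmetric line bundle and each divisor is symmetric}. By Proposition \ref{proposition: two mu3-fixed points are bielliptic picard with opposite signature}, there are exactly two $\mu_3$-fixed curves $X_1, X_2$ in $|\sheaf{L}_{k^{\sep}}|$, each carrying a bielliptic $\mu_6$-action, and they are defined over $k$ (as a pair), with the two possible signatures of the marked structure appearing precisely once. Selecting the $X_i$ whose $\mu_6$-action is the marked one gives a marked bielliptic Picard curve $(C, \gamma)$ over $k$. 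By Proposition \ref{proposition: smooth member pencil determines the surface}, the inclusion $C \hookrightarrow A$ identifies $A$ with $\Prym(C, \gamma)^{\vee}$ as $(1,2)$-polarized surfaces. To see that the $\mu_3$-actions also match, recall that the $\mu_3$-action on $A$ restricts to $\gamma|_{\mu_3}$ on $C$ by construction, and by Albanese functoriality this determines the $\mu_3$-action on $\Prym(C,\gamma)^{\vee}$ uniquely (since $C \hookrightarrow A$ is $\mu_3$-equivariant).

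For full faithfulness, consider marked bielliptic Picard curves $(C, \gamma), (C', \gamma')$ and their images $(A, \widehat{\lambda}, \hat{\alpha}), (A', \widehat{\lambda}', \hat{\alpha}')$. Any $\mu_6$-equivariant isomorphism $C \to C'$ clearly induces a $\mu_3$-equivariant isomorphism of polarized Pryms. Conversely, given an isomorphism $\phi\colon A \to A'$ in the target category, $\phi$ pulls back the canonical line bundle $\sheaf{L}'$ to $\sheaf{L}$, hence induces a $\mu_3$-equivariant isomorphism of pencils $|\sheaf{L}| \xrightarrow{\sim} |\sheaf{L}'|$. This isomorphism permutes the two $\mu_3$-fixed points, and since $\phi$ is $\mu_3$-equivariant with respect to the given actions, it preserves the signatures of the induced $\mu_6$-actions on tangent spaces at the origin. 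By Proposition \ref{proposition: two mu3-fixed points are bielliptic picard with opposite signature} the two fixed curves carry opposite signatures, so $\phi$ must send the marked curve $C \subset A$ to the marked curve $C' \subset A'$. The restriction $\phi|_C \colon C \to C'$ is the desired preimage under the functor, and uniqueness follows from the fact that $C$ generates $A$ as an abelian variety.

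The main obstacle I expect is checking compatibility of the $\mu_3$-actions under the isomorphism $\Prym(X_1, [-1]|_{X_1})^{\vee} \simeq A$ furnished by Proposition \ref{proposition: smooth member pencil determines the surface}. While Barth's result gives the isomorphism of polarized surfaces, the equivariance must be pinned down using the universal property of Proposition \ref{proposition: properties of the embedding of C into A}(3): the $\mu_3$-action $\hat{\alpha}$ on $A$ preserves the closed embedding $X_1 \hookrightarrow A$ and acts on $X_1$ by the action inherited from its $\mu_6$-structure; by uniqueness in that universal property applied to the twisted embedding $\hat{\alpha}(\omega) \circ i$, the isomorphism with $\Prym(X_1)^{\vee}$ intertwines $\hat{\alpha}$ with the canonical $\mu_3$-action on the dual Prym. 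A similar verification is needed for morphisms.
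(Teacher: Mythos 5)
Your proposal is correct and follows essentially the same route as the paper's proof: both construct the quasi-inverse by selecting the marked $\mu_3$-fixed curve in the pencil $|\sheaf{L}|$ (via Proposition \ref{proposition: two mu3-fixed points are bielliptic picard with opposite signature}) and then invoke Propositions \ref{proposition: properties of the embedding of C into A} and \ref{proposition: smooth member pencil determines the surface} to verify the two compositions; your reformulation as essential surjectivity plus full faithfulness, and your explicit use of the universal property to pin down $\mu_3$-equivariance, simply spell out details the paper leaves implicit.
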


\begin{proof}
    This association is well defined by Lemma \ref{lemma: Prym and dual are mu3-abelian surfaces}.
    It is functorial: an isomorphism of marked bielliptic Picard curves $(C,\gamma)\rightarrow (C',\gamma')$ induces an isomorphism $\Prym(C, \gamma)\rightarrow \Prym(C',\gamma')$ of marked $\mu_3$-abelian surfaces.
    We explicitly construct a quasi-inverse.
    To a triple $(A, \lambda, \alpha)$, in the notation of Proposition \ref{proposition: two mu3-fixed points are bielliptic picard with opposite signature}, we associate the marked bielliptic Picard curve $(X_1, \gamma_1)$.
    Since the constructions of \S\ref{subsection: generalities (1,2)-polarized surfaces} and \S\ref{subsection: comptible mu3-actions (1,2)-polarized surfaces} are canonical, this association is also functorial. 
    The fact that this is a quasi-inverse follows from Propositions \ref{proposition: properties of the embedding of C into A} and \ref{proposition: smooth member pencil determines the surface}.
\end{proof}
By Lemma \ref{lemma: when are two bielliptic Picard curves isomorphic}, the left hand side of \eqref{equation: equivalence of categories torelli} is in bijection with the set of equivalence classes $\{(a,b)\in k\times k\mid b(a^2-4b)\neq 0 \}/\sim$, where $(a,b)\sim (a',b')$ if $(a,b) = (\lambda^6a',\lambda^{12}b')$ for some $\lambda\in k^{\times}$.
If $k = k^{\sep}$, the $j$-invariant \eqref{equation: j-invariant} maps this set bijectively to the `$j$-line' $k \setminus \{0\}$.
Theorem \ref{theorem: Torelli theorem bielliptic Picard curves} thus shows that this description also applies to the right hand side of \eqref{equation: equivalence of categories torelli}.
In particular, we may define the $j$-invariant of a $\mu_3$-abelian surface $(A, \lambda, \alpha)$ as $j((C, \gamma))$, where $(C, \gamma)$ is any bielliptic Picard curve with $\Prym(C, \gamma)^{\vee} = (A, \lambda, \alpha)$.

\begin{remark}
{\em
    Since $\Prym(C,\gamma)\simeq \Prym(\widehat{C}, \hat{\gamma})^{\vee}$ (Proposition \ref{proposition: bigonal duality}) and bigonal duality induces an autoequivalence on the category of marked bielliptic Picard curves, Theorem \ref{theorem: Torelli theorem bielliptic Picard curves} implies that the association $(C, \gamma) \mapsto \Prym(C, \gamma)$ also induces an equivalence of categories of the form \eqref{equation: equivalence of categories torelli}. 
    On the other hand, the association $(A, \lambda, \alpha)\mapsto (A, \lambda, \alpha^{-1})$ is again an auto-equivalence on the category of marked $\mu_3$-abelian surfaces. 
    In Corollary \ref{corollary: marked mu3-surface and dual are isomorphic up to twist} we will prove a precise relationship between these equivalences.
    }
\end{remark}

\begin{remark}
{\em
    Barth has shown that a Torelli-type result fails for the family of all bielliptic genus $3$ curves $(X, \tau)$.
    Suppose that $k$ is algebraically closed of characteristic zero and $A$ is an indecomposable $(1,2)$-polarized abelian surface $(A,\lambda)/k$.
    Then there exist many bielliptic genus $3$ curves $(X,\tau)$ over $k$ with dual Prym variety isomorphic to $(A, \lambda)$; by Proposition \ref{proposition: smooth member pencil determines the surface} any smooth member of the pencil $|\sheaf{L}|$ on $A$ will do. 
    Theorem \ref{theorem: Torelli theorem bielliptic Picard curves} shows that we can correct this when $C$ is a bielliptic Picard curve and we fix the $\mu_6$-action on $C$ and the $\mu_3$-action on $A$.
}
\end{remark}

\begin{remark}\label{remark: isomorphism of moduli stacks}
{\em
    Let $S$ be a $\Z[1/6]$-scheme.
    A marked bielliptic Picard curve over $S$ is a smooth proper morphism $C\rightarrow S$ together with a $\mu_6$-action $\gamma\colon \mu_6 \times_S C \rightarrow C$ such that for every geometric point $\bar{s} \in S$, $(C_{\bar{s}}, \gamma_{\bar{s}})$ is a marked bielliptic Picard curve.
    With the evident notion of isomorphisms, this defines the stack $\mathfrak{M}$ of (marked) bielliptic Picard curves over $\Z[1/6]$ (in the fppf topology), isomorphic to an open substack of the weighted projective stack $\P(6,12)$.
    A $\mu_3$-abelian surface over $S$ is a $(1,2)$-polarized abelian scheme $(A, \lambda)$ over $S$ 
    together with a $\mu_3$-action $\alpha \colon \mu_3\times_S  A\rightarrow A$ such that for every geometric point $\bar{s}\in S$, $(A_{\bar{s}}, \lambda_{\bar{s}}, \alpha_{\bar{s}})$ is a $\mu_3$-abelian surface. 
    This defines the stack $\mathfrak{Y}$ of  $\mu_3$-abelian surfaces over $\Z[1/6]$.
    There exists an open substack $\mathfrak{Y}^{\text{indec}}\subset \mathfrak{Y}$ of those surfaces such that each geometric fiber is indecomposable.
    Theorem \ref{theorem: Torelli theorem bielliptic Picard curves} then generalizes to the statement that the association $(C, \gamma)\mapsto \Prym(C, \gamma)^{\vee}$ induces an isomorphism of stacks $\mathfrak{M}\xrightarrow{\sim} \mathfrak{Y}^{\text{indec}}$.
}
\end{remark}

\section{Shimura curves and quaternionic multiplication}\label{connections to Shimura curves}

We relate the moduli space of bielliptic Picard curves to certain unitary and quaternionic Shimura curves, which will allow us to deduce that these bielliptic Picard Pryms surfaces $P$ have quaternionic multiplication over $\overline{k}$ (Corollary \ref{cor:QM existence}).
This will also help us study which $P$ are geometrically non-simple in \S\ref{subsec: split Prym varieties}, since these correspond to CM points on the Shimura curve side.

\subsection{The quaternion order \texorpdfstring{$\calO$}{O}}\label{subsec: the quaternion order O}

Let $B$ be the quaternion algebra $(-3,2)_{\Q}$ with $\Q$-basis $\{1,i,j,ij\}$ and relations $i^2 = -3$, $j^2 = 2$, $ij = -ji$.
This is an indefinite quaternion algebra of discriminant $6$.
Let $\omega := \frac12(-1+i)$, so that $\omega^2 + \omega + 1 = 0$.
Let $\mathcal{O} := \Z[\omega,j] = \Z  + \Z\omega + \Z j  + \Z \omega j$.
A discriminant calculation shows that $\mathcal{O}$ is a maximal order in $B$.
As an associative ring, $\calO$ has generators $\omega, j$ and relations $\omega^2+ \omega + 1=0$, $j^2=2$ and $ j \omega  =  \omega^{-1} j$.

Let $\Aut(\mathcal{O})$ denote the set of ring automorphisms of $\mathcal{O}$, which we will think of as acting on the right on $\mathcal{O}$.
By the Skolem--Noether theorem, every element of $\Aut(\mathcal{O})$ is given by conjugating by an element of $b\in B^{\times}$ (uniquely defined up to $\Q^{\times}$) normalizing $\mathcal{O}$; denote the conjugation action of such an element $b$ by $[b] \in \Aut(\calO)$.

For reasons that will become clear in \S\ref{subsec: endomorphism field}, we define the groups
\begin{align*}
    \Aut_{i}^+(\calO) &:= \{\varphi \in \Aut(\calO) \mid  i^{\varphi} = i \} ,\\
    \Aut_{i}(\calO) &:= \{\varphi \in \Aut(\calO) \mid  i^{\varphi} = \pm i \}.
\end{align*}

\begin{lemma}\label{lemma: explicit description of AutiO}
    $\Aut_i^+(\calO)= \langle [1- \omega]\rangle$ is cyclic of order $6$ and $\Aut_i(\calO)= \langle [1- \omega] , [j] \rangle$ is dihedral of order $12$.
\end{lemma}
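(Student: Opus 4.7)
The plan is to use the Skolem--Noether description recalled just before the lemma: every $\varphi \in \Aut(\calO)$ is of the form $[b]$ for some $b \in B^\times$ (well-defined modulo $\Q^\times$) normalizing $\calO$. The condition $i^{[b]} = i$ amounts to $b$ centralizing $i$; since the centralizer of $\Q(i) = \Q(\omega)$ in $B$ is $\Q(\omega)$ itself, this forces $b \in \Q(\omega)^\times/\Q^\times$. So describing $\Aut_i^+(\calO)$ reduces to finding those $b \in \Q(\omega)^\times$ (up to $\Q^\times$) that normalize $\calO$.

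Next I would exploit the defining relation $j\alpha = \bar\alpha j$ for $\alpha \in \Q(\omega)$ (bar denoting the nontrivial Galois involution) to obtain $[b](j) = (b\bar b^{-1})\,j$. Since $[b]$ is trivial on $\omega \in \Q(\omega)$, the automorphism preserves $\calO$ iff $b\bar b^{-1} \in \calO \cap \Q(\omega) = \Z[\omega]$, hence iff $b\bar b^{-1}$ is a unit of $\Z[\omega]$, i.e. a sixth root of unity. This defines a homomorphism $\chi \colon \Aut_i^+(\calO) \to \mu_6 = \Z[\omega]^\times$ by $[b] \mapsto b\bar b^{-1}$, which is injective because $b = \bar b$ forces $b \in \Q^\times$. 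A direct computation using $(1-\omega)^2 = -3\omega$ and $N(1-\omega) = 3$ gives $\chi([1-\omega]) = (1-\omega)^2/N(1-\omega) = -\omega$, which generates $\mu_6$. Thus $\chi$ is an isomorphism and $[1-\omega]$ generates the cyclic group $\Aut_i^+(\calO)$ of order $6$.

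For the second claim, I would consider the restriction map $\Aut_i(\calO) \to \Gal(\Q(i)/\Q) = \{\pm 1\}$, whose kernel is $\Aut_i^+(\calO)$. It is surjective because $jij^{-1} = -i$ (from $ij = -ji$), so $[j] \in \Aut_i(\calO)$ maps to $-1$. Hence $\Aut_i(\calO) = \langle \Aut_i^+(\calO), [j]\rangle$, and $[j]^2 = 1$ since $j^2 = 2 \in \Q^\times$. Finally, the commutation $j\omega j^{-1} = \omega^{-1}$ gives $[j][1-\omega][j]^{-1} = [j(1-\omega)j^{-1}] = [1-\omega^{-1}]$, and applying $\chi$ yields $-\omega^{-1} = \chi([1-\omega])^{-1}$, so $[j]$ acts on $\langle [1-\omega]\rangle$ by inversion. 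This exhibits $\Aut_i(\calO)$ as the dihedral group of order $12$.

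The only computation of substance is the evaluation $\chi([1-\omega]) = -\omega$; everything else is bookkeeping with the relations $i^2 = -3$, $j^2 = 2$, $ij = -ji$ in $\calO$. The main subtle point, which I would flag carefully, is checking that $\chi$ really is a group homomorphism and not merely a map of sets, but this is immediate from $\overline{bc}/(bc) = (\bar b/b)(\bar c/c)$.
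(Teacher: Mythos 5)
Your proof is correct, and it executes the key step differently from the paper. Both arguments begin the same way: Skolem--Noether plus the fact that the centralizer of $i$ in $B$ is $\Q(i)=\Q(\omega)$ reduces the computation of $\Aut_i^+(\calO)$ to deciding which $b\in\Q(\omega)^{\times}/\Q^{\times}$ normalize $\calO$. At that point the paper simply cites \cite[Lemma 2.2.1]{LSSV-QMMazur}, which says $[b]$ is represented by an element of $\calO\cap\Q(i)=\Z[\omega]$ of norm dividing $6$, and reads off that $[b]\in\langle[1-\omega]\rangle$; you instead build the injective homomorphism $\chi\colon[b]\mapsto b\bar b^{-1}\in\Z[\omega]^{\times}=\mu_6$ and compute $\chi([1-\omega])=-\omega$, a generator. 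Your route is self-contained (no appeal to the external lemma) at the cost of a page of bookkeeping; the paper's is shorter by citation, and the cited lemma is the one that generalizes to arbitrary maximal orders, which is why the authors phrase it that way. Two points you should make explicit: (i) the implication ``$b\bar b^{-1}\in\Z[\omega]$ implies $b\bar b^{-1}$ is a unit'' uses that $b\bar b^{-1}$ has norm $N(b)/N(\bar b)=1$ (alternatively, that an automorphism preserves $\calO$ in both directions, so $b\bar b^{-1}$ and its inverse both lie in $\Z[\omega]$); and (ii) your formula $j^{[b]}=(b\bar b^{-1})j$ versus $(b^{-1}\bar b)j$ depends on the left/right conjugation convention, but since the two differ by inversion this does not affect the argument. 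Everything else --- the well-definedness and injectivity of $\chi$, the evaluation $(1-\omega)^2/N(1-\omega)=-\omega$, the sign homomorphism to $\Gal(\Q(i)/\Q)$ split by $[j]$, and the relation $[j][1-\omega][j]^{-1}=[1-\omega]^{-1}$ giving the dihedral structure --- checks out.
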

\begin{proof}
    A direct calculation shows that $\langle [1- \omega] , [j] \rangle \subset \Aut_{i}(\calO)$, that this subgroup is dihedral of order $12$ and that $\langle [1- \omega] , [j] \rangle \cap \Aut_i^+(\calO) = \langle [1-\omega]\rangle$.
    Conversely, suppose $[b] \in \Aut_i(\calO)$.
    After possibly multplying by $j$, we may assume $i^{[b]} = b^{-1} i b = i$. 
    In that case $b$ centralizes $i$, so $b\in \Q(i)$. 
    By \cite[Lemma 2.2.1]{LSSV-QMMazur}, $[b]$ can be represented by an element of $\calO \cap \Q(i) = \Z[\omega]$ of norm dividing $6$. 
    Therefore $[b] \in \langle [1-\omega]\rangle$, as desired.
\end{proof}

\begin{lemma}\label{lem:subfields}
    Let $G\leq \Aut_i(\calO)$ be a subgroup.
    Then the subring $\calO^G$ fixed by $G$ is: $\calO$ if $G=\{1\}$; $\Z[\omega]$ if $\{1\} \neq G \subset \langle [1-\omega]\rangle$; isomorphic to $\Z[\sqrt{2}]$ if $G$ is conjugate to $\langle [j]\rangle$; isomorphic to $\Z[\sqrt{6}]$ if $G$ is conjugate to $\langle [ij]\rangle $.
    In all other cases, $G$ is dihedral of order $6$ or $12$ and $\calO^G =\Z$.
\end{lemma}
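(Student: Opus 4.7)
My approach is to first extend scalars to $B = \calO\otimes_\Z\Q$, which is a division algebra over $\Q$ (being a quaternion algebra of discriminant $6\neq 1$). The action of $\Aut_i(\calO)$ extends to $B$, and $\calO^G = \calO\cap B^G$ for every subgroup $G$. By Skolem--Noether, each element of $G$ is conjugation by some $b\in B^{\times}$ (unique up to $\Q^{\times}$), so $B^G$ is the centralizer in $B$ of a chosen set of lifts. Two basic facts about the division algebra $B$ will drive everything: the centralizer of any non-central $b\in B$ is the quadratic subfield $\Q(b)\subset B$ it generates, and two distinct quadratic subfields of $B$ intersect in $\Q$.

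I would then classify the subgroups of the dihedral group $\Aut_i(\calO)$ of order $12$ up to conjugacy and compute $B^G$ case by case. For any nontrivial $G\subset\langle [1-\omega]\rangle$, a generator lifts to an element of $\Q(\omega)\setminus\Q$, so $B^G = \Q(\omega)$ and $\calO\cap \Q(\omega) = \Z[\omega]$, the maximal order of $\Q(\omega)$. For $G = \langle [j]\rangle$ the centralizer is $\Q(j)\simeq \Q(\sqrt{2})$, and $\Z[j]\subset\calO$ is the maximal order; for $G = \langle [ij]\rangle$ the identity $(ij)^2 = -i^2 j^2 = 6$ gives centralizer $\Q(ij)\simeq \Q(\sqrt{6})$ with maximal order $\Z[ij]\subset\calO$.

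Next I would verify that $[j]$ and $[ij]$ represent the two distinct conjugacy classes of reflections in $\Aut_i(\calO)$, so that every reflection subgroup of order $2$ is conjugate to exactly one of them. Using $j\omega = \bar\omega j$ and $(1-\omega)(1-\bar\omega) = 3$, a short computation shows
\[(1-\omega)^k\, j\, (1-\omega)^{-k} \;=\; \left(\frac{1-\omega}{1-\bar\omega}\right)^{\!k}\! j \;=\; (-\omega/3)^k\, j,\]
so the orbit of $[j]$ under conjugation by $\langle [1-\omega]\rangle$ is $\{[j],[\omega j],[\omega^2 j]\}$; the same computation applied to $[ij]$ produces the disjoint orbit $\{[ij],[\omega ij],[\omega^2 ij]\}$. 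Together these exhaust all six reflections, which confirms the conjugacy classification.

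For all remaining subgroups---the two $S_3$-subgroups $\langle[\omega],[j]\rangle$ and $\langle[\omega],[ij]\rangle$, the three Klein four-subgroups $\langle[i],[s]\rangle$ for a reflection $s$, and all of $\Aut_i(\calO)$ itself---the strategy is uniform: each such $G$ contains two non-commuting lifts that generate distinct quadratic subfields of $B$, so $B^G$ lies in their intersection, which is $\Q$. Hence $\calO^G = \Z$. The main obstacle will be bookkeeping: one must enumerate the non-cyclic subgroups carefully and in particular include the Klein four-subgroups (not mentioned explicitly in the statement) among the "other cases"; for these the centralizer argument still works since $[i]$ and any reflection lie in different quadratic subfields of $B$.
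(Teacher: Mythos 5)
Your proof is correct. The paper's own proof is a one-line appeal to ``a direct calculation using the explicit description of $\Aut_i(\calO)$,'' so you are essentially supplying the details of that calculation, and organizing it via centralizers of lifts in the division algebra $B$ (so that $B^G$ is always $B$, a single quadratic subfield, or $\Q$) is a clean way to do so. Two small remarks: your displayed conjugation formula should read $(1-\omega)^k\, j\, (1-\omega)^{-k} = (-\omega)^k j$ rather than $(-\omega/3)^k j$, since $(1-\omega)/(1-\bar\omega) = (1-\omega)^2/3 = -\omega$ --- a scalar discrepancy that is invisible in $B^{\times}/\Q^{\times}$ and so does not affect your conjugacy-class conclusion; and you are right to flag that the Klein four-subgroups $\langle [i],[s]\rangle$ fall among the ``other cases'' even though the lemma's phrasing mentions only dihedral groups of order $6$ and $12$ --- your centralizer argument handles them correctly because $\Q(i)=\Q(\omega)$ meets each real quadratic subfield fixed by a reflection only in $\Q$.
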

\begin{proof}
    This is a direct calculation using the explicit description of $\Aut_i(\calO)$ from Lemma \ref{lemma: explicit description of AutiO}.
\end{proof}

\subsection{Moduli of marked \texorpdfstring{$\mu_3$}{mu3}-abelian surfaces}\label{subsec: moduli of bielliptic Picard curves as shimura curve}

In Remark \ref{remark: isomorphism of moduli stacks}, we have introduced the stack $\mathfrak{Y}$ of $\mu_3$-abelian surfaces $(A, \lambda, \alpha)$ over $\Z[1/6]$, which we now view as a stack over $\Q$.
This is a Deligne--Mumford stack; let $Y/\Q$ be its coarse space.
By a variant of \cite[Proposition 2.1]{KudlaRapoport-unitarySVs} for $(1,2)$-polarizations, $Y$ is smooth and purely one-dimensional.
The next lemma shows that $Y$ may be viewed as a compactification of the $j$-line $\P^1\setminus \{0,\infty\}$ from \S\ref{subsection: automorphisms}.

\begin{lemma}\label{lemma: Y isomorphic to P1}
    There exists a unique isomorphism $j\colon Y\xrightarrow{\sim} \P^1_{\Q}$ with the property that $j([\Prym(C, \gamma)^{\vee}]) = j(C)$ for all marked bielliptic Picard curves $(C, \gamma)$ over $\bar{\Q}$.
\end{lemma}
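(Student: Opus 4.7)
The plan is to combine the Torelli theorem with the coarse-space description of $\P(6,12)$, and then extend the resulting open immersion over the decomposable boundary of $\mathfrak{Y}$ using the Shimura curve interpretation developed in the remainder of this section.

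First, the stacky version of Torelli (Remark \ref{remark: isomorphism of moduli stacks}) gives an isomorphism $\mathfrak{M} \xrightarrow{\sim} \mathfrak{Y}^{\text{indec}}$ of $\Q$-stacks, hence an isomorphism of coarse moduli spaces. The stack $\mathfrak{M}$ is the open substack of $\P(6,12)_{\Q}$ cut out by $b(a^2 - 4b) \neq 0$, and by Lemmas \ref{lemma: when are two bielliptic Picard curves isomorphic} and \ref{lemma: bielliptic picard special iff a=0} the weight-zero invariant $j(a,b) = (4b - a^2)/(4b)$ realizes the coarse moduli map of $\mathfrak{M}$ and identifies its coarse space with $\A^1_{\Q} \setminus \{0\} = \P^1_{\Q} \setminus \{0, \infty\}$. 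Composing, we obtain a canonical open immersion $\psi\colon Y^{\text{indec}} \hookrightarrow \P^1_{\Q}$ with image $\P^1_{\Q} \setminus \{0,\infty\}$ satisfying $\psi([\Prym(C,\gamma)^\vee]) = j(C)$ on $\bar{\Q}$-points.

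To extend $\psi$ to all of $Y$, note that $Y^{\text{indec}}$ is open and dense in $Y$ (since $Y$ is smooth and purely one-dimensional, as already recorded). A point of $Y \setminus Y^{\text{indec}}$ parametrizes a decomposable $\mu_3$-abelian surface $(A,\lambda,\alpha)$, for which $A \simeq E_1 \times E_2$ as polarized abelian surfaces; the signature condition of Definition \ref{definition: mu3-abelian surface}(2) forces $\mu_3$ to act on each $T_0 E_i$ via a nontrivial character, so each $E_i$ admits an order-three automorphism and hence $j(E_i) = 0$. A short analysis of $\mu_3$-compatible $(1,2)$-polarizations on $E_1 \times E_2$ then shows that $Y_{\bar{\Q}} \setminus Y_{\bar{\Q}}^{\text{indec}}$ is zero-dimensional. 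Combined with the properness of $Y$---which follows from the comparison with the compact quaternionic Shimura curve for the non-split discriminant-six quaternion algebra in \S\ref{subsec: comparison with quaternionic curve}---we conclude that $Y$ is a smooth proper geometrically irreducible $\Q$-curve containing $\P^1_{\Q} \setminus \{0,\infty\}$ as a dense open subscheme. Since any $\Q$-rational bielliptic Picard curve (e.g.\ $C_{0,1}$) produces a $\Q$-point of $Y^{\text{indec}}$, the curve $Y$ has a $\Q$-rational point and is therefore isomorphic to $\P^1_{\Q}$. The extension of $\psi$ is automatically unique because $Y^{\text{indec}}$ is Zariski-dense in $Y$ and $\P^1_{\Q}$ is separated.

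The main obstacle in this plan is the extension step, specifically the properness of $Y$: the modular definition does not make $Y$ obviously proper, and without properness $Y$ need only be a smooth open subcurve of $\P^1$. The quaternionic Shimura curve interpretation supplies this properness by realizing $Y$ as a compact curve attached to a non-split quaternion algebra, and the boundary count then becomes combinatorial.
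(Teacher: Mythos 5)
Your first step---identifying $Y^{\text{indec}}$ with $\P^1_{\Q}\setminus\{0,\infty\}$ via the Torelli theorem and the $j$-invariant---is exactly what the paper does. The problem is the extension step. You obtain properness of $Y$ from ``the comparison with the compact quaternionic Shimura curve in \S\ref{subsec: comparison with quaternionic curve},'' but that comparison is not available at this point: the proof of Proposition \ref{proposition: Y is quotient of shimura curve twist} begins by treating $\pi_d\colon X^{w_d}\to Y$ as a morphism of \emph{nice} (in particular proper) curves, which presupposes the conclusion of Lemma \ref{lemma: Y isomorphic to P1}, and the surjectivity of $\pi_2$ onto $Y$---which is what would transfer compactness from $X$ to $Y$---is equivalent to the statement that every $\mu_3$-abelian surface admits quaternionic multiplication, i.e.\ Corollary \ref{cor:QM existence}, itself deduced from Proposition \ref{proposition: Y is quotient of shimura curve twist}. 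So as written your argument is circular. (One could break the circle by importing the Petkova--Shiga period computation over $\C$, but you do not do this and the paper deliberately avoids it.)

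The gap is compounded by the fact that without properness your boundary analysis is too weak: you only show $Y\setminus Y^{\text{indec}}$ is zero-dimensional, and a smooth separated curve containing $\mathbb{G}_m$ as a dense open with \emph{finite} complement could still be $\P^1\setminus\{0\}$ or $\mathbb{G}_m$ itself. The paper's proof avoids properness entirely by computing the boundary \emph{exactly}: every decomposable $\mu_3$-abelian surface over $\C$ is forced to be $(E\times E,\lambda_O\times 2\lambda_O,\alpha_1\times\alpha_2)$ with $E\colon y^2=x^3+1$ and $\alpha_1,\alpha_2$ mutually inverse, giving precisely $2$ points of $(Y\setminus Y^{\text{indec}})(\C)$. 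Since $Y$ is smooth and $Y^{\text{indec}}$ is dense, $Y$ is an open subscheme of the smooth compactification $\P^1$ of $\mathbb{G}_m$, and a complement of size exactly $2$ forces $Y=\P^1_{\Q}$. If you replace your properness appeal with this exact count (which is the ``short analysis'' you gesture at but do not carry out), your argument closes and coincides with the paper's.
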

\begin{proof}
    The open substack $\mathfrak{Y}^{\text{indec}}\subset \mathfrak{Y}$ parametrizing those triples $(A, \lambda, \alpha)$ for which $(A, \lambda)$ is indecomposable has coarse space an open subset $Y^{\text{indec}} \subset Y$.
    We first analyze the complement.
    Every decomposable $(1,2)$-polarized abelian surface $(A, \lambda)$ over $\mathbb{C}$ must be of the form $(E_1, \lambda_O)\times (E_2, 2\lambda_O)$, where $E_i$ are elliptic curves and $\lambda_O$ is the unique principal polarization on an elliptic curve.
    If $\alpha$ is a $\mu_3$-action on $A$ satisfying the conditions of Definition \ref{definition: mu3-abelian surface}, then $\alpha$ restricts to a nontrivial action on $E_i$ for each $i$. It follows that $E_i$ must be isomorphic to $E \colon y^2 = x^3+ 1$. 
    We conclude that every decomposable $\mu_3$-abelian surface $(A, \lambda, \alpha)$ over $\C$ is isomorphic to $(E\times E, \lambda_O \times 2\lambda_O, \alpha_1\times \alpha_2)$, where $\alpha_i$ are mutually inverse nontrivial $\mu_3$-actions on $E$.
    Since there are two choices for the pair $(\alpha_1,\alpha_2)$, even up to isomorphism, $(Y\setminus Y^{\text{indec}})(\C)$ has size $2$.
    Since $Y$ is purely one-dimensional, $Y^{\text{indec}}$ is dense in $Y$.
    Theorem \ref{theorem: Torelli theorem bielliptic Picard curves} and Lemma \ref{lemma: when are two bielliptic Picard curves isomorphic} show that the morphism $\A^2_{\Q} \setminus \{\Delta_{a,b}=0\} \rightarrow Y^{\text{indec}}, (a,b)\mapsto [\Prym(C_{a,b}, \gamma_{a,b})^{\vee}]$ is surjective, factors through the $j$-invariant morphism $\A^2_{\Q} \setminus \{\Delta_{a,b}=0\} \rightarrow \P^1_{\Q}\setminus\{0,\infty\}, (a,b)\mapsto j_{a,b}$ of \eqref{equation: j-invariant}, and induces an isomorphism $\P^1_{\Q} \setminus \{0,\infty\} \simeq Y^{\text{indec}}, j \mapsto [\Prym(C_{a,b}, \gamma_{a,b})^{\vee}]$, where $a,b$ are any elements with $j = j_{a,b}$.
    Putting everything together, $Y$ is a purely one-dimensional smooth variety having an open subset isomorphic to $\P^1\setminus \{0,\infty\}$ whose complement has size $2$.
    We conclude that $Y\simeq \P^1_{\Q}$ and that the isomorphism $j\colon Y^{\text{indec}}\rightarrow \P_{\Q}^1\setminus\{0,\infty\}$ uniquely extends to an isomorphism $Y\rightarrow \P_{\Q}^1$.
\end{proof}

\begin{remark}
    {\em
    The curve $Y_{\C}$ is a disjoint union of Shimura varieties for groups of type $\mathrm{GU}(1,1)$.
    Similarly to \cite[Proposition 3.1]{KudlaRapoport-unitarySVs},  the connected components of $Y_{\C}$ are parametrized by isometry classes of rank $2$ Hermitian $\Z[\omega]$-lattices of discriminant $2$.
    Lemma \ref{lemma: Y isomorphic to P1} shows that there is precisely one such Hermitian lattice, although this is certainly not the easiest way to see that.
    }
\end{remark}

\begin{remark}
    {\em
    The isomorphism $j\colon Y\rightarrow \P^1$ identifies $Y^{\text{indec}}$ with $\P^1\setminus \{0,\infty\}$.
    The two decomposable points $0,\infty$ might be considered `cusps'.
    }
\end{remark}

\subsection{Comparison with a quaternionic Shimura curve}\label{subsec: comparison with quaternionic curve}
We now compare $Y$ to a quaternionic Shimura curve, in the same spirit as \cite[\S4.2]{Howard-unitaryshimuracurves}.
However, our analysis is a little different since we must consider twists and non-principal polarizations. 
We refer the reader to \cite[Chapter 43]{Voight-Quaternionalgebras} for background on Shimura curves.
\begin{definition}
Let $X$ be the (quaternionic) Shimura curve of discriminant $6$ over $\Q$.    
\end{definition}
The nice curve $X$ is the coarse space of the moduli stack of triples $(A, \lambda, \iota)$, where $(A, \lambda)$ is a $(1,2)$-polarized abelian surface and $\iota\colon \calO\rightarrow \End(A)$ is an embedding such that the Rosati involution restricts to $b\mapsto i \bar{b} i^{-1}$, where we recall from \S\ref{subsec: the quaternion order O} that $i=1+2\omega$.
(The moduli problem is typically formulated for principal polarizations, but the results of \cite[\S43.6]{Voight-Quaternionalgebras} have analogues for the polarized order $(\calO,i)$, see \cite[\S12]{Boutot79}.)
To such a triple $(A, \lambda, \iota)$ we would like to associate a $\mu_3$-abelian surface by restricting $\iota$ to $\Z/3\Z\simeq\langle \omega\rangle \subset \Z[\omega]^{\times} \subset \calO^{\times}$.
However, this defines a $\Z/3\Z$-action on $A$; to get a $\mu_3$-action we need to suitably twist the situation.

To this end, recall the action of the Atkin--Lehner group $W = \{1,w_2,w_3,w_6\}\simeq \Z/2\Z\times \Z/2\Z$ on $X$: if $b\in \calO$ is any element whose (reduced) norm has absolute value $d\in \{2,3,6\}$, then $w_d([A, \lambda, \iota]) = [A, \lambda, \iota\circ [b]]$, where $[b] \in \Aut(\calO)$ denotes conjugation by $b$.
The following curves will be important:
\begin{itemize}
    \item Let $X/w_3$ be the quotient of $X$ by $w_3$. The $W$-action on $X$ induces an action of $W/\langle w_3\rangle = \{1,\bar{w}\}$ on $X/w_3$.
    \item Let $(X/w_3)^{\bar{w}}$ be the twist of $X/w_3$ along the involution $\bar{w}$ and extension $\Q(\omega)/\Q$. 
    \item For each $d\in \{2,6\}$, let $X^{w_d}$ be the twist of $X$ along the involution $w_d$ and extension $\Q(\omega)/\Q$.
\end{itemize}
We recall that the twist of a nice curve $C/k$ along an involution $\tau$ and quadratic extension $K/k$ corresponds to the cocycle given by the image of the nontrivial element under $\HH^1(\Gal(K/k), \langle \tau\rangle) \rightarrow \HH^1(k, \mathbf{Aut}(C))$.
Note that $W$ acts on $X^{w_2}$ and $X^{w_6}$, $\bar{w}$ acts on $(X/w_3)^{\bar{w}}$, and we have canonical isomorphisms $X^{w_2}/w_3 \simeq X^{w_6}/w_3 \simeq (X/w_3)^{\bar{w}}$.

To compare $X$ and $Y$, we will construct morphisms $\pi_d\colon X^{w_d}\rightarrow Y$ for each $d\in \{2,6\}$. 
In the following paragraph, for a field $k/\Q$ we write $K := k\otimes_{\Q} \Q(\omega)$ and $\Aut_k(K) = \{1,\sigma\}$.
The curve $X^{w_2}$ is the coarse space of the moduli stack whose $k$-points parametrizes triples $(A, \lambda, \iota)$, where $(A, \lambda)$ is a $(1,2)$-polarized abelian surface over $k$ and $\iota\colon \calO\rightarrow \End(A_{K})$ is an embedding such that the Rosati involution restricts to $b\mapsto i\bar{b} i^{-1}$ and such that $\iota^{\sigma} = \iota \circ [j]$.
We claim that such a triple naturally gives rise to a $\mu_3$-abelian surface $(A, \lambda, \iota|_{\langle \omega\rangle})$ over $k$.
Indeed, restricting $\iota$ to $\langle \omega \rangle \subset \Z[\omega]^{\times} \subset \calO^{\times}$ defines homomorphism $\Z/3\Z\rightarrow \End(A_K)$ which due to the twisting descends to a $\mu_3$-action on $A$.
The triple $(A, \lambda, \iota|_{\langle \omega\rangle})$ satisfies first condition of Definition \ref{definition: mu3-abelian surface} since the Rosati involution sends $\iota(\omega)$ to $\iota(\omega^{-1})$.
The second condition follows from the identity $\omega^2+\omega+1=0$.
The association $(A, \lambda, \iota)\mapsto (A, \lambda, \iota|_{\langle \omega\rangle})$ also works on the level of $\Q$-schemes, hence it induces a morphism $\pi_2\colon X^{w_2}\rightarrow Y$.
Repeating the above discussion but insisting that $\iota^{\sigma} = \iota \circ [ij]$ instead of $\iota^{\sigma}= \iota \circ [j]$, we obtain a morphism $\pi_6\colon X^{w_6}\rightarrow Y$.

\begin{proposition}\label{proposition: Y is quotient of shimura curve twist}
    For each $d \in \{2,6\}$, $\pi_d\colon X^{w_d}\rightarrow Y$ induces an isomorphism $\phi_d\colon X^{w_d}/w_3\xrightarrow{\sim} Y$.
    Under the canonical isomorphisms $X^{w_2}/w_3\simeq X^{w_6}/w_3\simeq (X/w_3)^{\bar{w}}$, $\phi_2$ and $\phi_6$ correspond to the same isomorphism $(X/w_3)^{\bar{w}}\simeq Y$.
\end{proposition}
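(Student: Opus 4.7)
The plan is to verify that $\pi_d$ is $w_3$-invariant, and then to show the induced $\phi_d$ is an isomorphism via a fiber count over a generic geometric point.

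For the factorization, I would observe that $i = 1 + 2\omega \in \calO$ has reduced norm $3$, so on $X$ the Atkin--Lehner involution acts by $w_3(A, \lambda, \iota) = (A, \lambda, \iota \circ [i])$. Since $i$ commutes with $\omega$, the inner automorphism $[i]$ fixes $\Z[\omega]$ pointwise, and thus $\pi_d$ is $w_3$-invariant. A parallel check shows $w_3$ descends to each twist $X^{w_d}$: the cocycle conditions $\iota^\sigma = \iota \circ [b]$ for $b \in \{j, ij\}$ are preserved since $[i]$ commutes with both $[j]$ and $[ij]$ in $\Aut(\calO)$ (as $ji = -ij$ differs from $ij$ by a scalar). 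This produces $\phi_d \colon X^{w_d}/w_3 \to Y$.

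To show $\phi_d$ is an isomorphism, I would observe that both source and target are smooth, proper, geometrically connected curves over $\Q$ of genus zero: $Y \simeq \P^1_\Q$ by Lemma \ref{lemma: Y isomorphic to P1}, while $X^{w_d}/w_3$ is a twist of the classical genus-zero Atkin--Lehner quotient $X/w_3$. It therefore suffices to prove $\deg \phi_d = 1$. Working over $\bar{\Q}$, where $X^{w_d}$ identifies with $X$, a generic fiber of $\pi_d$ above $(A, \lambda, \alpha)$ consists of QM embeddings $\iota \colon \calO \hookrightarrow \End(A_{\bar{\Q}})$ extending $\alpha$, modulo isomorphism of triples. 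Two such embeddings differ by an element of $\Aut_i^+(\calO)$, which is cyclic of order $6$ by Lemma \ref{lemma: explicit description of AutiO}. Conjugation by the automorphism group of $(A, \lambda, \alpha)$, which is $\Z[\omega]^\times$, induces the order-$3$ subgroup $\langle[\omega]\rangle \subset \Aut_i^+(\calO)$, so each generic fiber of $\pi_d$ has size $2$. The nontrivial coset of $\langle[\omega]\rangle$ is represented by $[i]$, which is precisely the action of $w_3$; this identifies the two elements of the fiber, yielding $\deg \phi_d = 1$. Compatibility of $\phi_2$ and $\phi_6$ under the canonical identifications $X^{w_2}/w_3 \simeq (X/w_3)^{\bar{w}} \simeq X^{w_6}/w_3$ is then formal: both $\phi_d$ arise from the same universal construction $(A,\lambda,\iota) \mapsto (A,\lambda,\iota|_{\langle\omega\rangle})$, which only depends on the descended triple and not on the cocycle $[j]$ or $[ij]$ used to define the twist.

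The main obstacle is that this fiber count implicitly assumes $\pi_d$ is dominant, which amounts to the claim that a generic $\mu_3$-abelian surface admits at least one QM structure---essentially the quaternionic multiplication statement that this proposition is meant to establish. To avoid circularity, I would instead exhibit two $\bar{\Q}$-points of $X^{w_d}$ with distinct images in $Y$ (using for instance the explicit CM points on the Shimura curve together with their computed $j$-invariants), so that non-constancy of a morphism between irreducible projective curves forces surjectivity, and the fiber computation applies away from a finite locus.
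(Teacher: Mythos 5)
Your proposal follows essentially the same route as the paper's proof: $w_3$-invariance via $i = 1+2\omega$, and a generic fiber count identifying $\pi_d^{-1}(y)$ with the orbit space $\Aut_i^+(\calO)/\langle[\omega]\rangle$ of size $2$, with $w_3$ (acting through $[i]=[1-\omega]^3$) interchanging the two points of each fiber. The only place you diverge is the dominance worry at the end, and there your fix is both unnecessary and, as stated, circular. It is unnecessary because the fiber to compute is $\pi_d^{-1}(\pi_d(x))$ for a non-CM point $x=[A,\lambda,\iota]$ of $X(\C)$: this fiber is nonempty by construction (it contains $x$), and showing it has size $2$ already rules out constancy, since a constant map has an infinite fiber over the one point of its image. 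As there are uncountably many non-CM points and each such fiber has size $2$, infinitely many fibers have size $2$, so $\pi_d$ is a finite cover of degree $2$ and $\phi_d$ has degree $1$; no independent surjectivity input is needed. It is circular because the ``explicit CM points together with their computed $j$-invariants'' are only obtained later (Lemma \ref{lemma: comparison j-function and elkies coordinate} and Proposition \ref{proposition: all CM j-invariants}), and those computations rely on this very proposition; you would have to supply an independent calculation of the $j$-invariant of at least two CM surfaces to make that route work. The rest of your argument, including the identification of $\phi_2$ and $\phi_6$ over $\C$, matches the paper.
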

\begin{proof}
    It suffices to check these claims over $\C$, so let $\pi = \pi_{2,\C} = \pi_{6,\C}$. 
    The morphism $\pi$ is given by mapping an isomorphism class of triples $[A, \lambda, \iota]$ to $[A, \lambda, \iota|_{\langle \omega\rangle }]$.
    We have $\pi\circ w_3 = \pi$, since $w_3([A, \lambda, \iota]) = [A, \lambda, \iota\circ [i]]$ and $\iota|_{\langle\omega\rangle}=(\iota\circ [i])|_{\langle\omega\rangle}$.
    (Recall that $i = 1+2\omega$.)

    Since $\pi$ is a morphism between nice curves, it is either constant or a finite covering map.
    To prove the proposition, it suffices to show that $\pi$ has degree $2$, equivalently infinitely many fibres $\pi^{-1}(y)$ have size $2$.
    Let $x = [A, \lambda, \iota] \in X(\C)$ be a non-CM point and $y = \pi(x) = [A,\lambda, \iota|_{\langle \omega\rangle}]$; there are infinitely many (in fact uncountably many) such points $x$ by the theory of Shimura curves \cite[\S2.4]{Elkies-Shimuracurvecomputations}.
    The set $\pi^{-1}(y)$ is in bijection with the set of isomorphism classes of triples $(A',\lambda',\iota')$ such that $(A, \lambda, \iota|_{\langle \omega\rangle}) \simeq (A', \lambda', \iota'|_{\langle \omega\rangle})$.
    Replacing $(A',\lambda',\iota')$ by an isomorphic triple, we may assume that $(A',\lambda') = (A, \lambda)$ and $\iota'|_{\Z[\omega]} = \iota|_{\Z[\omega]}$.
    Therefore $\pi^{-1}(y)$ is in bijection with the orbit space 
    \begin{align}\label{equation: fibre of map pi}
        \{ \iota'\colon \calO \rightarrow \End(A) \mid \iota' \text{ compatible with Rosati involution and }\iota'|_{\Z[\omega]} = \iota|_{\Z[\omega]}\}/\Aut(A, \lambda).
    \end{align}
    Since $x$ is assumed to be a non-CM point and $\calO$ is maximal, $\End(A) \simeq \calO$.
    It follows that every $\iota'$ of \eqref{equation: fibre of map pi} is of the form $\iota\circ \alpha$ for some $\alpha\in \Aut(\calO)$.
    Since $\iota'|_{\Z[\omega]} = \iota|_{\Z[\omega]}$, $\alpha$ lies in the subgroup $\Aut_i^{+}(\calO) = \langle [1-\omega]\rangle$ of Lemma \ref{lemma: explicit description of AutiO} (using the notation of \S\ref{subsec: the quaternion order O}).
    Conversely, every element $[b]\in \langle [1-\omega]\rangle$ defines an element of \eqref{equation: fibre of map pi}, and two elements $[b], [b']$ lie in the same $\Aut(A, \lambda)$-orbit if and only if $b'b^{-1} \in \Q^{\times} \calO^{\times}$.
    It follows that this orbit space has size $|\pi^{-1}(y)|=|\langle [1-\omega]\rangle/\langle [\omega] \rangle|=2$, as desired.
\end{proof}

\begin{corollary}\label{cor:QM existence}
Every $\mu_3$-abelian surface $(A,\lambda,\alpha)$ over a field $k$ of characteristic $0$ admits an embedding $\calO \hookrightarrow \End(A_{k^{\sep}})$.
\end{corollary}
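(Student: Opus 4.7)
The plan is to descend the question to the quaternionic Shimura curve $X$ via Proposition \ref{proposition: Y is quotient of shimura curve twist}. A $\mu_3$-abelian surface $(A,\lambda,\alpha)/k$ determines a $k$-point $y\in Y(k)$, and in characteristic zero $k^{\sep}=\bar{k}$, so it suffices to produce a QM structure on $A_{k^{\sep}}$. The strategy is to lift $y_{k^{\sep}}\in Y(k^{\sep})$ along $\pi_2\colon X^{w_2}\to Y$ and then exploit the fact that the twist $X^{w_2}$ trivializes over $k^{\sep}$ to read off the QM structure from the moduli interpretation of $X$.

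To carry this out, I would first invoke Proposition \ref{proposition: Y is quotient of shimura curve twist}, which realises $Y$ as the quotient $X^{w_2}/w_3$, so that $\pi_2$ is a finite surjective morphism of nice curves of degree $2$. In particular, the induced map $X^{w_2}(k^{\sep})\to Y(k^{\sep})$ is surjective, so there exists a point $x\in X^{w_2}(k^{\sep})$ lying above $y_{k^{\sep}}$. Since $X^{w_2}$ becomes canonically isomorphic to $X$ after base change to any field containing $\Q(\omega)$, the point $x$ yields a $k^{\sep}$-point of $X$. The coarse moduli interpretation of $X$ over the algebraically closed field $k^{\sep}$ then associates to $x$ an isomorphism class of triples $(A',\lambda',\iota')$ where $\iota'\colon\calO \hookrightarrow \End(A')$ is an embedding compatible with $\lambda'$. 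By the construction of $\pi_2$ recalled in \S\ref{subsec: comparison with quaternionic curve}, the image $\pi_2(x)$ corresponds to the $\mu_3$-abelian surface $(A',\lambda',\iota'|_{\langle\omega\rangle})$; equating this with $y_{k^{\sep}}=[A_{k^{\sep}},\lambda_{k^{\sep}},\alpha_{k^{\sep}}]$ produces an isomorphism $\phi\colon (A_{k^{\sep}},\lambda_{k^{\sep}},\alpha_{k^{\sep}})\xrightarrow{\sim}(A',\lambda',\iota'|_{\langle\omega\rangle})$ of $\mu_3$-abelian surfaces. Transporting $\iota'$ along $\phi$ produces the desired embedding $\calO\hookrightarrow \End(A_{k^{\sep}})$.

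The only subtle point is to verify that a $k^{\sep}$-point of the coarse space $X$ genuinely lifts to an actual triple $(A',\lambda',\iota')$ defined over $k^{\sep}$; this is standard for coarse moduli spaces of Deligne--Mumford stacks at geometric points in characteristic zero, so no serious obstacle remains once Proposition \ref{proposition: Y is quotient of shimura curve twist} is in hand.
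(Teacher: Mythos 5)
Your proposal is correct and follows essentially the same route as the paper: both lift the point $[A,\lambda,\alpha]\in Y(k)$ to a $k^{\sep}$-point of $X^{w_2}$ via Proposition \ref{proposition: Y is quotient of shimura curve twist} and then invoke the moduli interpretation of $X$ over the algebraically closed field $k^{\sep}$. You simply spell out in more detail the steps the paper leaves implicit, including the (correct) remark that geometric points of the coarse space of a Deligne--Mumford stack in characteristic zero lift to actual objects.
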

\begin{proof}
    Proposition \ref{proposition: Y is quotient of shimura curve twist} shows that the point $[A, \lambda, \alpha] \in Y(k)$ lifts to a $k^{\sep}$-point of $X^{w_2}$ under $\pi_2$.
    The corollary follows from the moduli interpretation of $X$.
\end{proof}

\begin{remark}
    {\em 
Corollary \ref{cor:QM existence} was proved  by Petkova and Shiga over $\C$ using period matrix calculations \cite[Proposition 7.1]{PetkovaShiga}. 
}
\end{remark}

Our proof of Corollary \ref{cor:QM existence} shows that there exists an action of $\calO$ on $P_{k^{\sep}}$, but it sheds no light on what this action actually is. 
In Proposition \ref{proposition: Prym has O-PQM} we will give a more geometric proof of the quaternionic multiplication of $P_{k^{\sep}}$, which works in all characteristics $\neq 2,3$  and gives finer arithmetic information, such as the field of definition of the quaternionic multiplication, see \S\ref{subsec: endomorphism field}.

\section{Identifying bielliptic Picard curves in the pencil}\label{sec: identifying bielliptic pic curves in the pencil}

Let $(C, \gamma)$ be a marked bielliptic Picard curve.
Its Prym variety $\Prym(C, \gamma)=(P, \lambda, \alpha)$ is an indecomposable  $\mu_3$-abelian surface (Lemma \ref{lemma: Prym and dual are mu3-abelian surfaces}).
Therefore the results of \S\ref{subsection: generalities (1,2)-polarized surfaces}-\ref{subsection: comptible mu3-actions (1,2)-polarized surfaces} apply, and the pencil of curves on $P$ contains exactly two bielliptic Picard curves by Proposition \ref{proposition: two mu3-fixed points are bielliptic picard with opposite signature}.
We have already seen in \S\ref{subsection: bigonal duality} that this pencil contains the bigonal dual curve $\widehat{C}$.
In this section we will determine the other bielliptic Picard curve in the pencil: it turns out to be a sextic twist of the original curve $C$!
This calculation is a crucial ingredient for the results of \S\ref{endomorphisms of the Prym variety}.

Let $\widehat{C}\subset P$ be the dual curve of $C$ from \S\ref{subsection: bigonal duality}.
Then $\sheaf{M} :=\calO_P(\widehat{C})$ is the unique line bundle on $P$ representing the $(1,2)$-polarization with base locus $P[\lambda]$ (Proposition \ref{lemma: (1,2) surfaces unique symmetric line bundle and each divisor is symmetric}). 
By Lemma \ref{lemma: bigonal dual is a bielliptic Picard curve}, $\widehat{C}$ is a bielliptic Picard curve preserved by the $\mu_3$-action $\alpha$.
By Proposition \ref{proposition: two mu3-fixed points are bielliptic picard with opposite signature}, there is exactly one other curve in $|\sheaf{M}|$ that is preserved by the $\mu_3$-action; call this curve $C_{\text{twist}}$.
We will explicitly describe $C_{\text{twist}}$ using the work of Ikeda \cite{Ikeda-Biellipticcurvesofgenusthree}.

We will use the notations from \S\ref{subsection: the prym variety P}.
If $p\in E(k)$, let $\Theta_p$ be the image of the map $\Sym^2 C\rightarrow J$ sending $Q + Q'$ to $Q + Q' - \pi^*(p)$.
Then $D_p := \Theta_p \cap P$ is a divisor on $P$.  Note that $D_{\pi(\infty)} = \widehat{C}$.
Recall that we view $E$ as an elliptic curve with origin $O_E := \pi(\infty) \in E(k)$. 
\begin{lemma}
    \begin{enumerate}
        \item For every $p\in E(k)$, $D_p\in |\sheaf{M}|(k)$.
        \item If $k = k^{\sep}$, every element of $|\sheaf{M}|$ is of the form $D_p$ for some $p \in E(k)$.
        \item For every $p\in E(k)$, $D_{\iota(p)} = D_p$, where $\iota(p)$ denotes the inverse of $p$ under the group law. 
    \end{enumerate}
\end{lemma}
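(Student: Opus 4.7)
The plan is to handle (1), (3), (2) in that order, using translation and symmetry properties of the theta divisor $\Theta_{2\infty} \subset J$.

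For (1), I will use that $\Theta_p$ and $\Theta_{2\infty}$ differ by translation in $J$ by the point $\pi^*(p) \in J$ (the class of $\pi^*(p) - 2\infty$). Hence $[\Theta_p] - [\Theta_{2\infty}] \in \Pic^0(J) = J^\vee$ is the image of $\pm \pi^*(p)$ under the principal polarization $\phi_J \colon J \xrightarrow{\sim} J^\vee$. The restriction $\Pic^0(J) \to \Pic^0(P) = P^\vee = A$ agrees, via $\phi_J$, with the quotient map $J \to A$ from the diagram in \S\ref{subsection: the prym variety P}: dualizing the horizontal sequence $0 \to P \to J \to E \to 0$ and applying the self-polarizations of $J$ and $E$ recovers the vertical sequence $0 \to E \xrightarrow{\pi^*} J \to A \to 0$. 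Since $\pi^*(p)$ lies in the kernel of this quotient, we conclude $[D_p] = [\widehat{C}] = [\sheaf{M}]$ in $\Pic(P)$; the $k$-rationality of $D_p$ follows from that of $\Theta_p$ and $P$.

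For (3), the divisor $\Theta_{2\infty}$ is $[-1]$-symmetric, since $\infty$ is a hyperflex of the canonically embedded plane quartic $C$ and so $2\infty$ is a theta characteristic. Combined with the translation identity from (1), a short computation yields $[-1](\Theta_p) = \Theta_{\iota(p)}$, and hence $D_{\iota(p)} = [-1](D_p)$ as divisors on $P$. By Lemma \ref{lemma: (1,2) surfaces unique symmetric line bundle and each divisor is symmetric}, every divisor in $|\sheaf{M}|$ is fixed by $[-1]$, so $D_{\iota(p)} = D_p$.

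For (2), part (1) provides a morphism $\phi \colon E \to |\sheaf{M}| \simeq \P^1$, $p \mapsto D_p$, and it suffices to show $\phi$ is non-constant. Were $\phi$ constant, every $D_p$ would equal $\widehat{C}$, giving $\widehat{C} = \bigcup_{p \in E} D_p = P \cap (\Theta_{2\infty} + \pi^*(E))$. Ampleness of $\Theta_{2\infty}$ forces its stabilizer in $J$ to be finite, so $\Theta_{2\infty} + \pi^*(E)$ is a closed three-dimensional subvariety of $J$ and therefore equals $J$; this would imply $\widehat{C} = P$, which is absurd.

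The main anticipated obstacle is the polarization bookkeeping in (1): identifying the restriction $\Pic^0(J) \to \Pic^0(P)$ with the quotient $J \to A$ via the principal polarization of $J$, together with the correct diagram chase in \S\ref{subsection: the prym variety P}. Once that is pinned down, parts (3) and (2) follow cleanly from the symmetry of $\Theta_{2\infty}$ and a dimension count, respectively.
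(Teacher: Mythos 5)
Your proof is correct, but it is genuinely different from the paper's, which disposes of all three parts by citing Ikeda (\cite[Lemma 3.4, Remark 3.5]{Ikeda-Biellipticcurvesofgenusthree}) rather than arguing from scratch. Your route is the natural self-contained one: for (1) you use that $\Theta_p$ is the translate of $\Theta_{2\infty}$ by $\pm\pi^*(p-O_E)$ together with the fact that the two exact sequences in the diagram of \S\ref{subsection: the prym variety P} are dual to one another, so that the restriction map $\Pic^0(J)\to\Pic^0(P)$ kills $\lambda_\Theta(\pi^*(E))$; for (3) you combine the symmetry of $\Theta_{2\infty}$ (valid because $\infty$ is a hyperflex, so $2\infty$ is a theta characteristic and $[-1]\Theta_p=\Theta_{\iota(p)}$) with Lemma \ref{lemma: (1,2) surfaces unique symmetric line bundle and each divisor is symmetric}; and for (2) you observe that a constant map $E\to|\sheaf{M}|$ would force $P\cap(\Theta_{2\infty}+\pi^*(E))=\widehat{C}$, while $\Theta_{2\infty}+\pi^*(E)=J$ because the stabilizer of an ample divisor is finite. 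All of these steps check out (including the identification $\sheaf{M}=\calO_P(\widehat{C})$ with the distinguished symmetric line bundle, which the paper records just before the lemma), so what your argument buys is a proof that does not lean on Ikeda's computations, at the cost of some polarization bookkeeping that the citation avoids.

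One small point you should add to (1): your linear-equivalence computation shows $\calO_J(\Theta_p)|_P\simeq\sheaf{M}$, but for $D_p=\Theta_p\cap P$ to be an element of $|\sheaf{M}|$ you also need $P\not\subset\Theta_p$, so that the intersection is an honest effective divisor. This is easy — $\Theta_p$ is irreducible and birational to $\Sym^2 C$, which is of general type since $C$ is non-hyperelliptic, so $\Theta_p$ cannot coincide with the abelian surface $P$ — but it is currently implicit.
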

\begin{proof}
    This is \cite[Lemma 3.4]{Ikeda-Biellipticcurvesofgenusthree} and \cite[Remark 3.5]{Ikeda-Biellipticcurvesofgenusthree}.
\end{proof}

Therefore the map $E\rightarrow  |\sheaf{M}|, p \mapsto D_p$ factors through the map $\phi\colon E\rightarrow |2O_E|$ and gives rise to a  $\mu_3$-equivariant map $|2O_E|\rightarrow |\sheaf{M}|, y \mapsto D_y$.
We note that if $y$ is $k$-rational, then so is $D_y$.

By Lemma \ref{lemma: when are two bielliptic Picard curves isomorphic}, we may and do assume that $(C, \gamma)= (C_{a,b}, \gamma_{a,b})$ for some $a,b\in k$.
Then $E_{a,b}$ has equation $y^3 = x^2 +a x +b$ and $y$ induces an isomorphism $|2O_E| \simeq \P^1$.
There are two fixed points for the $\mu_3$-action on $|2O_E|$, corresponding to $0, \infty \in \P^1(k)$; call them $y_0, y_{\infty} \in |2O_E|$.
Then $D_{y_{\infty}} =D_{O_E} = \widehat{C}$ and $D_{y_0} = C_{\text{twist}}$.

In order to compute equations for $D_{y_0}$, we will give a different description of the family of divisors $\{D_p\}_{p \in E}$.
For ease of notation, write $C^{(2)} = \Sym^2 C$.
For each $\xi \in \Pic^2(E)$, consider the closed subscheme 
\begin{align*}
    F_{\xi} := \{ Q + Q' \in C^{(2)}\mid \pi(Q) + \pi(Q') \sim \xi \} \subset C^{(2)}.
\end{align*}
Since $C$ is not hyperelliptic, the map $Q + Q' -\pi^*(p) \mapsto Q + Q'$ induces an isomorphism $\phi_p\colon D_p \xrightarrow{\sim} F_{2p}$.
Consider the morphism
\begin{align*}
    \psi \colon C^{(2)}&\longrightarrow \P^2  \\
    ([x:y:z],[x':y':z']) &\mapsto [yz'-y'z:xz'-x'z:xy'-x'y],
\end{align*}
which sends a pair of distinct points on $C$ to the line it spans in the canonical embedding $C\subset \P^2$.
Ikeda has shown that for every $\xi \in \Pic^2(E)$ such that $F_{\xi}$ is smooth, $\psi$ maps $F_{\xi}$ isomorphically onto a plane quartic in $\P^2$ and determined this quartic explicitly \cite[Lemma 3.12]{Ikeda-Biellipticcurvesofgenusthree}. (In fact, he has shown this for any bielliptic genus $3$ curve.)
We will use his calculation to determine $C_{\text{twist}}$, together with a descent argument along a quadratic extension.

Let $\alpha, \alpha'$ be the roots of $x^2 + ax +b$.
Let $p= (\alpha,0)$ and $p' = (\alpha',0)$.
Let $K = k(\alpha, \alpha') =  k(\sqrt{a^2-4b})$.
Then $p,p'\in E(K)$ and $C_{\text{twist}} = D_{y_0} = D_p = D_{p'}$.
Both $p$ and $p'$ give rise to $K$-isomorphisms $\phi_p \colon D_p \rightarrow F_{2p}$ and $\phi_{p'}\colon D_{p'} \rightarrow F_{2p'}$.
Let $[u:v:1]$ be affine coordinates for the target of the morphism $\psi\colon \Sym^2 C\rightarrow \P^2$.

\begin{lemma}\label{lemma: calculation plane quartic}
   $\psi(F_{2p}) = \psi(F_{2p'}) \subset \P^2_K$ is cut out by the projective closure of the equation
   \begin{align}
    (2a^2-8b)v^3 = bu^4+au^2 +1. 
   \end{align}
\end{lemma}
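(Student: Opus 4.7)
The plan is to explicitly parametrize $F_{2p}$ and compute its image under $\psi$ by direct elimination; by \cite[Lemma 3.12]{Ikeda-Biellipticcurvesofgenusthree} the image is already known to be a plane quartic, so only the equation needs to be identified. In the model $E\colon y^3 = X^2+aX+b$ with $p = (\alpha,0)$, the divisor of $X-\alpha$ on $E$ equals $3p - 3O_E$ and the divisor of $y$ equals $p + p' - 2O_E$, so $\{1,\, y/(X-\alpha)\}$ is a basis for $L(2p)$. Hence a pair $((x_1,y_1),(x_2,y_2)) \in C^{(2)}$ lies in $F_{2p}$ precisely when $y_1/(x_1^2-\alpha) = y_2/(x_2^2-\alpha)$; call this common value $s$. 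Substituting $y_i = s(x_i^2-\alpha)$ into $y_i^3 = (x_i^2-\alpha)(x_i^2-\alpha')$ shows that $x_1^2$ and $x_2^2$ are the two roots of the quadratic $s^3 X^2 - (2\alpha s^3+1)X + (\alpha^2 s^3 + \alpha') = 0$, so $x_1^2 + x_2^2 = 2\alpha + s^{-3}$ and $x_1^2 x_2^2 = \alpha^2 + \alpha' s^{-3}$.

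Next, I compute $\psi$ explicitly. Factoring out $(x_1-x_2)$ gives
\[ y_1-y_2 = s(x_1-x_2)(x_1+x_2), \qquad x_1 y_2 - x_2 y_1 = -s(x_1-x_2)(x_1 x_2 + \alpha), \]
so in the affine chart $[u:v:1]$, writing $A := x_1+x_2$ and $B := x_1 x_2$, one has $u = -A/(B+\alpha)$ and $v = -1/[s(B+\alpha)]$. Introducing the auxiliary variable $T := vs$, the identities $A^2-2B = 2\alpha + s^{-3}$ and $B^2 = \alpha^2 + \alpha' s^{-3}$ translate (after clearing denominators) into $v^3 = T u^2 + 2 T^2$ and $T = (1-\alpha' u^2)/[2(\alpha'-\alpha)]$ respectively; combining these and using $\alpha+\alpha' = -a$, $\alpha\alpha' = b$ yields
\[ 2(\alpha-\alpha')^2 v^3 = (1-\alpha u^2)(1-\alpha' u^2) = 1 + au^2 + bu^4. \]
Since $2(\alpha-\alpha')^2 = 2a^2 - 8b$, this is the stated equation.

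Finally, the equation is symmetric in $\{\alpha, \alpha'\}$, so running the same argument starting from $p'$ produces the identical quartic, which simultaneously proves $\psi(F_{2p}) = \psi(F_{2p'})$ and descends the curve from $K$ to $k$. Equivalently, $\psi\circ\kappa = \psi$ because any line in $\P^2$ meets $C$ in a hyperplane section linearly equivalent to $4\infty$, and $\kappa$ sends $F_{2p}$ to $F_{2p'}$ since $4O_E - 2p \sim 2p'$ on $E$. The main obstacle in this plan is the elimination step in the second paragraph; the substitution $T = vs$ is what makes the algebra tractable, since otherwise the simultaneous elimination of $s, A, B$ quickly becomes unwieldy.
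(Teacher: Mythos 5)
Your computation is correct, but it takes a genuinely different route from the paper. The paper's proof is a one-line citation: the lemma is a special case of Ikeda's general formula for the plane quartic $\psi(F_\xi)$ attached to any bielliptic genus-$3$ curve, and the "proof" consists of translating the data $(C_{a,b}, p)$ into Ikeda's normal form $(z^2+\tfrac{a}{2}y^2)^2 = x^3y + (\tfrac14 a^2 - b)y^4$ and reading off his answer. You instead rederive the equation from scratch: you identify $F_{2p}$ with the locus $y_1/(x_1^2-\alpha)=y_2/(x_2^2-\alpha)=s$ using the basis $\{1, y/(X-\alpha)\}$ of $L(2p)$, and eliminate $s, A=x_1+x_2, B=x_1x_2$ against the two symmetric-function constraints. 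I checked the elimination: $u=AT$, $B=-\alpha-1/T$ with $T=vs$, the first constraint gives $v^3=Tu^2+2T^2$, the second (combined with the first) gives $T=(1-\alpha'u^2)/(2(\alpha'-\alpha))$, and these combine to $2(\alpha-\alpha')^2v^3=(1-\alpha u^2)(1-\alpha'u^2)=bu^4+au^2+1$ with $2(\alpha-\alpha')^2=2a^2-8b$, as required. Your use of Ikeda is reduced to the qualitative fact that the image is a plane quartic, which lets you conclude from containment of a dense open in the quartic locus (and even this could be avoided by noting that the displayed quartic is itself a smooth, hence irreducible, bielliptic Picard curve, so the closed irreducible image of $F_{2p}$ contained in it must equal it). Both closing arguments for $\psi(F_{2p})=\psi(F_{2p'})$ — the $\alpha\leftrightarrow\alpha'$ symmetry of the final equation, and $\psi\circ\kappa=\psi$ together with $\kappa(F_{2p})=F_{2p'}$ via $4O_E-2p\sim 2p'$ — are valid. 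The trade-off: the paper's proof is shorter but outsources the content to a reference whose conventions must be matched carefully; yours is self-contained and makes the provenance of each coefficient visible, at the cost of a page of algebra. Two cosmetic points only: the "precisely when" in your membership criterion for $F_{2p}$ holds only on the dense open where the expressions are defined (which is all you need), and the second displayed identity $T=(1-\alpha'u^2)/[2(\alpha'-\alpha)]$ uses both symmetric-function constraints, not just $B^2=\alpha^2+\alpha's^{-3}$ as your phrasing suggests.
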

\begin{proof}
    This is a special case \cite[Lemma 3.12]{Ikeda-Biellipticcurvesofgenusthree}\footnote{The results of loc. cit. assume that $k = \mathbb{C}$, but the proof of the lemma works over any field of characteristic $\neq 2$.}.
    For the reader's convenience, we provide the translation to Ikeda's notation.
    In his notation: $C$ has equation $(z^2 + \frac{a}{2}y^2)^2 = x^3y + \left(\frac14a^2-b\right)y^4$, $S(x,y) = \frac14a^2y^2$, $T(x,y) = x^3y + (\frac14a^2-b)y^4$, and $p = [0:\alpha : 1]$.
\end{proof}
Let $\mathcal{Q} = \psi(F_{2p}) \subset \P^2_K$. Lemma \ref{lemma: calculation plane quartic} shows that $\mathcal{Q} = \psi(F_{2p'})$ and that this quartic is defined over $k$.
The substitution $(u,v) \mapsto ((2a^2-b)^3b u , (2a^2-b)^2b v)$ shows that $\mathcal{Q}$ is $k$-isomorphic to the curve $y^3 = x^4+dax^2 + d^2 b$, where $d = 16b(a^2-4b)^4$. 
Therefore $\mathcal{Q}$ and $C$ are sextic twists in the sense of \S\ref{subsection: twists} and thus isomorphic over $k^{\sep}$.
However, these curves are not necessarily isomorphic over $k$, since the isomorphisms $C_{\text{twist},K} = D_{p} \xrightarrow{\phi_p} F_{2p}$ and $F_{2p} \xrightarrow{\psi} \mathcal{Q}_K$ are only defined over the (at most quadratic) extension $K/k$.
We descend down to $k$ by determining the Galois action on these isomorphisms; see \S\ref{subsec: notation and conventions} for our conventions on how $\Gal_k$ acts on morphisms.

\begin{lemma}\label{lemma: cocycle iso twist dual curve and Ikeda quartic}
    Suppose that $K \neq k$ and let $\sigma \in \Gal(K/k)$ be the unique nontrivial element.
    Let $\chi = \psi|_{F_{2p}} \circ \phi_p \colon C_{\text{twist},K} \xrightarrow{\sim} \mathcal{Q}_K$.
    Then $\chi^{\sigma} \circ \chi^{-1}\colon \mathcal{Q}_K \rightarrow \mathcal{Q}_K$ equals the involution $(u,v) \mapsto (-u,v)$.
\end{lemma}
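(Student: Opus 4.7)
The strategy is to show that $\chi^{-1}\circ\chi^\sigma$ equals the bielliptic involution $[-1]|_{C_{\mathrm{twist}}}$ inherited from inversion on $P$. Conjugating by $\chi$ then expresses $\chi^\sigma\circ\chi^{-1}$ as the bielliptic involution of $\mathcal{Q}_K$ transported via $\chi$. Since $(u,v)\mapsto(-u,v)$ manifestly preserves the equation $(2a^2-8b)v^3 = bu^4+au^2+1$ and has genus-one quotient, it is a bielliptic involution of $\mathcal{Q}$, and the lemma reduces to uniqueness of the bielliptic involution on a bielliptic Picard curve.

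\textbf{Tracing a point.} Since $\psi$ and the assignment $q\mapsto\phi_q$ are defined over $k$ while $\sigma$ swaps $p$ and $p'$, we have $\chi^\sigma = \psi|_{F_{2p'}}\circ\phi_{p'}$. Pick a generic $x = [R+R' - \pi^*(p')] \in C_{\mathrm{twist},K}$ with $R+R'\in F_{2p'}$. Then $\chi^\sigma(x) = \psi(R+R')$ is a line $\ell$ in $\P^2$ meeting $C$ in the degree-$4$ divisor $R+R'+S+S'$, where $S+S'$ denotes the residual pair. Since $\chi^{-1} = \phi_p^{-1}\circ\psi|_{F_{2p}}^{-1}$, applying $\chi^{-1}$ requires locating the unique element of $F_{2p}$ lying on $\ell$ and passing to the resulting divisor class on $P$.

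\textbf{Two key identities.} The argument rests on two linear equivalences on $C$: (i) $\pi^*(p)+\pi^*(p')\sim 4\infty$, since the function $y$ on $E\colon y^3 = x^2+ax+b$ has divisor $p+p'-2O_E$, so $p+p'\sim 2O_E$, and pulling back via $\pi$ (with $\infty$ ramified over $O_E$) gives $\pi^*(p)+\pi^*(p')\sim 4\infty$; and (ii) $R+R'+S+S'=\ell\cap C\sim 4\infty$, as $\ell\cap C$ is a hyperplane section in the canonical embedding. Pushing (ii) forward by $\pi$ yields $\pi_*(S+S')\sim 4O_E - 2p'\sim 2p$ in $\Pic^2(E)$ (using $p'=-p$ in the group law), so $S+S'\in F_{2p}$ and hence $\psi|_{F_{2p}}^{-1}(\ell)=S+S'$. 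Combining (i) and (ii) gives $S+S' - \pi^*(p)\sim \pi^*(p') - R-R'$, so
\[
\chi^{-1}(\chi^\sigma(x)) = \phi_p^{-1}(S+S') = [\pi^*(p') - R - R'] = -x \text{ in } P,
\]
proving $\chi^{-1}\circ\chi^\sigma = [-1]|_{C_{\mathrm{twist}}}$, equivalently $\chi^\sigma\circ\chi^{-1} = \chi\circ [-1]|_{C_{\mathrm{twist}}}\circ\chi^{-1}$.

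\textbf{Main obstacle.} The final identification of $\chi\circ [-1]\circ\chi^{-1}$ on $\mathcal{Q}_K$ with $(u,v)\mapsto(-u,v)$ requires uniqueness of the bielliptic involution on $\mathcal{Q}_K$. For non-special $\mathcal{Q}$ this is immediate from $\Aut(\mathcal{Q}_{\bar{k}})=\mu_6$ (Lemma~\ref{lemma: automorphism group bielliptic Picard curves}), which contains a unique involution. In the special case $a=0$, where $|\Aut(\mathcal{Q}_{\bar{k}})|=48$, there are multiple involutions and one must check directly, using the ambient geometry of $P$, that $(u,v)\mapsto(-u,v)$ is the one transported from $C_{\mathrm{twist}}$ by $\chi$ (for example, by computing at the $\mu_6$-fixed point $[0:1:0]\in\mathcal{Q}$, or by matching tangent characters at that point).
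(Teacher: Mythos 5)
Your core computation is correct and is, in substance, the computation the paper carries out: both arguments rest on exactly the two linear equivalences you isolate, namely $\pi^*(p)+\pi^*(p')\sim 4\infty$ and the fact that a line section of the canonically embedded quartic is $\sim 4\infty$. The difference is organizational. The paper evaluates $\chi^{\sigma}\circ\chi^{-1}$ directly on a line $\ell=\psi(Q_1+Q_2)$: from $R_1+R_2+\tau(Q_1)+\tau(Q_2)\sim 4\infty=K_C$ it deduces that the four points are collinear, hence $\psi(R_1+R_2)=\psi(\tau(Q_1)+\tau(Q_2))=\tau(\ell)=(-u,v)$, with no appeal to automorphism groups. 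You instead compute $\chi^{-1}\circ\chi^{\sigma}=[-1]|_{C_{\mathrm{twist}}}$ (correctly) and then transport through $\chi$, which obliges you to identify the abstract involution $\chi\circ[-1]\circ\chi^{-1}$ of $\mathcal{Q}_K$ with $(u,v)\mapsto(-u,v)$.

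That last identification is where the gap sits. Your uniqueness argument covers only non-special $\mathcal{Q}$, and the patch you sketch for $a=0$ does not work as stated: $\chi\circ[-1]\circ\chi^{-1}$ and $(u,v)\mapsto(-u,v)$ both fix $[0:1:0]$, and any nontrivial involution of a curve fixing a point acts by $-1$ on the tangent line there, so ``computing at the $\mu_6$-fixed point'' or ``matching tangent characters'' cannot separate the transported involution from other involutions of the order-$48$ group fixing that point. The special case is not vacuous here ($a=0$ with $K=k(\sqrt{-b})\neq k$ certainly occurs, and Theorem \ref{theorem: Ctwist is twist of original curve} needs the lemma there as well). Two ways to close it: (a) run the paper's direct computation, which is uniform in $a,b$; or (b) note that $\phi_p$, $\psi|_{F_{2p}}$ and $[-1]$ are all equivariant for the relevant $\mu_3$-actions (the $\mu_3$-action fixes $p=(\alpha,0)$, so it preserves $F_{2p}$, and it acts linearly on $\P^2$ and hence on lines), so $\chi\circ[-1]\circ\chi^{-1}$ centralizes the $\mu_3$-action on $\mathcal{Q}_K$; since the order-$3$ subgroups of the order-$48$ group have normalizer, hence centralizer, of order $12$ equal to the marked $\mu_{12}$ of Lemma \ref{lemma: automorphism group bielliptic Picard curves}, and $\mu_{12}$ acting by $(u,v)\mapsto(\zeta^3u,\zeta^4v)$ contains the unique involution $(u,v)\mapsto(-u,v)$, the identification follows. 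With either repair your proof is complete.
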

\begin{proof}
    Let $(u,v) \in \mathcal{Q}_K$ and let $Q_1+Q_2\in F_{2p}$ be the unique element mapping to $(u,v)$ under $\psi|_{F_{2p}}$.
    Since $p^{\sigma} = p'$, a calculation shows that $\chi^{\sigma}\circ \chi^{-1}$ maps $(u,v)$ to $\psi(R_1+R_2)$, where $R_1+R_2$ is the unique effective divisor on $C$ such that $R_1+R_2 -\pi^*(p') \sim Q_1+Q_2-\pi^*(p)$.
    We interpret this geometrically: let $\ell= \psi(Q_1+Q_2)$ be the line spanned by the points $Q_1, Q_2$ in the embedding $C = C_{a,b} \subset \P^2$.
    The condition $Q_1+Q_2\in F_{2p}$ implies that $Q_1+Q_2 + \tau(Q_1) +\tau(Q_2)\sim 2\pi^*(p)$, where we recall that $\tau\colon C\rightarrow C$ denotes the bielliptic involution.
    Combining the above two linear equivalences shows that $\tau(Q_1)+\tau(Q_2)+R_1+R_2 \sim \pi^*(p)+\pi^*(p')\sim 4\infty$. 
    Since $4\infty$ is canonical, it follows that $\tau(Q_1),\tau(Q_2),R_1,R_2$ are collinear and $\psi(R_1+R_2) = \psi(\tau(Q_1)+\tau(Q_2)) = \tau(\ell)$.
    In the coordinates $(u,v)$, this shows that $\psi(R_1+R_2) = (-u,v)$, proving the lemma.
\end{proof}

\begin{theorem}\label{theorem: Ctwist is twist of original curve}
    Let $C$ be a bielliptic Picard curve of the form $C_{a,b}$ for some $a,b\in k$.
    Then $C_{\text{twist}}$ is again a bielliptic Picard curve, isomorphic to $C_{\delta a, \delta^2 b}$ where $\delta = \Delta_{a,b} = 16b(a^2 - 4b)$.
    In particular, $C$ and $C_{\text{twist}}$ are isomorphic over $k^{\sep}$.
\end{theorem}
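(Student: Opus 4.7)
My plan is to combine the explicit description of the quartic $\mathcal{Q}$ in Lemma \ref{lemma: calculation plane quartic} with the cocycle calculation in Lemma \ref{lemma: cocycle iso twist dual curve and Ikeda quartic} to identify $C_{\text{twist}}$ as an explicit sextic twist of $C_{a,b}$.

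First, I would show that $\mathcal{Q}$ itself is $k$-isomorphic to the bielliptic Picard curve $C_{da, d^2 b}$ with $d = 16b(a^2-4b)^4$. Starting from the equation $2(a^2-4b)v^3 = bu^4+au^2+1$ and applying the substitution $(u,v) = (\alpha x,\beta y)$, the system forcing the result to take the form $y^3 = x^4 + da x^2 + d^2 b$ forces $\alpha^2 = 1/(db)$ and $\beta^3 = 1/(2d^2b(a^2-4b))$. Choosing $d = 16b(a^2-4b)^4$ makes both sides sixth and squared/cubed powers in $k$: explicitly, $\alpha = 1/(4b(a^2-4b)^2)$ and $\beta = 1/(8b(a^2-4b)^3)$ work, so the substitution is defined over $k$.

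Next, I would transport the cocycle. When $K\neq k$, Lemma \ref{lemma: cocycle iso twist dual curve and Ikeda quartic} says $\chi^\sigma\circ\chi^{-1}$ is $(u,v)\mapsto(-u,v)$ on $\mathcal{Q}_K$. Under the $k$-isomorphism $\mathcal{Q}\simeq C_{da,d^2b}$, this involution becomes $(x,y)\mapsto(-x,y)$, which is precisely the image of $\mu_2\hookrightarrow\mu_6\simeq\mathbf{Aut}(C_{da,d^2b},\gamma_{da,d^2b})$ (assuming we are in the non-special case; the special locus $a=0$ can be checked separately). Hence $C_{\text{twist}}$ is the twist of $C_{da,d^2b}$ by the $\mu_2$-valued cocycle corresponding to the class of $a^2-4b$ in $k^\times/k^{\times 2}$. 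Under the natural map $H^1(k,\mu_2)\to H^1(k,\mu_6)=k^\times/k^{\times 6}$ induced by $\mu_2\hookrightarrow\mu_6$, the class of $a^2-4b$ maps to the class of $(a^2-4b)^3$ (take $\eta=\sqrt{a^2-4b}$ and note $\eta^6=(a^2-4b)^3$). Therefore $C_{\text{twist}}$ is the sextic twist of $C_{da,d^2b}$ by $(a^2-4b)^3$, i.e.\ $C_{(a^2-4b)^3\cdot da,\,(a^2-4b)^6\cdot d^2 b}$.

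Finally, I would use Lemma \ref{lemma: when are two bielliptic Picard curves isomorphic} with $\lambda = a^2-4b\in k^\times$ to verify that, with $\delta := 16b(a^2-4b)$,
\[
\bigl((a^2-4b)^3 d a,\;(a^2-4b)^6 d^2 b\bigr) = \bigl(\lambda^6\delta a,\;\lambda^{12}\delta^2 b\bigr),
\]
which is a direct plug-in. This gives $C_{\text{twist}}\simeq C_{\delta a,\delta^2 b}$ over $k$, proving the theorem when $K\neq k$. In the degenerate case $K=k$ (so $a^2-4b$ is already a square in $k$), the maps $\phi_p$ and $\psi$ are $k$-rational, so $C_{\text{twist}}\simeq\mathcal{Q}\simeq C_{da,d^2b}$ over $k$ directly; then $d/\delta = (a^2-4b)^3$ is a sixth power in $k^\times$, and Lemma \ref{lemma: when are two bielliptic Picard curves isomorphic} again gives $C_{da,d^2b}\simeq C_{\delta a,\delta^2 b}$. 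The main obstacle is really only the bookkeeping in the Kummer calculation of step three — identifying the correct target class in $k^\times/k^{\times 6}$ and then matching it cleanly against the normalization $(\lambda^6 A,\lambda^{12} B)$; the rest is essentially forced by Lemma \ref{lemma: calculation plane quartic} and Lemma \ref{lemma: cocycle iso twist dual curve and Ikeda quartic}.
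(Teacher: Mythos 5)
Your proposal is correct and follows essentially the same route as the paper: identify $\mathcal{Q}\simeq C_{da,d^2b}$ over $k$ via Lemma \ref{lemma: calculation plane quartic}, handle $K=k$ directly, and for $K\neq k$ use the cocycle of Lemma \ref{lemma: cocycle iso twist dual curve and Ikeda quartic} to recognize $C_{\text{twist}}$ as the quadratic twist of $\mathcal{Q}$ along $K/k$ and then match it with $C_{\delta a,\delta^2 b}$ via the conventions of \S\ref{subsection: twists}. Your write-up merely makes explicit the Kummer-theoretic bookkeeping (the map $k^\times/k^{\times 2}\to k^\times/k^{\times 6}$, $c\mapsto c^3$, and the normalization by $\lambda=a^2-4b$) that the paper leaves as "a computation."
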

\begin{proof}
    By Lemma \ref{lemma: calculation plane quartic}, $\mathcal{Q}\simeq C_{da,d^2b}$, where $d = 16b(a^2-4b)^4$.
    Suppose first that $K = k$. 
    Then the isomorphism $\psi|_{F_{2p}} \circ \phi_p\colon C_{\text{twist}}\rightarrow  \mathcal{Q}$ are defined over $k$, hence $C_{\text{twist}} \simeq C_{da,d^2b}$ too.
    Since $K = k(\sqrt{a^2-4b})$, $a^2 - 4b$ is a square in $k^{\times}$. Therefore $d$ and $\delta$ have the same image in $k^{\times}/k^{\times 6}$, so $C_{\text{twist}} \simeq C_{da,d^2b} \simeq C_{\delta a,\delta^2 b}$ by Lemma \ref{lemma: when are two bielliptic Picard curves isomorphic}.

    Suppose now that $K/k$ is a quadratic extension and let $\sigma \in \Gal(K/k)$ be the nontrivial element. 
    Lemma \ref{lemma: cocycle iso twist dual curve and Ikeda quartic} shows that $C_{\text{twist}}$ is isomorphic to the quadratic twist of $\mathcal{Q}$ along the extension $K/k$ and involution $(u,v)\mapsto (-u,v)$. 
    A computation (using \S\ref{subsection: twists}) shows that this twist is exactly $C_{\delta a,\delta^2 b}$.
\end{proof}

Theorem \ref{theorem: Ctwist is twist of original curve} has consequences for the Prym variety of $C$ and its dual. 
If $(X, \mu, \beta)$ is a $\mu_3$-abelian surface in the sense of Definition \ref{definition: mu3-abelian surface}, the $\mu_3$-action $\beta$ and $\mu_2$-action $[-1]$ combine to a $\mu_6$-action.
Therefore, just as in \S\ref{subsection: twists}, every $\delta\in k^{\times}/k^{\times 6}$ gives rise to a \define{sextic twist} $(X_{\delta}, \mu_{\delta}, \beta_{\delta}) = (X, \mu, \beta)_{\delta}$ of $(X, \mu,\beta)$. 
The formation of the $\mu_3$-abelian surfaces $\Prym(C, \gamma)$ and $\Prym(C, \gamma)^{\vee}$ from \S\ref{subsec: torelli} is compatible with sextic twisting: $\Prym((C, \gamma)_{\delta}) \simeq \Prym(C, \gamma)_{\delta}$ and $\Prym((C, \gamma)_{\delta})^{\vee} \simeq \Prym(C, \gamma)^{\vee}_{\delta}$.

\begin{corollary}\label{corollary: marked mu3-surface and dual are isomorphic up to twist}
 Let $(C, \gamma) = (C_{a,b}, \gamma_{a,b})$ be a marked bielliptic Picard curve over $k$.
 Let $\delta = 16b(a^2 - 4b)$.
 Let $\Prym(C, \gamma) = (P, \lambda, \alpha)$ and $\Prym(C,\gamma)^{\vee} = (A, \widehat{\lambda}, \hat{\alpha})$.
 Then $(P, \lambda, \alpha) \simeq (A, \widehat{\lambda}, \hat{\alpha}^{-1})_{\delta}$.
 In particular, $P_{k^{\sep}}$ and $A_{k^{\sep}}$ are isomorphic as $(1,2)$-polarized abelian surfaces and $P_{k^{\sep}}$ is self-dual.  
\end{corollary}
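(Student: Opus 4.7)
The plan is to exploit Theorem \ref{theorem: Ctwist is twist of original curve}, which identifies the second $\mu_3$-fixed smooth member $C_{\text{twist}}$ of the pencil $|\sheaf{M}|$ on $P$ as being isomorphic, as a marked bielliptic Picard curve, to the sextic twist $(C,\gamma)_{\delta}$ of the original curve by $\delta = 16b(a^2 - 4b)$. Since the Torelli equivalence of Theorem \ref{theorem: Torelli theorem bielliptic Picard curves} intertwines the sextic-twist operations defined on marked bielliptic Picard curves and on $\mu_3$-abelian surfaces (which follows from naturality of the Prym construction, as it transports the $\mu_6$-inclusion inside $\mathbf{Aut}(C,\gamma)$ to the corresponding $\mu_6$-inclusion inside $\mathbf{Aut}(A,\widehat{\lambda},\hat{\alpha})$), this identification immediately yields $\Prym(C_{\text{twist}},\gamma_{\text{twist}})^{\vee} \simeq \Prym(C,\gamma)^{\vee}_{\delta} = (A,\widehat{\lambda},\hat{\alpha})_{\delta}$.

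Next, I would compute the left-hand side directly using that $C_{\text{twist}}$ already lives inside $P$ as a smooth member of the pencil. By Proposition \ref{proposition: smooth member pencil determines the surface}, the inclusion induces a canonical isomorphism of $(1,2)$-polarized surfaces $\Prym(C_{\text{twist}},[-1])^{\vee} \simeq P$. The hardest step is to track the $\mu_3$-action through this identification: the inclusion $C_{\text{twist}} \hookrightarrow P$ is $\mu_3$-equivariant for $\alpha$ on $P$ and its restriction to $C_{\text{twist}}$, but by Proposition \ref{proposition: two mu3-fixed points are bielliptic picard with opposite signature} the marked $\mu_3$-part of $\gamma_{\text{twist}}$ is the \emph{inverse} of this restricted action. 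Propagating this sign through Albanese-functoriality (using that $\Jac(C_{\text{twist}}) \to P$ is pushforward-equivariant, so $\iota_* \circ (\alpha|_{C_{\text{twist}}})_*^{-1} = \alpha^{-1} \circ \iota_*$) shows that the Torelli image of $(C_{\text{twist}},\gamma_{\text{twist}})$ is $(P,\lambda,\alpha^{-1})$ rather than $(P,\lambda,\alpha)$. Comparing with the first step gives $(P,\lambda,\alpha^{-1}) \simeq (A,\widehat{\lambda},\hat{\alpha})_{\delta}$, which rearranges to the stated isomorphism after applying the $\mu_3$-inversion functor to both sides and accounting for its interaction with sextic twisting.

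For the concluding assertions, I would use that $\HH^1(k^{\sep},\mu_6) = 0$, so every sextic twist trivializes after base change to $k^{\sep}$. Hence $(A,\widehat{\lambda},\hat{\alpha}^{-1})_{\delta} \otimes_k k^{\sep} \simeq (A_{k^{\sep}},\widehat{\lambda},\hat{\alpha}^{-1})$ as $\mu_3$-abelian surfaces, and in particular $(P_{k^{\sep}},\lambda) \simeq (A_{k^{\sep}},\widehat{\lambda})$ as $(1,2)$-polarized surfaces; since $A = P^{\vee}$ by definition, this is precisely the self-duality $P_{k^{\sep}} \simeq P_{k^{\sep}}^{\vee}$.
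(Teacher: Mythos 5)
Your proposal is correct and follows essentially the same route as the paper's proof: both rest on Theorem \ref{theorem: Ctwist is twist of original curve}, the compatibility of $\Prym(-)^{\vee}$ with sextic twisting, and Propositions \ref{proposition: smooth member pencil determines the surface} and \ref{proposition: two mu3-fixed points are bielliptic picard with opposite signature} to identify the dual Prym of the marked curve $(C_{\text{twist}},\gamma_{\text{twist}}^{-1})$ with $(P,\lambda,\alpha^{-1})$, followed by the same final inversion step. The only differences are cosmetic: you make explicit the Albanese-functoriality sign-chase and the triviality of sextic twists over $k^{\sep}$, which the paper leaves implicit.
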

\begin{proof}
    The $\mu_3$-action $\alpha$ and the involution $[-1]$ on $P$ combine to a $\mu_6$-action that restricts to $\mu_6$-actions $\hat{\gamma}$ and $\gamma_{\text{twist}}$ on $\widehat{C}$ and $C_{\text{twist}}$ respectively. 
    Lemma \ref{lemma: bigonal dual is a bielliptic Picard curve} shows that $(\widehat{C}, \hat{\gamma})$ is a marked bielliptic Picard curve; so is $(C_{\text{twist}}, \gamma_{\text{twist}}^{-1})$ by Proposition \ref{proposition: two mu3-fixed points are bielliptic picard with opposite signature}.
    Therefore $(P, \lambda, \alpha) \simeq \Prym(\widehat{C}, \hat{\gamma})^{\vee}$
    and $(P, \lambda, \alpha^{-1}) \simeq \Prym(C_{\text{twist}}, \gamma_{\text{twist}}^{-1})^{\vee}$ by Proposition \ref{proposition: smooth member pencil determines the surface}.

    It remains to show that $(P, \lambda, \alpha) \simeq (A, \widehat{\lambda}, \hat{\alpha}^{-1})_{\delta}$, or equivalently that $\Prym(C_{\text{twist}}, \gamma_{\text{twist}}^{-1})^{\vee}\simeq (A, \widehat{\lambda}, \hat{\alpha})_{\delta}$.
    Recall that $(C_{\delta}, \gamma_{\delta})$ denotes the sextic twist of $C$ along $\delta$.
    Theorem \ref{theorem: Ctwist is twist of original curve} shows that $(C_{\text{twist}}, \gamma_{\text{twist}}^{-1}) \simeq (C_{\delta}, \gamma_{\delta})$. 
    Therefore 
    \[\Prym(C_{\text{twist}}, \gamma_{\text{twist}}^{-1})^{\vee} \simeq \Prym(C_{\delta}, \gamma_{\delta})^{\vee} \simeq (A, \hat\lambda, \hat\alpha)_{\delta},\]
    as desired.
\end{proof}

\begin{remark}
{\em
    In contrast to the results of \S\ref{subsection: the dual prym A}, \ref{subsection: bigonal duality}, \ref{subsection: generalities (1,2)-polarized surfaces}, Corollary \ref{corollary: marked mu3-surface and dual are isomorphic up to twist} crucially uses properties specific to bielliptic Picard curves. 
    Indeed, the last sentence of the corollary might fail for a general bielliptic genus $3$ curve.
    }
\end{remark}

\begin{remark}\label{remark: universal iso between Prym and dual}
{\em
    The isomorphism $A \simeq P_{\delta^{-1}}$ of Corollary \ref{corollary: marked mu3-surface and dual are isomorphic up to twist} can be chosen `universally'.
    More precisely, let $R := \Z[1/6,a,b,\Delta_{a,b}^{-1}]$ and $S := \Spec(R)$.
    Equation \eqref{equation: bielliptic picard curve equation} defines the universal marked bielliptic Picard curve (in the sense of Remark \ref{remark: isomorphism of moduli stacks}) with Prym variety $\mathcal{P}\rightarrow S$ and dual $\mathcal{A}\rightarrow S$.
    Then there exists an isomorphism $\Phi\colon \mathcal{A}\rightarrow \mathcal{P}_{\delta^{-1}}$ that restricts to an isomorphism of Corollary \ref{corollary: marked mu3-surface and dual are isomorphic up to twist} for every geometric point of $S$.
    (This can be seen, for example, by spreading out an isomorphism over the generic point to all of $S$ using \cite[\S2, Lemma 1]{Faltings-finitenesstheorems}.)
}
\end{remark}

As a first application of Corollary \ref{corollary: marked mu3-surface and dual are isomorphic up to twist}, we show that the Atkin--Lehner involution on $X/w_3$ corresponds to inversion on the $j$-line $Y \simeq \P^1_j$.

\begin{lemma}
    Under the isomorphisms $(X/w_3)^{\bar{w}}\simeq Y\simeq \P^1_j$ of Proposition \ref{proposition: Y is quotient of shimura curve twist} and Lemma $\ref{lemma: Y isomorphic to P1}$, the involution $\bar{w}$ on $(X/w_3)^{\bar{w}}$ corresponds to the involution $j\mapsto 1/j$ on $\P^1_j$.
\end{lemma}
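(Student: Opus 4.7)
The plan is to chase through the moduli interpretations: first unwind the involution $\bar w$ on $(X/w_3)^{\bar w}$ as an explicit operation on $\mu_3$-abelian surfaces, and then identify this operation with bigonal duality on the underlying bielliptic Picard curve.

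First, I would lift $\bar w$ to the Atkin--Lehner involution $w_2$ on $X^{w_2}$. On the moduli description of $X^{w_2}$ used in \S\ref{subsec: comparison with quaternionic curve}, the involution $w_2$ sends $[A,\lambda,\iota]$ to $[A,\lambda,\iota\circ[j]]$. Restricting to $\langle\omega\rangle\subset\calO^\times$ and using the quaternion relation $j^{-1}\omega j=\omega^{-1}$ from \S\ref{subsec: the quaternion order O}, one sees that the map $\pi_2$ intertwines $w_2$ with the auto-equivalence
\[
(A,\widehat\lambda,\widehat\alpha)\;\longmapsto\;(A,\widehat\lambda,\widehat\alpha^{-1})
\]
on the category of $\mu_3$-abelian surfaces. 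Thus $\bar w$ on $Y$ corresponds, under Proposition \ref{proposition: Y is quotient of shimura curve twist}, to the operation of inverting the $\mu_3$-action.

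Next, I would translate this to the bielliptic Picard side via the Torelli equivalence of Theorem \ref{theorem: Torelli theorem bielliptic Picard curves}. Let $(C,\gamma)$ be a marked bielliptic Picard curve with $\Prym(C,\gamma)^\vee=(A,\widehat\lambda,\widehat\alpha)$ and set $\delta=16b(a^2-4b)$ as usual. By Corollary \ref{corollary: marked mu3-surface and dual are isomorphic up to twist},
\[
(A,\widehat\lambda,\widehat\alpha^{-1})\;\simeq\;(P,\lambda,\alpha)_{\delta^{-1}},
\]
and by bigonal duality (Proposition \ref{proposition: bigonal duality}) the right-hand side is isomorphic, as a $\mu_3$-abelian surface, to $\Prym(\widehat C,\widehat\gamma)^\vee_{\delta^{-1}}\simeq\Prym((\widehat C)_{\delta^{-1}},(\widehat\gamma)_{\delta^{-1}})^\vee$. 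Thus, under the Torelli equivalence, the involution $\bar w$ corresponds to the self-map $(C,\gamma)\mapsto ((\widehat C)_{\delta^{-1}},(\widehat\gamma)_{\delta^{-1}})$ on marked bielliptic Picard curves.

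Finally, since sextic twists preserve $j$-invariants (an immediate check from the formula $j_{a,b}=(4b-a^2)/(4b)$), we have $j((\widehat C)_{\delta^{-1}})=j(\widehat C)$, and Lemma \ref{lemma: j-invariant bigonal dual} gives $j(\widehat C)=1/j(C)$. Combining this with Lemma \ref{lemma: Y isomorphic to P1} (which normalizes $j\colon Y\xrightarrow{\sim}\P^1$ by the Prym $j$-invariant) identifies $\bar w$ with $j\mapsto 1/j$ on $\P^1_j$. The only subtle step is ensuring the equivariance of bigonal duality with the $\mu_3$-action (so that the identifications used are morphisms of $\mu_3$-abelian surfaces and not just of polarized surfaces); this follows directly from the functorial construction of $\widehat C\subset P$ and the $\mu_3$-equivariance of the inclusion $\widehat C\hookrightarrow P$.
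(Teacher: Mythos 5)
Your proposal is correct and follows essentially the same route as the paper's proof: lift $\bar{w}$ to $w_2$ on $X^{w_2}$, observe that $[j]$ inverts $\omega$ so that $\pi_2$ intertwines $w_2$ with inversion of the $\mu_3$-action, then identify $(A,\widehat\lambda,\widehat\alpha^{-1})$ with (a sextic twist of) $\Prym(\widehat C,\hat\gamma)^{\vee}$ via Corollary \ref{corollary: marked mu3-surface and dual are isomorphic up to twist} and bigonal duality, and conclude with $j(\widehat C)=1/j(C)$. The only cosmetic difference is that the paper works over $\C$ (where the twist by $\delta^{-1}$ is automatically trivial) while you keep the twist and note that sextic twisting preserves the $j$-invariant; both are fine.
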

\begin{proof}
    We use the notations of \S\ref{subsec: moduli of bielliptic Picard curves as shimura curve}, \ref{subsec: comparison with quaternionic curve}.
    It suffices to check the claim over $\C$ on the locus of indecomposable $\mu_3$-abelian surfaces $Y^{\text{indec}}\subset Y$, so let $y = [A,\lambda, \alpha] \in Y^{\text{indec}}(\C)$ be such a point. 
    Let $x = [A, \lambda, \iota] \in X^{w_2}(\C) = X(\C)$ be a lift of $y$ under $\pi_2$, so $\alpha = \iota|_{\langle \omega \rangle}$.
    Then $\bar{w}(y) = \pi_2(w_2(x))$.
    Since $j$ has norm $2$, $w_2(x) = [A, \lambda, \iota \circ [j]]$.
    Moreover $[j]$ maps $\omega$ to $\omega^{-1}$, so $\iota\circ [j]$ restricts to $\alpha^{-1}$. 
    It follows that $\pi_2(w_2(x)) = [A, \lambda, \alpha^{-1}]$.
    On the other hand, if $(A, \lambda, \alpha) \simeq  \Prym(C, \gamma)^{\vee}$, then Corollary \ref{corollary: marked mu3-surface and dual are isomorphic up to twist} shows that $(A, \lambda, \alpha^{-1})\simeq \Prym(C, \gamma)$, which is itself isomorphic to $\Prym(\widehat{C}, \hat{\gamma})^{\vee}$, where $(\widehat{C}, \hat{\gamma})$ is the bigonal dual curve from \S\ref{subsection: bigonal duality}.
    Since the $j$-invariant of $\widehat{C}$ equals the inverse of the $j$-invariant of $C$ (Lemma \ref{lemma: j-invariant bigonal dual}), the lemma follows from the explicit description of the isomorphism $Y\simeq \P^1_j$. 
\end{proof}

\section{Explicit quaternionic multiplication}\label{endomorphisms of the Prym variety}

In Corollary \ref{cor:QM existence} we proved that there exists an action of $\calO$ on $P_{k^{\sep}}$. Next we use Theorem \ref{theorem: Ctwist is twist of original curve} to describe this action explicitly. 
For the remainder of this paper, we fix a universal isomorphism $\Phi\colon \mathcal{A}\rightarrow \mathcal{P}_{\delta^{-1}}$ as in Remark \ref{remark: universal iso between Prym and dual}.

\subsection{Explicit quaternionic multiplication}\label{subsec: the prym has QM}

Let $(C, \gamma) = (C_{a,b}, \gamma_{a,b})$ be a marked bielliptic Picard curve over $k$.
Let $\delta = \Delta_{a,b}= 16 b(a^2-4b)$, which is nonzero since $C$ is smooth.
Consider the $\Gal_k$-set:
\begin{align}\label{equation: definition of set S}
\mathcal{S} := \{(\zeta, \varepsilon) \in k^{\sep}\times k^{\sep} \mid \zeta \text{ is a primitive sixth root of unity and }\varepsilon^6 = \delta \}.
\end{align}
Consider $\mu_3$-abelian surfaces $\Prym(C, \gamma) = (P, \lambda, \alpha)$ and $\Prym(C, \gamma)^{\vee} = (A, \widehat{\lambda}, \hat{\alpha})$.
For every $(\zeta, \varepsilon)\in \mathcal{S}$, we will define elements $r_{\zeta}, s_{\varepsilon} \in \End(P_{k^{\sep}})$.
Since $\zeta^2$ is a third root of unity, we may set $r_{\zeta} := \alpha(\zeta^2)  \in \End(P_{k^{\sep}})$.

To define $s_{\varepsilon}$, we use Corollary \ref{corollary: marked mu3-surface and dual are isomorphic up to twist}.
By that corollary and Remark \ref{remark: universal iso between Prym and dual}, the specialization of the universal isomorphism $\Phi$ at the $k$-point $(a,b) \in S(k)$ is an isomorphism $\phi \colon (A, \widehat{\lambda}, \hat{\alpha}^{-1}) \xrightarrow{\sim} (P, \lambda, \alpha)_{\delta^{-1}}$.
The element $\varepsilon$ determines an isomorphism $C_{\delta^{-1}, k^{\sep}} \rightarrow C_{k^{\sep}}, (x,y)\mapsto (\varepsilon^3x,\varepsilon^4y)$, hence an isomorphism $\chi_{\varepsilon} \colon (P, \lambda, \alpha)_{\delta^{-1},{k^{\sep}}} \rightarrow (P, \lambda, \alpha)_{k^{\sep}}$ between their Prym varieties.
Let $s_{\varepsilon}$ be the composition:
\begin{align}\label{equation: definition s}
 P_{k^{\sep}} \xrightarrow{\lambda} A_{k^{\sep}} \xrightarrow{\phi} (P_{\delta^{-1}})_{k^{\sep}} \xrightarrow{\chi_{\varepsilon}} P_{k^{\sep}}.  
\end{align}

\begin{proposition}\label{proposition: Prym has O-PQM}
For every $x = (\zeta, \varepsilon) \in \mathcal{S}$ the assignment $\omega\mapsto r_{\zeta}, j\mapsto s_{\varepsilon}$ extends to an embedding 
\begin{align}
    \iota_x\colon \calO \rightarrow \End(P_{k^{\sep}}) = \End(P_{\bar{k}}).
\end{align}
\end{proposition}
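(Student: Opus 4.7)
The plan is to verify the three defining relations of the presentation $\calO = \Z\langle\omega,j\rangle/(\omega^2+\omega+1,\, j^2-2,\, j\omega - \omega^{-1}j)$ recorded in \S\ref{subsec: the quaternion order O}; once these are checked for $r_\zeta$ and $s_\varepsilon$, the universal property of the presentation produces the ring homomorphism $\iota_x$, and injectivity is automatic because $\calO\otimes\Q = B$ is a central simple algebra and $\calO$ is torsion-free, forcing any nonzero ring map out of $\calO$ to be injective. Set $\omega_0 := \zeta^2$, a primitive cube root of unity. The first relation $r_\zeta^2 + r_\zeta + 1 = 0$ is immediate from equation \eqref{equation: endomorphism satisfies r^2+r+1=0} in the proof of Lemma \ref{lemma: mu3-fixed points Prym}.

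For the anti-commutation $s_\varepsilon\, r_\zeta = r_\zeta^{-1}\, s_\varepsilon$, I would chain three equivariance identities through the factorization $s_\varepsilon = \chi_\varepsilon\circ\phi\circ\lambda$. First, condition (1) of Definition \ref{definition: mu3-abelian surface}, rewritten using the footnote identity $\hat\alpha(\omega) = (\alpha(\omega)^\vee)^{-1}$, gives $\lambda\circ\alpha(\omega_0) = \hat\alpha(\omega_0)\circ\lambda$. Second, by Corollary \ref{corollary: marked mu3-surface and dual are isomorphic up to twist}, $\phi$ intertwines $\hat\alpha^{-1}$ on $A$ with $\alpha_{\delta^{-1}}$ on $P_{\delta^{-1}}$, so $\phi\circ\hat\alpha(\omega_0) = \alpha_{\delta^{-1}}(\omega_0^{-1})\circ\phi$. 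Third, $\chi_\varepsilon$ is an isomorphism of $\mu_3$-abelian surfaces (being induced by the $\mu_6$-equivariant change of variables $(x,y)\mapsto(\varepsilon^3x,\varepsilon^4y)$), so $\chi_\varepsilon\circ\alpha_{\delta^{-1}}(\omega_0^{-1}) = \alpha(\omega_0^{-1})\circ\chi_\varepsilon = r_\zeta^{-1}\circ\chi_\varepsilon$. Composing these three identities yields the desired relation.

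The most delicate relation is $s_\varepsilon^2 = [2]_P$, which is the main obstacle. Setting $\psi := \chi_\varepsilon\circ\phi\colon A_{k^{\sep}}\to P_{k^{\sep}}$, the fact that $\chi_\varepsilon$ and $\phi$ are isomorphisms of $(1,2)$-polarized surfaces gives $\psi^\vee\circ\lambda\circ\psi = \widehat\lambda$, and combined with $\widehat\lambda\circ\lambda = [2]_P$ from \S\ref{subsection: the dual prym A} this rearranges to
\[
s_\varepsilon^2 \;=\; \psi\circ\lambda\circ\psi\circ\lambda \;=\; \psi\circ(\psi^\vee)^{-1}\circ[2]_P,
\]
so the identity $s_\varepsilon^2 = [2]_P$ is equivalent to the symmetry statement $\psi = \psi^\vee\colon A \to A^\vee = P$. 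Rather than establish this symmetry directly, I would argue indirectly via the abstract quaternionic multiplication of Corollary \ref{cor:QM existence}, which gives $B\subseteq\End(P_{k^{\sep}})\otimes\Q$, with equality in the geometrically non-CM case. In that case, since $r_\zeta$ generates $\Q(\omega_0)$ (its own centralizer in $B$) and $s_\varepsilon$ anti-commutes with $r_\zeta$ by the previous paragraph, we have $s_\varepsilon = qj$ for some $q\in\Q(\omega_0)$, whence
\[
s_\varepsilon^2 \;=\; qjqj \;=\; q\,\bar q\, j^2 \;=\; 2\,N_{\Q(\omega_0)/\Q}(q) \;\geq\; 0
\]
by positive-definiteness of the norm on the imaginary quadratic field $\Q(\omega_0)$. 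A direct degree calculation $\deg s_\varepsilon = \deg\chi_\varepsilon\cdot\deg\phi\cdot\deg\lambda = 1\cdot 1\cdot 4 = 4$ then forces $N(q) = 1$, and hence $s_\varepsilon^2 = 2$. The geometrically non-simple (CM) locus, where $\End(P_{k^{\sep}})\otimes\Q$ may strictly contain $B$ so the above confinement of $s_\varepsilon$ to $B$ may fail, consists of only finitely many $\bar\Q$-points and can be handled by specialization along the universal family $\mathcal{P}\to S$ of Remark \ref{remark: universal iso between Prym and dual}: the section $s_\varepsilon^2 - [2]_P$ of the \'etale sheaf $\underline{\Hom}(\mathcal{P},\mathcal{P})$ vanishes on the dense non-CM locus of $S$ and is therefore identically zero.
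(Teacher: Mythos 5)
Your proof is correct, and the first two relations ($r_\zeta^2+r_\zeta+1=0$ and the anti-commutation, via the three equivariance identities for $\lambda$, $\phi$, $\chi_\varepsilon$) are handled exactly as in the paper. Where you genuinely diverge is the hard relation $s_\varepsilon^2=2$. The paper also reduces to the symmetry $\psi=\psi^\vee$, shows $(\psi^\vee)^{-1}\psi\in\Aut(A,\widehat\lambda,\hat\alpha)$, spreads out over $\tilde S$, evaluates at the single specialization $(a,b,\varepsilon)=(1,1/8,1)$ where the automorphism group is $\mu_6$ with $\mu_6(\Q)=\{\pm1\}$, and excludes the sign $-1$ (i.e.\ $s^2=-2$) because a definite quaternion algebra cannot act on a simple complex abelian surface. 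You instead bypass the symmetry statement: invoking Corollary \ref{cor:QM existence} to get $\End^0(P_{\bar k})\simeq B$ in the non-CM case, you place $s_\varepsilon$ in the coset $\Q(\omega_0)j$ (the anti-centralizer of $\omega_0$), so that $s_\varepsilon^2=2N_{\Q(\omega_0)/\Q}(q)>0$ by positivity of the norm on an imaginary quadratic field, and the degree count $\deg s_\varepsilon=\deg\lambda=4$ pins down $N(q)=1$. This is a clean alternative to the paper's exclusion of the definite algebra, and both arguments finish with essentially the same rigidity/spreading-out step over the universal family to cover the CM locus (for you, it is cleanest to say the identity holds at the generic point of $\tilde S$, which is non-CM and of characteristic $0$, and then propagates to every fiber). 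Two trade-offs are worth noting: your route makes Proposition \ref{proposition: Prym has O-PQM} logically dependent on the Shimura-theoretic Corollary \ref{cor:QM existence}, so it no longer serves as the independent, purely geometric second proof of quaternionic multiplication that the paper advertises; and since that corollary is only stated in characteristic $0$, your direct argument covers positive characteristic only through the final specialization over $\Z[1/6]$, which you should make explicit (the family lives over $\tilde S$, not $S$, since $s_\varepsilon$ depends on $\varepsilon$).
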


\begin{proof}
We may assume $k = k^{\sep}$.
Fix $x = (\zeta, \varepsilon) \in \mathcal{S}$ and write $r = r_{\zeta}$, $s = s_{\varepsilon}$.
We will show that $r^2+ r+1 = 0$, $s^2=2$ and $sr = r^{-1}s$. 
Since the associative ring $\calO$ from \S\ref{subsec: the quaternion order O} is generated by $\omega,j$ and satisfies the same three relations, this will imply that there exists a unique ring homomorphism $\iota_x\colon \calO \rightarrow \End(P_{k^{\sep}})$ with $\iota_x(\omega) = r$ and $\iota_x(j) = s$.
Such a homomorphism is automatically an embedding since $\calO\otimes \Q$ is a simple algebra.

Equation \eqref{equation: endomorphism satisfies r^2+r+1=0} of Lemma \ref{lemma: mu3-fixed points Prym} shows that $r^2+ r+ 1=0$.
To check that $sr = r^{-1} s$, note that the surfaces $P, P_{\delta^{-1}}, A$ have $\mu_3$-actions $\alpha, \alpha_{\delta^{-1}}, \hat{\alpha}$ respectively. 
The morphisms $\lambda$ and $\chi_{\varepsilon}$ from \eqref{equation: definition s} are equivariant with respect to these $\mu_3$-actions.
On the other hand, $\phi$ intertwines the $\mu_3$-action on the domain with the \emph{inverse} of the $\mu_3$-action on the target. 
The same must therefore be true for their composition $s$, so $sr = r^{-1}s$.

It suffices to prove that $s^2=2$; we will use the results of \S\ref{subsec: comparison with quaternionic curve} to achieve this.
Write $\psi = \chi_{\varepsilon} \circ \phi$.
The morphism $\psi$ is compatible with the polarizations since $\phi$ and $\chi_{\varepsilon}$ are, so $\psi^{\vee} \circ \lambda \circ \psi = \widehat{\lambda}$.
Therefore 
\begin{align}\label{equation: s^2 = 2}
    s^2 = \psi  \lambda \psi  \lambda = (\psi (\psi^{\vee})^{-1}) (\psi^{\vee} \lambda  \psi  \lambda) = (\psi  (\psi^{\vee})^{-1}) (\widehat{\lambda}  \lambda) = 2(\psi (\psi^{\vee})^{-1}).
\end{align}
It follows that $s^2=2$ if and only if $\psi^{\vee} = \psi$.
To prove the latter statement, we will first show that $(\psi^{\vee})^{-1}\circ \psi \in \Aut(A,\widehat{\lambda}, \hat{\alpha})$.
The identity $\psi^{\vee}\circ \lambda = \widehat{\lambda}\circ \psi$ implies that $\lambda \circ (\psi \circ \lambda \circ \psi^{\vee} - \widehat{\lambda}) \circ \lambda = 0$.
Since $\lambda$ is an isogeny, the inner expression must be zero, so $\psi \circ \lambda \circ \psi^{\vee} = \widehat{\lambda}$. In other words $\psi^{\vee}$ is compatible with the polarizations.
Moreover, $\psi$ intertwines the $\mu_3$-action $\hat{\alpha}$ on $A$ with the \emph{inverse} of the $\mu_3$-action $\alpha$ on $P$: $\psi(\hat{\alpha}(\omega)(x)) = \alpha(\omega)^{-1}(\psi(x))$ for all $x\in A$ and $\omega \in \mu_3$.
Applying duality to this identity and using that $\alpha^{\vee}(\omega) = \hat{\alpha}(\omega^{-1})$, $\psi^{\vee}$ again intertwines the $\mu_3$-action with the inverse of the other. 
We conclude that $(\psi^{\vee})^{-1}\circ\psi \in \Aut(A, \lambda, \alpha)$.

To show that this automorphism is the identity, we will use that its construction can be carried out in families.
More precisely, let $R = \Z[1/6,a,b,\Delta_{a,b}^{-1}]$ and $\tilde{R} = R[\varepsilon]/(\varepsilon^6-\Delta_{a,b})$.
Equation \eqref{equation: bielliptic picard curve equation} defines a universal marked bielliptic Picard curve $(\mathcal{C}, \gamma)$ over $S = \Spec(R) \subset \A^2_{\Z[1/6]}$ with Prym variety $(\mathcal{P}, \lambda, \alpha)$ and dual $(\mathcal{A}, \widehat{\lambda}, \hat{\alpha})$. 
On the cover $\tilde{S} = \Spec (\tilde{R}) \rightarrow S$ given by adjoining a sixth root of $\Delta_{a,b}$, there exists an isomorphism $\Psi\colon \mathcal{A}_{\tilde{S}} \rightarrow \mathcal{P}_{\tilde{S}}$ with the property that $\Psi$ specializes to $\psi$ for all morphisms $\Spec(k) \rightarrow \tilde{S}$ and that $(\Psi^{\vee})^{-1}\circ\Psi \in \Aut_{\tilde{S}}(\mathcal{A}, \lambda, \alpha)$.
Since the moduli stack of $(1,2)$-polarized abelian surfaces is separated and Deligne--Mumford, $\mathbf{Aut}_{\tilde{S}}(\mathcal{A}, \lambda, \alpha) \rightarrow \tilde{S}$ is finite and unramified.
Since finite unramified morphisms are \'etale locally on the target disjoint unions of closed immersions \cite[Tag \href{https://stacks.math.columbia.edu/tag/04HJ}{04HJ}]{stacksproject} and the base $\tilde{S}$ is connected, it suffices to show that $(\Psi^{\vee})^{-1}\Psi$ is the identity for a \emph{single} specialization $s\in \tilde{S}$.

The specialization $(a_0,b_0,\varepsilon_0) = (1,1/8,1)$ defines a $\Q$-point $s_0$ of $\tilde{S}$.
Theorem \ref{theorem: Torelli theorem bielliptic Picard curves} and Lemma \ref{lemma: automorphism group bielliptic Picard curves} shows that $\mathbf{Aut}((\mathcal{A}, \lambda, \alpha)_{s_0}) = \mu_6$.
Since $\mu_6(\Q) = \{\pm 1\}$, either $(\Psi^{\vee})^{-1}\Psi = 1$ or $(\Psi^{\vee})^{-1}\Psi = -1$.
Assume for the sake of contradiction that $(\Psi^{\vee})^{-1}\Psi = -1$.
Then \eqref{equation: s^2 = 2} shows that $s^2=-2$.
This implies that $r$ and $s$ determine an embedding of the \emph{definite} quaternion algebra $(-3,-2)_{\Q}$ into $\End(\mathcal{P}_{s})\otimes \Q$ for every specialization $s\in \tilde{S}(\C)$.
By the classification of endomorphism algebras of complex abelian surfaces \cite[Proposition 5.5.7, Exercise 9.10(1) and Exercise 9.10(4)]{BirkenhakeLange-Complexabelianvarieties}, this implies that $\mathcal{P}_s$ is isogenous to the square of a CM elliptic curve.
This is a contradiction, since there are (uncountably) many specializations $s$ for which $\mathcal{P}_{s}$ is simple, using Proposition \ref{proposition: Y is quotient of shimura curve twist} and CM theory for the quaternionic Shimura curve $X$.
We conclude that $\psi^{\vee} = \psi$ and hence by \eqref{equation: s^2 = 2} that $s^2 = 2$, as desired.
\end{proof}

If $P_{\bar{k}}$ is simple, then $\calO$ is the full ring of endomorphisms of $P_{\bar{k}}$:

\begin{lemma}\label{lemma: if P simple then O = End(P)}
    Let $C/k$ be a bielliptic Picard curve with Prym variety $P$ and suppose that $k$ is algebraically closed. Then $P$ is not simple $\Leftrightarrow$ 
    $\calO \not\simeq \End(P)$
    $\Leftrightarrow$
    $P$ is isogenous to the square of a CM elliptic curve.
    Consequently, if $P$ is simple then $\End(P) =\iota_x(\calO)$ for every $x\in \mathcal{S}$.
\end{lemma}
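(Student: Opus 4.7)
The plan is to combine the embedding $\iota_x\colon \calO \hookrightarrow \End(P_{\bar k})$ from Proposition~\ref{proposition: Prym has O-PQM} with Albert's classification of endomorphism algebras of abelian surfaces (e.g.\ \cite{BirkenhakeLange-Complexabelianvarieties}). Let $B := \calO \otimes \Q$, an indefinite quaternion algebra of discriminant $6$, and $D := \End(P)\otimes\Q$; the central constraint is that $B \hookrightarrow D$, so $D$ is noncommutative of $\Q$-dimension at least $4$.

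First I would handle the case where $P$ is simple. Then $D$ is a division algebra, and Albert's list for a simple complex abelian surface consists of $\Q$, an imaginary or real quadratic field, an indefinite rational quaternion algebra, or a totally imaginary quartic field. The only non-commutative option (needed since $B$ is non-commutative) is an indefinite rational quaternion algebra; by dimension count this forces $D = B$. Since $\calO$ is a \emph{maximal} order in $B$ and $\End(P) \supset \iota_x(\calO)$ is an order in $B$, maximality yields $\End(P) = \iota_x(\calO) \simeq \calO$. This simultaneously proves the implications ``$P$ simple $\Rightarrow \End(P)\simeq \calO$'' and the final sentence of the lemma.

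Next I would treat the case where $P$ is not simple, so $P$ is isogenous to $E_1 \times E_2$ for elliptic curves $E_1, E_2/k$. If $E_1$ and $E_2$ are non-isogenous, then $D \simeq (\End(E_1)\otimes\Q) \times (\End(E_2)\otimes\Q)$ is commutative, contradicting the embedding of the non-commutative algebra $B$ into $D$. Hence $E_1 \sim E_2 =: E$ and $D \simeq M_2(K)$ with $K = \End(E)\otimes\Q$. If $K = \Q$ then $M_2(\Q)$ is split, which is incompatible with the embedding of the non-split quaternion algebra $B$ of discriminant $6$; therefore $K$ is imaginary quadratic and $E$ is CM. In particular $\dim_\Q D = 8 \neq 4 = \dim_\Q B$, so $\End(P) \not\simeq \calO$.

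Assembling the cases gives the full equivalence: the first case shows ``$P$ simple $\Leftrightarrow \End(P) \simeq \calO$'', and the second shows ``$P$ not simple $\Leftrightarrow P \sim E^2$ with $E$ CM'' (the reverse direction of the latter being tautological from $\End(E^2)\otimes\Q = M_2(K)$ not being a division algebra). There is no real obstacle here beyond correctly invoking Albert's classification; the substantive work is already contained in Proposition~\ref{proposition: Prym has O-PQM}, which produces the embedding of the maximal order $\calO$ into $\End(P_{\bar k})$.
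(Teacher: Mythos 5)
Your proof is correct and follows exactly the route the paper takes: the paper's own proof is a one-line appeal to the Albert classification (citing Birkenhake--Lange, Proposition 5.5.7 and Exercises 9.10(1),(4)) combined with the embedding $\iota_x\colon \calO\hookrightarrow \End(P_{\bar k})$ from Proposition \ref{proposition: Prym has O-PQM}, and your case analysis is simply the honest expansion of that citation. The only cosmetic caveat is that, like the paper, you invoke the classification for \emph{complex} abelian surfaces while the lemma is stated for any algebraically closed $k$ of characteristic $\neq 2,3$, but this matches the level of detail of the original.
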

\begin{proof}
    This is a direct consequence of the Albert classification \cite[Proposition 5.5.7, Exercise 9.10(1) and Exercise 9.10(4)]{BirkenhakeLange-Complexabelianvarieties}.
\end{proof}
\begin{remark}{\em 
If $\mathrm{char}(k) = 0$ or if $k = \overline{\F}_p$ with $p > 3$, then these conditions are also equivalent to the condition that $P$ is {\it isomorphic} to a product of elliptic curves. See \cite{MitaniShioda} for the former case and \cite[Theorem 3.1]{RibetBimodules} for the latter.}
\end{remark}
We will study geometrically split Prym varieties in more detail in \S\ref{subsec: split Prym varieties}.

\subsection{Galois action on the endomorphisms}\label{subsec: endomorphism field}

Keep the notations of the previous section.
We proceed to determine how the various embeddings $\iota_x\colon \calO \rightarrow \End(P_{k^{\sep}})$ are related and deduce an explicit description of the Galois action on $\calO$.

Let $\Sym(\mathcal{S})$ be the group of bijections $\mathcal{S} \rightarrow \mathcal{S}$ acting on the right on the $\Gal_k$-set $\mathcal{S}$ of \eqref{equation: definition of set S}.
Define the subgroup $D_6\subset \Sym(\mathcal{S})$ generated by $\rho, \sigma$, where $(\zeta, \varepsilon)^{\rho} =(\zeta, \zeta^{-1} \varepsilon)$ and $(\zeta, \varepsilon)^{\tau} \mapsto (\zeta^{-1}, \varepsilon)$.
Then $D_6$ is a dihedral group of order $12$ and $D_6$ acts simply transitively on $\mathcal{S}$.

Recall the subgroup $\Aut_i(\calO)$ from Lemma \ref{lemma: explicit description of AutiO}.
The assignment $\rho\mapsto [1-\omega]$, $\tau\mapsto [j]$ induces an isomorphism $\varphi \colon D_6\xrightarrow{\sim} \Aut_i(\calO)$.

\begin{lemma}\label{lem: compatibility endo}
    For all $x \in \mathcal{S}$, $g \in D_6$ and $b\in \calO$, $\iota_{x^g}(b^{\varphi(g)}) =  \iota_{x}(b)$
\end{lemma}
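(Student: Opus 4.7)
The plan is to reduce the identity $\iota_{x^g} \circ \varphi(g) = \iota_x$ to a small number of cases. Since $\iota_{x^g}\circ \varphi(g)$ and $\iota_x$ are both ring homomorphisms $\calO \to \End(P_{\bar{k}})$, they agree as soon as they agree on the ring generators $\omega$ and $j$ of $\calO$. Moreover, if the identity holds for $g_1, g_2 \in D_6$ (applied at arbitrary base points), then it holds for $g_1g_2$: for any $b\in \calO$,
\[
\iota_{x^{g_1g_2}}(b^{\varphi(g_1g_2)}) = \iota_{(x^{g_1})^{g_2}}\bigl((b^{\varphi(g_1)})^{\varphi(g_2)}\bigr) = \iota_{x^{g_1}}(b^{\varphi(g_1)}) = \iota_x(b).
\]
It therefore suffices to verify the identity for $g\in\{\rho,\tau\}$ and $b\in\{\omega,j\}$, giving four cases.

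Three of these cases are straightforward consequences of the definitions of $\iota_x$ and $\varphi$. For $(g,b)=(\rho,\omega)$: $[1-\omega]$ fixes $\omega$ (equivalently, it fixes $i = 1+2\omega$) and $\rho$ fixes the first coordinate of $x = (\zeta,\varepsilon)$, so both sides equal $r_\zeta$. For $(\tau,j)$: $[j]$ fixes $j$ and $\tau$ fixes the second coordinate of $x$, so both sides equal $s_\varepsilon$. For $(\tau,\omega)$: from $j\omega = \omega^{-1}j$ one gets $\omega^{[j]} = \omega^{-1}$, and $\iota_{x^\tau}(\omega^{-1}) = r_{\zeta^{-1}}^{-1} = \alpha(\zeta^{-2})^{-1} = \alpha(\zeta^2) = \iota_x(\omega)$.

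The remaining case $(\rho,j)$ is the heart of the lemma. An explicit computation in $\calO$ using $j\omega = \omega^{-1}j$ and $(1-\omega)(1-\omega^2)=3$ yields $j^{[1-\omega]} = (1+\omega)j$. Substituting and applying $\iota_{x^\rho}$ reduces the required identity to $(1+r_\zeta)\,s_{\zeta^{-1}\varepsilon} = s_\varepsilon$, which simplifies (since $r_\zeta^2+r_\zeta+1=0$ gives $1+r_\zeta = -r_\zeta^{-1}$) to
\[
s_{\zeta^{-1}\varepsilon} = -r_\zeta\,s_\varepsilon.
\]
To prove this I would unwind the definition $s_\varepsilon = \chi_\varepsilon\circ \phi\circ \lambda$ and analyze how $\chi_\varepsilon$ depends on $\varepsilon$. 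At the curve level, the isomorphism $C_{\delta^{-1}}\xrightarrow{\sim} C$ underlying $\chi_\varepsilon$ changes from $(x,y)\mapsto (\varepsilon^3x,\varepsilon^4y)$ to $(x,y)\mapsto (-\varepsilon^3x, \zeta^2\varepsilon^4y)$ when $\varepsilon$ is replaced by $\zeta^{-1}\varepsilon$; the difference is postcomposition with the automorphism $(x,y)\mapsto (-x,\zeta^2y)$ of $C$, which is precisely the action of $\zeta^{-1}\in\mu_6$. Under the decomposition $\mu_6=\mu_2\times\mu_3$ (where $\zeta^{-1}$ corresponds to $(-1,-\zeta^{-1}) = (-1,\zeta^2)$, using $\zeta^3=-1$), this $\mu_6$-action translates on $P$ to the composition $[-1]\circ\alpha(\zeta^2) = -r_\zeta$. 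Hence $\chi_{\zeta^{-1}\varepsilon} = -r_\zeta\circ\chi_\varepsilon$, and composing on the right with $\phi\circ\lambda$ yields the desired identity.

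The main technical obstacle is the final translation from the curve-level automorphism $(x,y)\mapsto (-x,\zeta^2y)$ to the Prym-level endomorphism $-r_\zeta$; once this is settled through the decomposition $\mu_6 = \mu_2 \times \mu_3$, the rest of the lemma follows from the routine generator-by-generator verification.
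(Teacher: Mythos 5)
Your proposal is correct and follows essentially the same route as the paper: reduce to the four cases $(g,b)$ with $g\in\{\rho,\tau\}$ and $b\in\{\omega,j\}$, dispatch three of them directly from the definitions, and for $(\rho,j)$ compute $j^{[1-\omega]}=(1+\omega)j=-\omega^{-1}j$ and reduce to the identity $s_{\zeta^{-1}\varepsilon}=-r_\zeta s_\varepsilon$, proved by observing that changing $\varepsilon$ to $\zeta^{-1}\varepsilon$ postcomposes $\chi_\varepsilon$ with the automorphism induced by $\gamma(\zeta^{-1})$, which acts on $P$ as $[-1]\circ\alpha(\zeta^2)=-r_\zeta$. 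Your write-up in fact spells out the multiplicativity reduction and the final $\mu_6=\mu_2\times\mu_3$ translation in slightly more detail than the paper does.
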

\begin{proof}
    It suffices to prove the claimed identity when $g\in \{\tau, \rho\}$ and $b\in \{ \omega,j\}$, for which it can be checked using routine but intricate calculations.
    Write $x = (\zeta, \varepsilon)$.
    Suppose first that $g = \tau$, then $\iota_{x^g}(b^{\varphi(g)}) = \iota_{(\zeta^{-1}, \varepsilon)}(j^{-1}bj)$.
    If $b = \omega$, this equals $\iota_{(\zeta^{-1}, \varepsilon)}(j^{-1}\omega j) = \iota_{(\zeta^{-1}, \varepsilon)}(\omega^{-1}) = r_{\zeta^{-1}}^{-1} = r_{\zeta} = \iota_x(\omega)$.
    If $b = j$, this equals $\iota_{(\zeta^{-1}, \varepsilon)}(j) =s_{\varepsilon} = \iota_x(j)$.
    
    Suppose next that $g = \rho$. 
    If $b = \omega$, both sides are simply $r_{\zeta}$.
    If $b=j$, then
    \begin{align*}
        \iota_{x^g}(b^{\varphi(g)}) = \iota_{x^g}(j^{[1-\omega]}) 
        =\iota_{x^g}(-\omega^{-1}j) =-\iota_{(\zeta, \zeta^{-1}\varepsilon)}(\omega^{-1}) \iota_{(\zeta, \zeta^{-1}\varepsilon)}(j) 
        =-r_{\zeta}^{-1}s_{\zeta^{-1} \varepsilon}.
    \end{align*}
    The definition of $s_{\varepsilon}$ in \eqref{equation: definition s} shows that $s_{\zeta^{-1} \varepsilon}$ is given by postcomposing $s_{\varepsilon}$ with the automorphism of $P$ induced by $\gamma(\zeta^{-1})$. 
    A simple calculation shows that this is exactly $-r_{\zeta}$, so 
    $s_{\zeta^{-1} \varepsilon} = -r_{\zeta} s_{\varepsilon}$.
    Therefore $\iota_{x^g}(j^g) = -r_{\zeta}^{-1} (-r_{\zeta} s_{\varepsilon}) = s_{\varepsilon} = \iota_x(b)$.
\end{proof}

The Galois action on $\mathcal{S}$ determines a homomorphism $\Gal_k \rightarrow \Sym(\mathcal{S})$.
This homomorphism factors through an injection $\Gal(L/k)\hookrightarrow D_6$, where $L = k(\omega, \sqrt[6]{\delta})$ is the splitting field of $f(T) = T^6- \delta$.
Denote the composition $\Gal_k\rightarrow D_6\xrightarrow{\varphi} \Aut_i(\mathcal{O})$ again by $\varphi$.
Recall from \S\ref{subsec: notation and conventions} that if $f\colon X_{k^{\sep}} \rightarrow Y_{k^{\sep}}$ is a $k^{\sep}$-morphism between $k$-varieties and $\sigma \in \Gal_k$, then $f^{\sigma}$ denotes the $k^{\sep}$-morphism $x\mapsto f(x^{\sigma^{-1}})^{\sigma}$.

\begin{theorem}\label{theorem: galois action endo field on O}
    For all $x\in \mathcal{S}$, $\sigma \in \Gal_k$ and $b\in \calO$, $\iota_x(b)^{\sigma} = \iota_x(b^{\varphi(\sigma^{-1})})$.
    Consequently, there exists an embedding $\calO \subset \End(P_{k^{\sep}})$ such that $f^{\sigma} = f^{\varphi(\sigma^{-1})}$ for all $f\in \calO$ and $\sigma \in \Gal_k$.
    In particular, $\calO$ is $\Gal_k$-stable and defined over $L = k(\omega, \sqrt[6]{\delta})$.
\end{theorem}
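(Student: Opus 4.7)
The plan is to reduce everything to the Galois action on the labeling set $\mathcal{S}$, after which Lemma \ref{lem: compatibility endo} controls the passage between the different embeddings $\iota_x$.

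The first step is to establish the intermediate identity
\[
\iota_x(b)^\sigma = \iota_{x^\sigma}(b) \qquad \text{for all } x\in\mathcal{S},\ \sigma\in\Gal_k,\ b\in\calO.
\]
Both sides are ring homomorphisms $\calO\to \End(P_{k^{\sep}})$ (the $\Gal_k$-action on $\End(P_{k^{\sep}})$ being through ring automorphisms), so it suffices to check agreement on the generators $\omega$ and $j$, i.e.\ that $r_\zeta^\sigma = r_{\zeta^\sigma}$ and $s_\varepsilon^\sigma = s_{\varepsilon^\sigma}$. The first follows immediately from the Galois-equivariance of $\alpha\colon\mu_3\to\mathbf{Aut}(P)$ built into the definition of a $k$-rational $\mu_3$-abelian surface. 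For the second, I use the factorization $s_\varepsilon = \chi_\varepsilon\circ\phi\circ\lambda$ from \eqref{equation: definition s}: both $\lambda$ and $\phi$ are defined over $k$ (the latter because it is the specialization of the universal isomorphism $\Phi$ from Remark \ref{remark: universal iso between Prym and dual} at the $k$-point $(a,b)$), while $\chi_\varepsilon$ is induced by $(x,y)\mapsto(\varepsilon^3x,\varepsilon^4y)$, whose Galois conjugate is visibly $\chi_{\varepsilon^\sigma}$.

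Granted this, let $g_\sigma\in D_6$ denote the image of $\sigma$ under $\Gal_k\to D_6$, so $x^\sigma = x^{g_\sigma}$. Combining with Lemma \ref{lem: compatibility endo} (applied to the input $b^{\varphi(g_\sigma)^{-1}}$) yields
\[
\iota_x(b)^\sigma = \iota_{x^{g_\sigma}}(b) = \iota_x\bigl(b^{\varphi(g_\sigma)^{-1}}\bigr) = \iota_x\bigl(b^{\varphi(\sigma^{-1})}\bigr),
\]
where the last equality uses that $\varphi$ is a group homomorphism. For the consequences, fix any $x\in\mathcal{S}$ and identify $\calO$ with $\iota_x(\calO)\subset\End(P_{k^{\sep}})$; the identity becomes $f^\sigma = f^{\varphi(\sigma^{-1})}$, so $\calO$ is $\Gal_k$-stable. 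Since $\varphi$ factors through $\Gal(L/k)$ by construction of $L$, the subgroup $\Gal_L$ acts trivially on $\calO$, i.e.\ $\calO$ is defined over $L$.

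The only real obstacle is the Galois equivariance of $s_\varepsilon$, which hinges on having a genuinely $k$-rational choice of the isomorphism $\phi\colon A\to P_{\delta^{-1}}$. This is precisely why we fixed the universal isomorphism $\Phi$ in Remark \ref{remark: universal iso between Prym and dual} and obtained $\phi$ by specialization, rather than picking $\phi$ ad hoc over $k^{\sep}$.
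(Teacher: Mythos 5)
Your proposal is correct and follows essentially the same route as the paper: establish $\iota_x(b)^{\sigma}=\iota_{x^{\sigma}}(b)$ from the Galois-equivariance of the explicit constructions of $r_{\zeta}$ and $s_{\varepsilon}$, then conclude via Lemma \ref{lem: compatibility endo}. You simply spell out in more detail the equivariance check that the paper compresses into one sentence (in particular, correctly identifying the $k$-rationality of $\phi$ via the universal isomorphism $\Phi$ as the crux), and the bookkeeping with $\varphi(\sigma^{-1})$ matches the paper's conventions.
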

\begin{proof}
    Write $x = (\zeta, \varepsilon)$.
    The explicit construction of $r_{\zeta}$ and $s_{\varepsilon}$ shows that $r_{\zeta}^{\sigma} = r_{\zeta^{\sigma}}$ and $s_{\varepsilon}^{\sigma} = s_{\varepsilon^{\sigma}}$.
    Therefore $\iota_{x}(b)^{\sigma} = \iota_{x^{\sigma}}(b)$.
    By the previous lemma, $\iota_{x}(b)^{\sigma} = \iota_{x^{\sigma}}(b) = \iota_x(b^{\varphi(\sigma^{-1})})$.
\end{proof}

The \define{endomorphism field} of an abelian variety is the smallest field extension over which all endomorphisms are defined \cite{GuralnickKedlaya-Endomorphismfield}.
If $P$ is geometrically simple, then $\End(P_{\bar{k}}) = \mathcal{O}$ by Lemma \ref{lemma: if P simple then O = End(P)}. 
Since the action of $D_6$ on $\calO$ is faithful, Theorem \ref{theorem: galois action endo field on O} has the following immediate corollary.

\begin{corollary}\label{corollary: determination endo field}
    Let $C = C_{a,b}$ be a bielliptic Picard curve over $k$.
    Suppose that $P_{\bar{k}}$ is simple.
    Then the endomorphism field of $P/k$ is $k(\omega, \sqrt[6]{16 b(a^2- 4b)})$.
\end{corollary}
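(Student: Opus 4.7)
By definition, the endomorphism field of $P/k$ is the fixed field (inside $k^{\sep}$) of the kernel of the natural Galois action on $\End(P_{k^{\sep}})$. The plan is to combine three inputs already established in the excerpt: (i) since $P_{\bar k}$ is assumed simple, Lemma \ref{lemma: if P simple then O = End(P)} identifies $\End(P_{k^{\sep}}) = \iota_x(\calO)$ for any choice of $x \in \mathcal{S}$; (ii) under this identification, Theorem \ref{theorem: galois action endo field on O} describes the Galois action on $\calO$ explicitly as $b \mapsto b^{\varphi(\sigma^{-1})}$, where $\varphi \colon \Gal_k \to \Aut_i(\calO)$ is the composition $\Gal_k \to D_6 \xrightarrow{\sim} \Aut_i(\calO)$; (iii) by construction, the first arrow $\Gal_k \to D_6$ factors through an injection $\Gal(L/k) \hookrightarrow D_6$, where $L = k(\omega, \sqrt[6]{\delta})$ is the splitting field of $T^6 - \delta$ for $\delta = 16b(a^2-4b)$.

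Then I would argue as follows. A Galois element $\sigma$ acts trivially on $\End(P_{k^{\sep}}) = \iota_x(\calO)$ if and only if $b^{\varphi(\sigma^{-1})} = b$ for every $b \in \calO$; since $\Aut_i(\calO)$ acts faithfully on $\calO$ (tautologically, as a subgroup of $\Aut(\calO)$), this is equivalent to $\varphi(\sigma) = 1$. Using the isomorphism $D_6 \xrightarrow{\sim} \Aut_i(\calO)$, the kernel of $\varphi$ therefore coincides with the kernel of $\Gal_k \to D_6$, which by input (iii) is exactly $\Gal(k^{\sep}/L)$. Taking fixed fields gives the endomorphism field equal to $L = k(\omega, \sqrt[6]{16 b(a^2-4b)})$, as claimed. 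The deduction is formal once Theorem \ref{theorem: galois action endo field on O} is in hand, so there is no real obstacle at this stage; the only subtle point to keep in mind is that faithfulness of $D_6 \curvearrowright \calO$ is precisely what allows one to promote the description of the Galois action into an identification of fields, and this is already built into the definition of $\varphi$ as landing in $\Aut_i(\calO)$.
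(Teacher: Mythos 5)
Your proposal is correct and follows the paper's own route exactly: the paper likewise deduces the corollary immediately from Lemma \ref{lemma: if P simple then O = End(P)} (which gives $\End(P_{\bar k}) = \iota_x(\calO)$ in the simple case), Theorem \ref{theorem: galois action endo field on O}, and the faithfulness of the $D_6$-action on $\calO$, with the kernel of $\Gal_k \to D_6$ being $\Gal(k^{\sep}/L)$. You have merely written out the formal steps that the paper leaves implicit.
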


The endomorphism field is invariant under isogeny, so the corollary holds verbatim for the dual Prym $A/k$.

\begin{example}\label{example: sato-tate group large}
{\em 
    Corollary \ref{corollary: determination endo field} gives many examples of abelian surfaces with large endomorphism fields.
    For example, for the curve $C = C_{3,4}: y^3 = x^4 + 3x^2 +4$ over $\Q$, the Prym $P = P_{3,4}$ is geometrically simple (see Proposition \ref{proposition: all CM j-invariants}) and its endomorphism field is $\Q(\sqrt{-3}, \sqrt[6]{-7})$, a dihedral degree $12$ extension of $\Q$.
    In the notation of \cite{FKRS-STgenus2}, this seems to be the first published example of a geometrically simple abelian surface over $\Q$ with Sato--Tate group \href{https://www.lmfdb.org/SatoTateGroup/1.4.E.12.4a}{$J(E_6)$}.
    Similarly, $P_{-4,2}$ is a geometrically simple abelian surface with endomorphism field $\Q(\sqrt{-3}, \sqrt[3]{2})$ and Sato--Tate group \href{https://www.lmfdb.org/SatoTateGroup/1.4.E.6.1a}{$J(E_3)$}.
    }
\end{example}

We also record a simple description of the Artin representations $\End^0(P_{k^{\mathrm{sep}}})$ and $\NS(P_{k^{\mathrm{sep}}}) \otimes \Q$. Recall that $\NS(P_{k^{\mathrm{sep}}})$ is the group of line bundles on $P_{k^\mathrm{sep}}$ modulo algebraic equivalence.
Write $G = \Gal(L/k)$ and view it as a subgroup of $D_6$.
Let $\chi\colon D_6\rightarrow \{\pm 1\}$ be the unique homomorphism with kernel $C_6 =\langle \rho \rangle$ and let $\mathrm{std}\colon D_6\rightarrow \GL_2(\Q)$ be a model for the unique $2$-dimensional irreducible faithful representation of $D_6$.
\begin{corollary}\label{lem:NS galois action}
    Suppose that $P_{\bar{k}}$ is simple.
    Then there are isomorphisms of $G$-representations $\End^0(P_{k^{\mathrm{sep}}})\simeq (1\oplus \chi \oplus \mathrm{std})|_G$ and $\NS(P_{k^{\mathrm{sep}}})\otimes \Q \simeq (1 \oplus \mathrm{std})|_G$.
\end{corollary}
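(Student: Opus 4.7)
The plan is to reduce both claims to a decomposition problem for the natural action of $D_6$ on the rational quaternion algebra $B = \calO \otimes \Q$. By Lemma \ref{lemma: if P simple then O = End(P)}, for any choice of $x \in \mathcal{S}$ the embedding $\iota_x$ induces an identification $B \xrightarrow{\sim} \End^0(P_{k^{\mathrm{sep}}})$, and by Theorem \ref{theorem: galois action endo field on O} the natural $G$-action on the right-hand side corresponds under $\iota_x$ to the restriction to $G \subset D_6$ of the action of $D_6$ on $B$ through $\varphi$. So the first claim reduces to a purely representation-theoretic statement about $B$ as a rational $D_6$-module, and the second will follow by extracting the Rosati-fixed subrepresentation.

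For the decomposition of $B$, I will work with the $\Q$-basis $\{1, i, j, ij\}$. The subspaces $\Q \cdot 1$, $\Q \cdot i$ and $V := \Q j \oplus \Q(ij)$ are all $D_6$-stable, since $V$ is the orthogonal complement of $\Q \oplus \Q i$ under the reduced trace pairing. The line $\Q \cdot 1$ obviously carries the trivial representation, and by the very definition of $\Aut_i(\calO)$ together with Lemma \ref{lemma: explicit description of AutiO}, $\rho = [1-\omega]$ fixes $i$ while $\tau = [j]$ negates it, so $\Q \cdot i$ realises the character $\chi$. To identify $V$ with $\mathrm{std}$, I will compute $j^{\rho} = (1-\omega)^{-1} j (1-\omega)$ using $j\omega = \omega^{-1} j$ and the useful identity $(1-\omega)(1-\omega^{-1}) = 3$; a short calculation gives that the matrix of $\rho$ on $\{j, ij\}$ has characteristic polynomial $T^2 - T + 1$. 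In particular $\rho$ acts on $V$ with order $6$, which forces $V \simeq \mathrm{std}$, since among the two irreducible $2$-dimensional rational representations of $D_6$ only $\mathrm{std}$ has $\rho$ acting with order $6$ (the other one factors through $D_6/\langle \rho^3\rangle \simeq S_3$). Summing yields $B \simeq 1 \oplus \chi \oplus \mathrm{std}$ as $D_6$-representations, and hence the first isomorphism after restricting to $G$.

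For the Néron-Severi statement I will use the standard $G$-equivariant identification $\NS(P_{k^{\mathrm{sep}}}) \otimes \Q \simeq \End^0(P_{k^{\mathrm{sep}}})^{\dagger = 1}$ given by $\sheaf{L} \mapsto \lambda^{-1} \circ \lambda_{\sheaf{L}}$, which is equivariant because $\lambda$ is defined over $k$. Under $\iota_x$ the Rosati involution $\dagger$ becomes $b \mapsto i\bar{b}i^{-1}$ on $B$, as is built into the moduli problem of \S\ref{subsec: comparison with quaternionic curve} and consistent with condition (1) of Definition \ref{definition: mu3-abelian surface}. A direct calculation using $\bar{i} = -i$, $\bar{j} = -j$, $ij = -ji$ and $i^2 = -3$ then shows that $1$, $j$ and $ij$ are $\dagger$-fixed while $i^{\dagger} = -i$; the Rosati-fixed subrepresentation is therefore $\Q \cdot 1 \oplus V$, giving $\NS(P_{k^{\mathrm{sep}}}) \otimes \Q \simeq (1 \oplus \mathrm{std})|_G$ as desired.

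The one subtle point, and hence the main (though modest) obstacle, will be justifying that under the specific identification $\iota_x$ the Rosati involution really becomes $b \mapsto i\bar{b}i^{-1}$ rather than conjugation by some other pure imaginary element of $B^{\times}$. This is forced by the requirement of Definition \ref{definition: mu3-abelian surface}(1) that the Rosati involution restrict to complex conjugation on $\Z[\omega]$, together with the general principle that a $\Q$-algebra involution of $B$ extending a prescribed involution on an imaginary quadratic subfield is determined up to conjugation by that subfield.
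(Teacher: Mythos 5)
Your proposal is correct and follows essentially the same route as the paper: both decompose $B \simeq \End^0(P_{k^{\sep}})$ as $\Q\cdot 1 \oplus \Q\cdot i \oplus (\Q j \oplus \Q\, ij)$ using Theorem \ref{theorem: galois action endo field on O}, and both identify $\NS(P_{k^{\sep}})\otimes\Q$ with the Rosati-fixed part via $\sheaf{L}\mapsto \lambda^{-1}\lambda_{\sheaf{L}}$; your explicit check that $[1-\omega]$ acts on $\mathrm{span}\{j,ij\}$ with characteristic polynomial $T^2-T+1$ just makes the paper's assertion ``$\{j,ij\}$ generate $\mathrm{std}$'' fully explicit. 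The only soft spot is your ``general principle'' for pinning down the Rosati involution: the standard involution $b\mapsto\bar b$ also restricts to complex conjugation on $\Q(\omega)$ and is not conjugate to $b\mapsto i\bar b i^{-1}$, so you must additionally invoke positivity of the Rosati involution (which rules out $b\mapsto\bar b$ since $B$ is indefinite) --- this is exactly the word the paper uses to close that step.
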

\begin{proof}
    The first part immediately follows from Theorem \ref{theorem: galois action endo field on O}: $1\in B$ generates the trivial representation, $i\in B$ generates $\chi$ and $\{j,ij\}$ generate $\mathrm{std}$.
    To prove the second isomorphism, recall that the $\Gal_k$-equivariant map $L \mapsto \lambda^{-1}\phi_L$ gives an isomorphism between $\NS(P_{k^\mathrm{sep}}) \otimes \Q$ and the subgroup of $\End^0(P_{k^\mathrm{sep}})$ fixed by the Rosati involution. 
    The Rosati involution on $\End^0(P_{k^{\mathrm{sep}}})\simeq B$ is given by $b\mapsto i\bar{b}i^{-1}$, the unique positive involution on $B$ that induces complex conjugation on $\Q(i) = \Q(\omega)$.
    It follows that $\NS(P_{k^{\mathrm{sep}}})\otimes \Q \simeq \{ f \in B \mid i\bar{f} i^{-1} = f\} = \text{span} \{ 1\} \oplus \text{span}\{j,ij\} \simeq (1\oplus \mathrm{std})|_G$.
\end{proof}

\subsection{Prym surfaces of \texorpdfstring{$\GL_2$}{GL2}-type}\label{subsec:gl2type}

Let $C = C_{a,b}$ be a bielliptic Picard curve over $k$ with Prym variety $P$, and let $\iota \colon \calO \hookrightarrow \End(P_{k^{\sep}})$ be an embedding satisfying the conclusion of Theorem $\ref{theorem: galois action endo field on O}$.
Recall that $k(\omega)$ denotes the smallest field extension of $k$ containing a primitive third root of unity.

\begin{corollary}\label{corollary: pryms of gl2 type}
The following conditions are equivalent:
\begin{enumerate}
    \item $16b(a^2 - 4b)$ is a sixth power in $k(\omega)$;
    \item $\iota(\calO) \subset \End(P_{k(\omega)})$;
    \item $\iota(\calO) \cap \End(P)$ is either an order in a real quadratic field or all of $\iota(\calO)$.
\end{enumerate}
\end{corollary}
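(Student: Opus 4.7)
The plan is to translate the corollary into a question about fixed subrings of $\calO$ under subgroups of $\Aut_i(\calO)$, using Theorem \ref{theorem: galois action endo field on O} and the classification in Lemma \ref{lem:subfields}. By Theorem \ref{theorem: galois action endo field on O}, $\iota(\calO)$ is $\Gal_k$-stable and the action factors through a homomorphism $\bar{\varphi}\colon \Gal(L/k) \hookrightarrow \Aut_i(\calO)$, where $L = k(\omega, \sqrt[6]{\delta})$ and $\delta = 16b(a^2-4b)$. Setting $G := \bar{\varphi}(\Gal(L/k))$, it follows that for every field $k \subseteq k' \subseteq k^{\sep}$,
\[
\iota(\calO) \cap \End(P_{k'}) = \iota\bigl(\calO^{\bar{\varphi}(\Gal_{k'})}\bigr).
\]

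The key structural observation to establish next is that $\Aut_i^+(\calO) = \langle [1-\omega]\rangle$ coincides with the kernel of the natural character $\Aut_i(\calO) \twoheadrightarrow \Aut(\Q(\omega)/\Q) = \{\pm 1\}$; equivalently, under the identification $\Aut_i(\calO) \simeq D_6$ from Lemma \ref{lemma: explicit description of AutiO}, the element $\rho \mapsto [1-\omega]$ corresponds to Galois elements fixing $\omega = \zeta^2$. This yields $\bar{\varphi}(\Gal_{k(\omega)}) = G \cap \langle [1-\omega]\rangle$; call this subgroup $H$. The equivalence (1)$\Leftrightarrow$(2) is then immediate: (1) is equivalent to $L = k(\omega)$, which is equivalent to $H = \{1\}$, which is equivalent to saying $\Gal_{k(\omega)}$ acts trivially on $\iota(\calO)$, i.e., $\iota(\calO) \subset \End(P_{k(\omega)})$.

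For (1) $\Leftrightarrow$ (3), I would split into cases according to whether $H$ is trivial and whether $\omega \in k$. If $H = \{1\}$ and $\omega \in k$, then $G = H = \{1\}$ and $\calO^G = \calO$. If $H = \{1\}$ and $\omega \notin k$, then $|G| = 2$ and $G$ is generated by an element of $\Aut_i(\calO) \setminus \Aut_i^+(\calO)$ (a reflection in $D_6$); Lemma \ref{lem:subfields} then identifies $\calO^G$ with one of the real quadratic orders $\Z[\sqrt{2}]$ or $\Z[\sqrt{6}]$. In both subcases (3) holds. Conversely, if $H \neq \{1\}$, then either $\omega \in k$ (so $G = H$ is a non-trivial subgroup of $\langle[1-\omega]\rangle$, and Lemma \ref{lem:subfields} gives $\calO^G = \Z[\omega]$, which is imaginary quadratic, not real), or $\omega \notin k$ (so $G$ strictly contains $H$ and at least one reflection, making it dihedral of order $\geq 4$, and Lemma \ref{lem:subfields} gives $\calO^G = \Z$); in both subcases (3) fails.

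There is no real obstacle: once Theorem \ref{theorem: galois action endo field on O} is in hand, the argument is essentially bookkeeping against the tables in Lemmas \ref{lemma: explicit description of AutiO} and \ref{lem:subfields}. The one subtlety worth writing out carefully is the identification $\bar{\varphi}(\Gal_{k(\omega)}) = G \cap \langle[1-\omega]\rangle$, since this is what forces an imaginary-quadratic fixed ring to appear in exactly the cases excluded by (3), and ensures that the only geometrically compatible quadratic subrings are the real ones $\Z[\sqrt{2}]$ and $\Z[\sqrt{6}]$.
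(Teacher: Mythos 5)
Your proposal is correct and follows essentially the same route as the paper: equivalence of (1) and (2) via Theorem \ref{theorem: galois action endo field on O}, and the rest via the identification $H = G \cap \Aut_i^+(\calO)$ together with the classification of fixed subrings in Lemma \ref{lem:subfields}. You have merely written out explicitly the ``group theory calculation'' that the paper leaves implicit, including the useful observation that $\Aut_i^+(\calO)$ is the kernel of the character $\Aut_i(\calO)\to\Aut(\Q(\omega)/\Q)$.
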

\begin{proof}
$(1)$ and $(2)$ are equivalent by Theorem \ref{theorem: galois action endo field on O}.
To prove the equivalence between $(2)$ and $(3)$, let $G$ be the image of the homomorphism $\varphi\colon \Gal_k\rightarrow \Aut_i(\calO)$ of Theorem \ref{theorem: galois action endo field on O}, and let $H = \text{Im}(\Gal_{k(\omega)}\rightarrow \Aut_i^+(\calO))$.
Then $H = G \cap \Aut_i^+(\calO)$, where $\Aut_i^+(\calO) = \langle [1-\omega]\rangle$.
A group theory calculation using Lemma \ref{lem:subfields} shows that $H = \{1\}$ (in other words, $(2)$ holds) if and only if $\calO^G$ contains an order in a real quadratic field (in other words $(3)$ holds).
\end{proof}

\begin{remark}
{\em
    Recall that an abelian surface $X/k$ is \define{of $\GL_2$-type} if $\End(X)$ contains a quadratic ring.
    If $P$ is geometrically simple and $k \neq k(\omega)$
    then $\iota(\calO) \cap \End(P) = \End(P)$ (Lemma \ref{lemma: if P simple then O = End(P)}) and $\End(P)$ is either $\Z$ or an order in a real quadratic field by Lemma \ref{lem:subfields},
    so the  conditions above are equivalent to $P$ being of $\GL_2$-type. If $k = k(\omega)$ then $\Z[\omega] \hookrightarrow \End(P)$, so $P$ is always of $\GL_2$-type.
}
\end{remark}
We can be more precise about the ring of endomorphisms $\End(P)$. 
A calculation shows that an element $\delta\in k^{\times}$ is a sixth power in $k(\omega)$ if and only if either $\delta$ or $-27\delta$ is a sixth power in $k$. 
This dichotomy breaks up Corollary \ref{corollary: pryms of gl2 type} into the following two cases.

\begin{corollary}\label{cor:sqrt(2)-multiplication}
    Assume that $j(C) \neq 1$. Then the following conditions are equivalent.
    \begin{enumerate}
        \item $\Delta_{a,b} = 16b(a^2 - 4b)$ is a sixth power in $k$.
        \item $\End(P) \cap \iota(\calO)$ contains a subring isomorphic to $\Z[\sqrt{2}]$. 
        \item There exist $t,d \in k^\times$ such that 
        $(a,b) = (2(t^2 + 1)^2td^3,(t^2+1)^3t^2d^6)$. 
     \end{enumerate}
     If these conditions are satisfied then $P$ and $A = P^\vee$ are $k$-isomorphic. A sextic twist of $P$ satisfies the above conditions if and only if $j(C) = -t^2$ for some $t \in k^\times$.
\end{corollary}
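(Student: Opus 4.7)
The equivalence (1) $\Leftrightarrow$ (2) will follow by combining Theorem \ref{theorem: galois action endo field on O} with Lemma \ref{lem:subfields}. Setting $G := \varphi(\Gal_k) \subseteq \Aut_i(\calO)$, the theorem identifies $\End(P) \cap \iota(\calO)$ with $\iota(\calO^G)$, and by Lemma \ref{lem:subfields} the ring $\calO^G$ contains a subring isomorphic to $\Z[\sqrt{2}]$ exactly when $G = \{1\}$ or $G$ is conjugate to $\langle [j]\rangle$ (the subrings $\Z[\omega]$ and $\Z[\sqrt{6}]$ arising in the other cases do not contain $\Z[\sqrt{2}]$). If $\omega \in k$ then $G \subseteq \Aut_i^+(\calO)$, so the only viable option is $G = \{1\}$, which is equivalent to $\delta \in k^{\times 6}$. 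If $\omega \notin k$, write $\sigma$ for the nontrivial element of $\Gal(k(\omega)/k)$; choosing $\varepsilon_0 \in k^{\sep}$ with $\varepsilon_0^6 = \delta$ and a primitive sixth root of unity $\zeta_0$, the relation $\sigma(\varepsilon_0) = \zeta_0^m \varepsilon_0$ combined with a direct computation in the simply transitive $D_6$-action on $\mathcal{S}$ shows that $\varphi(\sigma) = \tau \rho^m$, whose image in $\Aut_i(\calO)$ is $[j(1-\omega)^m]$. Since replacing $\varepsilon_0$ by $\zeta_0^n \varepsilon_0$ sends $m \mapsto m - 2n$, the parity of $m$ is a well-defined invariant; one checks that $[j(1-\omega)^m]$ is conjugate to $[j]$ iff $m$ is even, and this occurs iff $\varepsilon_0$ can be chosen $\sigma$-fixed, iff $\varepsilon_0 \in k$, iff $\delta \in k^{\times 6}$.

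For (1) $\Leftrightarrow$ (3), direction (3) $\Rightarrow$ (1) is a direct substitution yielding $a^2 - 4b = 4(t^2+1)^3 t^4 d^6$ and hence $\Delta_{a,b} = \bigl(2t(t^2+1)d^2\bigr)^6$. For (1) $\Rightarrow$ (3) I use the identity $\Delta_{a,b} = -64 b^2 j(C)$ (which follows from $a^2 - 4b = -4b\,j(C)$). Given $\Delta_{a,b} = c^6$ with $c \in k^\times$, set $t := c^3/(8b)$ and $d := 4b/(ac)$; both lie in $k^\times$ because $a \neq 0$, which is where the assumption $j(C) \neq 1$ enters. A short computation gives $t^2 = -j(C)$, so $t^2 + 1 = a^2/(4b) \neq 0$, and back-substitution confirms $(a,b) = \bigl(2(t^2+1)^2 t d^3,\,(t^2+1)^3 t^2 d^6\bigr)$.

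When these conditions hold, $\delta \in k^{\times 6}$ trivializes the sextic twist in Corollary \ref{corollary: marked mu3-surface and dual are isomorphic up to twist}, yielding an isomorphism $(P,\lambda,\alpha) \simeq (A, \widehat{\lambda}, \hat{\alpha}^{-1})$ over $k$; in particular $P$ and $A$ are $k$-isomorphic as abelian surfaces. For the final statement, a sextic twist by $\mu \in k^\times$ rescales $\Delta_{a,b}$ by $\mu^4$, and Bezout ($\gcd(2,3) = 1$) gives $k^\times = k^{\times 2} \cdot k^{\times 3}$, hence $\{\mu^4 : \mu \in k^\times\} \cdot k^{\times 6} = k^{\times 2}$. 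Therefore some sextic twist of $P$ satisfies (1) iff $\Delta_{a,b} \in k^{\times 2}$, and via $\Delta_{a,b} = -64 b^2 j(C)$ this is equivalent to $-j(C) \in k^{\times 2}$, i.e., $j(C) = -t^2$ for some $t \in k^\times$.

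I expect the main obstacle to be the $D_6$-cocycle bookkeeping in the argument for (1) $\Leftrightarrow$ (2): realizing the Galois action on $\mathcal{S}$ explicitly as $\tau\rho^m$, tracking the parity of $m$ through the isomorphism $\varphi \colon D_6 \xrightarrow{\sim} \Aut_i(\calO)$ to the correct conjugacy class of reflections, and thus distinguishing $\calO^G \simeq \Z[\sqrt{2}]$ from $\calO^G \simeq \Z[\sqrt{6}]$ in Lemma \ref{lem:subfields}. Everything else reduces to direct algebraic manipulation.
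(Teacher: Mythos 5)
Your proof is correct and takes essentially the same route as the paper: the equivalence $(1)\Leftrightarrow(2)$ via Theorem \ref{theorem: galois action endo field on O} and Lemma \ref{lem:subfields} (the paper compresses your cocycle computation into the observation that $G$ being conjugate to a subgroup of $\langle[j]\rangle$ is, by Galois theory, equivalent to $T^6-\delta$ having a $k$-rational root), the $k$-isomorphism $P\simeq A$ from Corollary \ref{corollary: marked mu3-surface and dual are isomorphic up to twist}, and $(1)\Leftrightarrow(3)$ plus the last claim by the explicit algebra the paper dismisses as routine; your substitutions $t=c^3/(8b)$, $d=4b/(ac)$ and the identity $\Delta_{a,b}=-64b^2j(C)$ all check out. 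One point to tighten: in the case $\omega\notin k$ you apply $\sigma\in\Gal(k(\omega)/k)$ to $\varepsilon_0$, which presupposes $\varepsilon_0\in k(\omega)$; for the direction $(2)\Rightarrow(1)$ you should first observe that $\calO^G\supseteq\Z[\sqrt{2}]$ forces $G$ to be trivial or conjugate to $\langle[j]\rangle$, hence $|G|\leq 2$, so $L=k(\omega,\sqrt[6]{\delta})=k(\omega)$ and the computation with $\sigma(\varepsilon_0)=\zeta_0^m\varepsilon_0$ is then legitimate.
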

\begin{proof}
$(1)$ implies $P \simeq A$ by Corollary \ref{corollary: marked mu3-surface and dual are isomorphic up to twist}. 
To show $(1) \Leftrightarrow (2)$, let $H\leq G$ be the subgroups of $\Aut_i(\calO)$ defined in the proof of Corollary \ref{corollary: pryms of gl2 type}.
Then $\calO^G$ contains a subring isomorphic to $\Z[\sqrt{2}]$ if and only if $G$ is conjugate to a subgroup of $\langle [j]\rangle$ by Lemma \ref{lem:subfields}.
Theorem \ref{theorem: galois action endo field on O} and Galois theory show that this is equivalent to $T^6-\delta$ having a $k$-rational root.
This proves that $(1)\Leftrightarrow (2)$. The equivalence of $(1)$ and $(3)$ as well as the last claim of the corollary are routine algebra.     
\end{proof}

\begin{corollary}\label{cor:sqrt(6)-multiplication}
    Assume that $j(C) \neq 1$. Then the following conditions are equivalent.
    \begin{enumerate}
        \item $-27\Delta_{a,b} = -432b(a^2 - 4b)$ is a sixth power in $k$.
        \item $\End(P) \cap \iota(\calO)$ contains a subring isomorphic to $\Z[\sqrt{6}]$. 
        \item There exist $t,d\in k^\times$ such that $(a,b) = (18d^3t(1 - 3t^2)^2, 3^4d^6t^2(1 - 3t^2)^3)$.
     \end{enumerate}
     If these conditions hold then $P \simeq A_{-27}$, i.e.\ $P$ is isomorphic to the quadratic twist of $A$ along $k(\omega)$. A sextic twist of $P$ satisfies the above conditions if and only if $j(C) = 3t^2$ for some $t \in k^\times$.
\end{corollary}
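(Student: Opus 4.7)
My plan is to parallel the proof of Corollary \ref{cor:sqrt(2)-multiplication}. By Theorem \ref{theorem: galois action endo field on O}, the subring $\iota(\calO)\cap \End(P)$ equals $\iota(\calO^G)$, where $G\leq \Aut_i(\calO)$ is the image of the Galois representation $\varphi\colon \Gal_k\to \Aut_i(\calO)$. Lemma \ref{lem:subfields} then says $(2)$ holds if and only if $G$ is conjugate in $\Aut_i(\calO)$ to a subgroup of $\langle [ij]\rangle$. So the heart of the $(1)\Leftrightarrow(2)$ direction is to make this group-theoretic condition explicit in terms of $\delta=\Delta_{a,b}$.

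To do this, I first identify $[ij]$ in the dihedral realization $\Aut_i(\calO)\cong D_6$ with $\rho\leftrightarrow[1-\omega]$ and $\tau\leftrightarrow[j]$. Using $i=1+2\omega$ and the calculation $(1-\omega)^3=-3i$, one has $[i]=\rho^3$, hence $[ij]=\rho^3\tau$. From the explicit formulas in \S\ref{subsec: endomorphism field} one reads off $(\zeta,\varepsilon)^{\rho^3\tau}=(\zeta^{-1},-\varepsilon)$. The essential observation is that for $(\zeta,\varepsilon)\in\mathcal{S}$ the element
\[\eta := (\zeta-\zeta^{-1})\,\varepsilon\]
satisfies $\eta^6 = -27\delta$ (using $(\zeta-\zeta^{-1})^2=-3$, since $\zeta^2$ is a primitive cube root of unity) and is visibly fixed by $\rho^3\tau$. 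Therefore, if $G\subset \langle \rho^3\tau\rangle$ then $\eta\in k$, proving $(1)$. Conversely, if $-27\delta=\eta^6$ with $\eta\in k^\times$, then $\varepsilon:=\eta/(\zeta-\zeta^{-1})$ gives an element of $\mathcal{S}$ whose Galois orbit one verifies to lie in $\langle \rho^3\tau\rangle=\langle [ij]\rangle$, so $G\subset\langle[ij]\rangle$.

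The equivalence $(1)\Leftrightarrow(3)$ is a direct substitution: plugging the parametric form into $\Delta_{a,b}=16b(a^2-4b)$ yields $a^2-4b=-972\,d^6t^4(1-3t^2)^3$ and hence $\Delta_{a,b}=-2^6\cdot 3^9\,d^{12}t^6(1-3t^2)^6$, which gives $-27\Delta_{a,b}=(18d^2t(1-3t^2))^6$. For the converse, use $\Delta_{a,b}=-64b^2 j(C)$ (from $a^2-4b=-4bj$) together with $-27\Delta_{a,b}=e^6$ to set $t:=e^3/(72b)$, obtaining $j(C)=3t^2$; the parameter $d$ is then recovered by exploiting the equivalence $(a,b)\sim(\lambda^6 a,\lambda^{12}b)$ of Lemma \ref{lemma: when are two bielliptic Picard curves isomorphic}. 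For the assertion $P\simeq A_{-27}$ under (1): Corollary \ref{corollary: marked mu3-surface and dual are isomorphic up to twist} gives $P\simeq A_\delta$, and $\delta\equiv -27\pmod{k^{\times 6}}$ by (1), so $A_\delta\simeq A_{-27}$; since $\sqrt[6]{-27}=\sqrt{-3}$, we have $k(\sqrt[6]{-27})=k(\omega)$, so the sextic twist coincides with the quadratic twist along $k(\omega)$. For the final sextic twist claim, a sextic twist by $\delta'$ rescales $\Delta_{a,b}$ by $\delta'^4$, so the condition that some sextic twist satisfies $(1)$ is the condition $-27\Delta_{a,b}\in k^{\times 4}\cdot k^{\times 6}=k^{\times 2}$ (since $\gcd(4,6)=2$); substituting $-27\Delta_{a,b}=1728b^2j(C)=64b^2\cdot 27\,j(C)$ shows this amounts to $3j(C)$ being a square in $k^\times$, equivalently $j(C)=3t^2$.

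The main obstacle I expect is the identification $[ij]=\rho^3\tau$ together with discovering the $\rho^3\tau$-invariant expression $\eta=(\zeta-\zeta^{-1})\varepsilon$ whose sixth power is $-27\delta$; once this is in place, the rest of the corollary is routine algebra and bookkeeping parallel to Corollary \ref{cor:sqrt(2)-multiplication}.
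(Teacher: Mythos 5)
Your proposal is correct and takes essentially the same route as the paper: the paper's proof is literally ``similar to Corollary \ref{cor:sqrt(2)-multiplication}, omitted'', and you have filled in exactly the intended details — reduce $(2)$ via Theorem \ref{theorem: galois action endo field on O} and Lemma \ref{lem:subfields} to a condition on $G$, identify $[ij]=\rho^3\tau$, and exhibit the invariant $\eta=(\zeta-\zeta^{-1})\varepsilon$ with $\eta^6=-27\delta$ playing the role that $\varepsilon$ itself plays in the $\Z[\sqrt{2}]$ case; the computations $(1-\omega)^3=-3i$, $(\zeta-\zeta^{-1})^2=-3$, and the algebra for $(1)\Leftrightarrow(3)$ and the twist statements all check out. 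One small point to tighten: Lemma \ref{lem:subfields} only gives that $G$ is \emph{conjugate} into $\langle[ij]\rangle$, while your forward implication treats the case $G\subset\langle\rho^3\tau\rangle$ on the nose; since conjugating $G$ amounts to replacing the base point $x$ by $x^g$ and hence $\eta$ by another sixth root of $-27\delta$ (the values $\eta(x)$, $x\in\mathcal{S}$, exhaust all six roots of $T^6+27\delta$), the condition $(1)$ — ``\emph{some} sixth root lies in $k$'' — is exactly the conjugation-invariant statement, precisely parallel to ``$T^6-\delta$ has a $k$-rational root'' in the paper's $\Z[\sqrt{2}]$ argument.
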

\begin{proof}
The proof is similar to that of Corollary \ref{cor:sqrt(2)-multiplication} and is omitted.
\end{proof}

\begin{remark}
{\em
    Corollaries \ref{cor:sqrt(2)-multiplication} and \ref{cor:sqrt(6)-multiplication} show that a $j$-invariant $j\in Y(\Q) = \P^1(\Q)$ lifts to a $\Q$-rational point under the morphism $\pi_2$ (respectively $\pi_6$) from \S\ref{subsec: comparison with quaternionic curve} if and only if $j \in -\Q^{\times 2}$ (respectively $j\in 3\Q^{\times 2}$). We also note that the equivalence of $(1)$ and $(2)$ holds even when $j(C) = 1$.
}
\end{remark}

\begin{corollary}\label{cor:principal polarization}
 Suppose $k \neq k(\omega)$ and $P$ is geometrically simple. Then 
 \begin{enumerate}
 \item $\End(P)$ is isomorphic to either $\Z$, $\Z[\sqrt{2}]$ or $\Z[\sqrt{6}]$.
 \item $P$ carries a principal polarization if and only if $\End(P) \simeq \Z[\sqrt{2}]$, and in this case the unique principal polarization is $\frac12\lambda(2 -\sqrt2)$.
 \end{enumerate}
\end{corollary}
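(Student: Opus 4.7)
For part (1), I will combine Lemma~\ref{lemma: if P simple then O = End(P)} with Theorem~\ref{theorem: galois action endo field on O}: since $P_{\bar k}$ is simple, $\End(P_{\bar k}) = \iota(\calO)$ and hence $\End(P) = \calO^G$, where $G \subseteq \Aut_i(\calO)$ is the image of the Galois representation. The hypothesis $k \neq k(\omega)$ forces $G \not\subseteq \Aut_i^+(\calO) = \langle [1-\omega]\rangle$, since some element of $\Gal_k$ must act nontrivially on $\omega$. Lemma~\ref{lem:subfields} then leaves only the possibilities $\calO^G \in \{\Z,\,\Z[\sqrt{2}],\,\Z[\sqrt{6}]\}$.

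For part (2), the plan uses the identification of $\NS(P)\otimes\Q$ with the Rosati-fixed part of $\End^0(P)$ via $f\mapsto \lambda f$: a polarization $\mu = \lambda f$ is principal iff $\deg\mu = 4\cdot N(f)^2 = 1$, i.e., $N(f) = 1/2$. When $\End(P) = \Z$, the Rosati-fixed part is $\Q$, and since $\ker\lambda$ has only $4$ elements while $P[n]$ has $n^4$ for $n\geq 2$, $\lambda$ is not divisible by any $n\geq 2$ in $\Hom(P,P^\vee)$; thus $\NS(P) = \Z\lambda$ and every polarization has degree $4n^4\geq 4$. When $\End(P) = \Z[\sqrt{6}]$, the Rosati-fixed part is all of $\Q(\sqrt{6})$ (one checks $\rho(ij) = ij$); clearing denominators, $N(f) = 1/2$ becomes $2a^2 - 12b^2 = c^2$, which has no nontrivial integer solutions by a standard infinite descent modulo~$3$ (it forces $3\mid a,b,c$).

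The main case is $\End(P) = \Z[\sqrt{2}]$, where I propose to verify that $\mu := \tfrac{1}{2}\lambda(2 - \sqrt{2})$ is a well-defined principal polarization. The crux is integrality: $\lambda(2 - \sqrt{2}) \in 2\Hom(P,P^\vee)$, equivalently $\sqrt{2}(P[2]) \subseteq \ker\lambda$. Since $\sqrt{2}^2 = 2$ we have $\sqrt{2}(P[2])\subseteq \ker\sqrt{2}$, and a cardinality count ($|\sqrt{2}(P[2])| = 16/|\ker\sqrt{2}| = 4 = |\ker\sqrt{2}|$) reduces the integrality to the equality $\ker\sqrt{2} = \ker\lambda$. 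This is where I invoke the explicit description from~\S\ref{subsec: the prym has QM}: by Corollary~\ref{cor:sqrt(2)-multiplication}, $\Delta_{a,b}$ is a sixth power in $k$, so $\varepsilon\in k$ exists and $s_\varepsilon \in \End(P)$; identifying $\sqrt{2} = \pm s_\varepsilon$ and invoking \eqref{equation: definition s} writes $s_\varepsilon = \chi_\varepsilon\circ\phi\circ\lambda$ with $\chi_\varepsilon\circ\phi$ an isomorphism, so $\ker s_\varepsilon = \ker\lambda$ drops out immediately. With integrality in hand, $\mu$ is Rosati-symmetric (as $2-\sqrt{2}\in\Z[\sqrt{2}]$ is Rosati-fixed), totally positive (both conjugates of $2-\sqrt{2}$ are positive), and of degree~$1$ via $\deg(2\mu) = 4\cdot N(2-\sqrt{2})^2 = 16$. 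For uniqueness, every totally positive $f\in \Q(\sqrt{2})$ with $N(f) = 1/2$ differs from $(2-\sqrt{2})/2$ by a totally positive norm-one unit $(1+\sqrt{2})^{2n}$, which is absorbed by the $\Aut(P) = \Z[\sqrt{2}]^\times$-action on polarizations given by $u\cdot (\lambda f) = \lambda(u^2 f)$. The main obstacle is verifying $\ker\sqrt{2} = \ker\lambda$, but this falls out transparently from the construction of~$s_\varepsilon$ as a composition with $\lambda$ on the right.
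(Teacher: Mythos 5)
Your proof is correct, but it takes a genuinely different route from the paper's. For part (1) you argue exactly as the paper does (Lemma \ref{lem:subfields} plus the observation that $k\neq k(\omega)$ forces the image of $\varphi$ out of $\Aut_i^+(\calO)$). For part (2), however, the paper simply quotes Propositions 2.1 and 3.11 of González--Guàrdia--Rotger for both the non-existence in the $\Z[\sqrt{6}]$ case and the existence and formula in the $\Z[\sqrt{2}]$ case, whereas you reprove everything internally: the identification $\deg(\lambda f)=4\,N(f)^2$ reduces the question to solving $N(f)=1/2$ in $\End^0(P)$, the $3$-adic descent on $2a^2-12b^2=c^2$ kills the $\Z[\sqrt{6}]$ case, and the integrality of $\tfrac12\lambda(2-\sqrt2)$ is extracted from the explicit construction of $s_\varepsilon$ in \eqref{equation: definition s} via Corollary \ref{cor:sqrt(2)-multiplication}, which gives $\ker\sqrt2=\ker s_\varepsilon=\ker\lambda$. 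This last step is exactly what the paper's cryptic remark ``$\lambda'=\lambda-\psi^{-1}$'' encodes, so your argument makes the citation-free content of the corollary visible; the price is length, the gain is self-containedness and an explicit reason why $2$ is not a norm from $\Q(\sqrt6)$.

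One small imprecision in your uniqueness step: two totally positive elements $f,f'\in\Q(\sqrt2)$ with $N(f)=N(f')=1/2$ differ a priori only by a totally positive norm-one element of $\Q(\sqrt2)^\times$ (e.g.\ $(11+6\sqrt2)/7$), not necessarily by a unit of $\Z[\sqrt2]$; to conclude that $f/f'$ is a unit you should use that both $\lambda f$ and $\lambda f'$ are isomorphisms $P\to P^\vee$, so that $(\lambda f')^{-1}\circ(\lambda f)=f'^{-1}f$ lies in $\Aut(P)=\Z[\sqrt2]^\times$. With that repair the quotient is $(1+\sqrt2)^{2n}$ and is indeed absorbed by pullback along automorphisms, so uniqueness holds in the same (up to $\Aut(P)$) sense implicitly intended by the paper's statement.
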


\begin{proof}
 $(1)$ follows from Lemma \ref{lem:subfields}. 
 For $(2)$,  if $\End(P) = \Z$, then $\Hom(P,A) = \Z\lambda$ so cannot contain a principal polarization. Otherwise, we use the fact that any abelian surface with $\Z[\sqrt{2}]$-RM is principally polarized, whereas any $(1,2)$-polarized abelian surface $P$ with $\End(P) \simeq \Z[\sqrt{6}]$ is not, by \cite[Proposition 3.11]{GonzalesGuardiaRotger}.   If $\End(P) = \Z[\sqrt{2}]$, the unique principal polarization $\lambda' \colon P \to A$ is $\frac12\lambda(2 - \sqrt{2})$; see \cite[Proposition 2.1]{GonzalesGuardiaRotger}. Equivalently, $\lambda' = \lambda - \psi^{-1}$, in the notation of the proof of Proposition \ref{proposition: Prym has O-PQM}.
\end{proof}

\subsection{Geometrically split Prym varieties}\label{subsec: split Prym varieties}

In this subsection, we assume that $k$ has characteristic zero.
Let $C/k$ be a bielliptic Picard curve with Prym variety $P$.
If any of the equivalent conditions of Lemma \ref{lemma: if P simple then O = End(P)} is satisfied for $P_{\bar{k}}$ we call $C$ (or $P$) \define{CM}.
Being CM only depends on the $j$-invariant \eqref{equation: j-invariant} of $C$.
Choose an embedding $\iota\colon \calO \hookrightarrow \End(P_{\bar{k}})$ of the form $\iota_x$ for some $x\in \mathcal{S}$ as in \S\ref{subsec: the prym has QM}. Let
\begin{align}
    \End_{\calO}(P_{\bar{k}}) := \{f\in \End(P_{\bar{k}}) \mid f\circ \iota(b) =\iota(b) \circ f \text{ for all } b\in \calO\}.
\end{align}
This subring does not depend on the choice of $x\in \mathcal{S}$.
If $P$ is not CM, then $\End_{\calO}(P_{\bar{k}}) = \Z$; if $P$ is CM, then $\End_{\calO}(P_{\bar{k}})$ is an order $R$ in an imaginary quadratic field, and we say that \define{$P$ has CM by the order $R$}.

\begin{example}\label{example: special curve has CM by -4}
    {\em The Prym variety $P$ of the special bielliptic Picard curve $C:y^3 = x^4+1$ has CM by the order $\Z[i]$, hence $j(C) =1$ is a CM $j$-invariant. (In this example alone, the symbol $i$ means $\sqrt{-1}$.)
    Indeed, over $\bar{\Q}$, the curve $C$ has an automorphism $\beta(x,y) = (ix,y)$ commuting with the $\mu_6$-action on $C$.
    A signature calculation similar to Lemma \ref{lemma: mu3-action on P has char poly T^2+T+1} shows that $\beta$ induces an order $4$ automorphism $\beta_*$ on $P$ and has characteristic polynomial $X^2+1$.
    In the notation of \S\ref{subsec: the prym has QM}, choose a pair $(\zeta, \varepsilon) \in \mathcal{S}$.
    By construction, the elements $r_{\zeta}$ and $s_{\varepsilon}$ commute with $\beta_*$, so $\Z[\beta_*] = \Z[i] \subset \End_{\calO}(P_{\bar{k}})$.
    }
\end{example}

We now state some basic properties of the set of CM $j$-invariants, completely analogous to the elliptic curve setting, using the results of \S\ref{subsec: comparison with quaternionic curve}.
In the notation of that subsection, a $j$-invariant $j\in Y(\C)$ is CM if and only if it lifts to a CM point of $X$ under the map $\pi_2$.

\begin{lemma}\label{lemma: CM points are algebraic and there are finitely many of bounded degree}
    Every CM $j$-invariant is algebraic over $\Q$.
    Moreover, for every $d\in \Z_{\geq 1}$, there are only finitely many CM $j$-invariants $j$ with $[\Q(j):\Q]\leq d$.
\end{lemma}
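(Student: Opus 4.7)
The plan is to deduce both statements from well-known facts about CM points on the quaternionic Shimura curve $X$ of discriminant $6$, via the isomorphism $(X/w_3)^{\bar{w}} \simeq Y$ of Proposition \ref{proposition: Y is quotient of shimura curve twist}.

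For algebraicity, let $j \in Y(\C)$ be a CM $j$-invariant, corresponding under Theorem \ref{theorem: Torelli theorem bielliptic Picard curves} to a $\mu_3$-abelian surface $(A,\lambda,\alpha)$ with $A$ isogenous to the square of a CM elliptic curve. By Proposition \ref{proposition: Y is quotient of shimura curve twist}, the point $j$ lifts along $\pi_2$ to a point $x \in X^{w_2}(\C) = X(\C)$; the point $x$ records the same abelian surface together with a full $\calO$-action, and in particular satisfies $\End^0(A) \supsetneq \calO\otimes \Q$. Hence $x$ is a CM point on $X$ in the classical sense, and as such is algebraic, since any CM abelian variety over $\C$ descends to $\bar{\Q}$. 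Since $\pi_2$ and the coordinate $j\colon Y \simeq \P^1_\Q$ are both defined over $\Q$, we conclude that $j$ is algebraic over $\Q$.

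For finiteness, let $R \subset \End_\calO(A_{\bar{\Q}})$ be the imaginary quadratic CM order attached to $A$, and set $K = \Frac(R)$. By the CM theory of Shimura curves (Shimura reciprocity), $x \in X(\bar{\Q})$ is defined over the ring class field $H_R$ of $R$, of degree $h(R)$ over $K$ and $2h(R)$ over $\Q$, and the CM points of $X$ with CM by $R$ form (up to a bounded multiplicative factor coming from the finite set of optimal embeddings $R \hookrightarrow \calO$) a single $\Gal(H_R/K)$-orbit. Since $\pi_2\colon X^{w_2}\to Y$ has degree $2$ and the twist $(X/w_3)^{\bar w}$ only affects fields of definition by a bounded amount, one obtains an inequality of the shape $[\Q(j):\Q] \gg h(R)$ with an implicit constant independent of $R$.

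Finiteness then follows from the classical theorem of Heilbronn (effective via Brauer--Siegel) that for any $N\geq 1$ only finitely many imaginary quadratic orders $R$ satisfy $h(R)\leq N$. The main obstacle is pinning down the comparison $[\Q(j):\Q] \gg h(R)$, which requires tracking fields of definition through the Atkin--Lehner quotient by $w_3$, the quadratic twist, and the degree-$2$ map $\pi_2$; however, each of these operations changes degrees only by a bounded factor, so the argument goes through. Alternatively, one could invoke the André--Oort theorem for the Shimura curve $X$ (due to Edixhoven) to directly deduce the finiteness of CM points of bounded degree on $X$, and then push this down to $Y$.
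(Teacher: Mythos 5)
Your proposal is correct and follows the same route as the paper: the paper's proof is a one-line deduction from Proposition \ref{proposition: Y is quotient of shimura curve twist} together with the standard facts about CM points on the quaternionic Shimura curve $X$ cited from Elkies, which are exactly the facts (algebraicity of CM points, fields of definition via ring class fields, and finiteness of imaginary quadratic orders of bounded class number) that you spell out. Your write-up simply makes explicit the content the paper delegates to the citation.
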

\begin{proof}
Follows immediately from Proposition \ref{proposition: Y is quotient of shimura curve twist} and the corresponding facts concerning CM points Shimura curves \cite[\S2.4]{Elkies-Shimuracurvecomputations}. 
\end{proof}

We conclude by determining all $\Q$-rational CM $j$-invariants, equivalently all geometrically split $\Prym(C, \gamma)$ that can be defined over $\Q$. 
To this end, we will use Elkies' computations of all rational CM points on the full Atkin--Lehner quotient $X^*:= X/W\simeq \P^1_{\Q}$.
Following Elkies \cite[\S3.1]{Elkies-Shimuracurvecomputations}, let $t$ be the unique coordinate $X^* \xrightarrow{\sim} \P^1_t$ with the property that $t=0,1,\infty$ corresponds to the unique CM point by the order $\Z[\sqrt{-6}]$, $\Z[i]$, $\Z[\omega]$ respectively. 
Recall from Lemma \ref{lemma: Y isomorphic to P1} the isomorphism $j\colon Y\rightarrow \P^1$.
The isomorphism $Y\simeq (X/w_3)^{\bar{w}}$ of Proposition \ref{proposition: Y is quotient of shimura curve twist} determines a quotient map $\Pi\colon Y \simeq  (X/w_3)^{\bar{w}} \rightarrow ((X/w_3)^{\bar{w}})/\langle\bar{w}\rangle  = X^*$.

\begin{lemma}\label{lemma: comparison j-function and elkies coordinate}
    In the coordinates $j$ and $t$ of $Y$ and $X^*$ respectively, the map $\Pi\colon Y\rightarrow X^*$ is given by $j\mapsto \frac{(j+1)^2}{4j}$.
    A $k$-point $t\in k$ on $X^*$ lifts to a $k$-point on $Y$ if and only if $t(t-1)$ is a square in $k$.
\end{lemma}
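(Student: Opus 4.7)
The plan is to identify the degree $2$ map $\Pi$ by computing its values at three distinguished points, and then to read off the $k$-rationality criterion from a discriminant.

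By the previous lemma, the involution $\bar{w}$ on $Y \simeq \P^1_j$ is $j \mapsto 1/j$, so $\Pi \colon Y \to X^* = Y / \langle \bar{w} \rangle$ is the corresponding quotient map, ramified precisely at the two fixed points $j = 1$ and $j = -1$. In particular $\Pi$ is a degree $2$ rational function, and since $(j+1)^2/j$ is a $\bar{w}$-invariant generator of the quotient function field, I may write $\Pi(j) = F\bigl((j+1)^2/j\bigr)$ for some M\"obius transformation $F$.

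Next I would pin down $F$ by matching our family to Elkies' normalization ($t = 0, 1, \infty$ correspond to CM by $\Z[\sqrt{-6}], \Z[i], \Z[\omega]$). The special bielliptic Picard curve $C_{0,1}\colon y^3 = x^4 + 1$ has $j$-invariant $1$ and its Prym is CM by $\Z[i]$ (Example \ref{example: special curve has CM by -4}); lifting through $\pi_2$ to $X$ produces a $\Z[i]$-CM point whose image in $X^*$ must be $t = 1$, giving $\Pi(1) = 1$. The ``cusps'' $j = 0, \infty$ of $Y$ correspond to the decomposable $\mu_3$-abelian surfaces $(E, \lambda_O) \times (E, 2\lambda_O)$ with $E\colon y^2 = x^3 + 1$, as identified in the proof of Lemma \ref{lemma: Y isomorphic to P1}; these are $\Z[\omega]$-CM, so $\Pi(0) = \Pi(\infty) = \infty$. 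Finally, since $j = 0, \infty$ form an unramified $\bar{w}$-orbit, the second ramified value $\Pi(-1)$ cannot equal $\infty$ nor can it equal $\Pi(1) = 1$; among $\{0, 1, \infty\}$ this forces $\Pi(-1) = 0$. Evaluating $(j+1)^2/j$ at $j = 1, -1, 0$ gives $4, 0, \infty$, so $F(4) = 1$, $F(0) = 0$, $F(\infty) = \infty$, and hence $F(u) = u/4$ and $\Pi(j) = (j+1)^2/(4j)$.

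For the second assertion, the equation $(j+1)^2 = 4tj$ is the quadratic $j^2 + (2 - 4t) j + 1 = 0$, with discriminant $(2 - 4t)^2 - 4 = 16\, t(t - 1)$; hence a $k$-rational $t$ lifts to a $k$-rational $j$ if and only if $t(t - 1)$ is a square in $k$. The main obstacle is the middle step: correctly matching the CM types of our special $\mu_3$-abelian surfaces (the special curve $C_{0,1}$ and the cusps of $Y$) to Elkies' choice of coordinate on $X^*$. Once this dictionary is in place, the ramification structure of the quotient $\Pi$ together with the three known values determines the formula uniquely.
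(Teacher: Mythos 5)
Your overall strategy is genuinely different from the paper's: you work from the $Y$-side, using the earlier lemma that $\bar{w}$ acts as $j\mapsto 1/j$ to see that $\Pi$ factors through the invariant $(j+1)^2/j$, and then try to pin down the remaining M\"obius transformation $F$ by interpolation at CM points. The paper instead works from the $X^*$-side: it takes Elkies' explicit description of the biquadratic extension $\Q(X)=\Q(t)\bigl(\sqrt{-t},\sqrt{3(t-1)}\bigr)$, identifies $X/w_3$ as the double cover $s^2=-3t(t-1)$, twists to get $Y\colon s^2=t(t-1)$, rationally parametrizes this conic to obtain $t=(1+r)^2/(4r)$, and only then matches $r$ with $j$ using the same three CM points you use. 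Your first two interpolation conditions are fine and agree with the paper's matching: $\Pi(1)=1$ via Example \ref{example: special curve has CM by -4}, and $\Pi(0)=\Pi(\infty)=\infty$ via the decomposable surfaces of Lemma \ref{lemma: Y isomorphic to P1}, which have CM by $\Z[\omega]$.

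The gap is the third condition, $\Pi(-1)=0$. Since $F(\infty)=\infty$ forces $F$ to be affine, the two conditions $F(\infty)=\infty$ and $F(4)=1$ leave a one-parameter family $F(u)=\alpha u+(1-4\alpha)$, so the third data point is essential, and your justification for it does not work: you correctly rule out $\Pi(-1)=\infty$ and $\Pi(-1)=1$, but then conclude ``among $\{0,1,\infty\}$ this forces $\Pi(-1)=0$'' without any argument that $\Pi(-1)$ lies in $\{0,1,\infty\}$ in the first place. All you know a priori is that $j=-1$, being a fixed point of $\bar{w}$, is a CM point whose image in $X^*$ is a CM point --- but $X^*$ has infinitely many CM points, and Elkies' coordinate only pins down the three of discriminants $-24$, $-4$, $-3$. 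What is actually needed is that $j=-1$ is the $\Z[\sqrt{-6}]$-CM point; this is exactly the content of the entry ``$\Disc(R)=-24$, $j=-1$'' in Table \ref{table 1}, which is established \emph{using} this lemma, so you cannot invoke it. The paper supplies the missing input by analyzing the ramification of the three quadratic subfields of $\Q(X)/\Q(t)$ to identify the branch locus of $X/w_3\rightarrow X^*$ as $\{t=0,\,t=1\}$; once that is known, the second branch point of $\Pi$ must indeed map to $t=0$ and your interpolation closes. Your concluding discriminant computation for the lifting criterion is correct once the formula for $\Pi$ is in hand.
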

\begin{proof}
    Elkies \cite[\S3.1, end of p 17]{Elkies-Shimuracurvecomputations} has shown that the biquadratic extension of function fields $\Q(X)/\Q(X^*)$ is given by adjoining $\sqrt{-t}$ and $\sqrt{3(t-1)}$ to $\Q(X^*)=\Q(t)$.
    The three intermediate quadratic subfields correspond to the three intermediate Atkin--Lehner quotients of $X$; analyzing the ramification shows that the morphism $X/w_3\rightarrow X^*$ can be realized as the double cover of $X^* = \P^1_t$ branched along the function $-3t(t-1)$, so $X/w_3$ has equation $s^2 = -3t(t-1)$.
    Moreover, the involution $(t,s)\mapsto (t,-s)$ corresponds to the involution $\bar{w}$.
    Proposition \ref{proposition: Y is quotient of shimura curve twist} shows that $Y\rightarrow X^*$ is the quadratic twist of $X/w_3$ along $\Q(\sqrt{-3})/\Q$, so $Y\rightarrow \P_t^1$ has equation $s^2= t(t-1)$. 
    Rationally parametrizing this conic shows that $Y$ has a rational coordinate $r$ such that $s=(1-r^2)/(4r)$, $t = (1+r)^2/(4r)$ and the involution $\bar{w}$ corresponds to $r\mapsto 1/r$.
    
    We claim that after possibly replacing $r$ by $1/r$, which does not affect the expression $t = (1+r)^2/(4r)$, we have $j=r$. 
    Analyzing the ramification of $\Pi$, we see that $Y$ has two CM points by $\Z[\omega]$ (corresponding to $r=0,\infty$) and one CM point by $\Z[i]$ (corresponding to $r=1$).
    The points $j=0,\infty$ correspond to decomposable $\mu_3$-abelian surfaces, described in the proof of Lemma \ref{lemma: Y isomorphic to P1}, and have CM by $\Z[\omega]$.
    Example \ref{example: special curve has CM by -4} shows that $j=1$ corresponds to the CM point by $\Z[i]$.
    So $j$ and $r$ (or $1/r$) agree on the three points $0,1, \infty$, so they must agree everywhere, proving the claim.
\end{proof}

\begin{proposition}\label{proposition: all CM j-invariants}
    Table $\ref{table 1}$ is a complete list of rational CM $j$-invariants and the discriminants of the corresponding quadratic orders. 
    Consequently, a bielliptic Picard curve $C/\Q$ has geometrically non-simple Prym variety if and only if $j(C)$ appears in this table.
\end{proposition}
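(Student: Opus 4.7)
The plan is to reduce the classification of rational CM $j$-invariants on $Y$ to the (already-known) classification of rational CM points on the full Atkin--Lehner quotient $X^* = X/W$, using the explicit map $\Pi \colon Y \to X^*$ of Lemma \ref{lemma: comparison j-function and elkies coordinate}. A $j$-invariant $j \in Y(\bar\Q)$ is CM in our sense (equivalently, the associated Prym is geometrically non-simple, by Lemma \ref{lemma: if P simple then O = End(P)}) if and only if any lift of $[A,\lambda,\alpha]$ along $\pi_2\colon X^{w_2}\to Y$ is a CM point on the quaternionic Shimura curve $X$. Consequently, $j$ is CM if and only if $\Pi(j) \in X^*$ is a CM point, and the quadratic CM order attached to $j$ on $Y$ agrees with the one attached to $\Pi(j)$ on $X^*$.

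Elkies \cite[\S3.1]{Elkies-Shimuracurvecomputations} has determined all rational CM points on $X^*$ and their discriminants, giving a finite list of values $t_1,\dots,t_n \in \Q$. For each such $t_i$, Lemma \ref{lemma: comparison j-function and elkies coordinate} says that the fibre $\Pi^{-1}(t_i)$ contains a $\Q$-point of $Y$ precisely when $t_i(t_i - 1)$ is a square in $\Q$; in that case the two preimages are
\[
 j^{\pm} \;=\; (2t_i - 1) \pm 2\sqrt{t_i(t_i-1)},
\]
which are swapped by the residual involution $\bar w$, i.e.\ by bigonal duality $j \mapsto 1/j$ (cf. Lemma \ref{lemma: j-invariant bigonal dual}). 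Thus the proof reduces to running through Elkies' finite table and keeping exactly those entries for which $t_i(t_i-1) \in \Q^{\times 2}$, then recording the corresponding $j$-values together with their (common) CM discriminant.

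The two ramification points of $\Pi$, at $j=1$ and $j=-1$, need to be treated separately since they are fixed by $\bar w$. These correspond to $t = 1$ and $t = 0$ respectively, which are the CM points of discriminant $-4$ and $-24$ in Elkies' coordinates; that $j=1$ (the special bielliptic Picard curve) has CM by $\Z[i]$ was shown already in Example \ref{example: special curve has CM by -4}, and the analogous direct verification at $j = -1$ identifies the CM order as $\Z[\sqrt{-6}]$, matching Elkies. For the remaining non-ramified rational CM values of $t$, the CM order transports directly through $\Pi$ without change, so each survivor in Elkies' list contributes a pair $\{j^+, 1/j^+\}$ of rational CM $j$-invariants of the same discriminant.

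The final statement then follows by tabulating the resulting $j$-values and discriminants, which gives Table \ref{table 1}; the converse direction is automatic since if $j \in \Q$ appears in the list, then $\Pi(j) \in X^*(\Q)$ is one of Elkies' CM points, whence the corresponding Prym is geometrically non-simple. The main obstacle is purely bookkeeping: one must run through Elkies' list, apply the square-test to $t(t-1)$, invert the quadratic to recover $j$, and verify no CM order is double-counted or missed at the ramification locus. No new conceptual input beyond Lemma \ref{lemma: comparison j-function and elkies coordinate} and Elkies' classification is required.
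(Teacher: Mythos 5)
Your proposal is correct and follows essentially the same route as the paper: reduce via the degree-$2$ map $\Pi\colon Y\to X^*$ of Lemma \ref{lemma: comparison j-function and elkies coordinate} to Elkies' finite list of rational CM points on $X^*$, apply the lifting criterion $t(t-1)\in\Q^{\times 2}$, and invert the quadratic $t=(j+1)^2/(4j)$ to recover the pairs $\{j,1/j\}$; your explicit formula $j^{\pm}=(2t-1)\pm 2\sqrt{t(t-1)}$ is exactly the ``elementary calculation'' the paper leaves implicit. The only detail you omit is that some entries of Elkies' table were originally only conjecturally CM and are confirmed unconditionally by Errthum \cite{Errthum-singularmodulishimuracurves}, which the paper cites to make the classification unconditional.
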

\begin{proof}
    A point in $Y(\Q)$ is CM if and only if its image under $\Pi\colon Y\rightarrow X^*$ is CM. 
    It therefore suffices to find all rational CM points on $X^*$ and determine which ones lift to $Y(\Q)$.
    Elkies \cite[Table 1]{Elkies-Shimuracurvecomputations} has determined all rational CM points in his $t$-coordinate.
    Some of these were only conjecturally CM, but \cite{Errthum-singularmodulishimuracurves} confirms this table unconditionally. 
    The proposition then follows from Lemma \ref{lemma: comparison j-function and elkies coordinate} and an elementary calculation.
\end{proof}

\begin{table}
\centering
\begin{tabular}{c | c | c  }
    $\Disc(R)$ & $|\Disc(R)|$ & $j$ or $1/j$ \\
    \hline 
    $-3$ & $3$ & $0$ \\
    $-4$ & $2^2$ & $1$ \\
    $-24$ & $2^3\cdot 3$ &$-1$ \\
    $-75$ & $3\cdot 5^2$ &$256/135$ \\
    $-84$ & $2^2 \cdot 3 \cdot 7$ & $-27$ \\
    $-120$& $2^3 \cdot 3 \cdot 5$ & $27/125$ \\
    $-228$ & $2^2 \cdot 3 \cdot 19$ & $15625/729$ \\
    $-147$ & $3 \cdot 7^2$ & $-48384/15625$ \\
    $-372$& $2^2 \cdot 3 \cdot 31$  & $-1771561/421875$ \\
    $-408$ & $2^3 \cdot 3 \cdot 17$ & $-11390625/4913$ \\
\end{tabular}
\caption{Rational CM $j$-invariants and their discriminants}
\label{table 1}
\end{table}

\section{\texorpdfstring{$6$}{6}-torsion points in the Prym variety}\label{sec:2,3,6-torsion}

To prove Theorem \ref{thm:main}, we  must analyze the Galois modules $P[n]$ of bielliptic Picard Pryms.  In this section we study $P[2]$, $P[3]$ and $P[6]$ explicitly and give various criteria for the existence of rational torsion points.

For the remainder of this section, let $(C, \gamma) = (C_{a,b}, \gamma_{a,b})$ be a marked bielliptic Picard curve over a field $k$ (always assumed of characteristic $\neq 2,3$).

\subsection{\texorpdfstring{$2$}{2}-torsion}

Recall from \S\ref{subsection: the prym variety P}-\ref{subsection: the dual prym A} that $[2] = \widehat{\lambda} \lambda$, giving rise to a short exact sequence
\begin{equation}\label{eq: SES involving P[2] and P[lambda]}
 0\rightarrow P[\lambda] \rightarrow P[2] \rightarrow A[\widehat{\lambda}]\rightarrow 0.  
\end{equation}
The diagram from \S\ref{subsection: the prym variety P} shows that there is a canonical isomorphism $P[\lambda] \simeq E[2]$ and by bigonal duality we have $A[\widehat{\lambda}] \simeq  \widehat{E}[2]$ as well. We conclude:

\begin{lemma}
There is a short exact sequence of $\Gal_k$-modules
\begin{equation}\label{eq:ses}
0 \to E[2] \to P[2]\to \widehat{E}[2] \to 0.
\end{equation}
\end{lemma}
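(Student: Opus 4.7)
The plan is to assemble the claimed exact sequence from the two ingredients already displayed just above the lemma statement, making sure all identifications are Galois-equivariant.

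First I would invoke the factorisation $[2] = \widehat{\lambda}\circ\lambda$ of multiplication by $2$ on $P$, which was established in \S\ref{subsection: the dual prym A}. Taking kernels and using the snake lemma (or just the standard fact that $A[\widehat\lambda] = \lambda(P[2])$) yields the short exact sequence
\[0\longrightarrow P[\lambda] \longrightarrow P[2] \xrightarrow{\ \lambda\ } A[\widehat{\lambda}] \longrightarrow 0\]
already recorded as \eqref{eq: SES involving P[2] and P[lambda]}. Since $\lambda$ and $\widehat\lambda$ are defined over $k$, all three terms and both maps are $\Gal_k$-equivariant.

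Next I would identify the outer terms. For the left term, the commutative diagram in \S\ref{subsection: the prym variety P} identifies $P[\lambda]$ with $\pi^{*}(E[2])$, and the map $\pi^{*}\colon E[2]\hookrightarrow P$ is a $k$-rational closed immersion, producing a canonical $\Gal_k$-isomorphism $E[2]\xrightarrow{\sim}P[\lambda]$. For the right term, bigonal duality (Proposition \ref{proposition: bigonal duality}) gives a $k$-rational isomorphism $\widehat{A}\xrightarrow{\sim}P$ of $(1,2)$-polarized surfaces, so interchanging the roles of $(C,P,A)$ and $(\widehat C,\widehat P,\widehat A)$ and reapplying the diagram of \S\ref{subsection: the prym variety P} (now for $\widehat C$) gives a canonical $\Gal_k$-isomorphism $\widehat{E}[2]\xrightarrow{\sim}\widehat P[\widehat\lambda]\simeq A[\widehat\lambda]$, where in the last step one uses that the isomorphism $\widehat A\simeq P$ is compatible with the $(1,2)$-polarization and hence carries $\widehat P[\widehat\lambda]$ to $A[\widehat\lambda]$.

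Substituting these two identifications into \eqref{eq: SES involving P[2] and P[lambda]} then yields the desired sequence \eqref{eq:ses}. Essentially no obstacle is present: every map in sight is defined over $k$ and the identifications are functorial, so Galois equivariance is automatic; the only thing that requires a moment of care is verifying that the isomorphism $\widehat A\simeq P$ of Proposition \ref{proposition: bigonal duality} really does match $\widehat P[\widehat\lambda]$ with $A[\widehat\lambda]$, which is immediate from the sentence in the proof of Proposition \ref{proposition: bigonal duality} stating that the two maps denoted $\widehat\lambda$ agree under this identification.
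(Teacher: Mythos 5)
Your proposal is correct and follows essentially the same route as the paper: the factorisation $[2]=\widehat{\lambda}\circ\lambda$ gives the sequence \eqref{eq: SES involving P[2] and P[lambda]}, the left term is identified with $E[2]$ via $\pi^{*}$ using the diagram of \S\ref{subsection: the prym variety P}, and the right term with $\widehat{E}[2]$ via bigonal duality (Proposition \ref{proposition: bigonal duality}). You have merely spelled out the Galois-equivariance checks that the paper leaves implicit.
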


The $k$-points of the outer terms in \eqref{eq:ses} are easy to determine.
\begin{lemma}\label{lemma: 2-torsion in elliptic curves}
Let $E = E_{a,b}$ and $\widehat{E} = \widehat{E}_{a,b}$ be as above. Then 
\begin{enumerate}
\item $E[2](k) \neq 0$ if and only if $16(a^2 - 4b)$ is a cube in $k$, in which case $P[2](k) \neq 0$. 
\item $\widehat{E}[2](k) \neq 0$ if and only if $b$ is a cube in $k$, in which case $A[2](k) \neq 0$.
\end{enumerate}
Moreover if $k(\omega) \neq k$, then $|E[2](k)| \leq 2$ and $|\widehat{E}[2](k)| \leq 2$.
\end{lemma}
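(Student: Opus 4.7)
Both $E_{a,b}\colon y^2 = x^3 + 16(a^2-4b)$ and $\widehat{E}_{a,b}\colon y^2 = x^3 + b$ are elliptic curves of the form $E_d \colon y^2 = x^3 + d$ with $d \in k^\times$. The nontrivial $2$-torsion of $E_d$ is cut out by $y = 0$, so it consists of the three points $(\alpha, 0)$ with $\alpha^3 = -d$. Since $\mathrm{char}(k) \neq 2$, the element $-1 = (-1)^3$ is always a cube, so $-d$ is a cube in $k$ if and only if $d$ is. Applying this with $d = 16(a^2-4b)$ and $d = b$ proves the two cubicity criteria in (1) and (2).

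The implications ``$E[2](k) \neq 0 \Rightarrow P[2](k) \neq 0$'' and ``$\widehat{E}[2](k) \neq 0 \Rightarrow A[2](k) \neq 0$'' then follow by taking $\Gal_k$-invariants. For (1), the short exact sequence \eqref{eq:ses} gives an injection $E[2](k) \hookrightarrow P[2](k)$. For (2), bigonal duality (Proposition \ref{proposition: bigonal duality}) applied to $\widehat{C}$ in place of $C$, with the roles of $E, \widehat{E}$ and of $P, A$ swapped, yields the analogous short exact sequence
\[ 0 \to \widehat{E}[2] \to A[2] \to E[2] \to 0, \]
and taking $k$-points gives the claim.

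For the final statement, suppose $|E[2](k)| > 2$, so that all three nontrivial $2$-torsion points are $k$-rational. Then the cubic $x^3 + 16(a^2-4b)$ splits completely over $k$; if $\alpha$ is one root, the others are $\omega\alpha$ and $\omega^2\alpha$, forcing $\omega \in k$. The same reasoning applied to the cubic defining $\widehat{E}[2]$ yields $|\widehat{E}[2](k)| \leq 2$ under the hypothesis $k(\omega) \neq k$.

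The only step that requires any thought is keeping straight which sequence applies to which of $P$ and $A$; there are no real obstacles.
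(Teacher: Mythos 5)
Your proof is correct and takes the same route as the paper, which simply reads the criteria off the Weierstrass models $E\colon y^2 = x^3 + 16(a^2-4b)$ and $\widehat{E}\colon y^2 = x^3 + b$ and uses the exact sequence \eqref{eq:ses} (and its bigonal-dual counterpart) for the implications about $P[2](k)$ and $A[2](k)$. The only detail worth noting is that the swapped sequence for $A$ also uses $\widehat{\widehat{E}}\simeq E$, which is immediate from the explicit formulas in \S\ref{subsection: bigonal duality}.
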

\begin{proof}
    This can be read off the models $E \colon y^2 = x^3 + 16(a^2 - 4b)$ and $\widehat{E} \colon y^2 = x^3 + b$ of \S\ref{subsection: bigonal duality}.
\end{proof}
To determine $P[2](k)$, we study the extension class of $\eqref{eq:ses}$ and determine when an element of $\widehat{E}[2](k)$ lifts to $P[2](k)$.
We will use the following explicit geometric description of $P[2]$; we thank Adam Morgan for pointing it out to us.
Recall that $E$ is an elliptic curve with origin $O_E=\pi(\infty)$.
\begin{proposition}\label{prop: geometric description of P[2]}
    Each divisor class in $P[2](\bar{k})$ is represented by a unique divisor of the form $R + \pi^*(T) - 3\infty$, where $R \in C(\overline{k})$ is a ramification point of the map $\pi\colon C\rightarrow E$ and $T \in E(\overline{k})$ is such that $[2](T) = -\pi(R)$. 
\end{proposition}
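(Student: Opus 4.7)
The plan is to verify that the assignment $\Phi\colon (R,T)\mapsto D := [R+\pi^*(T)-3\infty]$ is a well-defined injection from the set of pairs in the statement into $P[2](\bar k)$. Both sides have $16$ elements --- four ramification points of $\pi$ (namely $\infty$ and the three $(0,\beta)$ with $\beta^3 = b$), each with four choices of $T$ in the fibre of $[2]\colon E\to E$ over $-\pi(R)$, and $|P[2](\bar k)| = 2^{2\dim P} = 16$ --- so injectivity will suffice.

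Well-definedness amounts to checking $\tau^*D = D$ and $2D = 0$. The former is immediate from $\tau(R) = R$, $\tau(\infty) = \infty$, and $\pi\circ\tau = \pi$. For the latter, using $\pi^*\pi(R) = 2R$ (since $R$ is a ramification point) and $2T = -\pi(R)$ in $E$, one computes
\[
2D = 2(R-\infty) + \pi^*(2T - 2O_E) = 2(R-\infty) - \pi^*(\pi(R) - O_E) = 2(R-\infty) - (2R-2\infty) = 0
\]
in $J$. Therefore $(1+\tau^*)D = 2D = 0$, so $D$ lies in $\ker(1+\tau^*)$; together with the isogeny $P\times E\to J$ of \S\ref{subsection: the prym variety P} and the injectivity of $\pi^*\colon E\hookrightarrow J$ (standard for the ramified cover $\pi$), one checks that this kernel equals $P$ on $\bar k$-points, so $D\in P[2]$.

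For injectivity, suppose $\Phi(R,T) = \Phi(R',T')$. If $R = R'$, then $\pi^*(T-T')\sim 0$ in $J$, forcing $T = T'$ by injectivity of $\pi^*$. If $R\neq R'$, setting $Q = T'-T$ (in the group law of $E$) rewrites the relation as the linear equivalence $R + 2\infty \sim R' + \pi^*(Q)$ of effective degree-three divisors; after swapping the two pairs if necessary we may assume $R\neq\infty$. The main technical step, and the main obstacle, is to derive a contradiction from this equivalence: non-hyperellipticity of $C$ (Theorem \ref{theorem: curves with mu6 action are of the form Ca,b}) combined with Riemann--Roch yields $h^0(R+2\infty) = 1$, because $h^0(2\infty) = 1$ forces $2\infty - R$ to be non-effective for $R\neq\infty$. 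Therefore $R'+\pi^*(Q) = R+2\infty$ as effective divisors, and a case analysis on $Q$ --- either $Q = O_E$ (so $\pi^*(Q) = 2\infty$ and comparing divisors forces $R' = R$, contradiction) or $Q\neq O_E$ (so $\pi^*(Q)$ avoids $\infty$, making the multiplicity of $\infty$ on the left at most $1$, contradicting multiplicity $2$ on the right) --- completes the proof.
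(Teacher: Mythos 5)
Your proof is correct, and its overall skeleton (check the $16$ candidate divisors are $\tau$-invariant and $2$-torsion, prove injectivity of the assignment, then conclude by counting against $|P[2](\bar k)|=16$) coincides with the paper's. The one place you genuinely diverge is the injectivity step. The paper disposes of it in one line by citing Proposition \ref{proposition: properties of the embedding of C into A}(1): since the summand $\pi^*(T)-2\infty$ lies in $\pi^*(E)=\ker(J\to A)$, the equality of classes forces $i(R)=i(R')$ in $A$, and Barth's theorem that $i\colon C\hookrightarrow A$ is a closed embedding gives $R=R'$; then $T=T'$ follows from injectivity of $\pi^*$ exactly as in your $R=R'$ case. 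You instead avoid the dual Prym entirely and argue directly on the curve: rewriting the relation as $R+2\infty\sim R'+\pi^*(Q)$, using non-hyperellipticity and Riemann--Roch (via $K_C\sim 4\infty$) to get $h^0(R+2\infty)=1$, and comparing multiplicities at $\infty$. Both arguments are sound; yours is more elementary and self-contained (it does not lean on the Barth/Ikeda machinery of \S\ref{subsection: the dual prym A}), at the cost of a short case analysis, while the paper's is shorter given the results already established. Your computation of $2D=0$ and the $\tau$-invariance check match the paper's verbatim in substance.
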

\begin{proof}
    We first show that every such divisor defines an element of $P[2](\bar{k})$, so let $x= R+\pi^*(T)-3\infty$ be such a divisor class. 
    Recall that $\pi(\infty)=O_E$ denotes the origin of $E$.
    The condition $[2](T) = -\pi(R)$ translates to an equivalence $2T+\pi(R) \sim 3O_E$.
    Pulling this equivalence back along $\pi$ shows that $2x= 0$, so $x\in J[2](\bar{k})$.
    Since both $R$ and $\pi^*(T)$ are fixed by $\tau$, $\tau(x) = x$ so $\tau(x) + x = 2x = 0$ so $x\in P[2](\bar{k})$.
    We now claim that two such divisors $R+\pi^*(T)-3\infty$ and $R'+\pi^*(T')-3\infty$ are linearly equivalent if and only if $R=R'$ and $T=T'$.
    Indeed, Proposition \ref{proposition: properties of the embedding of C into A}(1) shows that $R=R'$, and then $T=T'$ follows from the fact that $\pi^*\colon E\rightarrow J$ is injective.
    
    Since there are $16$ divisors of this form and $P[2](\bar{k})$ has order $16$ too, every element of $P[2](\bar{k})$ has a unique representative of this form.
\end{proof}
This description of $P[2]$ is compatible with the sequence \eqref{eq:ses}:
the map $E[2]\rightarrow P[2]$ sends $T$ to $\pi^*(T)-2\infty$;
the map $P[2]\rightarrow \widehat{E}[2]$ sends $R+\pi^*(T)-3\infty$ with $R = (0,t)\in C(\bar{k})$ to $(-t,0)\in \widehat{E}[2](\bar{k})$, using the coordinates of the equations in \S\ref{subsection: bigonal duality}.

\begin{theorem}\label{thm:2-division poly}
An element $(-t,0) \in \widehat{E}[2](k)$ with $t\in k$ lifts to $P[2](k)$ under \eqref{eq:ses} if and only if 
$g_{a,t}(z) := z^4-6tz^2+4az-3t^2$ has a $k$-rational root.
Consequently, $(P[2]\setminus P[\lambda])(k)\neq \varnothing$ if and only if there exists $t\in k$ such that $t^3 = b$ and such that $g_{a,t}$ has a $k$-rational root.
\end{theorem}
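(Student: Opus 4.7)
The plan is to leverage Proposition \ref{prop: geometric description of P[2]} as the starting point. That proposition identifies the lifts of $(-t, 0) \in \widehat{E}[2](k)$ in $P[2](\bar{k})$ with the points $T \in E(\bar{k})$ such that $[2]T = -\pi(R) = (4t, -4a)$, where $R := (0, t) \in C(k)$ is a ramification point of $\pi$. Moreover, the uniqueness clause of the proposition reduces the question of $k$-rationality of a lift $D_T := R + \pi^*(T) - 3\infty$ to the question of $k$-rationality of $T$.

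The technical core is to construct an explicit, Galois-equivariant bijection between such $T$'s and the roots of $g_{a,t}(-z) = z^4 - 6tz^2 - 4az - 3t^2$, from which the theorem will follow since $z \mapsto -z$ permutes the roots of $g_{a,t}(z)$ with those of $g_{a,t}(-z)$. For this, I would consider the rational function $f_c := x^2 + c(y - t) \in \bar{k}(C)$ depending on a parameter $c \in \bar{k}$. Local analysis at $\infty$ (using $v_\infty(x) = -3$ and $v_\infty(y) = -4$, which follow from the plane-quartic embedding $y^3 = x^4 + ax^2 + b$) gives $\deg \operatorname{div}(f_c) = 6$, while local analysis at $R$ (a ramification point of $\pi$, so $v_R(x) = 1$ and $v_R(y - t) = 2$) gives $v_R(f_c) \geq 2$. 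The four remaining zeros of $f_c$ lie away from $\{x = 0\}$, and substituting $y = t - x^2/c$ into $y^3 = x^4 + ax^2 + b$ with $b = t^3$ reduces them to solutions of a quadratic in $w := x^2$:
\begin{equation*}
w^2 + (c^3 - 3tc)w + (3t^2 c^2 + ac^3) = 0.
\end{equation*}
A direct computation yields that the discriminant of this quadratic equals $c^2 \cdot g_{a,t}(-c)$.

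For the four remaining zeros to form a divisor of the shape $2\pi^*(T)$, the quadratic in $w$ must have a double root, which is precisely the condition $g_{a,t}(-c) = 0$. Conversely, when this condition holds, one obtains a point $T = (u, v) \in E$ via the explicit formulas $u = 2(c^2 - t)$ and $v = 4(a + 3tc - c^3)$; a short calculation modulo $g_{a,t}(-c)$ confirms the identity $v^2 = u^3 + 16(a^2 - 4b)$. Since $\operatorname{div}(f_c) = 2(R + \pi^*(T) - 3\infty)$ is principal, the class $D_T$ is $2$-torsion in $P$, and Proposition \ref{prop: geometric description of P[2]} then forces $[2]T = -\pi(R)$. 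The inverse map $T \mapsto c := -(v - 4a)/(2(u - 4t))$ shows that $c \in k$ iff $T \in E(k)$, yielding the bijection on $k$-points. The degenerate locus $u = 4t$ corresponds to $T = \pi(R)$, which can only satisfy $[2]T = -\pi(R)$ when $\pi(R)$ is $3$-torsion, and can be handled by direct inspection.

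The ``consequently'' assertion then follows by combining the first part with Lemma \ref{lemma: 2-torsion in elliptic curves}(2), which identifies the nonzero elements of $\widehat{E}[2](k)$ as the $(-t, 0)$ with $t \in k$ and $t^3 = b$: indeed, $(P[2] \setminus P[\lambda])(k)$ is nonempty precisely when the projection $P[2](k) \to \widehat{E}[2](k)$ from \eqref{eq:ses} is nonzero. The main obstacle in the proof is the discriminant computation together with the polynomial identity for $v^2$; both are elementary but require careful bookkeeping.
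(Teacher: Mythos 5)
Your overall strategy is sound and genuinely different from the paper's. Both arguments begin the same way, using Proposition \ref{prop: geometric description of P[2]} to reduce the lifting question to whether there is a $k$-rational $T$ with $[2]T = -\pi(R) $ for $R = (0,t)$. At that point the paper simply observes that this is the statement that $\pi(R) = (4t,4a)$ is divisible by $2$ in $E(k)$ and quotes the classical binary-quartic criterion for $2$-divisibility (citing Alp\"oge--Ho), whereas you re-derive that criterion by hand, working on $C$ itself: you exhibit the halving divisor class directly as $\tfrac12\operatorname{div}(f_c)$ for $f_c = x^2 + c(y-t)$. I checked your key computations and they are correct: the substitution does yield $w^2 + (c^3-3tc)w + (3t^2c^2+ac^3)=0$, its discriminant is $c^2\,g_{a,t}(-c)$, and $(u,v) = (2(c^2-t),\,4(a+3tc-c^3))$ satisfies $v^2 = u^3 + 16(a^2-4t^3)$ modulo $g_{a,t}(-c)$ (indeed $v^2-u^3-16(a^2-4t^3) = (8c^2-24t)\,g_{a,t}(-c)$). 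The appeal to the uniqueness clause of Proposition \ref{prop: geometric description of P[2]} to conclude $[2]T = -\pi(R)$ is also legitimate. What your route buys is a self-contained, characteristic-free derivation that produces the halving point explicitly; what it costs is the bookkeeping below.

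The one real loose end is the ``only if'' direction. You need every $T$ with $[2]T=-\pi(R)$ to arise from some root $c$ of $g_{a,t}(-z)$, and you assert this via the inverse formula $c = -(v-4a)/(2(u-4t))$, but you never verify that this $c$ actually satisfies $g_{a,t}(-c)=0$ when $(u,v)$ is an arbitrary half of $-\pi(R)$ (equivalently, you never show the map $c\mapsto T_c$ is surjective onto the four halves; injectivity plus a counting argument would also require knowing $g_{a,t}(-z)$ has four distinct roots, which you do not address). Relatedly, the degenerate locus you set aside is not empty: $u=4t$ forces $c^2=3t$ and hence $a^2=3t^3$, and on that same locus the quartic acquires a double root and $v_R(f_c)$ can jump, so several parts of the argument degenerate simultaneously there; ``handled by direct inspection'' needs to actually be carried out. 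Both issues are fixable by finite computations (e.g.\ verifying $g_{a,t}(-c)=0$ directly from the duplication formula on $E$, using $c=-3u^2/(4v)$), but as written the backward implication is asserted rather than proved. A very minor point: $v_R(y-t)=2$ fails when $a=0$ (it is then $4$), though only $v_R(f_c)\ge 2$ is needed.
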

\begin{proof}
By the above proposition, the point lifts to $P[2](k)$ if and only if the corresponding point $(4t,4a)\in E(k)$ is divisible by $2$ in $E(k)$.
Using the $2$-descent map $E(k)/2E(k)\hookrightarrow \HH^1(k,E[2])$ and its interpretation via binary quartic forms, this is equivalent to $z^4-24tz^2+32az-48t^2$ having a root in $k$ \cite[\S2, (11)]{AlpogeHo}. Changing $z$ by $2z$ and dividing by $16$ results in the polynomial $g_{a,t}$.
\end{proof}

\begin{corollary}\label{cor: P[2] param}
    $(P[2]\setminus P[\lambda])(k)\neq \varnothing$ if and only if $(a,b)= ((4s + 3)(4s^2 - 3)d^3,(4s+3)^3d^6)$ for some $s,d \in k$.
\end{corollary}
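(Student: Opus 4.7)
The plan is to invoke Theorem \ref{thm:2-division poly}, which reduces the claim to showing that the parametrization describes exactly those pairs $(a,b)$ for which there exists $t \in k$ with $t^3 = b$ and a $k$-rational root $z$ of the quartic $g_{a,t}(z) = z^4 - 6tz^2 + 4az - 3t^2$. Smoothness of $C$ forces $b \neq 0$, hence $t \neq 0$, and also $z \neq 0$ since $g_{a,t}(0) = -3t^2 \neq 0$.

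For the direction $(\Leftarrow)$, given $(a,b) = ((4s+3)(4s^2-3)d^3, (4s+3)^3 d^6)$, set $T := 4s+3$, $t := Td^2$, and $z := -Td$. Then $t^3 = T^3 d^6 = b$ is immediate, and substituting into $g_{a,t}(z)$ and factoring out $d^4 T^2$ reduces the verification to $T^2 - 6T + 9 - 16s^2 = 0$, which holds because $T - 3 = 4s$.

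For the direction $(\Rightarrow)$, given $t \in k^{\times}$ with $t^3 = b$ and a nonzero root $z$ of $g_{a,t}$, define $d := -t/z$ and $s := (z^2/t - 3)/4$, so that $4s+3 = z^2/t$. Then $(4s+3)^3 d^6 = (z^2/t)^3 \cdot (-t/z)^6 = t^3 = b$ directly. To recover $a$, I would use $4s^2 - 3 = ((4s+3)^2 - 6(4s+3) - 3)/4 = (z^4 - 6tz^2 - 3t^2)/(4t^2)$, multiply out $(4s+3)(4s^2-3)d^3$, and simplify using the identity $4az = -z^4 + 6tz^2 + 3t^2$ (which is simply $g_{a,t}(z) = 0$) to conclude that this product equals $a$.

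The main obstacle is expository rather than technical: one must discover the right choice of root, namely $z = -Td$. This can be motivated (though not logically required) by substituting $t = Td^2$ and $z = \alpha d$ into $g_{a,t}(z) = 0$ and treating the resulting relation as a cubic polynomial in $T$, which factors as $(T + \alpha)$ times a quadratic in $T$. The linear factor $T = -\alpha$, i.e.\ $z = -Td$, already yields a complete inversion of the parametrization, so the quadratic factor plays no role in the argument.
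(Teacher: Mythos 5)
Your proof is correct, and it rests on the same reduction as the paper's: both first invoke Theorem~\ref{thm:2-division poly} to translate the statement into the existence of a cube root $t$ of $b$ and a $k$-rational root of $g_{a,t}$, and the forward direction (exhibiting the root $z=-(4s+3)d$ for $t=(4s+3)d^2$) is essentially identical. Where you genuinely diverge is in the converse. The paper uses the scaling identity $\lambda^4 g_{a,t}(z) = g_{\lambda^3 a, \lambda^2 t}(\lambda z)$ with $\lambda = t/a$ to reduce to the one-parameter family $g_{v,v}$, observes that the locus $g_{v,v}(z)=0$, $z\neq 0$ is an open subset of a smooth conic (it is quadratic in $v$), and rationally parametrizes that conic to extract $s$; since this normalization divides by $a$, the case $a=0$ must be treated separately there. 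You instead invert the parametrization directly, setting $d=-t/z$ and $4s+3=z^2/t$ and verifying $(4s+3)^3d^6=b$ and $(4s+3)(4s^2-3)d^3=a$ using only the relation $g_{a,t}(z)=0$ in the form $z^4-6tz^2-3t^2=-4az$; I have checked this computation and it is right. Your route is more elementary (no conic parametrization) and uniform in $a$, including $a=0$, which is a small but real gain. One caveat: your closing ``motivational'' paragraph is not quite accurate as stated --- with $a$, $b$ expressed in terms of $T=4s+3$ and $d$, the relation $g_{a,t}(\alpha d)=0$ is quadratic in $T$, not cubic, and $(T+\alpha)$ is a factor only after imposing $(\alpha+3)^2=16s^2$ --- but since you explicitly flag this as heuristic and not part of the logic, it does not affect the validity of the proof.
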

\begin{proof}
 Theorem \ref{thm:2-division poly} shows that $P[2]\setminus P[\lambda]$ has a $k$-point if and only if there exists a $t\in k$ with $t^3=b$ such that $g_{a,t}(z) = z^4-6tz^2+4az-3t^2$ has a $k$-rational root.
 A calculation shows that if $(a,b)$ is of the above form for some $s,d\in k$ then $t=(4s+3)$ is such that $g_{a,t}(z)$ has the root $z=-4s-3$ in $k$.
 Conversely, suppose that $t \in k$ is a cube root of $b$ such that $g_{a,t}(z)$ has a root in $k$.
 If $a=0$, a calculation shows that we may take $s =\pm \sqrt{3}/2$, so we may assume that $a\neq 0$.
 Note that $\lambda g_{a,t}(z) = g_{\lambda^3a,\lambda^2 t}(\lambda z)$ for all $\lambda \in k$.
 Choosing $\lambda = t/a$ and setting $v=t^3/a^2$, it follows that $g_{v,v}(z)$ has a $k$-rational root.
 Since $t\neq 0$, this root must be nonzero.
 The locus of $(v,z)$ with $g_{v,v}(z)=0$ and $z\neq 0$ is isomorphic to an open subset of a smooth conic.
 After rationally parametrizing this conic we find that $v$ must be of the form $(4s+3)/(4s^2-3)^2$ for some $s\in k$. It follows that $(a,b)$ is of the above form with $d=a/t$.
\end{proof}

In the following we assume for simplicity that $k(\omega) \neq k$, so that $E[2](k) \neq E[2]$. For abstract groups $G$ and $G'$, we sometimes write $G \subset G'$ as shorthand for ``there exists an embedding $G \hookrightarrow G'$''.

\begin{proposition}\label{prop:(Z/2)^2 criterion}
Suppose $k(\omega) \neq k$. Then the following conditions are equivalent:
\begin{enumerate}
    \item $(\Z/2\Z)^2 \subset P(k)$; 
     \item $E[2](k) \neq 0$ and there exists a branch point of $C\rightarrow E$ different from $O_E$ lying in $2E(k)$;
    \item $(a,b) = ((16 w^6+40 w^3-2)d^3,\left(8 w^3+1\right)^3d^6)$ for some pair $w,d \in k$;
    \item $(\Z/2\Z)^2 \subset A(k)$.
\end{enumerate}
\end{proposition}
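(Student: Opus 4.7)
My plan is to establish the equivalences in the order $(1) \Leftrightarrow (2) \Leftrightarrow (3)$ and then $(1) \Leftrightarrow (4)$ separately via Weil duality.

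For $(1) \Leftrightarrow (2)$, I will use the short exact sequence \eqref{eq:ses}. Under the standing hypothesis $k(\omega) \neq k$, Lemma~\ref{lemma: 2-torsion in elliptic curves} bounds $|E[2](k)|, |\widehat{E}[2](k)| \leq 2$, so \eqref{eq:ses} forces $|P[2](k)| \leq 4$, with $(\Z/2\Z)^2 \subset P(k)$ exactly when $E[2](k) = \Z/2\Z$ and the connecting map $P[2](k) \to \widehat{E}[2](k)$ is surjective. By Theorem~\ref{thm:2-division poly}, the surjectivity is equivalent to the existence of a cube root $t \in k$ of $b$ such that the branch point $(4t, 4a)$ of $\pi \colon C \to E$ is divisible by $2$ in $E(k)$, which is precisely condition (2).

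For $(2) \Leftrightarrow (3)$, the forward direction $(3) \Rightarrow (2)$ is a direct verification: setting $t = (8w^3+1)d^2$ one checks $t^3 = b$, and a short expansion gives $16(a^2 - 4b) = (16w(w^3-1)d^2)^3$, so $E[2](k) \neq 0$. The polynomial identity $g_{a,t}(-(2w+1)^2 d) = 0$ (where $g_{a,t}(z) = z^4 - 6tz^2 + 4az - 3t^2$) then places the branch point in $2E(k)$ via Theorem~\ref{thm:2-division poly}. For $(2) \Rightarrow (3)$, I start from the parametrisation $(a, b) = ((4s+3)(4s^2-3)d^3, (4s+3)^3 d^6)$ in Corollary~\ref{cor: P[2] param}, which encodes the branch-point condition alone. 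A direct factorisation yields $a^2 - 4b = d^6(4s+3)^2(2s+1)^3(2s-3)$, so the additional requirement $16(a^2 - 4b) \in k^{\times 3}$ reduces to $16(4s+3)^2(2s-3) \in k^{\times 3}$. This cuts out an affine genus-$0$ curve in $s$ containing the rational point $s = -1/2$, hence admits a rational parametrisation by a single variable $w$, which a computation identifies with the family in (3).

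For $(1) \Leftrightarrow (4)$, I exploit that $A = P^{\vee}$: the Weil pairing gives a canonical Galois-equivariant isomorphism $A[2] \simeq \Hom(P[2], \mu_2) = P[2]^{\vee}$. The $\mu_3$-action on $P$ endows $P[2]$ with the structure of a $2$-dimensional $\F_4$-vector space, and \eqref{eq:ses} is a short exact sequence of $\F_4[\Gal_k]$-modules. In the non-degenerate case $E[2](k), \widehat{E}[2](k) \neq 0$ (outside which both (1) and (4) fail by the corresponding size bounds), the actions on $E[2]$ and $\widehat{E}[2]$ factor through $\Gal(k(\omega)/k)$; an explicit case analysis on the $\F_4[\Gal_k]$-module structure of $P[2]$ then shows that $|P(k)[2]| = 4$ if and only if \eqref{eq:ses} splits as a sequence of $\Gal_{k(\omega)}$-modules. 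Dualising \eqref{eq:ses} and using the Weil pairing on $E$ and $\widehat{E}$ to identify $E[2]^{\vee} \simeq E[2]$ and $\widehat{E}[2]^{\vee} \simeq \widehat{E}[2]$, the analogous sequence $0 \to \widehat{E}[2] \to A[2] \to E[2] \to 0$ splits over $k(\omega)$ precisely when \eqref{eq:ses} does, yielding $(1) \Leftrightarrow (4)$.

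The main computational obstacle will be the rational parametrisation in $(2) \Rightarrow (3)$, which requires a somewhat lengthy but elementary calculation; the $\F_4[\Gal_k]$-module analysis for $(1) \Leftrightarrow (4)$ is conceptually the most delicate step, but becomes short once the $\F_4$-structure coming from $\mu_3 \subset \End(P)$ is exploited.
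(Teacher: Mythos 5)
Your treatment of $(1)\Leftrightarrow(2)\Leftrightarrow(3)$ is essentially the paper's: \eqref{eq:ses} together with Lemma \ref{lemma: 2-torsion in elliptic curves} and the description of $P[2]$ handles $(1)\Leftrightarrow(2)$, and the paper likewise derives $(3)$ by intersecting the parametrisation of Corollary \ref{cor: P[2] param} with the cube condition $16(a^2-4b)=16d^6(2s-3)(2s+1)^3(4s+3)^2\in k^{\times 3}$; it makes your genus-$0$ parametrisation explicit by noting this is equivalent to $(3-2s)/(4(4s+3))$ being a cube $w^3$ and solving for $s$. (One small wording slip: if $\widehat{E}[2](k)=0$ the map $P[2](k)\to\widehat{E}[2](k)$ is vacuously surjective while $(\Z/2\Z)^2\not\subset P(k)$, so ``surjective'' should read ``has nontrivial image''; your reformulation via Theorem \ref{thm:2-division poly} already builds in a $k$-rational cube root of $b$, so nothing is lost.) Where you genuinely diverge is $(1)\Leftrightarrow(4)$: the paper instead proves $(3)\Leftrightarrow(4)$ by bigonal duality, checking that if $(a,b)$ has the form in $(3)$ then so does $(8a,16(a^2-4b))$ --- the coefficients of $\widehat{C}$, whose Prym is $A$ --- with $w'=-1/(2w)$. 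Your route via $A[2]\simeq\Hom(P[2],\mu_2)$ and the $\F_4$-structure is correct and arguably cleaner: in the nondegenerate case $\Gal_{k(\omega)}$ acts $\F_4$-linearly and fixes $E[2]$ pointwise, so $P[2]^{\Gal_{k(\omega)}}$ is an $\F_4$-subspace containing the line $E[2]$, whence $|P[2](k)|=4$ forces $\Gal_{k(\omega)}$ to act trivially on all of $P[2]$ --- a condition manifestly preserved under Cartier duality. This buys a statement independent of the explicit parametrisation $(3)$, at the cost of the module-theoretic case analysis; the paper's check is a two-line computation once $(3)$ is in hand and also exhibits the explicit involution $w\mapsto -1/(2w)$ on the parameter.
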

\begin{proof}
That $(1)$ and $(2)$ are equivalent follows from \eqref{eq:ses}, Lemma \ref{lemma: 2-torsion in elliptic curves}, the fact that $k(\omega)\neq k$ and Proposition \ref{prop: geometric description of P[2]}. 
Similarly, Proposition \ref{prop:(Z/2)^2 criterion} shows that $(\Z/2\Z)^2 \subset P_{a,b}(k)$ if and only if $(a,b) = ((4s + 3)(4s^2 - 3)d^3,(4s+3)^3d^6)$ for some $s,d\in k$ and $16(a^2-4b) = 16 d^6(2 s-3) (2s+1)^3 (4s+3)^2$ is a cube. 
The latter is equivalent to $(3-2s)/4(4s+3)$ being a cube $w^3$ in $k$.
Solving for $s$, plugging in back to $a,b$ and absorbing common factors in $d$ proves the equivalence between $(1)$ and $(3)$.

A calculation shows that if $(a,b)$ is of the form $(3)$ for some $w,d\in k$, then $(8a,16(a^2-4b))$ is of the form $(3)$ with $w'=-1/2w$ and $d'=-(2t)^2d$.
Therefore $(3)\Leftrightarrow (4)$ by bigonal duality from \S\ref{subsection: bigonal duality}.
\end{proof}

Note that when $k(\omega) \neq k$, we have $|P[2](k)| \leq 4$ by $\eqref{eq:ses}$ and Lemma \ref{lemma: 2-torsion in elliptic curves}. So the conditions in Proposition \ref{prop:(Z/2)^2 criterion} are also equivalent to the condition $P[2](k) \simeq (\Z/2\Z)^2$.

\subsection{\texorpdfstring{$\sqrt{-3}$}{sqrtminus3}-torsion}\label{subsec: sqrt3-torsion}
We use the $\mu_3$-action on $P$ to explicitly analyze a $\Gal_k$-submodule of $P[3]$.
Let $\mathfrak{p}$ be the ideal $(1 - \omega) \subset \Z[\omega]$, viewed as a subgroup of $\End(P_{k^{\sep}})$. Since $\mathfrak{p}$ is $\Gal_k$-stable, so is the subgroup $A[\mathfrak{p}]$ of points fixed by $\omega$. We have $\mathfrak{p} = (\sqrt{-3})$, so $P[\mathfrak{p}]$ has order $9$, and is the kernel of an isogeny $P \to B$ over $k$. In fact, the quotient $B = P/P[\mathfrak{p}]$ is isomorphic to the sextic twist $P_{-27}$, which is also the quadratic twist of $P$ by the extension $k(\omega)/k$ \cite[Rem\ 2.8]{ShnidmanWeiss}. We therefore have an exact sequence
\begin{equation}\label{eq:3torsion}
    0 \to P[\mathfrak{p}] \to P[3] \to P_{-27}[\mathfrak{p}] \to 0
\end{equation}
Of course, $A = P^\vee$ also has a $\mu_3$-action, so we can similarly define $A[\mathfrak{p}] \subset A[3]$, a subgroup of order $9$ that sits in an analogous short exact sequence.

\begin{lemma}\label{lem:3-tors A =P}
    $A[\mathfrak{p}] \simeq P[\mathfrak{p}]$ as $\Gal_k$-modules.
\end{lemma}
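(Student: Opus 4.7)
The plan is to exhibit the isomorphism concretely via the polarization $\lambda \colon P \to A$. The essential input is that $\lambda$ is $\mu_3$-equivariant with respect to $\alpha$ on $P$ and $\hat\alpha$ on $A$. Indeed, rearranging condition (1) of Definition \ref{definition: mu3-abelian surface} yields $\lambda \circ \alpha(\omega) = (\alpha(\omega)^\vee)^{-1} \circ \lambda$, and the footnote in \S\ref{subsec: torelli} identifies $(\alpha(\omega)^\vee)^{-1}$ with $\hat\alpha(\omega)$. Therefore $\lambda$ carries $P[\mathfrak{p}] = \ker(1 - \alpha(\omega))$ into $A[\mathfrak{p}] = \ker(1 - \hat\alpha(\omega))$.

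It then remains to check that the restriction $\lambda|_{P[\mathfrak{p}]} \colon P[\mathfrak{p}] \to A[\mathfrak{p}]$ is a bijection. Injectivity is immediate: $\ker \lambda = P[\lambda] \subseteq P[2]$ while $P[\mathfrak{p}] \subseteq P[3]$, so the two subgroups have coprime order and intersect trivially. Since both $P[\mathfrak{p}]$ and $A[\mathfrak{p}]$ have order $9$ (computed as in the proof of Lemma \ref{lemma: mu3-fixed points Prym}), this forces the map to be an isomorphism of abstract groups, and $\Gal_k$-equivariance is automatic because $\lambda$ is $k$-rational. I do not anticipate any genuine obstacle here; the only subtlety is pinning down the relationship between the $\mu_3$-action used to define $A[\mathfrak{p}]$ and the dual $\mu_3$-action, and this is exactly what the footnote in \S\ref{subsec: torelli} supplies (noting also that $(1-\omega)$ and $(1-\omega^{-1})$ generate the same ideal $\mathfrak{p}$ in $\Z[\omega]$, so either convention gives the same subgroup).
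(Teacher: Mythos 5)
Your proof is correct and follows exactly the paper's route: the paper's entire proof is the one-line statement that the restriction of the polarization $\lambda$ induces the isomorphism, and your argument simply supplies the details (equivariance of $\lambda$ via condition (1) of Definition \ref{definition: mu3-abelian surface}, injectivity from $\ker\lambda \subseteq P[2]$ versus $P[\mathfrak{p}] \subseteq P[3]$, and an order count).
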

\begin{proof}
    The restriction of the polarization $\lambda$ induces such an isomorphism.
\end{proof}
Let $f(x) := x^4 + ax^2 +b \in k[x]$ and let $\mathcal{R} = \{\pm \alpha_1, \pm \alpha_2\}$ be the $4$ roots of $f$ in $k^{\sep}$.

\begin{lemma}\label{lemma: sqrt3-torsion generators}
The Galois module $P[\p]$ is generated by the classes $D_1 := (\alpha_1,0) - (-\alpha_1,0)$ and $D_2 := (\alpha_2,0) - (-\alpha_2,0)$.  
\end{lemma}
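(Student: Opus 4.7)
The plan is to check two things: (i) that $D_1, D_2$ lie in $P[\mathfrak{p}](\bar k)$, and (ii) that they are $\F_3$-linearly independent in the $2$-dimensional $\F_3$-vector space $P[\mathfrak{p}]$.

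For (i), observe that the $\mu_3$-action $\omega \cdot (x,y) = (x,\omega y)$ fixes exactly five points of $C$: the point $\infty$, and the four points $(\pm\alpha_i, 0)$ with $y = 0$. Since the Abel--Jacobi map based at the $\mu_6$-fixed point $\infty$ is $\mu_3$-equivariant by Albanese functoriality (cf.\ \S\ref{subsection: the prym variety P}), each class $[(\pm\alpha_i,0) - \infty] \in J$ is fixed by $\omega$, so $(1-\omega) D_j = 0$ in $J$ for $j = 1,2$. The bielliptic involution $\tau \colon (x,y) \mapsto (-x,y)$ swaps $(\alpha_j,0)$ and $(-\alpha_j,0)$, hence $\tau^* D_j = -D_j$, and so $D_j \in \ker(1+\tau^*) = P$. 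Therefore $D_1, D_2 \in P[\mathfrak{p}]$.

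For (ii), Lemma \ref{lemma: mu3-fixed points Prym} (together with the identity $\mathfrak{p}^2 = (3)$ in $\Z[\omega]$) yields $|P[\mathfrak{p}]| = 9$, so $P[\mathfrak{p}]$ is $2$-dimensional over $\Z[\omega]/\mathfrak{p} \simeq \F_3$. Two nonzero elements of $\F_3^2$ form a basis unless one is $\pm$ the other, so it suffices to rule out the relations $D_j = 0$ (for $j = 1,2$) and $D_1 = \pm D_2$ in $\Jac(C)$. Since $C$ is non-hyperelliptic by Theorem \ref{theorem: curves with mu6 action are of the form Ca,b}, Riemann--Roch gives $\ell(D) = 1$ for every effective divisor $D$ of degree $2$ on $C$, so distinct effective degree-$2$ divisors have distinct classes in $\Jac(C)$. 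The smoothness condition $b(a^2-4b) \neq 0$ makes $\alpha_1, -\alpha_1, \alpha_2, -\alpha_2$ pairwise distinct and nonzero, and each of the putative relations then forces a coincidence among these four points, a contradiction.

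The main obstacle is step (ii): one needs both the dimension count for $P[\mathfrak{p}]$ and the rigidity of effective degree-$2$ divisors coming from non-hyperellipticity. Step (i) is a direct calculation once the $\mu_6$-fixed-point structure on $C$ is written down.
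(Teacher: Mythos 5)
Your proposal is correct and follows the same route as the paper's own (much terser) proof: membership in $P[\p]$ via the $\mu_3$- and $\tau$-fixed-point structure, and linear independence by reducing the relations $D_j=0$, $D_1=\pm D_2$ to linear equivalences of distinct effective divisors of small degree, which are excluded because $C$ is non-hyperelliptic. Your write-up simply supplies the details (the dimension count $|P[\p]|=9$ from Lemma \ref{lemma: mu3-fixed points Prym} and the Riemann--Roch rigidity) that the paper leaves implicit.
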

\begin{proof}
These classes live in $P$, are non-trivial, and are annihilated by $1 - \omega$ so lie in $P[\p]$. They are linearly independent over $\F_3$ since otherwise we would have $D_2\sim \pm D_1$, which is impossible since $C$ is not hyperelliptic.  
\end{proof}

Let $D_4\subset \Sym(\mathcal{R})$ the subset of permutations of $\mathcal{R}$ such that $(-\alpha)^{\sigma} = -\alpha^{\sigma}$ for all $\alpha\in \mathcal{R}$ and $\sigma \in \Sym(\mathcal{R})$. This is a dihedral group of order $8$, generated by its four reflections $\{\tau_1,\tau_2, \hat{\tau}_1, \hat{\tau}_2\}$, where $\tau_i$ maps $\alpha_i$ to $-\alpha_i$ and fixes the other roots, $\hat{\tau}_1$ swaps $\alpha_1 \leftrightarrow \alpha_2$ and $-\alpha_1 \leftrightarrow -\alpha_2$, and $\hat{\tau}_2$ swaps $\alpha_1 \leftrightarrow -\alpha_2$ and $-\alpha_1 \leftrightarrow \alpha_2$.
(The reader is invited to picture $D_4$ acting on a square with labels $\{\alpha_1, \alpha_2, -\alpha_1, -\alpha_2\}$.)
Let $V$ be the $\F_3$-vector space with basis $\{v_{\alpha}\mid \alpha \in \mathcal{R}\}$, modulo the relations $v_{-\alpha} = -v_{\alpha}$. 
The $D_4$-action on $\mathcal{R}$ induces a linear $D_4$-action on $V$.
After choosing the basis $\{v_{\alpha_1}, v_{\alpha_2}\}$, this induces a representation $\varphi\colon D_4\rightarrow \GL_2(\F_3)$, isomorphic to the mod $3$ reduction of the reflection representation of $D_4$.
The $\Gal_k$-action on $\mathcal{R}$ determines a homomorphism $\chi\colon \Gal_k\rightarrow \GL_2(\F_3)$.
Lemma \ref{lemma: sqrt3-torsion generators} shows:

\begin{proposition}\label{proposition: sqrt3-torsion as mod 3 reflection rep}
    The $\Gal_k$-action on $P[\mathfrak{p}](k^{\sep})$ in the $\F_3$-basis $\{D_1,D_2\}$ is given by the composition $\Gal_k \xrightarrow{\chi} D_4\xrightarrow{\varphi} \GL_2(\F_3)$. 
\end{proposition}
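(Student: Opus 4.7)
\textbf{Proof plan for Proposition \ref{proposition: sqrt3-torsion as mod 3 reflection rep}.}

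The plan is to construct an explicit $\Gal_k$-equivariant isomorphism $\Psi \colon V \xrightarrow{\sim} P[\mathfrak{p}](k^{\sep})$ sending the basis $\{v_{\alpha_1}, v_{\alpha_2}\}$ to $\{D_1, D_2\}$; the proposition will then be immediate, since by definition the $\Gal_k$-action on $V$ in this basis is $\varphi \circ \chi$.

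First I would extend the assignment $v_\alpha \mapsto D_\alpha := (\alpha,0)-(-\alpha,0)$ to an $\F_3$-linear map on the free $\F_3$-vector space on $\{v_\alpha : \alpha \in \mathcal{R}\}$, and check that it factors through $V$. The relation $v_{-\alpha} = -v_\alpha$ is preserved since $D_{-\alpha} = (-\alpha,0) - (\alpha,0) = -D_\alpha$ by construction. Each $D_\alpha$ lies in $P[\mathfrak{p}]$ by the same argument as in the proof of Lemma \ref{lemma: sqrt3-torsion generators}: the divisor is $\tau$-anti-invariant and is fixed by the $\mu_3$-action on $J$ (since each ramification point $(\alpha,0)$ is fixed by $(x,y)\mapsto (x,\omega y)$), hence is annihilated by $1-\omega$. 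By Lemma \ref{lemma: sqrt3-torsion generators} the elements $\Psi(v_{\alpha_1}) = D_1$ and $\Psi(v_{\alpha_2}) = D_2$ form an $\F_3$-basis of $P[\mathfrak{p}]$, so $\Psi$ is an isomorphism of two-dimensional $\F_3$-vector spaces.

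It remains to check $\Gal_k$-equivariance. The Galois action on $\mathcal{R}$ commutes with the involution $\alpha \mapsto -\alpha$ (since $-1 \in k$), so it does factor through $\chi \colon \Gal_k \to D_4$. For $\sigma \in \Gal_k$,
\[
\sigma \cdot D_\alpha \;=\; (\sigma(\alpha),0) - (-\sigma(\alpha),0) \;=\; D_{\sigma(\alpha)} \;=\; \Psi(\sigma \cdot v_\alpha),
\]
so $\Psi$ is $\Gal_k$-equivariant. Transporting the basis $\{v_{\alpha_1}, v_{\alpha_2}\}$ to $\{D_1, D_2\}$ via $\Psi$ then identifies the representation of $\Gal_k$ on $P[\mathfrak{p}]$ with $\varphi \circ \chi$, as required. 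The argument is essentially formal once Lemma \ref{lemma: sqrt3-torsion generators} is in hand; the only real input beyond that lemma is the compatibility of the $\mu_3$-action on $J$ with the permutation action on the branch points, which is built into Albanese functoriality. I do not anticipate any genuine obstacle here.
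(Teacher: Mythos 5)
Your proposal is correct and is essentially the paper's argument: the paper derives the proposition directly from Lemma \ref{lemma: sqrt3-torsion generators} (it literally writes ``Lemma \ref{lemma: sqrt3-torsion generators} shows:'' with no further proof), the point being exactly that $\sigma\cdot D_\alpha = D_{\sigma(\alpha)}$ and $D_{-\alpha}=-D_\alpha$, so the Galois action on the basis $\{D_1,D_2\}$ is the mod-$3$ reflection representation $\varphi\circ\chi$. You have merely written out the implicit equivariance check in full, which is fine.
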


\begin{corollary}\label{cor:Z/3 x Z/3}
We have $P[\p](k) \simeq (\Z/3\Z)^2$ if and only if $f(x)$ splits completely in $k[x]$.    
\end{corollary}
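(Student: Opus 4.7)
By Proposition~\ref{proposition: sqrt3-torsion as mod 3 reflection rep}, the $\Gal_k$-action on $P[\p](k^{\sep})\simeq (\Z/3\Z)^2$ factors as $\varphi\circ\chi$, where $\chi\colon\Gal_k\to D_4$ records the action on the root set $\mathcal{R}=\{\pm\alpha_1,\pm\alpha_2\}$ of $f(x)=x^4+ax^2+b$, and $\varphi\colon D_4\to\GL_2(\F_3)$ is the mod~$3$ reduction of the reflection representation. Thus $P[\p](k)=P[\p](k^{\sep})\simeq(\Z/3\Z)^2$ if and only if the image of $\chi$ is contained in $\ker\varphi$. Since, by its very definition, $\chi$ is trivial exactly when $\Gal_k$ fixes each element of $\mathcal{R}$, i.e.\ when $f$ splits completely in $k[x]$, the corollary will follow as soon as I verify that $\varphi$ is injective.

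The faithfulness of $\varphi$ I would check by a one-line calculation in the basis $\{v_{\alpha_1},v_{\alpha_2}\}$: the four reflections $\tau_1,\tau_2,\hat\tau_1,\hat\tau_2$ in $D_4$ map respectively to the four pairwise distinct involutions
\[
\begin{pmatrix}-1&0\\0&1\end{pmatrix},\quad\begin{pmatrix}1&0\\0&-1\end{pmatrix},\quad\begin{pmatrix}0&1\\1&0\end{pmatrix},\quad\begin{pmatrix}0&-1\\-1&0\end{pmatrix}\in\GL_2(\F_3),
\]
and these, together with their products, already generate a subgroup of order~$8$. (Alternatively, since $|D_4|=8$ is coprime to~$3$, the integral reflection representation of $D_4$—which is faithful in characteristic zero—remains faithful after reduction mod~$3$.)

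Putting everything together: $f$ splits completely over $k$ $\Longleftrightarrow$ $\chi$ is trivial $\Longleftrightarrow$ $\varphi\circ\chi$ is trivial (using that $\varphi$ is injective) $\Longleftrightarrow$ $\Gal_k$ acts trivially on $P[\p](k^{\sep})$ $\Longleftrightarrow$ $P[\p](k)\simeq(\Z/3\Z)^2$.

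There is essentially no obstacle here; the entire content is packaged into Proposition~\ref{proposition: sqrt3-torsion as mod 3 reflection rep}, and the corollary is a direct reading of that proposition once faithfulness of $\varphi$ is noted. If there is any subtlety, it is merely to be careful that the sign conventions in the definition of $V$ (which kill $v_{\alpha}+v_{-\alpha}$) correctly yield a $2$-dimensional representation rather than a $4$-dimensional permutation representation—this is exactly what makes $\varphi$ the reflection representation and not the regular one.
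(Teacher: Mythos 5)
Your proof is correct and is essentially the paper's own argument: the paper's proof of this corollary reads, in full, ``Use Proposition \ref{proposition: sqrt3-torsion as mod 3 reflection rep} and the fact that $\varphi$ is injective.'' Your explicit verification of the faithfulness of $\varphi$ (via the four reflection matrices, which remain distinct mod $3$) simply fills in the detail the paper leaves implicit.
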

\begin{proof}
    Use Proposition \ref{proposition: sqrt3-torsion as mod 3 reflection rep} and the fact that $\varphi$ is injective.
\end{proof}

\begin{corollary}\label{cor:semisimplicity}
The $\F_3[\Gal_k]$-module $P[\p]$ is semisimple.    
\end{corollary}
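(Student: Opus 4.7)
The plan is a one-line application of Maschke's theorem, using the explicit description of the Galois representation from Proposition \ref{proposition: sqrt3-torsion as mod 3 reflection rep}.

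By that proposition, the $\Gal_k$-action on $P[\mathfrak{p}]$ factors as $\Gal_k \xrightarrow{\chi} D_4 \xrightarrow{\varphi} \GL_2(\F_3)$, where $D_4$ denotes the dihedral group of order $8$ acting on the roots $\mathcal{R} = \{\pm\alpha_1,\pm\alpha_2\}$. Let $G \subset D_4$ be the image of $\chi$. Then the $\F_3[\Gal_k]$-module structure on $P[\mathfrak{p}]$ factors through the finite group $G$, whose order divides $|D_4| = 8$. Since $\gcd(|G|, 3) = 1$, Maschke's theorem applies to $\F_3[G]$, and so every $\F_3[G]$-module is semisimple. In particular $P[\mathfrak{p}]$, viewed as an $\F_3[\Gal_k]$-module via the surjection $\Gal_k \twoheadrightarrow G$, is semisimple.

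There is no real obstacle here: the work has already been done in establishing that the representation factors through a group whose order is coprime to the residue characteristic.
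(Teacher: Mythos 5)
Your proposal is correct and is exactly the paper's argument: the action factors through a subgroup of $D_4$, whose order $8$ is coprime to $3$, so Maschke's theorem gives semisimplicity. Nothing further is needed.
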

\begin{proof}
    Use Proposition \ref{proposition: sqrt3-torsion as mod 3 reflection rep} and the fact that $3$ is coprime to the order of $D_4$.
\end{proof}

Let $\hat{f} := x^4 + 8ax + 16(a^2 - 4b)$.

\begin{corollary}\label{corollary:root3-torsion}
$P[\mathfrak{p}](k) \neq 0$ if and only if either $f$ or $\hat{f}$ has a root in $k$.    
\end{corollary}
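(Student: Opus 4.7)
The key is Proposition \ref{proposition: sqrt3-torsion as mod 3 reflection rep}, which identifies the $\Gal_k$-module $P[\mathfrak{p}]$ with the mod-$3$ reflection representation $V = \F_3 v_{\alpha_1}\oplus\F_3 v_{\alpha_2}$ of $D_4$, via the action $\chi\colon \Gal_k\to D_4$ on the roots $\mathcal{R}=\{\pm\alpha_1,\pm\alpha_2\}$ of $f$. Hence $P[\mathfrak{p}](k)\neq 0$ is equivalent to the image $H:=\chi(\Gal_k)\subset D_4$ having a nonzero fixed vector in $V$, and the whole task reduces to translating this group-theoretic condition into a statement about $f$ and $\hat{f}$.

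First I would determine which subgroups of $D_4$ admit nonzero fixed vectors in $V$. A direct matrix computation using the description of the reflections $\tau_i,\hat{\tau}_j$ on the basis $\{v_{\alpha_1},v_{\alpha_2}\}$ shows that the order-$4$ rotation acts on $V$ with characteristic polynomial $T^2+1$, which is irreducible over $\F_3$, so it fixes no nonzero vector; its square acts as $-\id$ and also fixes nothing. By contrast, each of the four reflections $\tau_1,\tau_2,\hat{\tau}_1,\hat{\tau}_2$ pointwise fixes a $1$-dimensional subspace of $V$, namely $\langle v_{\alpha_2}\rangle$, $\langle v_{\alpha_1}\rangle$, $\langle v_{\alpha_1}+v_{\alpha_2}\rangle$, $\langle v_{\alpha_1}-v_{\alpha_2}\rangle$, respectively. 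Consequently $V^H\neq 0$ if and only if $H$ is contained in one of the four reflection subgroups of $D_4$.

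Next I would translate each of these four group-theoretic conditions into a rationality statement about the roots. The inclusion $H\subset \langle \tau_i\rangle$ is equivalent to $\alpha_{3-i}\in k$, hence to $f$ having a root in $k$ (conversely, if $\alpha_j\in k$ then $\Gal_k$ can at most swap $\pm\alpha_{3-j}$). The inclusion $H\subset \langle \hat{\tau}_1\rangle$ (resp.\ $H\subset \langle \hat{\tau}_2\rangle$) is equivalent to $\alpha_1+\alpha_2 \in k$ (resp.\ $\alpha_1-\alpha_2\in k$); here one uses separability of $f$, i.e.\ $\alpha_1^2\neq\alpha_2^2$, so that $\alpha_1\pm\alpha_2\neq 0$, to rule out the possibility that such a sum is preserved by a rotation (which would negate it). Applying Lemma \ref{lemma: bigonal dual Galois theory}, which identifies the roots of $\hat{f}$ as $\{\pm 2\alpha_1\pm 2\alpha_2\}$, these last two conditions are jointly equivalent to $\hat{f}$ having a root in $k$. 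Combining the two cases gives the stated equivalence.

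The only subtlety worth isolating is the cross-check that $\alpha_1\pm\alpha_2\in k$ genuinely forces $H$ into the claimed reflection subgroup rather than a larger subgroup; this is where the separability of $f$ enters, but it is a short argument. Everything else is routine bookkeeping with the $D_4$-action.
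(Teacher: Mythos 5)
Your proposal is correct and follows essentially the same route as the paper: both reduce via Proposition \ref{proposition: sqrt3-torsion as mod 3 reflection rep} to asking which subgroups of $D_4$ fix a nonzero vector of $V$, observe that only the reflection subgroups do, and translate containment in $\langle\tau_i\rangle$ (resp.\ $\langle\hat{\tau}_i\rangle$) into $f$ (resp.\ $\hat{f}$, via Lemma \ref{lemma: bigonal dual Galois theory}) having a $k$-rational root. Your explicit verification that the stabilizer of $\alpha_1\pm\alpha_2$ is no larger than $\langle\hat{\tau}_i\rangle$ (using separability of $f$) is exactly the point the paper leaves implicit in the identification $\langle\hat{\tau}_i\rangle = \mathrm{Stab}_{D_4}(\alpha_1+\alpha_2)$.
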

\begin{proof}
Let $G$ be the image of $\chi\colon \Gal_k\rightarrow D_4$.
Then $P[\mathfrak{p}](k) \neq 0$ if and only if $G$ fixes some nonzero element of $V = \F_3^2$.
A group theory calculation shows that only reflections have fixed points, so this is equivalent to $G \subset \langle \tau_i\rangle $ or $G\subset \langle \hat{\tau}_i\rangle$ for some $i=1,2$.
We have $G \subset \langle \tau_i\rangle$ for some $i$ if and only if $f$ has a $k$-rational root.
By Lemma \ref{lemma: bigonal dual Galois theory}, $G \subset \langle \hat{\tau}_i\rangle = \text{Stab}_{D_4}(\alpha_1+\alpha_2)$ for some $i$ if and only if $\hat{f}$ has a $k$-rational root.
\end{proof}

\begin{proposition}\label{prop: 3-torsion families}
    $\Z/3\Z \subset P[\mathfrak{p}](k)$ if and only if either
    \begin{enumerate}
        \item $P = P_{-(c+1)d^2,cd^4}$ for some $c,d \in k$ or
        \item $P = P_{-8(c + 1)d^2, 16(c -1)^2d^4}$ for some $c,d \in k$.
    \end{enumerate}
\end{proposition}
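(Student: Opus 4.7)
The plan is to invoke Corollary \ref{corollary:root3-torsion}, which reduces the statement to showing that $f(x) = x^4 + ax^2 + b$ or $\widehat{f}(x) = x^4 + 8ax^2 + 16(a^2-4b)$ has a $k$-rational root, and then to rationally parametrize each of these two possibilities. Since $P[\mathfrak{p}]$ is a $2$-dimensional $\F_3$-vector space, the condition $\Z/3\Z \subset P[\mathfrak{p}](k)$ is equivalent to $P[\mathfrak{p}](k) \neq 0$, so Corollary \ref{corollary:root3-torsion} applies.

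Suppose first that $f$ has a root $d \in k$; smoothness forces $d \neq 0$, since $d=0$ would give $b=0$. Setting $c := -a/d^2 - 1 \in k$, the relation $d^4 + ad^2 + b = 0$ rearranges to exactly $(a,b) = (-(c+1)d^2, cd^4)$, yielding family (1). Conversely, for any such $(c,d)$ one verifies $f(d) = 0$ directly (and that $\Delta_{a,b} = 16cd^8(c-1)^2$, so the smoothness condition becomes $cd(c-1) \neq 0$).

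Now suppose $\widehat{f}$ has a root $r \in k$. If $r = 0$ then $a^2 = 4b$, contradicting $\Delta_{a,b} \neq 0$, so $r \neq 0$. Writing $r = 8d$ with $d \in k^\times$, the equation $\widehat{f}(8d) = 0$ becomes, after a short calculation, $4b = (a + 16d^2)^2$. Setting $c := -a/(8d^2) - 1$ gives $a = -8(c+1)d^2$, and then $4b = (a+16d^2)^2 = 64(c-1)^2 d^4$, so $b = 16(c-1)^2 d^4$, yielding family (2). Conversely, for any such $(c,d)$ one checks $\widehat{f}(8d) = 4096 d^4 \bigl(1 - (c+1) + c\bigr) = 0$, and that smoothness corresponds to $cd(c-1) \neq 0$. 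The main (mild) subtlety is keeping track of when $d=0$ is excluded and normalizing the parametrization so that distinct cases give the clean expressions in the statement; otherwise the argument is a direct combination of Corollary \ref{corollary:root3-torsion} with two elementary algebraic manipulations.
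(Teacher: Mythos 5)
Your proof is correct and follows essentially the same route as the paper: reduce via Corollary \ref{corollary:root3-torsion} to the condition that $f$ or $\hat{f}$ has a $k$-rational root, then rationally parametrize each case (the paper phrases case (1) as $f(x)=(x^2-t^2)(x^2-c)$, which is the same parametrization after the substitution $c\mapsto c/t^2$, $d=t$). Your algebra in both cases checks out, and you supply slightly more detail for case (2) than the paper, which only says ``similarly.''
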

\begin{proof}
The polynomial $f(x)$ has a $k$-rational root if and only if $f(x) =(x^2-t^2)(x^2-c)$ for some $c,t\in k$, if and only if $P \simeq P_{-c-t^2,ct^2}$ for some $c,t \in k$ if and only if $P$ is as in $(1)$.
Similarly $\widehat{f}$ has a $k$-root if and only if $P$ is as in $(2)$. The proposition then follows from Corollary \ref{corollary:root3-torsion}.
\end{proof}

\begin{proposition}\label{prop:Z/3 x Z/3 classification}
$(\Z/3\Z)^2 \subset P[\mathfrak{p}](k)$ if and only if $P = P_{-(c^2 +1)d^2, c^2d^4}$ for some $c,d \in k$.  
\end{proposition}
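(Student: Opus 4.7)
The plan is to reduce the claim to the splitting of $f(x) = x^4 + ax^2 + b$ over $k$, which has already been essentially packaged for us in Corollary \ref{cor:Z/3 x Z/3}, and then to rationally parametrize such $f$.

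First I would observe that since $P[\mathfrak{p}]$ has order $9$, any subgroup of $P[\mathfrak{p}](k)$ isomorphic to $(\Z/3\Z)^2$ must equal all of $P[\mathfrak{p}]$. Thus the condition $(\Z/3\Z)^2 \subset P[\mathfrak{p}](k)$ is equivalent to $P[\mathfrak{p}](k) \simeq (\Z/3\Z)^2$, which by Corollary \ref{cor:Z/3 x Z/3} happens if and only if $f(x) = x^4 + ax^2 + b$ splits completely in $k[x]$.

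The next step is to translate complete splitting of $f$ into a rational parametrization of $(a,b)$. Since $f$ is even and $b \neq 0$ by smoothness of $C_{a,b}$, a complete factorization in $k[x]$ must take the form $f(x) = (x-\alpha_1)(x+\alpha_1)(x-\alpha_2)(x+\alpha_2)$ with $\alpha_1,\alpha_2 \in k^\times$ and $\alpha_1^2 \neq \alpha_2^2$. Setting $d = \alpha_1$ and $c = \alpha_2/\alpha_1 \in k^\times$, the identification of coefficients yields $a = -(1+c^2)d^2$ and $b = c^2 d^4$, so that $P \simeq P_{-(c^2+1)d^2, c^2 d^4}$. Smoothness corresponds to $c \neq 0, \pm 1$, which is automatic from $\alpha_1^2 \neq \alpha_2^2$ and $b \neq 0$.

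Conversely, given any $c, d \in k$ with $(c^2+1)d^2$ and $c^2 d^4$ cutting out a smooth curve, one checks directly that
\[ x^4 - (c^2+1)d^2 x^2 + c^2 d^4 = (x^2 - d^2)(x^2 - c^2 d^2), \]
which splits completely in $k[x]$, so Corollary \ref{cor:Z/3 x Z/3} gives $P[\mathfrak{p}](k) \simeq (\Z/3\Z)^2$.

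There is no real obstacle here; the only point requiring a little care is the initial reduction $(\Z/3\Z)^2 \subset P[\mathfrak{p}](k) \Leftrightarrow P[\mathfrak{p}](k) \simeq (\Z/3\Z)^2$, which uses the crucial fact (Lemma \ref{lemma: mu3-fixed points Prym}) that $|P[\mathfrak{p}]| = 9$.
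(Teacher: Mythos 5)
Your proposal is correct and follows essentially the same route as the paper: both reduce via Corollary \ref{cor:Z/3 x Z/3} to the complete splitting of $f(x)=x^4+ax^2+b$ over $k$ and then parametrize the split even quartics (the paper writes $f=(x^2-t^2)(x^2-c^2)$ and normalizes $t$ by a cubic twist, while you set $d=\alpha_1$, $c=\alpha_2/\alpha_1$ directly — the same computation). Your explicit remark that $(\Z/3\Z)^2\subset P[\mathfrak{p}](k)$ forces $P[\mathfrak{p}](k)\simeq(\Z/3\Z)^2$ because $|P[\mathfrak{p}]|=9$ is a point the paper leaves implicit, and is a welcome clarification.
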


\begin{proof}
By Corollary \ref{cor:Z/3 x Z/3}, this happens if and only if $f(x)$ splits completely, in which case we have $f(x) = (x^2 - t^2)(x^2 - c^2)$ for some $c,t \in k$.  Up to cubic twist we may take $t = 1$, hence we arrive at the form $P_{-(c^2 +1)d^2, c^2d^4}$, for some $c, d\in k$.
\end{proof}

\begin{question}{\em 
Do there exist bielliptic Picard curves $C/\Q$ such that $P[3](\Q) \neq 0$ but $P[\p](\Q)= 0$? 
}
\end{question}

\subsection{\texorpdfstring{$6$}{6}-torsion}\label{subsec: 6-torsion}
We combine our results on $P[2]$ and $P[3]$ to produce  
examples of points of order $6$ in $P(k)$. Here we assume $k = \Q$ for simplicity.
\begin{proposition}\label{prop:Z/6}
$\Z/6\Z \subset P[2\mathfrak{p}](\Q)$ if and only if one of the following holds
\begin{enumerate}
    \item $P = P_{2^4(c+1)(c-1)^2, 2^8c(c-1)^4}$  for some $c \in \Q$; or
    \item $P = P_{8c(1-c),16c^2(1+c)^2}$  for some $c \in \Q$.
    \item $P = P_{-(c+1)c^4, 16(1-c)^2c^8}$ where $c = \frac{(3t-2)(5t-2)^3}{t(7t-4)^3}$ for some $t \in \Q$.
    \item $P = P_{\frac14(1-v)^3(3v+1)^3(3v^4 + 6v^2 -1),v^6(v-1)^6(3v+1)^6}$ for some $v \in \Q$.
\end{enumerate}
\end{proposition}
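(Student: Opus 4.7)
The strategy rests on a clean decomposition of $P[2\mathfrak{p}]$. Since $(2)$ and $\mathfrak{p} = (1-\omega)$ are coprime ideals in $\Z[\omega]$ (the prime $\mathfrak{p}$ lies above $3$), the Chinese Remainder Theorem gives a $\Gal_\Q$-equivariant decomposition
\[P[2\mathfrak{p}] \;=\; P[2] \oplus P[\mathfrak{p}],\]
which, after taking $\Gal_\Q$-invariants, yields $P[2\mathfrak{p}](\Q) = P[2](\Q) \oplus P[\mathfrak{p}](\Q)$. Hence $\Z/6\Z \subset P[2\mathfrak{p}](\Q)$ if and only if both $P[2](\Q) \neq 0$ and $P[\mathfrak{p}](\Q) \neq 0$. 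The plan is to intersect the parametric loci for these two conditions.

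For the forward direction, I would carry out a case analysis. Proposition \ref{prop: 3-torsion families} describes two families that account for all rational $\mathfrak{p}$-torsion:
\begin{itemize}
\item (A) $P \simeq P_{-(c+1)d^2,\,cd^4}$, arising when $f$ has a rational root;
\item (B) $P \simeq P_{-8(c+1)d^2,\,16(c-1)^2d^4}$, arising when $\hat f$ has a rational root.
\end{itemize}
Rational $2$-torsion has three possible sources from Lemma \ref{lemma: 2-torsion in elliptic curves} and Corollary \ref{cor: P[2] param}: (i) $E[2](\Q)\ne 0$, equivalent to $16(a^2-4b)$ being a cube in $\Q$; (ii) $\widehat E[2](\Q)\ne 0$, equivalent to $b$ being a cube; or (iii) $(P[2]\setminus P[\lambda])(\Q) \ne 0$, parametrized by $(a,b)=((4s+3)(4s^2-3)d^3,(4s+3)^3 d^6)$. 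For each of the six pairs (A or B) $\times$ (i, ii, iii), I would impose the relevant 2-torsion condition on the explicit 2-parameter family for the 3-torsion, solve the resulting polynomial relation between parameters, and normalize using the scaling $(a,b)\sim (\lambda^6 a, \lambda^{12} b)$ from Lemma \ref{lemma: when are two bielliptic Picard curves isomorphic}. Bigonal duality (Proposition \ref{proposition: bigonal duality} and Lemma \ref{lemma: j-invariant bigonal dual}) simultaneously swaps cases (A)/(B) and conditions (i)/(ii), collapsing the six combinations into the four essentially distinct families (1)--(4) in the statement.

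The converse is a direct verification for each of the four families. One exhibits explicit rational representatives of both $P[2](\Q)$ and $P[\mathfrak{p}](\Q)$; for instance, family (2) satisfies $16(a^2-4b) = (-16c)^3$ (yielding $2$-torsion via Lemma \ref{lemma: 2-torsion in elliptic curves}) and $\hat f$ has the rational root $x=\pm 8c$ (yielding $\mathfrak{p}$-torsion via Corollary \ref{corollary:root3-torsion}). The chief obstacle is the combination of 3-torsion with case (iii) of 2-torsion, which is what produces families (3) and (4): matching the 2-parameter family of Corollary \ref{cor: P[2] param} against that of case (A) or (B) of Proposition \ref{prop: 3-torsion families} yields an algebraic curve of auxiliary parameters, and the nontrivial input is that each such curve turns out to have genus zero, so admits an explicit rational parametrization. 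Producing this parametrization (the rational function $c=(3t-2)(5t-2)^3/(t(7t-4)^3)$ in family (3) and the polynomial expressions in $v$ in family (4)) is the computational heart of the argument, achieved by rationally uniformizing the intersection via a well-chosen auxiliary variable coming from the rational root of $g_{a,t}$ in Theorem \ref{thm:2-division poly}.
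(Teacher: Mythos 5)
Your overall strategy is the same as the paper's: the paper's entire proof is the two-sentence remark that each family is the fiber product of one of the two families of rational $2$-torsion in $P$ (Lemma \ref{lemma: 2-torsion in elliptic curves} and Corollary \ref{cor: P[2] param}) with one of the two families of Proposition \ref{prop: 3-torsion families}, the details being ``tedious but straightforward algebra.'' Your reduction $P[2\mathfrak{p}](\Q)=P[2](\Q)\oplus P[\mathfrak{p}](\Q)$, your converse verification for family (2), and your description of the computational heart (rationally parametrizing the genus-zero intersection loci arising from Corollary \ref{cor: P[2] param}) are all consistent with this.

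There is, however, a concrete error in your case analysis for the forward direction. Your ``source (ii)'' --- $b$ a cube, i.e.\ $\widehat{E}[2](\Q)\neq 0$ --- is \emph{not} a source of rational $2$-torsion in $P$: by Lemma \ref{lemma: 2-torsion in elliptic curves}(2) it produces a rational point of $A[2]$, not of $P[2]$. Via the exact sequence \eqref{eq:ses}, a nonzero class in $\widehat{E}[2](\Q)$ lifts to $P[2](\Q)$ only when the quartic $g_{a,t}$ of Theorem \ref{thm:2-division poly} has a rational root, and the outcome of that lifting criterion is exactly your source (iii) (Corollary \ref{cor: P[2] param}); so (ii) is a necessary but not sufficient consequence of (iii), not an independent case. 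The correct dichotomy is therefore $P[\lambda](\Q)\neq 0$ (your (i)) or $(P[2]\setminus P[\lambda])(\Q)\neq\varnothing$ (your (iii)), giving exactly $2\times 2=4$ combinations against (A)/(B) --- which is how the four families of the statement arise. Your proposed repair, collapsing six combinations to four ``by bigonal duality,'' does not work: duality replaces $P_{a,b}$ by $P_{8a,16(a^2-4b)}\simeq A_{a,b}$, and while $P[\mathfrak{p}]\simeq A[\mathfrak{p}]$ (Lemma \ref{lem:3-tors A =P}), the groups $P[2](\Q)$ and $A[2](\Q)$ genuinely differ, so the condition $\Z/6\Z\subset P(\Q)$ is not preserved under this swap. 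If you run the six cases literally, the two cases built on (ii) alone produce families on which $P[2](\Q)$ may vanish, which would corrupt the ``if'' direction. Once (ii) is deleted the argument matches the paper's.
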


\begin{proof}
Each family above is the fiber product of one of the two families of rational $2$-torsion in $P$ (Lemma \ref{lemma: 2-torsion in elliptic curves} and Corollary \ref{cor: P[2] param}) with one of the two families in Proposition \ref{prop: 3-torsion families}. We omit the details, as it is tedious but straightforward algebra, in a spirit similar to Proposition \ref{prop:(Z/2)^2 criterion}. 
\end{proof}

\begin{proposition}\label{prop: order 18 torsion}
$\Z/3\Z \times \Z/6\Z \subset P[2\p](\Q)$ if and only if there exists $c \in \Q \setminus \{0,\pm 1\}$ such that  
 $a = -16(c^2+1)(c^2-1)^2$ and $b = 2^8c^2(c^2 -1)^4$. 
\end{proposition}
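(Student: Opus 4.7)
The plan is to separate the $2$-torsion and $3$-torsion conditions using $P[2\mathfrak{p}] = P[2] \oplus P[\mathfrak{p}]$ (since $2$ and $\mathfrak{p}$ are coprime in $\calO = \Z[\omega]$), combined with the abstract isomorphism $\Z/3\Z \times \Z/6\Z \simeq (\Z/3\Z)^2 \oplus \Z/2\Z$. The required inclusion is then equivalent to the two conditions $(\Z/3\Z)^2 \subset P[\mathfrak{p}](\Q)$ and $\Z/2\Z \subset P[2](\Q)$. By Proposition \ref{prop:Z/3 x Z/3 classification}, the first condition gives $P \simeq P_{a,b}$ with $(a,b) = (-(c^2+1)d^2, c^2d^4)$ for some $c, d \in \Q^\times$. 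For the $2$-torsion I would use the short exact sequence \eqref{eq:ses}: the cleanest source is $E[2](\Q) \neq 0$, which by Lemma \ref{lemma: 2-torsion in elliptic curves} and the identity $a^2 - 4b = d^4(c^2-1)^2$ is equivalent to $4d^2(c^2-1) \in (\Q^\times)^3$. Such a $d$ is unique modulo cubes (if $4d_i^2(c^2-1) = m_i^3$ for $i=1,2$ then $(d_1/d_2)^2 = (m_1/m_2)^3$ is a cube, forcing $d_1/d_2$ to be a cube) and $d = 4(c^2-1)$ visibly works. Since the equivalence $(a,b) \sim (\lambda^6 a, \lambda^{12} b)$ of Lemma \ref{lemma: when are two bielliptic Picard curves isomorphic} corresponds to $d \sim \lambda^3 d$, we may take $d = 4(c^2-1)$ exactly, giving the $(a,b)$ of the proposition.

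The main obstacle is ruling out the other possible source of $\Z/2\Z \subset P[2](\Q)$: a rational lift of an element of $\widehat{E}[2](\Q)$ as in Theorem \ref{thm:2-division poly}, assuming $E[2](\Q) = 0$. Such a lift requires $b = c^2 d^4$ to be a cube, equivalently $cd^2 = n^3$ with $t = n^2$, together with a rational root of the descent polynomial $g_{a, t}(z) = z^4 - 6tz^2 + 4az - 3t^2$. Substituting $z = un$ and using $d^2/n^3 = 1/c$, the equation $g_{a,t}(z) = 0$ becomes $h(u) := u^4 - 6u^2 - 4\mu u - 3 = 0$ where $\mu := c + 1/c \in \Q$; solving gives $\mu = (u^4 - 6u^2 - 3)/(4u)$, and the rationality of $c$ then demands $\mu^2 - 4 \in (\Q^\times)^2$. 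Using the explicit factorizations $u^4 - 6u^2 \mp 8u - 3 = (u \pm 1)^3(u \mp 3)$, one computes
\[
\mu^2 - 4 \;=\; \frac{(u^4 - 6u^2 - 3)^2 - 64 u^2}{16 u^2} \;=\; \frac{(u^2-1)^3(u^2-9)}{16 u^2},
\]
so this case reduces to finding rational $u \in \Q \setminus \{0, \pm 1, \pm 3\}$ with $(u^2-1)(u^2-9) \in (\Q^\times)^2$. I would rule this out via a $2$-descent on the genus one curve $v^2 = (u^2-1)(u^2-9)$, which has full rational $2$-torsion $(\pm 1, 0), (\pm 3, 0)$, showing that its Mordell--Weil rank is zero; combined with the obvious rational points $(0, \pm 3)$, this identifies the entire rational Mordell--Weil group and forces every rational $u$ into $\{0, \pm 1, \pm 3\}$, each corresponding to an excluded value of $c$.

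For the converse, with $(a,b) = (-16(c^2+1)(c^2-1)^2, 2^8 c^2(c^2-1)^4)$ the polynomial $f(x) = (x^2 - 16(c^2-1)^2)(x^2 - 16 c^2(c^2-1)^2)$ splits over $\Q$, giving $(\Z/3\Z)^2 \subset P[\mathfrak{p}](\Q)$ by Proposition \ref{prop:Z/3 x Z/3 classification}, while $16(a^2 - 4b) = (16(c^2-1)^2)^3$ is a cube, yielding $E[2](\Q) \neq 0$ and hence $\Z/2\Z \subset P[2](\Q)$ by Lemma \ref{lemma: 2-torsion in elliptic curves}. The condition $c \notin \{0, \pm 1\}$ in the statement is exactly the smoothness condition $\Delta_{a,b} = 2^{20} c^2(c^2-1)^{10} \neq 0$.
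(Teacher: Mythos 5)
Your proof is correct and its skeleton coincides with the paper's: both split the condition into $(\Z/3\Z)^2\subset P[\mathfrak{p}](\Q)$ (Proposition \ref{prop:Z/3 x Z/3 classification}) plus $\Z/2\Z\subset P[2](\Q)$, and both treat the latter via the same two cases --- either $P[\lambda](\Q)\neq 0$, which forces $d=4(c^2-1)$ up to cubes and yields exactly the stated family, or a rational point of $P[2]\setminus P[\lambda]$ via Corollary \ref{cor: P[2] param}. Where you genuinely diverge is in killing the second case. The paper normalizes $d=c$ (so $b=c^6$) and feeds the raw plane curve $\{g_c(z)=0\}$ to Magma, which reports a genus-one curve whose Jacobian has Mordell--Weil group of order $8$, no point of which gives a smooth curve. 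You instead substitute $z=un$ with $cd^2=n^3$, introduce $\mu=c+1/c$, and exploit the factorization $u^4-6u^2\mp 8u-3=(u\pm1)^3(u\mp3)$ (which I checked) to reduce the entire case to the single condition $(u^2-1)(u^2-9)\in(\Q^{\times})^2$ with $u\notin\{0,\pm1,\pm3\}$. This is a cleaner, human-checkable reduction: via $s=u^2$, $w=uv$ your curve covers $w^2=s(s-1)(s-9)$, and the rank-$0$, $(\Z/2\Z)^2$-torsion Mordell--Weil group of that curve accounts for precisely the eight visible rational points; the paper's order-$8$ Magma output corroborates your count. The trade-off is that you still owe the reader the actual $2$-descent (or a database citation) for that rank-zero claim, just as the paper owes its Magma call --- neither route is computation-free at this step. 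Two trivial blemishes: $\calO$ denotes the quaternion order in this paper, so write $\Z[\omega]$ for the ring in which $2$ and $\mathfrak{p}$ are coprime; and the proposition's equalities $a=\cdots$, $b=\cdots$ are to be read up to the scaling $(a,b)\sim(\lambda^6a,\lambda^{12}b)$, which your argument (like the paper's) implicitly uses.
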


\begin{proof}
This follows by combining Proposition \ref{prop:Z/3 x Z/3 classification} with both Lemma \ref{lemma: 2-torsion in elliptic curves} and Corollary \ref{cor: P[2] param}. In the first case, we are forced to set $d = 4(c^2 -1)$ which leads to the formulas for $a$ and $b$ above.  In the second case, we must first set $d = c$ in order for $b$ to be a cube, with $b = t^3$ where $t = c^2$. Then we must see when the polynomial
\[g_{a,t}(z) = g_c(z) = z^4 - 6c^2z^2 -4c^4z   - 4c^2z - 3c^4\]
has a root. The plane curve $\{(c,z) \colon g_c(z) = 0\}$ is irreducible of geometric genus $1$, and with the help of Magma \cite{Magma} we find that it is birational to an elliptic curve with Mordell-Weil group of order $8$. None of these rational points correspond to smooth bielliptic Picard curves, so this second case gives no new examples. 
\end{proof}

Simple algebra shows that  for $(a,b)$ as in Proposition \ref{prop: order 18 torsion}, the curve $C_{a,b}$ has affine model
\[2(c^2 -1)^2y^3 = (x^2 -1)(x^2 -c^2),\]
recovering the equation given in the introduction.

\begin{remark}{\em 
Combining Proposition \ref{prop: order 18 torsion} and Remark \ref{rem: genus two curve}, we are led to the family of genus two curves
 \[C_t \colon (t^2 + 1)y^2 = (2x^2 + 2x - 1)((t^2-1)^2x^4 + 2(t^2 -1)^2x^3 + 4t^2x -t^2).\]
For all but finitely many $t\in \Q$, the Jacobian $J = J_t = \Jac(C_t)$ satisfies $\End^0(J_{\overline{\Q}}) \simeq B$. For all integers $1 \leq t \leq 10000$ we compute $J_t(\Q)_{\mathrm{tors}} \simeq \Z/2\Z \times (\Z/3\Z)^2$ in Magma \cite{Magma}. 
We suspect this holds for all (but finitely many) $t\in \Q$, since the isogeny of Remark \ref{rem: genus two curve} should have degree $2$.  
 }
\end{remark}

\subsection{Infinite families}
Table \ref{table: prym torsion} summarizes many  of the computations of Section \S\ref{sec:2,3,6-torsion}.  For each group $G$ in Theorem \ref{thm:main}, the table gives an infinite family of curves $C_{a,b}$ such that $G \hookrightarrow P(\Q)_{\mathrm{tors}}$. 
Since the $j$-invariant of every family is not constant, each family contains infinitely many distinct $\bar{\Q}$-isomorphism classes, and all but finitely many of these are geometrically simple by Lemma \ref{lemma: CM points are algebraic and there are finitely many of bounded degree}.
For certain groups $G$ there exist multiple such families, but we only write one. In particular, we do not claim that every Prym $P$ such that $G \hookrightarrow P(\Q)_{\mathrm{tors}}$ appears in this table.

\begin{table}
\centering
\begin{tabular}{c | c | c  | c}
    $G$ & $a$ & $b$ & $j$\\
    \hline 
    $\{1\}$ & $a$ & $b$ & $\frac{4b-a^2}{4b}$\\
    \hline
     $\Z/2\Z$ & $2s$ & $s^2 - t^3$ & $\frac{t^3}{t^3 -s^2}$\\
     \hline
   $\Z/3\Z$ & $-(c+1)d^2$ & $cd^4$ & $\frac{-(c-1)^2}{4c}$ \\
   \hline
     $(\Z/2\Z)^2$ & $(16 w^6+40 w^3-2)d^3$ & $ \left(8 w^3+1\right)^3d^6$ & $-\left(\frac{4 w \left(w^3-1\right)}{\left(8 w^3+1\right)}\right)^3$ \\
    \hline
    $\Z/6\Z$ & $8c(1-c)$ & $16c^2(1+c)^2$ & $\frac{4c}{(c+1)^2}$ \\
    \hline
    $(\Z/3\Z)^2$ & $-(c^2+1)d^2$ & $c^2d^4$ & $\frac{(c^2 - 1)^2}{-4c^2}$\\
    \hline
    $\Z/6\Z \times \Z/3\Z$ & $-16(c^2+1)(c^2-1)^2$ &  $2^8c^2(c^2 -1)^4$ & $\frac{(c^2 -1)^2}{-4c^2}$\\
    \hline
\end{tabular}
\caption{Infinite families of Pryms $P_{a,b}$ with $G \hookrightarrow P_{a,b}(\Q)_{\mathrm{tors}}$.}
\label{table: prym torsion}
\end{table}

\subsection{\texorpdfstring{$2$}{2}-torsion in $A$ using bitangents}\label{subsec: 2-torsion in A using bitangents}

Even though it is possible to use the description of $P[2]$ from \S\ref{prop: geometric description of P[2]} and bigonal duality to describe $A[2]$, we give a more intrinsic description of the $\Gal_k$-module using the bitangents of the quartic curve $C$. 
This will be used in \S\ref{subsec:no 4-torsion using bitangents} to rule out rational points of order $4$ in $P$.

We first recall the connection between the bitangents on a smooth plane quartic curve $C$ over a general field $k$ (of characteristic not $2$ nor $3$) and $2$-torsion points in its Jacobian; see \cite[\S6]{Dolgachev-classical} for more details.
Let $X\subset \P^2_k$ be a smooth plane quartic curve. 
A \define{bitangent} of $X$ is a line $\ell \subset \P^2_k$ which intersects $X$ with even multiplicity at every point, i.e. $\ell \cap X = 2D$ for some effective divisor $D$ of $X$ of degree $2$.
Since $X$ is canonically embedded, $2D \sim K_X$ and so $D$ is a \define{theta characteristic}.
This sets up a bijection between the $28$ bitangents of $X_{\bar{k}}$ and the odd theta characteristics of $X_{\bar{k}}$.
If $D,E$ are two odd theta characteristics of $X_{\bar{k}}$, then $D-E \in \Jac_X[2](\bar{k})$.
Every element of $\Jac_X[2](\bar{k})$ is a sum of points of this form.

We now specialize to our setting where $(C, \gamma)$ is a bielliptic Picard curve over $k$. 
Note that the line at infinity is a bitangent with theta characteristic $2\infty$. 
\begin{lemma}
The action of $\langle \tau\rangle$ on the $28$ bitangents of $C_{\bar{k}}$ has four fixed points $($including the line at infinity$)$ and $12$ orbits of size $2$.
\end{lemma}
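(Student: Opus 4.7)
The plan is to identify the $\tau$-fixed bitangents directly, using the canonical plane quartic model $C\colon y^3 z = x^4 + ax^2 z^2 + bz^4$. The involution $\tau$ acts on $\P^2$ via $[x:y:z]\mapsto [-x:y:z]$, so a line is preserved setwise by $\tau$ precisely when it equals $\{x=0\}$ or passes through the $\tau$-fixed point $[1:0:0]$; the latter pencil consists of the line at infinity $\{z=0\}$ together with the affine lines $\{y=t\}$ for $t\in\bar{k}$. It then suffices to determine which of these $\tau$-invariant lines are bitangents.

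I would quickly dispose of $\{x=0\}$: its intersection with $C$ is $\infty + \sum_{\zeta\in\mu_3}[0:\zeta\sqrt[3]{b}:1]$, a sum of four distinct points (using $b\neq 0$), hence not a bitangent. The line at infinity, on the other hand, meets $C$ only at $\infty$ with multiplicity four, giving the divisor $4\infty = 2\cdot(2\infty)$ and hence a bitangent with theta characteristic $2\infty$; this accounts for the one distinguished fixed bitangent mentioned in the statement.

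The main step is the analysis of the pencil $\{y=t\}_{t\in\bar{k}}$: each such line cuts out the biquadratic divisor $x^4 + ax^2 + (b-t^3) = 0$ on $C$, which factors as $(x^2 - r_+)(x^2 - r_-)$ where $r_\pm$ are the roots of $u^2 + au + (b-t^3)$. A brief case analysis shows that this intersection divisor has the form $2D$ for some effective $D$ if and only if $r_+ = r_-$, i.e.\ the discriminant $a^2 - 4(b - t^3)$ vanishes. This gives $t^3 = (4b - a^2)/4$, which by the smoothness hypothesis $a^2 \neq 4b$ has exactly three distinct nonzero cube roots in $\bar{k}$, producing three further $\tau$-fixed bitangents. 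Combined with the line at infinity this yields the four fixed bitangents claimed, and the remaining $28 - 4 = 24$ bitangents must split into twelve orbits of size two since $\tau$ is an involution. The only mild point requiring care is ruling out degenerate cases in the biquadratic, e.g.\ $r_+ = 0 \neq r_-$, where the double root at $x=0$ does not combine into a genuine bitangent divisor—but this is immediate from the factorisation.
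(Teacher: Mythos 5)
Your proof is correct and follows essentially the same route as the paper: identify the $\tau$-invariant lines, observe that apart from the line at infinity they are the lines $y=t$, and note that such a line is a bitangent exactly when $x^4+ax^2+(b-t^3)$ is a perfect square, i.e. $t^3=b-a^2/4$, which has three solutions since $a^2\neq 4b$. Your treatment is in fact slightly more careful than the paper's, which leaves the exclusion of the line $\{x=0\}$ and the degenerate root configurations implicit.
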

\begin{proof}
   Using the explicit equation \eqref{equation: bielliptic picard curve equation}, $\tau(x,y) = (-x,y)$.
   Therefore $\tau$ is fixes a bitangent line (different from the line at infinity) if and only if it is of the form $y=c$. 
   Such an equation defines a bitangent line if and only if $x^4+ax^2+b - c$ is the square of a polynomial, which happens if and only if $c^3 = b - a^2/4$, which has $3$ solutions in $\bar{k}$. 
\end{proof}

Write $p\colon J\rightarrow A$ for the projection map, the dual of $P\hookrightarrow J$. 
Given any bitangent $\ell$ of $C_{\bar{k}}$ with theta characteristic $D$, write $x_{\ell} = D-2\infty$ for the associated $2$-torsion point in $J(\bar{k})$.  

\begin{proposition}\label{prop:bitangent description}
If $\ell$ is a bitangent line fixed by $\tau$, then $p(x_\ell) =0$.    If $\ell$ and $\ell'$ are two distinct bitangent lines of $C$ not fixed by $\tau$, then $p(x_{\ell}) = p(x_{\ell'})$ if and only if $\ell' = \tau(\ell)$. 
The map $\ell\mapsto p(x_{\ell})$ induces a bijection between the $\langle\tau\rangle$-orbits of bitangents of $C_{\bar{k}}$ of size $2$ and $A[2](\bar{k}) \setminus A[\widehat{\lambda}](\bar{k})$.
\end{proposition}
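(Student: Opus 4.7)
The plan rests on two elementary observations. First, $\tau^*(x_\ell) = x_{\tau(\ell)}$ for every bitangent $\ell$, since $\tau$ sends the theta characteristic of $\ell$ to that of $\tau(\ell)$ and $\tau(\infty) = \infty$. Second, $P[2] = J[2]^{\tau^*}$: this is immediate from $P = \ker(1+\tau^*)$ together with $-x = x$ for $x \in J[2]$. Combining these with the identity $p(P[2]) = \lambda(P[2]) = A[\hat\lambda]$ and the exactness of $0 \to \pi^*(E[2]) \to J[2] \xrightarrow{p} A[2] \to 0$, I would deduce that $p(x_\ell) \in A[\hat\lambda]$ if and only if $\tau(\ell) = \ell$, for every bitangent $\ell$.

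To promote this to the stronger statement $p(x_\ell) = 0$ for $\tau$-fixed $\ell$, I would invoke the explicit classification of $\tau$-fixed bitangents from the preceding lemma: they are the line at infinity (with theta characteristic $2\infty$) and the three lines $y = c$ with $c^3 = b - a^2/4$ (with theta characteristic $(\sqrt{-a/2},c) + (-\sqrt{-a/2},c)$). In each case the theta divisor $D$ is visibly a fiber of $\pi \colon C \to E$, so $x_\ell = D - 2\infty$ lies in $\pi^*(E) = \ker(p)$, yielding the first assertion of the proposition.

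For the second assertion, suppose $\ell \neq \ell'$ are distinct bitangents, neither fixed by $\tau$, with $p(x_\ell) = p(x_{\ell'})$. By the previous step the corresponding theta characteristics $D, D'$ lie outside $\pi^*(E) \subset \Sym^2 C$, so Proposition \ref{proposition: A is a quotient of Sym^2C} forces $D' + \tau(D) \sim 4\infty$. Combined with $2\tau(D) \sim 4\infty$ (since $\tau(D)$ is a theta characteristic), this yields $D' \sim \tau(D)$, and since $\tau(D)$ is an odd theta characteristic with $h^0 = 1$, uniqueness of the effective representative forces $D' = \tau(D)$ and hence $\ell' = \tau(\ell)$. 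For the concluding bijection, the $12$ $\tau$-orbits of size $2$ inject into $A[2] \setminus A[\hat\lambda]$, which has cardinality $16 - 4 = 12$, so this injection is automatically a bijection.

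The subtlest step will be promoting $p(x_\ell) \in A[\hat\lambda]$ to $p(x_\ell) = 0$ in the $\tau$-fixed case: the abstract $\tau^*$-invariance only yields the weaker containment, and upgrading it to equality requires verifying that each $\tau$-fixed bitangent's theta divisor is genuinely a fiber of $\pi$, not merely $\tau$-invariant as a linear equivalence class. This is where the specific geometry of bielliptic Picard curves enters.
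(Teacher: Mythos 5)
Your proof is correct and follows essentially the same route as the paper's: identifying the theta characteristics of $\tau$-fixed bitangents as fibres of $\pi$ (hence lying in $\ker p = \pi^*(E)$), showing that $x_\ell \in P[2]$ forces $\tau(\ell)=\ell$, deducing injectivity on orbits from Ikeda's description of $\Sym^2 C \to A$, and concluding by the count $12 = 16-4$. The only cosmetic differences are that you phrase the middle step via $P[2]=J[2]^{\tau^*}$ where the paper says ``$\tau$ acts as $-1$ on $P$,'' and you spell out the deduction $D'+\tau(D)\sim 4\infty \Rightarrow D'=\tau(D)$ (via $2D'\sim 4\infty$ and uniqueness of the effective representative) that the paper leaves implicit.
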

\begin{proof}
   We may assume that $k = \bar{k}$.
   If $\ell$ is fixed by $\tau$, then $\ell \cap C = 2\pi^{-1}(R)$ for some $R \in E[2]$.
   Since the morphism $p$ has kernel $\pi^*(E)$, $p(x_{\ell})=0$.
   We claim that if $\ell$ is not fixed by $\tau$ then $p(x_\ell)\not\in A[\widehat{\lambda}]$. Since $J \simeq (P \times E)/(P\cap \pi^*(E))$, this is equivalent to saying that $x_\ell \notin P[2]$. This follows from the assumption that $D \not\sim \tau(D)$ and since $\tau$ acts as $-1$ on $P$.
   Since $A[2] \setminus A[\widehat{\lambda}]$ and the set of $\langle\tau\rangle$-orbits of bitangents of size $2$ both have size $12$, the remainder of the lemma follows from Proposition \ref{proposition: A is a quotient of Sym^2C}.
\end{proof}

\section{Classifying rational torsion in Prym varieties}\label{sec: classifying rational torsion subgroups}
In Section \ref{sec:2,3,6-torsion} we showed that the finite  groups mentioned in Theorem \ref{thm:main}  all arise as subgroups of Pryms surfaces, and in fact infinitely often.  In this section we finish the proof of the theorem by showing that all other finite abelian groups do not arise. 

Let $C = C_{a,b}$ be a bielliptic Picard curve over $\Q$, with double cover $\pi \colon C \to E$,  Prym $P$, and dual Prym $A = P^\vee$, as usual.

\subsection{Eliminating \texorpdfstring{$\ell$}{ell}-torsion for \texorpdfstring{$\ell \geq 5$}{ell5}}

We first show that $P[\ell](\Q)=0$ for all primes $\ell \geq 5$.
When $P$ is geometrically simple, this follows from  the more general result \cite[Theorem 1.1]{LSSV-QMMazur}. We briefly explain how the techniques of that paper can be adapted to handle the geometrically split case as well and in the process give a simplified version of the proof (which is possible due to the specific nature of our family).

Fix a prime $\ell \geq 5$.
Theorem \ref{theorem: galois action endo field on O} shows that there exists an embedding $\iota\colon \calO \rightarrow \End(P_{\bar{\Q}})$ whose image is $\Gal_{\Q}$-stable; we will fix such an embedding in what follows. 
Let $\calO_{\ell} := \calO \otimes_{\Z} \F_{\ell}$.
Since $\calO$ has discriminant $6$ and $\calO$ is maximal at $\ell$, there is an isomorphism $\calO_{\ell} \simeq \Mat_2(\F_{\ell})$. 
Let $M := P[\ell](\bar{\Q})$, a free left $\calO_{\ell}$-module of rank $1$.
Both $\calO_{\ell}$ and $M$ are (right) $\Gal_{\Q}$-modules, and the action of $\calO_{\ell}$ on $M$ is $\Gal_{\Q}$-equivariant. 

\begin{proposition}\label{proposition: reduction to gl2 type}
    If  $P[\ell](\Q) \neq 0$, then $\calO \subset \End(P_{\Q(\omega)})$.
\end{proposition}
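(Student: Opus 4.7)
Assume $v \in P[\ell](\Q)$ is nonzero. The plan is to show that the subgroup $H = \mathrm{Im}(\Gal_{\Q(\omega)} \to \Aut_i^+(\calO))$ must be trivial, since this is equivalent to $\calO \subset \End(P_{\Q(\omega)})$ by Corollary \ref{corollary: pryms of gl2 type}. First I would exploit the module-theoretic structure. Since $\ell \geq 5$ is coprime to $\mathrm{disc}(\calO) = 6$, the ring $\calO_\ell$ is isomorphic to $M_2(\F_\ell)$, and $M = P[\ell]$ is a faithful $\calO_\ell$-module of $\F_\ell$-dimension $4$. It therefore decomposes as $V \otimes_{\F_\ell} W$, where $V$ is the unique simple $\calO_\ell$-module and $W \simeq \F_\ell^2$ is a multiplicity space on which $\calO_\ell$ acts trivially. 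By Skolem--Noether and the twisted Galois action of Theorem \ref{theorem: galois action endo field on O}, there is a factorization $\rho(\sigma) = u_\sigma \otimes g_\sigma$, where $u_\sigma \in B_\ell^\times$ is a lift of the automorphism $\varphi(\sigma^{-1}) \in \Aut_i(\calO)$ realized as inner conjugation, and $g_\sigma \in GL(W)$ is the corresponding element of the centralizer.

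Next I would pass to the endomorphism field $L = \Q(\omega, \sqrt[6]{\delta})$ over which $\calO$ is defined. Since $\varphi|_{\Gal_L}$ is trivial, we have $\rho(\sigma) = 1 \otimes g_\sigma$ for $\sigma \in \Gal_L$, and hence $P[\ell]^{\Gal_L} = V \otimes W^{\Gal_L}$. The existence of a rational point $v$ forces $W^{\Gal_L} \neq 0$, and the remaining Galois quotient $\Gal(L/\Q)$ acts on $V \otimes W^{\Gal_L}$ with $v$ invariant.

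The key step is to analyze the $H \simeq \Gal(L/\Q(\omega))$-action on this space. Assuming $H \neq \{1\}$, pick $\sigma \in H$ with $\varphi(\sigma) = [(1-\omega)^k]$, $k \not\equiv 0 \pmod{|H|}$. The operator $u_\sigma = (1-\omega)^k$ acts on $V$ with characteristic polynomial a power of $t^2 - 3t + 3$. When $\ell \equiv 2 \pmod 3$, this quadratic is irreducible over $\F_\ell$ (its discriminant $-3$ being a non-square), so $u_\sigma$ has no $\F_\ell$-rational eigenvalues on $V$; then the fixed space of $u_\sigma \otimes g_\sigma$ in $V \otimes W^{\Gal_L}$ cannot contain an $\F_\ell$-rational nonzero vector, contradicting the existence of $v$. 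In the remaining case $\ell \equiv 1 \pmod 3$, I would decompose $P[\ell] = P[\mathfrak{p}] \oplus P[\bar{\mathfrak{p}}]$ (using that $\ell$ splits in $\Z[\omega]$) and observe that these two components are isotropic and dual under the Weil pairing induced by $\lambda$, giving $P[\bar{\mathfrak{p}}] \simeq \Hom(P[\mathfrak{p}], \mu_\ell)$ as $\Gal_{\Q(\omega)}$-modules. The decomposition $v = v_+ + v_-$ forces both components to be nonzero (since $\Gal_\Q / \Gal_{\Q(\omega)}$ swaps the two summands), and this in turn produces a Galois-stable filtration of $P[\mathfrak{p}]$ with trivial and cyclotomic characters. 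Comparing the character governing the Galois twist of $j$ (taking values in $\mu_6 \subset \F_\ell^\times$) with the cyclotomic character $\chi_\mathrm{cyc}|_{\Gal_{\Q(\omega)}}$ (whose image is the index-$2$ subgroup of $\F_\ell^\times$) yields the contradiction whenever $\ell > 13$.

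The hard part will be the case $\ell \equiv 1 \pmod 3$ for the small primes $\ell \in \{7, 13\}$, where the image sizes of the two characters could coincide. There one needs a finer analysis, for instance by computing Frobenius traces at a prime of good reduction where $P$ is known to have the expected QM structure, a strategy developed in greater generality in \cite{LSSV-QMMazur}. A simpler alternative, specific to our situation, is to exploit the explicit formula for $\delta = 16b(a^2-4b)$ to show that $H \neq \{1\}$ forces $-3\delta$ to be a specific power in $\Q(\omega)^\times$, which combined with the local behavior at $\ell$ rules out nontrivial $\ell$-torsion for these primes too.
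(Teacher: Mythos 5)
There is a genuine gap. Your setup (writing $P[\ell]\simeq V\otimes W$ with $V$ the simple $\calO_\ell$-module and factoring the Galois action as $u_\sigma\otimes g_\sigma$ via Skolem--Noether) is sound and is in the same spirit as the paper's argument, but the case analysis that follows does not close. In the case $\ell\equiv 2\bmod 3$ you conclude that $u_\sigma\otimes g_\sigma$ has no nonzero $\F_\ell$-rational fixed vector because $u_\sigma=(1-\omega)^k$ has no $\F_\ell$-rational eigenvalues on $V$; but this only works if $g_\sigma$ acts on $W^{\Gal_L}$ through an $\F_\ell$-rational scalar. If $W^{\Gal_L}=W$ (which is not excluded, e.g.\ for $\ell=5$ the field $\Q(P[\ell])$ could be contained in the degree-$12$ field $L$), then $g_\sigma\in\GL(W)$ may have eigenvalues conjugate to $u_\sigma^{-1}$'s, in which case $u_\sigma\otimes g_\sigma$ has eigenvalue $1$ with multiplicity $2$ and a nonzero rational fixed space; you never rule this out. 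More seriously, the case $\ell\equiv 1\bmod 3$ is not proved at all: the "comparison of characters" yielding a contradiction for $\ell>13$ is asserted without justification, and for $\ell\in\{7,13\}$ you only list candidate strategies. Note also that the proposition does not claim $P[\ell](\Q)=0$ (that is false over $\Q(\omega)$ for decomposable surfaces and is only established later via supersingular reduction and Lorenzini's theorem); your closing sentences drift toward proving that stronger statement rather than the stated one.

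The paper's proof avoids all congruence cases. Given nonzero $m\in P[\ell](\Q)$, one first shows (as in \cite[Lemma 6.2.3]{LSSV-QMMazur}) that the cyclic module $\calO_\ell\cdot m$ has order exactly $\ell^2$. Then one considers the Galois-stable subring $S=\Z[\omega]\subset\calO$: since $S_\ell$ has no proper nonzero $\Gal_\Q$-stable ideals, the map $S_\ell\to\calO_\ell\cdot m$, $x\mapsto x\cdot m$, is injective, hence bijective by counting, so $S_\ell\cdot m=\calO_\ell\cdot m$. A purely linear-algebraic lemma \cite[Lemma 6.2.6]{LSSV-QMMazur} then forces $\Gal_{\Q(\omega)}$ to act trivially on $\calO_\ell$, and since $\ell\geq 5$ this lifts to trivial action on $\calO$ itself. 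If you want to salvage your approach, the missing ingredient is precisely this kind of global constraint relating the $\calO_\ell$-span of $m$ to the commutative subring $\Z[\omega]$, rather than an eigenvalue analysis prime by prime.
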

\begin{proof}
    The proof of \cite[Theorem 6.0.1]{LSSV-QMMazur} carries over essentially without change; we briefly sketch the details.
    Suppose that $m\in M^{\Gal_{\Q}}$ is nonzero.
    An argument identical to \cite[Lemma 6.2.3]{LSSV-QMMazur} shows that $\calO_{\ell}\cdot m\subset M$ has order $\ell^2$.
    Let $S := \Z[\omega] \subset \calO$ and $S_{\ell} := S\otimes_{\Z}\F_{\ell}$.
    Then $S$ is a $\Gal_{\Q}$-stable subring of $\calO$, and the induced action of $\Gal_{\Q}$ on $S$ factors through $\Gal(\Q(\omega)/\Q)$.
    Since $S_{\ell}$ has no $\Gal_{\Q}$-stable proper nonzero ideals, the map $S_{\ell}\rightarrow \calO\cdot m$ sending $x\mapsto x\cdot m$ is injective.
    By cardinality reasons, it is also surjective, so $S_{\ell} \cdot x = \calO_{\ell} \cdot x$.
    The (purely linear-algebraic) \cite[Lemma 6.2.6]{LSSV-QMMazur} then shows that  $\Gal_{\Q(\omega)}$ acts trivially on $\calO_{\ell}$.
    Since $\ell\geq 5$, by \cite[Lemma 3.5.7]{LSSV-QMMazur} this implies that $\Gal_{\Q(\omega)}$ acts trivially on $\calO$.
    In other words, $\calO\subset \End(P_{\Q(\omega)})$, as desired. 
\end{proof}

The quaternionic multiplication places strong restrictions on the reduction type of $P$ at a prime $p$.
First of all, $P$ has potentially good reduction at $p$, and acquires good reduction over a totally ramified extension $K$ of $\Q_p$ \cite[Lemma 4.1.2]{LSSV-QMMazur}.
The special fiber of the Neron model of $P_K$ is an abelian surface over $\F_p$; we will denote this abelian surface by $P_{\F_p}$ and (by slight abuse of language) call it the reduction of $P$ mod $p$. (Its isomorphism class might depend on the choice of $K$.)
The reduction $P_{\F_p}$ is geometrically isogenous to the square of an elliptic curve.
If $p$ divides the discriminant of $\calO$ (that is, if $p \mid 6$), then $P_{\F_p}$ is supersingular \cite[\S2]{Jordan86}.
Additionally, the prime-to-$p$ torsion subgroup of $P(\Q)$ embeds in $P_{\F_p}(\F_p)$ for every prime $p$ by formal group considerations. 
We use these remarks to prove the following:

\begin{proposition}\label{proposition: no primes >3 in torsion}
    Let $\ell \geq 5$ be a prime. Then $P[\ell](\Q) =0$.
\end{proposition}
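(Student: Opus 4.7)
The plan is to assume for contradiction that $P[\ell](\Q) \neq 0$ and derive a contradiction by combining the QM structure of $P$ over $\Q(\omega)$ with Weil bounds on the supersingular reductions at $p \in \{2,3\}$.

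First, I would invoke Proposition \ref{proposition: reduction to gl2 type} to conclude $\calO \subset \End(P_{\Q(\omega)})$. Since $\ell \nmid 6 = \disc(\calO)$, the algebra $\calO \otimes \F_\ell$ is isomorphic to $M_2(\F_\ell)$, and $P[\ell]$ is free of rank one over it. By Morita equivalence there is a decomposition $P[\ell] \simeq V \otimes_{\F_\ell} W$ as $\calO\otimes\F_\ell$-modules with commuting $\Gal_{\Q(\omega)}$-action, where $V \simeq \F_\ell^2$ is the standard $M_2(\F_\ell)$-module with trivial Galois action and $W$ is a $2$-dimensional $\F_\ell$-representation of $\Gal_{\Q(\omega)}$. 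Then $P[\ell](\Q(\omega)) = V \otimes W^{\Gal_{\Q(\omega)}}$, so the assumption $P[\ell](\Q) \neq 0$ forces $W^{\Gal_{\Q(\omega)}} \neq 0$ and hence $|P[\ell](\Q(\omega))| \in \{\ell^2, \ell^4\}$.

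For the upper bound, I would reduce at primes above $p \in \{2, 3\}$ in $\Q(\omega)$. Since $2,3 \mid \disc(\calO) = 6$, the reductions $P_{\F_3}$ (at the ramified prime above $3$) and $P_{\F_4}$ (at the inert prime above $2$) are supersingular and inherit the QM by $\calO$. The Frobenius characteristic polynomial on the $\ell$-adic Tate module therefore factors as $(T^2 - aT + p^f)^2$ with $f$ the residue degree and $|a| \leq 2\sqrt{p^f}$. Combined with the embeddings $P[\ell](\Q(\omega)) \hookrightarrow P_{\F_3}(\F_3)$, $P_{\F_4}(\F_4)$, this gives
\[\ell^2 \leq |P_{\F_3}(\F_3)| = (4-a_3)^2 \leq 49 \quad \text{and} \quad \ell^2 \leq |P_{\F_4}(\F_4)| = (5-b)^2,\]
with the further restriction $b = a_2^2 - 4$ for some $a_2 \in \Z$ with $|a_2| \leq 2$, since $P_{\F_4}$ is the base change to $\F_4$ of the $\F_2$-reduction of $P_{\Q}$ and its Frobenius is the square of $\pi_{\F_2}$. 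A short divisibility analysis then rules out all $\ell \geq 7$.

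This leaves $\ell = 5$, where the constraints pin down $a_3 = -1$ and $a_2 \in \{\pm 2\}$, so $|P_{\F_3}(\F_3)| = |P_{\F_4}(\F_4)| = 25$. The subcase $a_2 = 2$ is immediately eliminated by the sharper embedding $P[5](\Q) \hookrightarrow P_{\F_2}(\F_2)$, whose right-hand side has order $(3-a_2)^2 = 1$. The main obstacle is the remaining subcase $a_2 = -2$, where both reductions a priori accommodate all of $P[5](\Q(\omega))$. To handle it, I plan to exploit the $\Gal(\Q(\omega)/\Q)$-semilinear action on $W$: by Theorem \ref{theorem: galois action endo field on O}, complex conjugation acts on $\calO$ via an involution inverting $\omega$, and a linear-algebraic comparison with the explicitly determined mod-$5$ Frobenius eigenvalues on $W$ at the primes above $2$ and $3$ should force any Galois-stable line in $W$ to be trivial. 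The finitely many geometrically non-simple Pryms enumerated in Proposition \ref{proposition: all CM j-invariants} can be dealt with separately by direct computation on the list of CM $j$-invariants.
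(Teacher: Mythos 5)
Your opening moves match the paper's: invoke Proposition \ref{proposition: reduction to gl2 type} to get $\calO\subset\End(P_{\Q(\omega)})$, then exploit the supersingular reductions at the primes dividing $\disc(\calO)=6$. But your sieve does not close. The decisive error is at the prime $2$: you assume the characteristic polynomial of Frobenius on the $\F_2$-reduction is $(T^2-a_2T+2)^2$ with $a_2\in\Z$, $|a_2|\le 2$. The quaternionic action is only defined over $\Q(\omega)$, whose residue field at the inert prime $2$ is $\F_4$, so only the $\F_4$-Frobenius polynomial need be a square; the $\F_2$-polynomial can be any supersingular Weil polynomial. The isogeny class over $\F_2$ with polynomial $T^4+2T^2+4$ is supersingular, admits an $\calO$-action over $\F_4$ (it is $\F_4$-isogenous to $E^2$ with $\End^0(E_{\F_4})=\Q(\sqrt{-3})$, which splits $B$), base-changes to $(T^2+2T+4)^2$ with $|P_{\F_4}(\F_4)|=49$, and pairs with $a_3=-3$ (so $|P_{\F_3}(\F_3)|=49$) to satisfy every congruence you impose. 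Thus $\ell=7$ survives your argument, and you have no substitute for the mechanism the paper uses to kill it: ramification of the endomorphism field at $3$ (via Silverberg) forces bad, totally additive reduction at $3$, and Lorenzini's bound on component groups then excludes $7$-torsion in $P(\Q_3)$.

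The $\ell=5$ endgame is also incomplete --- the subcase $a_2=-2$ is only a plan (``should force''), not a proof --- and here you under-use the supersingularity you yourself record. Supersingular over $\F_3$ means all Newton slopes equal $\tfrac12$, which for $(T^2-a_3T+3)^2$ forces $3\mid a_3$; with the Weil bound this gives $a_3\in\{-3,0,3\}$ and $(4-a_3)^2\in\{1,16,49\}$, never divisible by $5$ (nor by any prime $\ell\ge 11$). So the reduction at $3$ alone eliminates every $\ell\ne 7$, with no need for the Morita lower bound, the $\F_4$ count, or the semilinear-action argument; this is in effect what the paper's Honda--Tate/LMFDB step does. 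Two smaller points: embedding $P[\ell](\Q(\omega))$ (rather than just $P[\ell](\Q)$) into the special fibre over $\F_3$ and $\F_4$ requires the totally-ramified potentially-good-reduction statement over $\Q_3(\omega)$ and $\Q_2(\omega)$, not merely over $\Q_p$; and deferring the geometrically split case to ``direct computation'' on Table \ref{table 1} is not routine, since each listed $j$-invariant corresponds to infinitely many sextic twists over $\Q$ (whereas the arguments above, like the paper's, apply uniformly to the CM case).
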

\begin{proof}
Suppose instead that $P[\ell](\Q)\neq 0$.
    Then $\ell$ divides the order of $P_{\F_3}(\F_3)$.
    Moreover, $P_{\F_3}$ is supersingular.
    Proposition \ref{proposition: reduction to gl2 type} shows that $\calO\subset \End(P_{\Q(\omega)})$, so the base change $P_{\Q(\omega)}$ has quaternionic multiplication over $\Q(\omega)$.
    Since $3$ is ramified in $\Q(\omega)$, we must also have $\calO \subset \End(P_{\F_3})$.
    By Honda-Tate theory (conveniently recorded in the results of the following  \href{https://www.lmfdb.org/Variety/Abelian/Fq/?start=&count=50&hst=List&q=3&p=&g=2&p_rank=0&initial_coefficients=&simple=&geom_simple=&primitive=&polarizable=&jacobian=&simple_quantifier=&newton_polygon=&abvar_point_count=&curve_point_count=&simple_factors=&angle_rank=&jac_cnt=&hyp_cnt=&twist_count=&max_twist_degree=&geom_deg=&p_rank_deficit=&geom_squarefree=&dim1_factors=&dim2_factors=&dim3_factors=&dim4_factors=&dim5_factors=&dim1_distinct=&dim2_distinct=&dim3_distinct=&number_field=&galois_group=&search_type=List&sort_order=abvar_count&showcol=&hidecol=&sort_dir=}{LMFDB} \cite{lmfdb} search), we see that the only option is $\ell=7$.
    
    We conclude by excluding the case $\ell=7$.
    Let $L/\Q$ be the endomorphism field, namely the smallest field extension with the property that $\End(P_L) = \End(P_{\bar{\Q}})$.
    Since $\Gal(\Q(\omega)/\Q)$ acts nontrivially on the subring $\Z[\omega]\subset \calO\subset \End(P_{\Q(\omega)})$, we have $\Q(\omega)\subset L$.
    A result of Silverberg \cite[Theorem 4.2]{Silverberg92a} shows that $L$ is unramified at all places of good reduction of $P$.
    Since $3$ is ramified in $\Q(\omega)\subset L$, it follows that $P$ has bad reduction at $3$.
    By \cite[Proposition 4.1.3(b)]{LSSV-QMMazur} and Lemma \ref{corollary: pryms of gl2 type}, the reduction of $P$ at $3$ is totally additive; in other words the identity component of the special fibre of the Neron model of $P$ over $\Z_3$ is unipotent.
    A result of Lorenzini \cite[Corollary 3.25]{Lorenzini-groupofcomponentsneronmodel} then shows that $7 \nmid |P(\Q_3)_{\mathrm{tors}}|$, a contradiction.    
\end{proof}

\begin{remark}
{\em
    The elliptic curve $X \colon y^2 = x^3 + 48(\omega + 5)$ has a $\Q(\omega)$-rational point of order $7$. It follows that the decomposable $\mu_3$-abelian surface $X \times X$ has (two independent) $\Q(\omega)$-rational points of order $7$. 
    This shows that the argument above is sharp in a certain sense.
}
\end{remark}

\subsection{Eliminating small groups}
We rule out certain small groups of order $2^i3^j$ from appearing as subgroups of $P(\Q)$, using arguments that are specific to our family. 

\begin{proposition}\label{prop: no 9-torsion}
    $\Z/9\Z \not\subset P(\Q)$ and $P(\Q)[3]\subset (\Z/3\Z)^2$.
\end{proposition}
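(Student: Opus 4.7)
The plan is to prove both assertions separately.

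For the bound $P(\Q)[3]\subset(\Z/3\Z)^2$, I would apply the Weil pairing. Since $\gcd(\deg\lambda,3)=\gcd(2,3)=1$, the polarization $\lambda$ induces a $\Gal_\Q$-equivariant, non-degenerate, alternating pairing $e_3\colon P[3]\times P[3]\to\mu_3$. Because $\mu_3(\Q)=\{1\}$, any two elements of $P(\Q)[3]$ pair trivially, so $P(\Q)[3]$ is an isotropic subspace of a non-degenerate symplectic $\F_3$-vector space of dimension $4$. Such an isotropic subspace has $\F_3$-dimension at most $2$, giving $|P(\Q)[3]|\leq 9$.

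For $\Z/9\Z\not\subset P(\Q)$, I would argue by reduction modulo a small prime $\ell\neq 2,3$ of good reduction. By Proposition \ref{proposition: Prym has O-PQM}, $P_{\F_\ell}$ inherits quaternionic multiplication by $\calO$ over $\overline{\F_\ell}$; after a (finite) base change where $\calO$ becomes rational, the $3$-adic Tate module is a free rank-one module over the unique maximal order $\calO_3$ in $B\otimes\Q_3$, which is ramified with uniformizer $\p$ satisfying $\p^2=3\calO_3$. A direct computation using the action of Frobenius $\pi_\ell\in\calO_3$, whose reduced norm is $\ell$ and trace is $a_\ell$, identifies
\[
P_{\F_\ell}(\F_\ell)[3^\infty]\cong\calO_3/\p^{s}\quad\text{where}\quad s=v_\p(\pi_\ell-1)=v_3(\ell+1-a_\ell).
\]
For any prime $\ell\leq 17$ of good reduction, the Weil bound $|a_\ell|\leq 2\sqrt\ell$ forces $\ell+1-a_\ell<27$, so $s\leq 2$, and consequently the abelian group $\calO_3/\p^s$ has exponent $3$ (since $3\in\p^2$ annihilates $\calO_3/\p^s$ for such $s$). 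Since $P(\Q)_{\mathrm{tors}}[3^\infty]$ injects into $P_{\F_\ell}(\F_\ell)[3^\infty]$, $P(\Q)$ cannot contain an element of order $9$.

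The main obstacle is to guarantee the existence of such a small prime for \emph{every} $P/\Q$. Since $2\mid\Delta_{a,b}$ always, $\ell=2$ is excluded, and one must check that at least one of $\ell\in\{5,7,11,13,17\}$ is a prime of good reduction; if not, I would fall back to a larger good prime, using Chebotarev density applied to the endomorphism field $L=\Q(\omega,\sqrt[6]{\Delta_{a,b}})$ of Theorem \ref{theorem: galois action endo field on O} to find an $\ell$ at which $\calO$ becomes $\F_\ell$-rational, and then ruling out the finitely many Frobenius values $a_\ell\equiv\ell+1\pmod{27}$ that would force $s\geq 3$. The geometrically non-simple case is handled by explicit inspection of the finitely many CM Pryms listed in Proposition \ref{proposition: all CM j-invariants}.
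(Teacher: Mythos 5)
Your Weil-pairing argument for the second assertion is correct and self-contained: since $\deg\lambda=4$ is prime to $3$, the polarization induces a nondegenerate, alternating, $\Gal_\Q$-equivariant pairing on $P[3]$, and $\mu_3(\Q)=1$ forces $P(\Q)[3]$ to be isotropic, hence of $\F_3$-dimension at most $2$. This is a genuinely different and more elementary route than the paper's, which obtains that bound only as a byproduct of a single reduction argument (described below). Note, however, that the pairing says nothing about points of order $9$, so the two assertions really are separate, and the first is where your proof breaks down.

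The argument for $\Z/9\Z\not\subset P(\Q)$ has a genuine gap. The identification $P_{\F_\ell}(\F_\ell)[3^\infty]\cong\calO_3/\p^{s}$ with $s=v_3(\ell+1-a_\ell)$ requires Frobenius to commute with the $\calO$-action, i.e.\ the quaternionic multiplication must already be defined over $\F_\ell$ itself (not merely over $\overline{\F}_\ell$); by Theorem \ref{theorem: galois action endo field on O} this essentially forces $\ell$ to split completely in the endomorphism field $L=\Q(\omega,\sqrt[6]{\Delta_{a,b}})$, and there is no reason such an $\ell$ exists among the primes $\leq 17$ of good reduction --- nothing prevents $\Delta_{a,b}$ from being divisible by all of $5,7,11,13,17$, and even a good prime in that range need not split in $L$. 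Without $\F_\ell$-rationality of the QM the characteristic polynomial of Frobenius need not be a square and the group-structure analysis collapses. The Chebotarev fallback does not close the gap: at a large split prime the Weil bound no longer forces $\ell+1-a_\ell<27$, and ``ruling out $a_\ell\equiv\ell+1\pmod{27}$'' is not something you can do at a fixed $\ell$ --- a hypothetical $P$ with a rational point of order $9$ would satisfy precisely this congruence at \emph{every} good split prime, so a contradiction would require an equidistribution or image-of-Galois input you have not supplied. The paper's fix is to reduce at $\ell=2$, a prime ramified in $\calO$: $P$ has potentially good reduction there, acquires good reduction over a totally ramified extension of $\Q_2$, the prime-to-$2$ torsion of $P(\Q)$ injects into $P_{\F_2}(\F_2)$ by the formal-group argument, and the reduction is forced to be supersingular. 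A finite check over $\F_2$ (via the LMFDB and \cite[Lemma 7.2.1]{LSSV-QMMazur}) then shows the $3$-part of $P_{\F_2}(\F_2)$ embeds in $(\Z/3\Z)^2$, which proves both assertions at once.
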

\begin{proof}
    Using the notation and remarks made before Proposition \ref{proposition: no primes >3 in torsion}, the prime-to-$2$ torsion subgroup of $P(\Q)$ injects into $P_{\F_2}(\F_2)$ and $P_{\F_2}$ is a supersingular abelian surface.
    Consulting the LMFDB \cite{lmfdb}, we see that the $3$-part of $P_{\F_2}(\F_2)$ is of order at most $9$, and there is a unique \href{https://www.lmfdb.org/Variety/Abelian/Fq/2/2/a_e}{isogeny class} of supersingular abelian surfaces $B/\F_2$ with $9 \mid |B(\F_2)|$.
    This isogeny class is the square of a supersingular elliptic curve $E$ over $\F_2$ and $E(\F_2)\simeq \Z/3\Z$.
    By \cite[Lemma 7.2.1]{LSSV-QMMazur} (and its proof) and the fact that $\Z[\sqrt{-2}]$ is a PID, every member of this isogeny class is in fact isomorphic to $E^2$, so $B(\F_2) \simeq (\Z/3\Z)^2$.
    We conclude that the $3$-part of $P_{\F_2}(\F_2)$, hence also the $3$-part of $P(\Q)_{\mathrm{tors}}$,  is a subgroup of $(\Z/3\Z)^2$.
\end{proof}

\begin{proposition}\label{proposition: no order 12}
 $(\Z/2\Z)^2 \times \Z/3\Z \not\subset P(\Q)$.
\end{proposition}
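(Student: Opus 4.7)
My plan is to assume $(\Z/2\Z)^2 \times \Z/3\Z \subset P(\Q)$ and derive a contradiction by combining the explicit $(\Z/2\Z)^2$-parameterization with the necessary conditions for rational $3$-torsion, reducing everything to rational points on a known rank-zero elliptic curve.

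First, Proposition $\ref{prop:(Z/2)^2 criterion}$(3) gives $w, d \in \Q$ with $(a,b) = ((16w^6 + 40w^3 - 2)d^3, (8w^3+1)^3 d^6)$ and $\Delta_{a,b} \neq 0$. For the order-$3$ element, the sequence $\eqref{eq:3torsion}$ of $\Gal_\Q$-modules remains left exact on $\Q$-points, so $\Z/3\Z \subset P(\Q)$ forces either $P[\mathfrak{p}](\Q) \neq 0$ or $P_{-27}[\mathfrak{p}](\Q) \neq 0$, where $P_{-27} = P_{-27a,729b}$. Applying Corollary $\ref{corollary:root3-torsion}$ to $P$ and to $P_{-27}$, at least one of the four polynomials
\[
f(x) = x^4 + ax^2 + b, \quad \hat f, \quad f_{-27}(x) = x^4 - 27ax^2 + 729b, \quad \hat f_{-27}(x) = x^4 - 216ax^2 + 16 \cdot 729(a^2 - 4b)
\]
must have a rational root.

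Each is a quadratic in $t = x^2$, so a rational root forces its $t$-discriminant to be a rational square. A short computation shows that these four discriminants are rational-square multiples of $a^2 - 4b$ (for $f$ and $f_{-27}$) or of $b$ (for $\hat f$ and $\hat f_{-27}$). The key identity is $a^2 - 4b = 256 d^6 (w^4 - w)^3$, using $w^3(w^3-1)^3 = (w^4-w)^3$, so ``$a^2 - 4b$ is a rational square'' collapses to ``$w^4 - w$ is a rational square''. Similarly $b = (8w^3+1) \cdot ((8w^3+1)d^2)^2$, so ``$b$ is a rational square'' becomes ``$8w^3+1$ is a rational square''. Both conditions cut out affine curves birational to the elliptic curve $E_0 \colon y^2 = u^3 + 1$, which has Mordell--Weil rank zero and torsion group $\Z/6\Z$. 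Its six rational points yield only the three finite values $w \in \{0, 1, -1/2\}$; direct substitution shows that each forces $\Delta_{a,b} = 0$, contradicting the smoothness of $C_{a,b}$.

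The main technical obstacle is spotting the clean factorization $a^2 - 4b = 256 d^6 (w^4 - w)^3$, which is what makes both necessary conditions for rational $3$-torsion collapse onto the same rank-zero elliptic curve; everything else is routine algebra.
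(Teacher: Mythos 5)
Your proof is correct and follows essentially the same route as the paper's: both start from the $(\Z/2\Z)^2$-parameterization, reduce the rational $3$-torsion to $\p$-torsion of $P$ or of $P_{-27}$, and end by finding all rational points on a genus-one curve isomorphic to the rank-zero elliptic curve $y^2 = x^3+1$ (the paper phrases the middle step via $j$-invariants, Proposition \ref{prop: 3-torsion families} and a bigonal-duality reduction, whereas you work directly with the $x^2$-discriminants of the four quartics, which amounts to the same square conditions on $a^2-4b$ and $b$). One cosmetic slip: $b = (8w^3+1)\cdot\bigl((8w^3+1)d^3\bigr)^2$, not $(8w^3+1)\cdot\bigl((8w^3+1)d^2\bigr)^2$, but this does not affect the conclusion that $b$ is a rational square if and only if $8w^3+1$ is.
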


\begin{proof}
Suppose that $(\Z/2\Z)^2 \times (\Z/3\Z) \subset P(\Q)$. 
By Proposition \ref{prop:(Z/2)^2 criterion}, we have $j = -\left(\frac{4 w \left(w^3-1\right)}{\left(8 w^3+1\right)}\right)^3$ for some $w\in \Q$.
Since $\Z/3\Z\subset P(\Q)$, the exact sequence \eqref{eq:3torsion} shows that $P[\mathfrak{p}](\Q)\neq 0$ or $P_{-27}[\mathfrak{p}](\Q)\neq 0$.
Proposition \ref{prop: 3-torsion families} then shows one of $j$ or $1/j$ is of the form $-4t/(t-1)^2$ for some $t\in \Q$. 
Replacing $j$ by $1/j$ if necessary, using Proposition \ref{prop:(Z/2)^2 criterion}, Lemma \ref{lem:3-tors A =P} and bigonal duality, we may assume $j = -4t/(t-1)^2$ for some $t\in \Q$.
This is equivalent to 
\begin{align}
    (4j-2)^2-4=\frac{1024 w^3 \left(w^3-1\right)^3 \left(8 w^6+20 w^3-1\right)^2}{\left(8 w^3+1\right)^6}
\end{align}
being a square in $\Q$. Since $w=0,1,-1/2$ leads to $j=0, \infty$, we may assume $w$ is distinct from these values. 
Then this expression is a square if and only if $z^2 = w(w^3-1)$ for some $z\in \Q$.
This equation defines the affine part $C^{\circ}$ of a genus $1$ curve $C/\Q$ which is a double cover of $\P^1$.
The point $(w,z)=(0,0)$ endows $C$ with the structure of an elliptic curve with Weierstrass model $y^2 = x^3+1$. (This can be seen using the invariant theory of binary quartics \cite[\S2.1]{AlpogeHo}.)
This elliptic curve has Mordell-Weil group $\Z/6\Z$, so $C^{\circ}(\Q)=\{(0,0), (-2,0), (1,\pm 3)\}$.
We conclude that $w\in\{0,1,-2\}$, but we already observed these all correspond to $j=0,\infty$.
\end{proof}

\subsection{Eliminating points of order \texorpdfstring{$4$}{4}}\label{subsec:no 4-torsion using bitangents}

We rule out the existence of order $4$ points in $P(\Q)$ using the description of $A[2]$ via bitangents from \S\ref{subsec: 2-torsion in A using bitangents}.

\begin{proposition}\label{proposition: no 4-torsion}
    $\Z/4\Z \not\subset P(\Q)$.
\end{proposition}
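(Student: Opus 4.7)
The plan is to argue by contradiction using the bitangent description of Proposition~\ref{prop:bitangent description}, ultimately reducing to an explicit duplication computation on a Mordell curve.

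Suppose $Q \in P(\Q)$ has order $4$, and set $Q' := \lambda(Q) \in A(\Q)$. Since $\widehat\lambda(Q') = 2Q \neq 0$ we have $Q' \notin A[\widehat\lambda]$; moreover $2Q' = \lambda(2Q)$ vanishes if and only if $2Q \in P[\lambda]$. In the opposite case $Q'$ has order $4$ with $2Q' \in A[\widehat\lambda](\Q)$, which under the bigonal duality isomorphism $A \simeq P_{\widehat C}$ of Proposition~\ref{proposition: bigonal duality} is exactly the original hypothesis applied to $\widehat C$ in place of $C$. After possibly replacing $C$ by $\widehat C$, we may therefore assume $Q' \in (A[2]\setminus A[\widehat\lambda])(\Q)$. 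By Proposition~\ref{prop:bitangent description}, $Q'$ corresponds to a $\Gal_\Q$-stable $\tau$-orbit $\{L, \tau L\}$ of bitangents of size $2$ on $C$; writing $L\colon y = \alpha x + \beta$ affinely, Galois stability forces $u := \alpha^2 \in \Q^\times$ and $v := \beta \in \Q$.

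Requiring the intersection polynomial $x^4 + a x^2 + b - (\alpha x + \beta)^3$ to be a perfect square yields the system
\[
u^4 + 12 u^2 v - 4 a u + 24 v^2 = 0, \qquad u^2(b - v^3) = 9 v^4,
\]
and after elimination the clean identity $16(a^2 - 4b) = (u^2 + 8v)^3$. Setting $C_0 := u^2 + 8v$, the elliptic curve $E = E_{a,b}$ takes the Mordell form $y^2 = x^3 + C_0^3$, carrying the rational $2$-torsion point $T_0 := (-C_0, 0)$. Next, lift $Q$ to a divisor class representative $\widetilde Q = x_L + \pi^*(e) \in J$ for some $e \in E(\bar{\Q})$. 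The condition $\tau^*\widetilde Q = -\widetilde Q$ forces $2e = -T$, where $T \in E(\Q)$ is defined by $D + \tau^*D \sim \pi^{-1}(T) + 2\infty$ with $L \cap C = 2D$. A calculation using $\pi\colon (x,y) \mapsto (4y, 8x^2 + 4a)$ and Vieta's formulas on the roots of $x^2 + px + q$ cut out by $L \cap C$ shows that the chord on $E$ through the images $P_1, P_2 \in E$ of the two points of $D$ has slope $u$ and passes through $T_0$; hence $T = P_1 + P_2 = T_0$ and $2e = T_0$. A cocycle analysis on $\Gal(\Q(\sqrt u)/\Q)$ then shows that the rationality of $\widetilde Q$ requires $T_0 \in 2E(\Q)$ when $u \in \Q^{\times 2}$, and, after transporting along the quadratic twist isomorphism, requires $\tilde T_0 := (-u C_0, 0) \in 2 E^u(\Q)$ on $E^u \colon y^2 = x^3 + (u C_0)^3$ otherwise.

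Both possibilities are excluded uniformly: for any $C \in \Q^\times$, the duplication formula on the Mordell curve $y^2 = x^3 + C^3$ rewrites $x(2P) = -C$ as $x_0^4 + 4C x_0^3 - 8 C^3 x_0 + 4 C^4 = 0$, and the substitution $x_0 = C z$ collapses this to $(z^2 + 2z - 2)^2 = 0$, whose only roots $z = -1 \pm \sqrt{3}$ are irrational. This contradicts the previous step, completing the argument. The main technical delicacy is the identity $T = T_0$; it becomes transparent once one observes that the chord through $P_1, P_2$ has slope $\alpha^2 = u$ and $y$-intercept $u^3 + 8 u v$, which together vanish at $X_3 = -C_0$ thanks to Vieta on $y^2 = x^3 + C_0^3$.
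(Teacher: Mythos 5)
Your proof is correct, and it shares its first half with the paper's argument — the reduction via bigonal duality to $\lambda(Q)\in (A[2]\setminus A[\widehat{\lambda}])(\Q)$, the bitangent description of that group, and the decomposition $Q = x_L + \pi^*(e - O_E)$ — but the endgame is genuinely different. The paper stops at the observation that $\sigma(e)=\pm e$ for all $\sigma$, so that $\langle e\rangle$ is a Galois-stable cyclic subgroup of order $4$ of $E$, and then invokes CM theory (Bourdon--Clark) to say that an elliptic curve with CM by $\Z[\omega]$ admits no rational cyclic $4$-isogeny. You instead make everything explicit: the perfect-square condition on $x^4+ax^2+b-(\alpha x+\beta)^3$ yields your two relations in $u=\alpha^2$ and $v=\beta$, from which $16(a^2-4b)=(u^2+8v)^3$ follows (I checked this elimination, as well as the chord computation giving slope $u$ and $y$-intercept $uC_0$, hence $T=P_1\oplus P_2=T_0$); the same $\sigma(e)=\pm e$ observation then says $e$ descends to a rational point of order $4$ on $E$ or on its quadratic twist $E^u$, and the factorization $z^4+4z^3-8z+4=(z^2+2z-2)^2$ in the duplication formula kills both cases. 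What the paper's route buys is brevity and conceptual clarity; what yours buys is self-containedness (no appeal to the classification of isogenies of CM curves — you only need the ``$\pm$-rational'' case of that fact, which you verify by hand) plus the extra structural output that a rational point of $A[2]\setminus A[\widehat{\lambda}]$ forces $16(a^2-4b)$ to be a cube, consistent with the paper's exact sequence \eqref{eq:ses}. The one place your write-up is thinner than it should be is the ``cocycle analysis'': you should spell out that $e\in E(\Q(\sqrt{u}))$, that for $\sigma$ negating $\alpha$ one gets $\pi^*(\sigma(e)\ominus e)=2Q$ and hence $\sigma(e)\ominus e=T_0$ because $E[2](\Q)=\{O_E,T_0\}$ (here one uses $\omega\notin\Q$), so that $\sigma(e)=e\oplus 2e=-e$ and $\psi(e)$ is automatically $\Q$-rational on $E^u$. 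These details all check out, but as written they are asserted rather than proved.
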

\begin{proof} 
    For the sake of contradiction, let $x\in P(\Q)$ be a point of order $4$. Let $A = P^\vee$ be the dual Prym.    We have $\widehat{\lambda}\circ \lambda =[2]$, so either $\lambda(x) \in A[2]$ or $y=\lambda(x)\in A(\Q)$ has order $4$ and $\widehat{\lambda}(y)\in P[2]$.
    After possibly replacing $P$ by $A$ and applying bigonal duality (Proposition \ref{proposition: bigonal duality}), we may therefore assume that $\lambda(x)\in A[2](\Q)$.
    Since $x$ has order $4$, we have $\lambda(x)\not\in A[\widehat{\lambda}]$.  Using Proposition \ref{prop:bitangent description} and its notation, there exists a unique $\tau$-orbit of odd theta characteristics $\{D, D'\}$ such that $p(D-2\infty) =\lambda(x)$.
    This orbit is defined over $\Q$ but $D$ and $D'$ may only be defined over a quadratic extension.

    Recall that $\lambda$ is the composition $P\hookrightarrow J\xrightarrow{p} A$ and that $p$ has kernel $\pi^*(E)$, where $\pi\colon C\rightarrow E$ is the double cover.
    Therefore $x-(D-2\infty) \in \pi^*(E)$. Since $\pi^*$ is injective, we may write
    \begin{align}\label{equation: proof no 4-torsion}
    x-(D-2\infty) \sim \pi^*(y-O_E)    
    \end{align} 
    for some unique $y\in E(\bar{\Q})$.
    The left hand side has order $4$, so $y$ has order $4$ too.

    We claim that the subgroup $\{0,y,2y,3y\}$ generated by $y$ is $\Gal_{\Q}$-stable. 
    Indeed, let $\sigma \in\Gal_{\Q}$, so $\sigma(D) \in \{D, \tau(D)\}$. 
    If $\sigma(D) = D$, then $\sigma(y) =y$ by \eqref{equation: proof no 4-torsion}.
    Suppose $\sigma(D) = \tau(D)$. 
    Using that $\tau(x)= -x$ and applying $\tau$ to \eqref{equation: proof no 4-torsion}, we see that $-x -(\tau(D)-2\infty) = \tau(\pi^*(y)-O_E) = \pi^*(y-O_E)$.
    Applying $\sigma$ to \eqref{equation: proof no 4-torsion} shows that $x -(\tau(D)-2\infty) = \pi^*(\sigma(y)-O_E)$.
    Adding the two equations from the last two sentences shows that $\sigma(y) = -y=3y$, so $\{0,y,2y,3y\}$ is indeed $\Gal_{\Q}$-stable.

    But $E$ is an elliptic curve with a faithful $\mu_3$-action, hence has CM by $\Z[\omega]$, and such curves have no rational cyclic $4$-isogenies, by CM theory \cite[Theorem 6.18(c)]{BourdonClark-torsionpointsCMellcurves}. Contradiction! 
\end{proof}

\subsection{Proof of Theorem \ref{thm:main}}
The groups appearing in the theorem are realized infinitely often in Table \ref{table: prym torsion}, so it suffices to prove that $G:=P(\Q)_{\mathrm{tors}}$ is isomorphic to one of these.
Proposition \ref{proposition: no primes >3 in torsion} shows that the order of $G$ is of the form $2^i 3^j$.
Propositions \ref{prop: no 9-torsion} and \ref{proposition: no 4-torsion} show that $G$ is $6$-torsion, of the form $(\Z/2\Z)^m \times (\Z/3\Z)^n$ for some $m\geq 0$ and $n \leq 2$.
Equation (\ref{eq: SES involving P[2] and P[lambda]}) and Lemma \ref{lemma: 2-torsion in elliptic curves} show that $m\leq 2$ as well. Proposition \ref{proposition: no order 12} shows that if $m=2$ then $n=0$.
Therefore $G$ must be one of the groups appearing in Theorem \ref{thm:main}.

We do not classify all the possibilities for $J(\Q)_{\mathrm{tors}}$, but Theorem \ref{thm:main} quickly implies the following:

\begin{corollary}
    Let $C/\Q$ be a bielliptic Picard curve with Jacobian $J$. 
    Then $J(\Q)_{\mathrm{tors}} = J(\Q)[12]$.
\end{corollary}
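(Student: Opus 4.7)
The plan is to bootstrap Theorem \ref{thm:main} along the isogeny $P\times E \to J$ from \S\ref{subsection: the prym variety P}. Concretely, there is a $\Q$-isogeny
\[
\phi\colon P\times E\longrightarrow J,\qquad (p,e)\mapsto p+\pi^*(e),
\]
whose kernel $\{(\pi^*(e),e)\mid e\in E[2]\}\simeq E[2]$ is $2$-torsion. Since $\ker\phi\subseteq (P\times E)[2]$, there exists a complementary $\Q$-isogeny $\psi\colon J\to P\times E$ satisfying $\phi\circ\psi=[2]_J$ and $\psi\circ\phi=[2]_{P\times E}$.

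The next step is to bound the rational torsion of each factor of $P\times E$. Theorem \ref{thm:main} shows that $P(\Q)_{\mathrm{tors}}$ is $6$-torsion. The elliptic curve $E\colon y^2=x^3+16(a^2-4b)$ has $j$-invariant $0$, and it is classical (as a consequence of Mazur's theorem, or of CM theory for the order $\Z[\omega]$) that any elliptic curve over $\Q$ with $j=0$ has torsion subgroup contained in $\Z/6\Z$. Hence $(P\times E)(\Q)_{\mathrm{tors}}$ is $6$-torsion.

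Putting these together: for any $x\in J(\Q)_{\mathrm{tors}}$, the point $\psi(x)\in(P\times E)(\Q)$ is torsion, so $6\psi(x)=0$. Applying $\phi$ and using $\phi\circ\psi=[2]_J$ yields
\[
12x \;=\; \phi\bigl(\psi(6x)\bigr) \;=\; \phi\bigl(6\psi(x)\bigr) \;=\; 0,
\]
which establishes $J(\Q)_{\mathrm{tors}}\subseteq J(\Q)[12]$; the reverse inclusion is immediate.

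I do not anticipate any substantive obstacle here: the two factors of $P\times E$ are controlled by results already in hand, and the only mild subtlety, namely that $\psi$ descends to $\Q$, follows from the $\Q$-rationality of the subgroup scheme $\ker\phi\subset (P\times E)$ (being the image of the $\Q$-morphism $e\mapsto (\pi^*(e),e)$ on $E[2]$).
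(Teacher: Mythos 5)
Your proof is correct and follows essentially the same route as the paper: both arguments exploit that $J$ is isogenous to the product of a Prym surface and $E$ via an isogeny with $2$-torsion kernel, bound the Prym factor by Theorem \ref{thm:main} and the factor $E$ (a $j=0$ curve) by classical results, and conclude $12x=0$. The only cosmetic difference is that you run the isogeny in the direction $P\times E\to J$ and invoke the complementary isogeny $\psi$ with $\phi\circ\psi=[2]_J$, whereas the paper uses $\theta\colon J\to A\times E$ directly (deducing $6x\in\ker\theta\simeq E[2]$, which is $2$-torsion) at the cost of needing bigonal duality to transfer the torsion bound from $P$ to $A$.
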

\begin{proof}
    Using the notation of \S\ref{subsection: the prym variety P}, the surjective maps $J\rightarrow A$ and $J\rightarrow E$ combine to an isogeny $\theta\colon J\rightarrow A\times E$ whose kernel is isomorphic to $E[2]$.
    Theorem \ref{thm:main} and bigonal duality (Proposition \ref{proposition: bigonal duality}) show that $A(\Q)_{\mathrm{tors}}$ is $6$-torsion. 
    Moreover elementary arguments (using reduction mod $3$ and $5$) show that $E(\Q)_{\mathrm{tors}}$ is $6$-torsion. 
    It follows that for every $x\in J(\Q)_{\mathrm{tors}}$, $6\theta(x) = 0$, so $6 x \in \ker(\theta) \simeq E[2]$, which is $2$-torsion. 
\end{proof}

\section{Classifying rational torsion in Pryms of \texorpdfstring{$\GL_2$}{GL2}-type}\label{sec:gl2type}

Combining our knowledge of rational torsion subgroups and rational endomorphism rings of Pryms $P = P_{a,b}$, we classify the finite $\End(P)$-modules that arise as $P(\Q)_{\mathrm{tors}}$ for Pryms $P$ of $\GL_2$-type.  

Define the rational functions
\[a_2(t) := \dfrac{-4(t^2 +1)}{t^2(t^2 - 1)^2} \hspace{3mm} \mbox{ and } \hspace{3mm} b_2(t) := \dfrac{16}{t^2(t^2 -1)^4}.\] 

\begin{proposition}\label{prop:sqrt(2) and 3-torsion}
For all but finitely many $t \in \Q$, the Prym 
$P = P_{a_2(t),b_2(t)}$ is geometrically simple and satisfies $\End(P) \simeq \Z[\sqrt{2}]$ and $(\Z/3\Z)^2 \subset  P[\p](\Q)$. Conversely, if $P_{a,b}/\Q$ is a Prym with these three properties then $(a,b) = (a_2(t)\lambda^6,b_2(t)\lambda^{12})$ for some $t, \lambda \in \Q$. 
\end{proposition}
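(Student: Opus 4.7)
The plan is to combine the parametrizations from Corollary \ref{cor:sqrt(2)-multiplication} (characterizing Pryms whose endomorphism ring contains $\Z[\sqrt{2}]$) and Proposition \ref{prop:Z/3 x Z/3 classification} (characterizing Pryms with $(\Z/3\Z)^2 \subset P[\p](\Q)$), and then show that imposing both simultaneously cuts out precisely the one-parameter family $\{(a_2(t),b_2(t)) : t \in \Q\setminus\{0,\pm 1\}\}$, up to the sextic twist equivalence of Lemma \ref{lemma: when are two bielliptic Picard curves isomorphic}.

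For the forward direction, fix $t \in \Q \setminus \{0,\pm 1\}$ and set $(a,b) = (a_2(t), b_2(t))$. A direct computation yields
\[
a^2 - 4b = \frac{16}{t^4(t^2-1)^2}, \qquad \Delta_{a,b} = 16b(a^2-4b) = \left(\frac{4}{t(t^2-1)}\right)^6,
\]
so $\Delta_{a,b}$ is a sixth power in $\Q$; hence Corollary \ref{cor:sqrt(2)-multiplication} gives $\Z[\sqrt{2}] \hookrightarrow \End(P) \cap \iota(\calO)$. One checks that $f(x) = x^4+ax^2+b$ factors as $(x^2-\alpha^2)(x^2-\beta^2)$ with $\alpha = 2/(t^2-1)$ and $\beta = 2/(t(t^2-1))$, so $f$ splits over $\Q$; by Corollary \ref{cor:Z/3 x Z/3}, $P[\p](\Q) \simeq (\Z/3\Z)^2$. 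Since the $j$-invariant $j(t) = -(t^2-1)^2/(4t^2)$ is a non-constant rational function of $t$, Lemma \ref{lemma: CM points are algebraic and there are finitely many of bounded degree} implies that $P$ is geometrically simple for all but finitely many $t \in \Q$. For such $t$, Corollary \ref{cor:principal polarization} restricts $\End(P)$ to $\{\Z, \Z[\sqrt{2}], \Z[\sqrt{6}]\}$, and the containment $\Z[\sqrt{2}] \subseteq \End(P)$ pins down $\End(P) \simeq \Z[\sqrt{2}]$.

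For the converse, suppose $P_{a,b}/\Q$ is geometrically simple with $\End(P) \simeq \Z[\sqrt{2}]$ and $(\Z/3\Z)^2 \subset P[\p](\Q)$. By Corollary \ref{cor:Z/3 x Z/3}, $f$ splits completely, so $f = (x^2-\alpha^2)(x^2-\beta^2)$ for some $\alpha,\beta \in \Q^\times$ with $\alpha^2 \neq \beta^2$; thus $a = -(\alpha^2+\beta^2)$, $b = \alpha^2\beta^2$, and $\Delta_{a,b} = [4\alpha\beta(\alpha^2-\beta^2)]^2$. By Corollary \ref{cor:sqrt(2)-multiplication}, $\Delta_{a,b}$ is a sixth power in $\Q$, so $\pm 4\alpha\beta(\alpha^2-\beta^2)$ is a cube; after possibly swapping $\alpha \leftrightarrow \beta$ and/or negating one of them (operations that preserve $(a,b)$), we may arrange $4\alpha\beta(\alpha^2-\beta^2) = \mu^3$ for some $\mu \in \Q^\times$. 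Setting $t := \alpha/\beta \in \Q^\times \setminus \{\pm 1\}$ and $\lambda := \mu/(2\beta)$, a direct calculation using $\alpha\beta(\alpha^2-\beta^2) = \mu^3/4$ gives $\alpha = \lambda^3 \cdot 2/(t^2-1)$ and $\beta = \lambda^3 \cdot 2/(t(t^2-1))$, whence $(a,b) = (\lambda^6 a_2(t), \lambda^{12} b_2(t))$, as required.

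The main obstacle is conceptual rather than technical: one must recognize that the two apparently different parametrizations (in terms of $(s,d)$ from Corollary \ref{cor:sqrt(2)-multiplication}(3), and in terms of $(c,d_1)$ from Proposition \ref{prop:Z/3 x Z/3 classification}) fit together into a single one-parameter family indexed by $t = \alpha/\beta$, and carefully track the sextic twist equivalence together with the sign and ordering symmetries of $\alpha,\beta$ so as to ensure surjectivity of $(t,\lambda) \mapsto (\lambda^6 a_2(t), \lambda^{12} b_2(t))$ onto the locus of interest.
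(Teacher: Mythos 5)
Your proposal is correct and follows essentially the same route as the paper's proof: geometric simplicity for all but finitely many $t$ via Lemma \ref{lemma: CM points are algebraic and there are finitely many of bounded degree}, upgrading $\Z[\sqrt{2}]\subset\End(P)$ to an isomorphism via Corollary \ref{cor:principal polarization}, and then intersecting the parametrizations of Corollary \ref{cor:sqrt(2)-multiplication} and Proposition \ref{prop:Z/3 x Z/3 classification}. Your explicit work with the roots $\alpha,\beta$ and the substitution $t=\alpha/\beta$, $\lambda=\mu/(2\beta)$ simply fills in the algebra that the paper's proof leaves terse.
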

\begin{proof}
    First note that by Lemma \ref{lemma: CM points are algebraic and there are finitely many of bounded degree}, the surface $P_{a_2(t),b_2(t)}$ will be non-CM (and hence geometrically simple) for all but finitely many specializations of $t$, so for such $t$ the condition $\Z[\sqrt{2}]\simeq \End(P)$ is equivalent to $\Z[\sqrt{2}]\subset \End(P)$ by Corollary \ref{cor:principal polarization}.
    By Proposition \ref{prop:Z/3 x Z/3 classification} and Corollary \ref{cor:sqrt(2)-multiplication}, $(\Z/3\Z)^2\subset P[\mathfrak{p}]$ and $\Z[\sqrt{2}]\subset \End(P)$ if and only if $(a,b) = (-(t^2+1)d^2,t^2d^4)$ for some $t,d\in k$ and $16b(a^2-4b)\in \Q^{\times 6}$. 
    The latter is equivalent to $d = 16t^2(t^2-1) \lambda^3$ for some $\lambda \in \Q^{\times}$, proving the proposition.
\end{proof}
According to \cite[Theorem 1.4]{LSSV-QMMazur}, this is the largest torsion subgroup that can arise among maximal PQM abelian surfaces over $\Q$ of $\GL_2$-type.

Note that we cannot simultaneously have $\End(P) \simeq \Z[\sqrt{2}]$ and $P[3](\Q) \simeq \Z/3\Z$ since $3$ is inert in $\Z[\sqrt{2}]$. However, using our family of $\Z[\sqrt{6}]$-RM Pryms, we give examples of Pryms of $\GL_2$-type with $P[3](\Q) \simeq \Z/3\Z$. For this we define
\[a_6(t) = 36\frac{3t^2 -1}{t^2(3t^2 + 1)^2} \hspace{3mm} \mbox{ and } \hspace{3mm} b_6(t) = -\frac{3888}{t^2(3t^2 + 1)^4} \]

\begin{proposition}\label{prop: sqrt(6) and 3-torsion}
 For all but finitely values of $t \in \Q$
the surface $P_{a_6(t),b_6(t)}$ is geometrically simple and satisfies  $\End(P) \simeq \Z[\sqrt{6}]$ and $\Z/3\Z \subset P[\p](\Q)$. Conversely, if $P_{a,b}/\Q$ is a Prym with these three properties then either $P_{a,b}$ or $A_{a,b}$ is isomorphic to $P_{ a_6(t),b_6(t)}$ for some $t \in \Q$. 
\end{proposition}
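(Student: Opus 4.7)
The proof will follow the template of Proposition \ref{prop:sqrt(2) and 3-torsion}. For the direct direction, set $(a,b) = (a_6(t), b_6(t))$ and compute directly: $\Delta_{a,b} = -2^{12} \cdot 3^9/(t(3t^2+1))^6$, so
\[-27\,\Delta_{a,b} = \bigl(36/(t(3t^2+1))\bigr)^6 \in (\Q^\times)^6,\]
and Corollary \ref{cor:sqrt(6)-multiplication} gives $\Z[\sqrt{6}] \subset \End(P)$. Moreover, $f(x) := x^4 + a_6(t) x^2 + b_6(t)$ factors as $(x^2 - 36/(t^2(3t^2+1)^2))(x^2 + 108/(3t^2+1)^2)$, producing the rational roots $\pm 6/(t(3t^2+1))$, so $\Z/3\Z \subset P[\mathfrak{p}](\Q)$ by Proposition \ref{prop: 3-torsion families}(1). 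Since $j(a_6(t), b_6(t)) = (3t^2+1)^2/(12t^2)$ is never equal to $1$ for $t \in \Q^\times$, and since Lemma \ref{lemma: CM points are algebraic and there are finitely many of bounded degree} excludes CM for all but finitely many $t$, the Prym $P$ is geometrically simple away from finitely many specializations. Then Corollary \ref{cor:principal polarization}(1) forces $\End(P) \in \{\Z, \Z[\sqrt{2}], \Z[\sqrt{6}]\}$, and the inclusion $\Z[\sqrt{6}] \subset \End(P)$ pins down $\End(P) = \Z[\sqrt{6}]$.

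For the converse, let $P_{a,b}$ satisfy the three properties. By Proposition \ref{prop: 3-torsion families}, either $f_{a,b}$ or $\hat{f}_{a,b}$ has a rational root. In the latter case, bigonal duality (Proposition \ref{proposition: bigonal duality}) and $\End(A) = \End(P)$ allow us to replace $P_{a,b}$ by $A_{a,b} = \Prym(\widehat{C}_{a,b})$, whose defining polynomial $f_{\widehat{C}_{a,b}}$ does have a rational root. So assume without loss of generality that $(a,b) = (-(c+1)d^2, cd^4)$ for some $c, d \in \Q^\times$. Then $\Delta_{a,b} = 16c(c-1)^2d^8$, so
\[-27\,\Delta_{a,b} = (-432\,c) \cdot \bigl(d^4(c-1)\bigr)^2.\]
The sixth-power condition of Corollary \ref{cor:sqrt(6)-multiplication} forces $-432c$, equivalently $-3c$, to be a rational square; writing $c = -3t^2$ for some $t \in \Q^\times$ gives $-27\,\Delta_{a,b} = \bigl(36t(3t^2+1)d^4\bigr)^2$, which is a sixth power precisely when $36t(3t^2+1)d^4 \in (\Q^\times)^3$.

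The technical heart of the argument is parametrizing this cubic condition. Setting $y := dt(3t^2+1)/6$ transforms it into $y^4 \in (\Q^\times)^3$; since the rational points of the singular curve $z^3 = y^4$ are parametrized by $(y,z) = (s^3, s^4)$, we obtain $y = s^3$ for some $s \in \Q^\times$, hence $d = 6s^3/(t(3t^2+1))$. Substituting back yields
\[(a, b) = \bigl(a_6(t)\,s^6,\; b_6(t)\,s^{12}\bigr).\]
By Lemma \ref{lemma: when are two bielliptic Picard curves isomorphic} (with $\lambda = s$), this pair is $\Q$-isomorphic as a marked bielliptic Picard curve to $(a_6(t), b_6(t))$, so $P_{a,b} \simeq P_{a_6(t), b_6(t)}$ over $\Q$ (or $A_{a,b} \simeq P_{a_6(t), b_6(t)}$ if bigonal duality was invoked).

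The main obstacle is the parametrization of the cubic condition $36t(3t^2+1)d^4 \in (\Q^\times)^3$, which reduces to describing rational points on the cuspidal cubic $z^3 = y^4$; once this is cleared, the absorption of the twist parameter $s$ into the isomorphism class via Lemma \ref{lemma: when are two bielliptic Picard curves isomorphic} is the conceptual feature that permits the cleaner conclusion ``$P_{a,b}$ or $A_{a,b}$ is isomorphic to $P_{a_6(t), b_6(t)}$'' rather than a $(t,\lambda)$-style parametrization as in Proposition \ref{prop:sqrt(2) and 3-torsion}.
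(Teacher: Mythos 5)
Your proposal is correct and follows essentially the same route as the paper, which simply says ``the proof is as before'' and combines Corollary \ref{cor:sqrt(6)-multiplication} with Proposition \ref{prop: 3-torsion families}, using bigonal duality to reduce to the first family. You have merely filled in the omitted algebra (the factorization of $f$, the computation $-27\Delta_{a_6,b_6}=(36/(t(3t^2+1)))^6$, and the parametrization of the sixth-power condition via $c=-3t^2$ and $d=6s^3/(t(3t^2+1))$), and these computations check out.
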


\begin{proof}
    The proof is as before, this time combining Corollary \ref{cor:sqrt(6)-multiplication} with the two cases of Proposition \ref{prop: 3-torsion families}. Notice that the properties of having $\Z[\sqrt{6}]$-multiplication and a non-trivial rational $\p$-torsion point are preserved by duality, and the two families of Proposition \ref{prop: 3-torsion families} are interchanged by duality as well. Thus, it is enough to consider the first family, which leads to the functions $a_6(t)$ and $b_6(t)$. 
\end{proof}

\begin{question}
{\em 
The minimal conductor of a geometrically simple abelian surface $A/\Q$ of $\GL_2$-type with potential quaternionic multiplication is $3^{10}$, corresponding to a Galois orbit of weight two eigenforms $f \in S_2(\Gamma_0(243))$ with coefficients in $\Z[\sqrt{6}]$  \cite[Table 1]{GonzalezGuardia}. One can show that the corresponding optimal quotient $A$ of $J_0(243)$  has a rational point of order $3$, a $(1,2)$-polarization, and endomorphism field $L = \Q(\omega)$. The same is also true for its dual.  Using the results of this paper, it is not hard to show that either $A$ or its dual is isomorphic to $P_{a_6(t),b_6(t)}$ for some value of $t$. Which value of $t$ is it?   
}
\end{question}

\begin{proof}[Proof of Theorem $\ref{thm:gl2main}$]
We have $\End(P) \simeq  \Z[\sqrt{D}]$ for some $D\in \{2,6\}$ by Corollary \ref{cor:principal polarization}.
First assume $D = 2$, so that $\mathfrak{a}_2 = (\sqrt{2})$ and $\mathfrak{a}_3 = (3)$. Corollary \ref{cor:sqrt(2)-multiplication} gives a parameterization (in terms of $t$ and $d$) of those $P$ with $\End(P) \simeq \Z[\sqrt{2}]$. Since $A \simeq P$ in this case (again by Corollary \ref{cor:sqrt(2)-multiplication}), we have $P_{a,b}[2](\Q) \neq 0$ if and only if $b$ is a cube. Thus, choosing $t$ to be a cube guarantees that $\Z[\sqrt{2}]/\mathfrak{a}_2 \simeq \Z/2\Z \subset P[2](\Q)$.  Proposition \ref{prop:sqrt(2) and 3-torsion} gives a one-parameter family of examples with $\Z[\sqrt{2}]/\mathfrak{a}_3 \simeq \F_9 \subset P(\Q)$. By Theorem \ref{thm:main}, it remains to rule out $(\Z/2\Z)^2$ and $\Z/3\Z \times \Z/6\Z$ as subgroups of $P(\Q)$. If $(\Z/2\Z)^2 \subset P(\Q)$ then $P = P_{a,b}$ must be a specialization of the family in Corollary \ref{prop:(Z/2)^2 criterion}, and moreover $16b(w,d)(a(w,d)^2 - 4b(w,d))$ must be a sixth power. The latter is automatically a cube, so it is enough to show that it cannot be a square. Writing it out explicitly, we must show that the affine curve $Y \colon y^2 = w(w^3 - 8)(w^3 + 1)$ has no rational points with $y \neq 0$. With the help of Magma \cite{Magma} we find that the smooth projectivization $\overline{Y}$ of $Y$ is a double cover of the elliptic curve $y^2 = x^3 + 6x - 7$, which has four rational points. Checking pre-images, we find that $Y(\Q)$ consists of the three rational points with $y = 0$, as desired. Finally, we must rule out $\Z/3\Z \times \Z/6\Z \subset P(\Q)$. If $P$ has this property, then since $P[\p](\Q)$ and $P_{-27}[\p](\Q)$ are $\F_9$-vector spaces, one of them is isomorphic to $\F_9$ by (\ref{eq:3torsion}). Thus, exchanging $P$ with $P_{-27}$ if necessary (which is allowed since $P[2](\Q) \simeq P_{-27}[2](\Q)$), we may assume $P$ is a specialization of the family in Proposition \ref{prop:sqrt(2) and 3-torsion}, and hence it is enough to show that $b_2(t)$ is never a cube. It is then enough to show that there are no $\Q$-rational points on the curve $Y' \colon 4t^2(t^2-1) = y^3$ with $y \neq 0$ and $t \notin \{0,\pm 1\}$. Note the double cover $Y' \to X$, where $X \colon 4t(t -1) = y^3$ is an elliptic curve (minus the origin) such that  $X(\Q) = \{(0,0),(1,0)\}$. It  follows that $Y'$ has no interesting rational points, finishing the proof in the case $D = 2$.

Next consider $D = 6$, so that $\mathfrak{a}_2 = (2-\sqrt{6})$ and $\mathfrak{a}_3 = (3 + \sqrt{6})$. 
Corollary \ref{cor:sqrt(6)-multiplication} then gives a parameterization (in terms of $t$ and $d$) of those $P$ with $\End(P) \simeq \Z[\sqrt{6}]$. Since $A \simeq P_{-27}$ in this case, we have $P_{a,b}[2](\Q) \neq 0$ if and only if $b$ is a cube. Choosing $t$ so that $3t^2$ is a cube guarantees that $\Z[\sqrt{2}]/\mathfrak{a}_2 \simeq \Z/2\Z \subset P[2](\Q)$.  Proposition \ref{prop: sqrt(6) and 3-torsion} gives a one-parameter family of examples with $\Z[\sqrt{2}]/\mathfrak{a}_3 \simeq \F_3 \subset P(\Q)$. By Theorem \ref{thm:main}, it remains to rule out $(\Z/2\Z)^2$ and $\Z/6\Z$ as subgroups of $P(\Q)$. The argument for $(\Z/2\Z)^2$ is exactly as before so we omit it. 

It remains to rule out $\Z/6\Z \subset P(\Q)$. 
Let $I \subset \calO$ be the maximal two-sided ideal in $\calO$ above $3$ (which is unique since $\calO$ is ramified at $3$), so that $\calO/I \simeq \F_9$. Note that the completion $I \otimes \Z_3 \subset \calO \otimes \Z_3$ is generated by any element of $\calO \otimes \Z_3$ of minimal positive valuation.  Since $\mathfrak{a}_3 \subset \Z[\sqrt{6}]$ is generated by the element $3 + \sqrt{6}$ of norm $3$, and similarly $\mathfrak{p} \subset \Z[\omega]$ is generated by the element  $1 - \zeta$ of norm $3$, we  have $A[\mathfrak{a}_3] = A[I] = A[\mathfrak{p}]$, where $A[I]$ is the subgroup of $A$ killed by $\iota(I)$, where $\iota\colon \mathcal{O}\rightarrow \End(P_{\bar{\Q}})$ is a choice of embedding satisfying the conclusion of Theorem \ref{theorem: galois action endo field on O}. There is also an exact sequence 
\[0 \to P[\mathfrak{a}_3] \to P[3] \to P[\mathfrak{a}_3] \to  0\]
where the surjection is multiplication by $3 + \sqrt{6}$. Thus, if $P[3](\Q) \neq 0$,
we must also have $P[\p](\Q) = P[\mathfrak{a}_3](\Q) \neq 0$.  It follows that $P$ is a specialization of the family in Proposition \ref{prop: sqrt(6) and 3-torsion}. So it suffices to show that both $b_6(t)$ and $16(a_6(t)^2 - 4b(t))$ are never cubes. These two quantities turn out to be inverses of each other modulo cubes, so it is enough to show $b_6(t)$ is never a cube. 
More precisely, it is enough to show that the curve $Y' \colon y^3 = 12t^2(3t^2 + 1)$ has no rational points with $y \neq 0$. Using the double cover $Y' \to X$ where $X \colon y^3 = 12t(3t+1)$, which is (the affine part of) an elliptic curve $\overline{X}$ with $\overline{X} \simeq \Z/6\Z$, we check that $Y'(\Q) = \{(0,0)\}$, which completes the proof.
\end{proof}

\bibliographystyle{abbrv}

\end{document}